 \theoremstyle{plain}
 \newtheorem{thm1}{Theorem}
 \newtheorem{cor1}[thm1]{Corollary}
\newtheorem{thm}{Theorem}[section]
\newtheorem{lemma}[thm]{Lemma}
\newtheorem{prop}[thm]{Proposition}
\newtheorem{cor}[thm]{Corollary}
\theoremstyle{definition}
\newtheorem{defn}[thm]{Definition}
\newtheorem{remark}[thm]{Remark}
\newtheorem{example}[thm]{Example}
\numberwithin{equation}{section}
\def\sA{\mathsf{A}}
\def\sB{\mathsf{B}}
\def\sC{\mathsf{C}}
\def\sD{\mathsf{D}}
\def\sE{\mathsf{E}}
\def\sF{\mathsf{F}}
\def\sG{\mathsf{G}}
\def\sX{\mathsf{X}}
\def\cA{\mathcal{A}}
\def\cC{\mathcal{C}}
\def\cS{\mathcal{S}}
\def\FF{\mathbb{F}}
\def\ZZ{\mathbb{Z}}
\def\K{\mathbb{K}}
\def\PG{\mathsf{PG}}
\DeclareMathOperator\Type{\mathrm{Typ}}
\DeclareMathOperator\Res{\mathrm{Res}}
\DeclareMathOperator\Opp{\mathrm{Opp}}
\DeclareMathOperator\Diag{\mathrm{Diag}}
\DeclareMathOperator\disp{\mathrm{disp}}
\DeclareMathOperator\height{\mathrm{ht}}
\def\la{\lambda}
\def\<{\langle}
\def\>{\rangle}
\renewcommand{\@makefnmark}{\mbox{\textsuperscript{}}}
\def\keywords{\xdef\@thefnmark{}\@footnotetext}
\def\MSC{\xdef\@thefnmark{}\@footnotetext}
\title{Automorphisms and opposition in spherical buildings of exceptional type, I}
\author{James Parkinson 
\and
Hendrik Van Maldeghem}
\date{\today}
\begin{document}

\maketitle

\begin{abstract} 
To each automorphism of a spherical building there is naturally associated an \textit{opposition diagram}, which encodes the types of the simplices of the building that are mapped onto opposite simplices. If no chamber (that is, no maximal simplex) of the building is mapped onto an opposite chamber then the automorphism is called \textit{domestic}. In this paper we give the complete classification of domestic automorphisms of split spherical buildings of types $\sE_6$, $\sF_4$, and $\sG_2$. Moreover, for all split spherical buildings of exceptional type we classify (i) the domestic homologies, (ii) the opposition diagrams arising from elements of the standard unipotent subgroup of the Chevalley group, and (iii) the automorphisms with opposition diagrams with at most $2$ distinguished orbits encircled. Our results provide unexpected characterisations of long root elations and products of perpendicular long root elations in long root geometries, and analogues of the density theorem for connected linear algebraic groups in the setting of Chevalley groups over arbitrary fields.
\end{abstract}
\MSC{2010 Mathematics Subject Classification: 20E42, 51E24, 51B25, 20E45}

\section*{Introduction}

The study of the geometry of fixed elements of automorphisms of spherical buildings is a well-established and beautiful topic (see~\cite{PMW:15}). Over the past decade a complementary theory concerning the ``opposite geometry'', consisting of those elements mapped to opposite elements by an automorphism of a spherical building, has been developed. A starting point for this theory is the fundamental result of Abramenko and Brown~\cite[Proposition 4.2]{AB:09}, stating that if $\theta$ is a nontrivial automorphism of a thick spherical building then the opposite geometry $\Opp(\theta)$ is necessarily nonempty. Indeed the generic situation is that $\Opp(\theta)$ is rather large, and typically contains many chambers of the building (\textit{chambers} are the simplices of maximal dimension). The more special situation is when $\Opp(\theta)$ contains no chamber, in which case $\theta$ is called \textit{domestic}.

Domestic automorphisms have recently enjoyed extensive investigation, see \cite{PTM:15,PVM:19a,PVM:19b,TTM:11,TTM:12,TTM:12b,HVM:12,HVM:13}. Cumulatively these papers illuminate an intimate, and as yet not fully understood, connection between domesticity and large, rich fixed subconfigurations. For example, by~\cite{HVM:12} the domestic dualities of large $\sE_6$ buildings are precisely the polarities that fix a split building of type~$\sF_4$, and by \cite{HVM:13} the domestic trialities of $\sD_4$ buildings are precisely the automorphisms fixing a split building of type~$\sG_2$. These remarkable connections underscore the importance of both the opposite geometry and the notion of domesticity in the theory of spherical buildings. 

A systematic study of the opposite geometry was initiated in~\cite{PVM:19a,PVM:19b}, where we developed the notion of an \textit{opposition diagram} of an automorphism, encoding the types of the simplices of the building that are mapped onto opposite simplices by the automorphism. This concept gives a useful framework for the study of the opposite geometry and domesticity. Indeed, a striking consequence of the theory is that there are surprisingly few opposition diagrams possible. The purpose of this paper (along with \cite{PVM:20a,PVM:20c}) is to classify, as much as possible, the class of automorphisms with each diagram, with the focus of this paper being on split spherical buildings of exceptional type.

Let us briefly expand on the above concepts, before summarising our main results. Suppose that $\Delta$ is an irreducible split spherical building with Dynkin diagram~$\Gamma$. The opposition diagram $\Diag(\theta)$ of an automorphism $\theta$ of $\Delta$ is drawn by encircling the nodes of $\Gamma$ corresponding to the types of the minimal simplices of $\Delta$ that are mapped onto opposite simplices by~$\theta$. 

For example, the diagram 
\begin{center}
\begin{tikzpicture}[scale=0.5,baseline=-0.75ex]
\node [inner sep=0.8pt,outer sep=0.8pt] at (-2,0) (1) {$\bullet$};
\node [inner sep=0.8pt,outer sep=0.8pt] at (-1,0) (3) {$\bullet$};
\node [inner sep=0.8pt,outer sep=0.8pt] at (0,0) (4) {$\bullet$};
\node [inner sep=0.8pt,outer sep=0.8pt] at (1,0) (5) {$\bullet$};
\node [inner sep=0.8pt,outer sep=0.8pt] at (2,0) (6) {$\bullet$};
\node [inner sep=0.8pt,outer sep=0.8pt] at (3,0) (7) {$\bullet$};
\node [inner sep=0.8pt,outer sep=0.8pt] at (4,0) (8) {$\bullet$};
\node [inner sep=0.8pt,outer sep=0.8pt] at (0,-1) (2) {$\bullet$};
\draw (-2,0)--(4,0);
\draw (0,0)--(0,-1);
\draw [line width=0.5pt,line cap=round,rounded corners] (1.north west)  rectangle (1.south east);
\draw [line width=0.5pt,line cap=round,rounded corners] (6.north west)  rectangle (6.south east);
\draw [line width=0.5pt,line cap=round,rounded corners] (7.north west)  rectangle (7.south east);
\draw [line width=0.5pt,line cap=round,rounded corners] (8.north west)  rectangle (8.south east);
\end{tikzpicture}
\end{center}
represents an automorphism of an~$\sE_8$ building mapping vertices of types $1,6,7,8$, and no vertices of other types, to opposite vertices (we adopt Bourbaki labelling~\cite{Bou:02}). A priori, there could be $2^8$ possible opposition diagrams for automorphisms of $\sE_8$ buildings, however it is a remarkable fact that there are only $5$ diagrams possible. The idea behind the proof of this fact, from \cite{PVM:19a,PVM:19b}, is as follows. Suppose first that $\Delta$ is a \textit{large} spherical building of rank at least $3$ (meaning that $\Delta$ has no Fano plane residues). In \cite[Theorem~1]{PVM:19a} we showed that every automorphism $\theta$ of $\Delta$ satisfies the following closure property: If there exist type $J_1$ and $J_2$ simplices in $\Opp(\theta)$, then there exists a type $J_1\cup J_2$ simplex in $\Opp(\theta)$. Such automorphisms are called \textit{capped}, and this highly nontrivial property imposes severe constraints on the structure of opposition diagrams. For \textit{small} spherical buildings it turns out that automorphisms are not necessarily capped, however the same constraints on the opposition diagrams exist for other reasons (see~\cite{PVM:19b}).

We call a diagram satisfying the constraints imposed by cappedness an \textit{admissible diagram}. The precise constraints are not required here (see \cite[\S2.1]{PVM:19a} for details), as it is sufficient for our purpose to simply give the complete list of admissible Dynkin diagrams of exceptional type:

\begin{figure}[H]
\begin{center}
${^2}\sE_{6;0}=\begin{tikzpicture}[scale=0.5,baseline=-0.5ex]
\node [inner sep=0.8pt,outer sep=0.8pt] at (-2,0) (2) {$\bullet$};
\node [inner sep=0.8pt,outer sep=0.8pt] at (-1,0) (4) {$\bullet$};
\node [inner sep=0.8pt,outer sep=0.8pt] at (0,-0.5) (5) {$\bullet$};
\node [inner sep=0.8pt,outer sep=0.8pt] at (0,0.5) (3) {$\bullet$};
\node [inner sep=0.8pt,outer sep=0.8pt] at (1,-0.5) (6) {$\bullet$};
\node [inner sep=0.8pt,outer sep=0.8pt] at (1,0.5) (1) {$\bullet$};
\draw (-2,0)--(-1,0);
\draw (-1,0) to [bend left=45] (0,0.5);
\draw (-1,0) to [bend right=45] (0,-0.5);
\draw (0,0.5)--(1,0.5);
\draw (0,-0.5)--(1,-0.5);
\end{tikzpicture}$\qquad
${^2}\sE_{6;1}=\begin{tikzpicture}[scale=0.5,baseline=-0.5ex]
\node at (0,0.8) {};
\node at (0,-0.8) {};
\node [inner sep=0.8pt,outer sep=0.8pt] at (-2,0) (2) {$\bullet$};
\node [inner sep=0.8pt,outer sep=0.8pt] at (-1,0) (4) {$\bullet$};
\node [inner sep=0.8pt,outer sep=0.8pt] at (0,-0.5) (5) {$\bullet$};
\node [inner sep=0.8pt,outer sep=0.8pt] at (0,0.5) (3) {$\bullet$};
\node [inner sep=0.8pt,outer sep=0.8pt] at (1,-0.5) (6) {$\bullet$};
\node [inner sep=0.8pt,outer sep=0.8pt] at (1,0.5) (1) {$\bullet$};
\draw (-2,0)--(-1,0);
\draw (-1,0) to [bend left=45] (0,0.5);
\draw (-1,0) to [bend right=45] (0,-0.5);
\draw (0,0.5)--(1,0.5);
\draw (0,-0.5)--(1,-0.5);
\draw [line width=0.5pt,line cap=round,rounded corners] (2.north west)  rectangle (2.south east);
\end{tikzpicture}$\qquad
${^2}\sE_{6;2}=\begin{tikzpicture}[scale=0.5,baseline=-0.5ex]
\node at (0,0.8) {};
\node at (0,-0.8) {};
\node [inner sep=0.8pt,outer sep=0.8pt] at (-2,0) (2) {$\bullet$};
\node [inner sep=0.8pt,outer sep=0.8pt] at (-1,0) (4) {$\bullet$};
\node [inner sep=0.8pt,outer sep=0.8pt] at (0,-0.5) (5) {$\bullet$};
\node [inner sep=0.8pt,outer sep=0.8pt] at (0,0.5) (3) {$\bullet$};
\node [inner sep=0.8pt,outer sep=0.8pt] at (1,-0.5) (6) {$\bullet$};
\node [inner sep=0.8pt,outer sep=0.8pt] at (1,0.5) (1) {$\bullet$};
\draw (-2,0)--(-1,0);
\draw (-1,0) to [bend left=45] (0,0.5);
\draw (-1,0) to [bend right=45] (0,-0.5);
\draw (0,0.5)--(1,0.5);
\draw (0,-0.5)--(1,-0.5);
\draw [line width=0.5pt,line cap=round,rounded corners] (2.north west)  rectangle (2.south east);
\draw [line width=0.5pt,line cap=round,rounded corners] (1.north west)  rectangle (6.south east);
\end{tikzpicture}$\qquad
${^2}\sE_{6;4}=\begin{tikzpicture}[scale=0.5,baseline=-0.5ex]
\node at (0,0.8) {};
\node at (0,-0.8) {};
\node [inner sep=0.8pt,outer sep=0.8pt] at (-2,0) (2) {$\bullet$};
\node [inner sep=0.8pt,outer sep=0.8pt] at (-1,0) (4) {$\bullet$};
\node [inner sep=0.8pt,outer sep=0.8pt] at (0,-0.5) (5) {$\bullet$};
\node [inner sep=0.8pt,outer sep=0.8pt] at (0,0.5) (3) {$\bullet$};
\node [inner sep=0.8pt,outer sep=0.8pt] at (1,-0.5) (6) {$\bullet$};
\node [inner sep=0.8pt,outer sep=0.8pt] at (1,0.5) (1) {$\bullet$};
\draw (-2,0)--(-1,0);
\draw (-1,0) to [bend left=45] (0,0.5);
\draw (-1,0) to [bend right=45] (0,-0.5);
\draw (0,0.5)--(1,0.5);
\draw (0,-0.5)--(1,-0.5);
\draw [line width=0.5pt,line cap=round,rounded corners] (2.north west)  rectangle (2.south east);
\draw [line width=0.5pt,line cap=round,rounded corners] (4.north west)  rectangle (4.south east);
\draw [line width=0.5pt,line cap=round,rounded corners] (3.north west)  rectangle (5.south east);
\draw [line width=0.5pt,line cap=round,rounded corners] (1.north west)  rectangle (6.south east);
\end{tikzpicture}$\\
$\sE_{6;2}=\begin{tikzpicture}[scale=0.5,baseline=-0.5ex]
\node at (0,0.8) {};
\node at (0,-0.8) {};
\node [inner sep=0.8pt,outer sep=0.8pt] at (-2,0) (1) {$\bullet$};
\node [inner sep=0.8pt,outer sep=0.8pt] at (-1,0) (3) {$\bullet$};
\node [inner sep=0.8pt,outer sep=0.8pt] at (0,0) (4) {$\bullet$};
\node [inner sep=0.8pt,outer sep=0.8pt] at (1,0) (5) {$\bullet$};
\node [inner sep=0.8pt,outer sep=0.8pt] at (2,0) (6) {$\bullet$};
\node [inner sep=0.8pt,outer sep=0.8pt] at (0,-1) (2) {$\bullet$};
\draw (-2,0)--(2,0);
\draw (0,0)--(0,-1);
\draw [line width=0.5pt,line cap=round,rounded corners] (1.north west)  rectangle (1.south east);
\draw [line width=0.5pt,line cap=round,rounded corners] (6.north west)  rectangle (6.south east);
\end{tikzpicture}$\qquad 
$\sE_{6;6}=\begin{tikzpicture}[scale=0.5,baseline=-0.5ex]
\node at (0,0.8) {};
\node at (0,-0.8) {};
\node [inner sep=0.8pt,outer sep=0.8pt] at (-2,0) (1) {$\bullet$};
\node [inner sep=0.8pt,outer sep=0.8pt] at (-1,0) (3) {$\bullet$};
\node [inner sep=0.8pt,outer sep=0.8pt] at (0,0) (4) {$\bullet$};
\node [inner sep=0.8pt,outer sep=0.8pt] at (1,0) (5) {$\bullet$};
\node [inner sep=0.8pt,outer sep=0.8pt] at (2,0) (6) {$\bullet$};
\node [inner sep=0.8pt,outer sep=0.8pt] at (0,-1) (2) {$\bullet$};
\draw (-2,0)--(2,0);
\draw (0,0)--(0,-1);
\draw [line width=0.5pt,line cap=round,rounded corners] (1.north west)  rectangle (1.south east);
\draw [line width=0.5pt,line cap=round,rounded corners] (2.north west)  rectangle (2.south east);
\draw [line width=0.5pt,line cap=round,rounded corners] (3.north west)  rectangle (3.south east);
\draw [line width=0.5pt,line cap=round,rounded corners] (4.north west)  rectangle (4.south east);
\draw [line width=0.5pt,line cap=round,rounded corners] (5.north west)  rectangle (5.south east);
\draw [line width=0.5pt,line cap=round,rounded corners] (6.north west)  rectangle (6.south east);
\end{tikzpicture}$\\
$\sE_{7;0}=\begin{tikzpicture}[scale=0.5,baseline=-0.5ex]
\node at (0,0.8) {};
\node at (0,-0.8) {};
\node [inner sep=0.8pt,outer sep=0.8pt] at (-2,0) (1) {$\bullet$};
\node [inner sep=0.8pt,outer sep=0.8pt] at (-1,0) (3) {$\bullet$};
\node [inner sep=0.8pt,outer sep=0.8pt] at (0,0) (4) {$\bullet$};
\node [inner sep=0.8pt,outer sep=0.8pt] at (1,0) (5) {$\bullet$};
\node [inner sep=0.8pt,outer sep=0.8pt] at (2,0) (6) {$\bullet$};
\node [inner sep=0.8pt,outer sep=0.8pt] at (3,0) (7) {$\bullet$};
\node [inner sep=0.8pt,outer sep=0.8pt] at (0,-1) (2) {$\bullet$};
\draw (-2,0)--(3,0);
\draw (0,0)--(0,-1);
\phantom{\draw [line width=0.5pt,line cap=round,rounded corners] (1.north west)  rectangle (1.south east);}
\phantom{\draw [line width=0.5pt,line cap=round,rounded corners] (7.north west)  rectangle (7.south east);}
\end{tikzpicture}$\qquad 
$\sE_{7;1}=\begin{tikzpicture}[scale=0.5,baseline=-0.5ex]
\node at (0,0.8) {};
\node at (0,-0.8) {};
\node [inner sep=0.8pt,outer sep=0.8pt] at (-2,0) (1) {$\bullet$};
\node [inner sep=0.8pt,outer sep=0.8pt] at (-1,0) (3) {$\bullet$};
\node [inner sep=0.8pt,outer sep=0.8pt] at (0,0) (4) {$\bullet$};
\node [inner sep=0.8pt,outer sep=0.8pt] at (1,0) (5) {$\bullet$};
\node [inner sep=0.8pt,outer sep=0.8pt] at (2,0) (6) {$\bullet$};
\node [inner sep=0.8pt,outer sep=0.8pt] at (3,0) (7) {$\bullet$};
\node [inner sep=0.8pt,outer sep=0.8pt] at (0,-1) (2) {$\bullet$};
\draw (-2,0)--(3,0);
\draw (0,0)--(0,-1);
\draw [line width=0.5pt,line cap=round,rounded corners] (1.north west)  rectangle (1.south east);
\end{tikzpicture}$\qquad
$\sE_{7;2}=\begin{tikzpicture}[scale=0.5,baseline=-0.5ex]
\node at (0,0.8) {};
\node at (0,-0.8) {};
\node [inner sep=0.8pt,outer sep=0.8pt] at (-2,0) (1) {$\bullet$};
\node [inner sep=0.8pt,outer sep=0.8pt] at (-1,0) (3) {$\bullet$};
\node [inner sep=0.8pt,outer sep=0.8pt] at (0,0) (4) {$\bullet$};
\node [inner sep=0.8pt,outer sep=0.8pt] at (1,0) (5) {$\bullet$};
\node [inner sep=0.8pt,outer sep=0.8pt] at (2,0) (6) {$\bullet$};
\node [inner sep=0.8pt,outer sep=0.8pt] at (3,0) (7) {$\bullet$};
\node [inner sep=0.8pt,outer sep=0.8pt] at (0,-1) (2) {$\bullet$};
\draw (-2,0)--(3,0);
\draw (0,0)--(0,-1);
\draw [line width=0.5pt,line cap=round,rounded corners] (1.north west)  rectangle (1.south east);
\draw [line width=0.5pt,line cap=round,rounded corners] (6.north west)  rectangle (6.south east);
\end{tikzpicture}$\\
$\sE_{7;3}=\begin{tikzpicture}[scale=0.5,baseline=-0.5ex]
\node at (0,0.8) {};
\node at (0,-0.8) {};
\node [inner sep=0.8pt,outer sep=0.8pt] at (-2,0) (1) {$\bullet$};
\node [inner sep=0.8pt,outer sep=0.8pt] at (-1,0) (3) {$\bullet$};
\node [inner sep=0.8pt,outer sep=0.8pt] at (0,0) (4) {$\bullet$};
\node [inner sep=0.8pt,outer sep=0.8pt] at (1,0) (5) {$\bullet$};
\node [inner sep=0.8pt,outer sep=0.8pt] at (2,0) (6) {$\bullet$};
\node [inner sep=0.8pt,outer sep=0.8pt] at (3,0) (7) {$\bullet$};
\node [inner sep=0.8pt,outer sep=0.8pt] at (0,-1) (2) {$\bullet$};
\draw (-2,0)--(3,0);
\draw (0,0)--(0,-1);
\draw [line width=0.5pt,line cap=round,rounded corners] (1.north west)  rectangle (1.south east);
\draw [line width=0.5pt,line cap=round,rounded corners] (6.north west)  rectangle (6.south east);
\draw [line width=0.5pt,line cap=round,rounded corners] (7.north west)  rectangle (7.south east);
\end{tikzpicture}$\qquad 
$\sE_{7;4}=\begin{tikzpicture}[scale=0.5,baseline=-0.5ex]
\node at (0,0.8) {};
\node at (0,-0.8) {};
\node [inner sep=0.8pt,outer sep=0.8pt] at (-2,0) (1) {$\bullet$};
\node [inner sep=0.8pt,outer sep=0.8pt] at (-1,0) (3) {$\bullet$};
\node [inner sep=0.8pt,outer sep=0.8pt] at (0,0) (4) {$\bullet$};
\node [inner sep=0.8pt,outer sep=0.8pt] at (1,0) (5) {$\bullet$};
\node [inner sep=0.8pt,outer sep=0.8pt] at (2,0) (6) {$\bullet$};
\node [inner sep=0.8pt,outer sep=0.8pt] at (3,0) (7) {$\bullet$};
\node [inner sep=0.8pt,outer sep=0.8pt] at (0,-1) (2) {$\bullet$};
\draw (-2,0)--(3,0);
\draw (0,0)--(0,-1);
\draw [line width=0.5pt,line cap=round,rounded corners] (1.north west)  rectangle (1.south east);
\draw [line width=0.5pt,line cap=round,rounded corners] (3.north west)  rectangle (3.south east);
\draw [line width=0.5pt,line cap=round,rounded corners] (4.north west)  rectangle (4.south east);
\draw [line width=0.5pt,line cap=round,rounded corners] (6.north west)  rectangle (6.south east);
\end{tikzpicture}$\qquad
$\sE_{7;7}=\begin{tikzpicture}[scale=0.5,baseline=-0.5ex]
\node at (0,0.8) {};
\node at (0,-0.8) {};
\node [inner sep=0.8pt,outer sep=0.8pt] at (-2,0) (1) {$\bullet$};
\node [inner sep=0.8pt,outer sep=0.8pt] at (-1,0) (3) {$\bullet$};
\node [inner sep=0.8pt,outer sep=0.8pt] at (0,0) (4) {$\bullet$};
\node [inner sep=0.8pt,outer sep=0.8pt] at (1,0) (5) {$\bullet$};
\node [inner sep=0.8pt,outer sep=0.8pt] at (2,0) (6) {$\bullet$};
\node [inner sep=0.8pt,outer sep=0.8pt] at (3,0) (7) {$\bullet$};
\node [inner sep=0.8pt,outer sep=0.8pt] at (0,-1) (2) {$\bullet$};
\draw (-2,0)--(3,0);
\draw (0,0)--(0,-1);
\draw [line width=0.5pt,line cap=round,rounded corners] (1.north west)  rectangle (1.south east);
\draw [line width=0.5pt,line cap=round,rounded corners] (3.north west)  rectangle (3.south east);
\draw [line width=0.5pt,line cap=round,rounded corners] (4.north west)  rectangle (4.south east);
\draw [line width=0.5pt,line cap=round,rounded corners] (6.north west)  rectangle (6.south east);
\draw [line width=0.5pt,line cap=round,rounded corners] (2.north west)  rectangle (2.south east);
\draw [line width=0.5pt,line cap=round,rounded corners] (5.north west)  rectangle (5.south east);
\draw [line width=0.5pt,line cap=round,rounded corners] (7.north west)  rectangle (7.south east);
\end{tikzpicture}$\\
$\sE_{8;0}=\begin{tikzpicture}[scale=0.5,baseline=-0.5ex]
\node at (0,0.8) {};
\node at (0,-0.8) {};
\node [inner sep=0.8pt,outer sep=0.8pt] at (-2,0) (1) {$\bullet$};
\node [inner sep=0.8pt,outer sep=0.8pt] at (-1,0) (3) {$\bullet$};
\node [inner sep=0.8pt,outer sep=0.8pt] at (0,0) (4) {$\bullet$};
\node [inner sep=0.8pt,outer sep=0.8pt] at (1,0) (5) {$\bullet$};
\node [inner sep=0.8pt,outer sep=0.8pt] at (2,0) (6) {$\bullet$};
\node [inner sep=0.8pt,outer sep=0.8pt] at (3,0) (7) {$\bullet$};
\node [inner sep=0.8pt,outer sep=0.8pt] at (4,0) (8) {$\bullet$};
\node [inner sep=0.8pt,outer sep=0.8pt] at (0,-1) (2) {$\bullet$};
\draw (-2,0)--(4,0);
\draw (0,0)--(0,-1);
\phantom{\draw [line width=0.5pt,line cap=round,rounded corners] (1.north west)  rectangle (1.south east);}
\phantom{\draw [line width=0.5pt,line cap=round,rounded corners] (8.north west)  rectangle (8.south east);}
\end{tikzpicture}$\qquad
$\sE_{8;1}=\begin{tikzpicture}[scale=0.5,baseline=-0.5ex]
\node at (0,0.8) {};
\node at (0,-0.8) {};
\node [inner sep=0.8pt,outer sep=0.8pt] at (-2,0) (1) {$\bullet$};
\node [inner sep=0.8pt,outer sep=0.8pt] at (-1,0) (3) {$\bullet$};
\node [inner sep=0.8pt,outer sep=0.8pt] at (0,0) (4) {$\bullet$};
\node [inner sep=0.8pt,outer sep=0.8pt] at (1,0) (5) {$\bullet$};
\node [inner sep=0.8pt,outer sep=0.8pt] at (2,0) (6) {$\bullet$};
\node [inner sep=0.8pt,outer sep=0.8pt] at (3,0) (7) {$\bullet$};
\node [inner sep=0.8pt,outer sep=0.8pt] at (4,0) (8) {$\bullet$};
\node [inner sep=0.8pt,outer sep=0.8pt] at (0,-1) (2) {$\bullet$};
\draw (-2,0)--(4,0);
\draw (0,0)--(0,-1);
\draw [line width=0.5pt,line cap=round,rounded corners] (8.north west)  rectangle (8.south east);
\phantom{\draw [line width=0.5pt,line cap=round,rounded corners] (1.north west)  rectangle (1.south east);}
\end{tikzpicture}$\qquad 
$\sE_{8;2}=\begin{tikzpicture}[scale=0.5,baseline=-0.5ex]
\node at (0,0.8) {};
\node at (0,-0.8) {};
\node [inner sep=0.8pt,outer sep=0.8pt] at (-2,0) (1) {$\bullet$};
\node [inner sep=0.8pt,outer sep=0.8pt] at (-1,0) (3) {$\bullet$};
\node [inner sep=0.8pt,outer sep=0.8pt] at (0,0) (4) {$\bullet$};
\node [inner sep=0.8pt,outer sep=0.8pt] at (1,0) (5) {$\bullet$};
\node [inner sep=0.8pt,outer sep=0.8pt] at (2,0) (6) {$\bullet$};
\node [inner sep=0.8pt,outer sep=0.8pt] at (3,0) (7) {$\bullet$};
\node [inner sep=0.8pt,outer sep=0.8pt] at (4,0) (8) {$\bullet$};
\node [inner sep=0.8pt,outer sep=0.8pt] at (0,-1) (2) {$\bullet$};
\draw (-2,0)--(4,0);
\draw (0,0)--(0,-1);
\draw [line width=0.5pt,line cap=round,rounded corners] (1.north west)  rectangle (1.south east);
\draw [line width=0.5pt,line cap=round,rounded corners] (8.north west)  rectangle (8.south east);
\end{tikzpicture}$\\
$\sE_{8;4}=\begin{tikzpicture}[scale=0.5,baseline=-0.5ex]
\node at (0,0.8) {};
\node at (0,-0.8) {};
\node [inner sep=0.8pt,outer sep=0.8pt] at (-2,0) (1) {$\bullet$};
\node [inner sep=0.8pt,outer sep=0.8pt] at (-1,0) (3) {$\bullet$};
\node [inner sep=0.8pt,outer sep=0.8pt] at (0,0) (4) {$\bullet$};
\node [inner sep=0.8pt,outer sep=0.8pt] at (1,0) (5) {$\bullet$};
\node [inner sep=0.8pt,outer sep=0.8pt] at (2,0) (6) {$\bullet$};
\node [inner sep=0.8pt,outer sep=0.8pt] at (3,0) (7) {$\bullet$};
\node [inner sep=0.8pt,outer sep=0.8pt] at (4,0) (8) {$\bullet$};
\node [inner sep=0.8pt,outer sep=0.8pt] at (0,-1) (2) {$\bullet$};
\draw (-2,0)--(4,0);
\draw (0,0)--(0,-1);
\draw [line width=0.5pt,line cap=round,rounded corners] (1.north west)  rectangle (1.south east);
\draw [line width=0.5pt,line cap=round,rounded corners] (6.north west)  rectangle (6.south east);
\draw [line width=0.5pt,line cap=round,rounded corners] (7.north west)  rectangle (7.south east);
\draw [line width=0.5pt,line cap=round,rounded corners] (8.north west)  rectangle (8.south east);
\end{tikzpicture}$\qquad 
$\sE_{8;8}=\begin{tikzpicture}[scale=0.5,baseline=-0.5ex]
\node at (0,0.8) {};
\node at (0,-0.8) {};
\node [inner sep=0.8pt,outer sep=0.8pt] at (-2,0) (1) {$\bullet$};
\node [inner sep=0.8pt,outer sep=0.8pt] at (-1,0) (3) {$\bullet$};
\node [inner sep=0.8pt,outer sep=0.8pt] at (0,0) (4) {$\bullet$};
\node [inner sep=0.8pt,outer sep=0.8pt] at (1,0) (5) {$\bullet$};
\node [inner sep=0.8pt,outer sep=0.8pt] at (2,0) (6) {$\bullet$};
\node [inner sep=0.8pt,outer sep=0.8pt] at (3,0) (7) {$\bullet$};
\node [inner sep=0.8pt,outer sep=0.8pt] at (4,0) (8) {$\bullet$};
\node [inner sep=0.8pt,outer sep=0.8pt] at (0,-1) (2) {$\bullet$};
\draw (-2,0)--(4,0);
\draw (0,0)--(0,-1);
\draw [line width=0.5pt,line cap=round,rounded corners] (8.north west)  rectangle (8.south east);
\draw [line width=0.5pt,line cap=round,rounded corners] (7.north west)  rectangle (7.south east);
\draw [line width=0.5pt,line cap=round,rounded corners] (6.north west)  rectangle (6.south east);
\draw [line width=0.5pt,line cap=round,rounded corners] (5.north west)  rectangle (5.south east);
\draw [line width=0.5pt,line cap=round,rounded corners] (4.north west)  rectangle (4.south east);
\draw [line width=0.5pt,line cap=round,rounded corners] (3.north west)  rectangle (3.south east);
\draw [line width=0.5pt,line cap=round,rounded corners] (2.north west)  rectangle (2.south east);
\draw [line width=0.5pt,line cap=round,rounded corners] (1.north west)  rectangle (1.south east);
\end{tikzpicture}$\\
$\sF_{4;0}=\begin{tikzpicture}[scale=0.5,baseline=-0.5ex]
\node at (0,0.8) {};
\node at (0,-0.8) {};
\node [inner sep=0.8pt,outer sep=0.8pt] at (-1.5,0) (1) {$\bullet$};
\node [inner sep=0.8pt,outer sep=0.8pt] at (-0.5,0) (2) {$\bullet$};
\node [inner sep=0.8pt,outer sep=0.8pt] at (0.5,0) (3) {$\bullet$};
\node [inner sep=0.8pt,outer sep=0.8pt] at (1.5,0) (4) {$\bullet$};
\phantom{\draw [line width=0.5pt,line cap=round,rounded corners] (1.north west)  rectangle (1.south east);}
\phantom{\draw [line width=0.5pt,line cap=round,rounded corners] (4.north west)  rectangle (4.south east);}
\draw (-1.5,0)--(-0.5,0);
\draw (0.5,0)--(1.5,0);
\draw (-0.5,0.1)--(0.5,0.1);
\draw (-0.5,-0.1)--(0.5,-0.1);
\draw (0-0.15,0.3) -- (0+0.08,0) -- (0-0.15,-0.3);
\end{tikzpicture}$\qquad
$\sF_{4;1}^{1}=\begin{tikzpicture}[scale=0.5,baseline=-0.5ex]
\node at (0,0.8) {};
\node at (0,-0.8) {};
\node [inner sep=0.8pt,outer sep=0.8pt] at (-1.5,0) (1) {$\bullet$};
\node [inner sep=0.8pt,outer sep=0.8pt] at (-0.5,0) (2) {$\bullet$};
\node [inner sep=0.8pt,outer sep=0.8pt] at (0.5,0) (3) {$\bullet$};
\node [inner sep=0.8pt,outer sep=0.8pt] at (1.5,0) (4) {$\bullet$};
\phantom{\draw [line width=0.5pt,line cap=round,rounded corners] (1.north west)  rectangle (1.south east);}
\phantom{\draw [line width=0.5pt,line cap=round,rounded corners] (4.north west)  rectangle (4.south east);}
\draw (-1.5,0)--(-0.5,0);
\draw (0.5,0)--(1.5,0);
\draw (-0.5,0.1)--(0.5,0.1);
\draw (-0.5,-0.1)--(0.5,-0.1);
\draw (0-0.15,0.3) -- (0+0.08,0) -- (0-0.15,-0.3);
\draw [line width=0.5pt,line cap=round,rounded corners] (1.north west)  rectangle (1.south east);
\end{tikzpicture}$\qquad
$\sF_{4;1}^{4}=\begin{tikzpicture}[scale=0.5,baseline=-0.5ex]
\node at (0,0.8) {};
\node at (0,-0.8) {};
\node [inner sep=0.8pt,outer sep=0.8pt] at (-1.5,0) (1) {$\bullet$};
\node [inner sep=0.8pt,outer sep=0.8pt] at (-0.5,0) (2) {$\bullet$};
\node [inner sep=0.8pt,outer sep=0.8pt] at (0.5,0) (3) {$\bullet$};
\node [inner sep=0.8pt,outer sep=0.8pt] at (1.5,0) (4) {$\bullet$};
\phantom{\draw [line width=0.5pt,line cap=round,rounded corners] (1.north west)  rectangle (1.south east);}
\phantom{\draw [line width=0.5pt,line cap=round,rounded corners] (4.north west)  rectangle (4.south east);}
\draw (-1.5,0)--(-0.5,0);
\draw (0.5,0)--(1.5,0);
\draw (-0.5,0.1)--(0.5,0.1);
\draw (-0.5,-0.1)--(0.5,-0.1);
\draw (0-0.15,0.3) -- (0+0.08,0) -- (0-0.15,-0.3);
\draw [line width=0.5pt,line cap=round,rounded corners] (4.north west)  rectangle (4.south east);
\end{tikzpicture}$\\
$\sF_{4;2}=\begin{tikzpicture}[scale=0.5,baseline=-0.5ex]
\node at (0,0.8) {};
\node at (0,-0.8) {};
\node [inner sep=0.8pt,outer sep=0.8pt] at (-1.5,0) (1) {$\bullet$};
\node [inner sep=0.8pt,outer sep=0.8pt] at (-0.5,0) (2) {$\bullet$};
\node [inner sep=0.8pt,outer sep=0.8pt] at (0.5,0) (3) {$\bullet$};
\node [inner sep=0.8pt,outer sep=0.8pt] at (1.5,0) (4) {$\bullet$};
\phantom{\draw [line width=0.5pt,line cap=round,rounded corners] (1.north west)  rectangle (1.south east);}
\phantom{\draw [line width=0.5pt,line cap=round,rounded corners] (4.north west)  rectangle (4.south east);}
\draw (-1.5,0)--(-0.5,0);
\draw (0.5,0)--(1.5,0);
\draw (-0.5,0.1)--(0.5,0.1);
\draw (-0.5,-0.1)--(0.5,-0.1);
\draw (0-0.15,0.3) -- (0+0.08,0) -- (0-0.15,-0.3);
\draw [line width=0.5pt,line cap=round,rounded corners] (1.north west)  rectangle (1.south east);
\draw [line width=0.5pt,line cap=round,rounded corners] (4.north west)  rectangle (4.south east);
\end{tikzpicture}$\qquad
$\sF_{4;4}=\begin{tikzpicture}[scale=0.5,baseline=-0.5ex]
\node at (0,0.8) {};
\node at (0,-0.8) {};
\node [inner sep=0.8pt,outer sep=0.8pt] at (-1.5,0) (1) {$\bullet$};
\node [inner sep=0.8pt,outer sep=0.8pt] at (-0.5,0) (2) {$\bullet$};
\node [inner sep=0.8pt,outer sep=0.8pt] at (0.5,0) (3) {$\bullet$};
\node [inner sep=0.8pt,outer sep=0.8pt] at (1.5,0) (4) {$\bullet$};
\phantom{\draw [line width=0.5pt,line cap=round,rounded corners] (1.north west)  rectangle (1.south east);}
\phantom{\draw [line width=0.5pt,line cap=round,rounded corners] (4.north west)  rectangle (4.south east);}
\draw (-1.5,0)--(-0.5,0);
\draw (0.5,0)--(1.5,0);
\draw (-0.5,0.1)--(0.5,0.1);
\draw (-0.5,-0.1)--(0.5,-0.1);
\draw (0-0.15,0.3) -- (0+0.08,0) -- (0-0.15,-0.3);
\draw [line width=0.5pt,line cap=round,rounded corners] (1.north west)  rectangle (1.south east);
\draw [line width=0.5pt,line cap=round,rounded corners] (2.north west)  rectangle (2.south east);
\draw [line width=0.5pt,line cap=round,rounded corners] (3.north west)  rectangle (3.south east);
\draw [line width=0.5pt,line cap=round,rounded corners] (4.north west)  rectangle (4.south east);
\end{tikzpicture}$\\
$\sG_{2;0}=\begin{tikzpicture}[scale=0.5,baseline=-0.5ex]
\node at (0,0.8) {};
\node at (0,-0.8) {};
\node [inner sep=0.8pt,outer sep=0.8pt] at (-0.5,0) (2) {$\bullet$};
\node [inner sep=0.8pt,outer sep=0.8pt] at (0.5,0) (3) {$\bullet$};
\phantom{\draw [line width=0.5pt,line cap=round,rounded corners] (2.north west)  rectangle (2.south east);}
\phantom{\draw [line width=0.5pt,line cap=round,rounded corners] (3.north west)  rectangle (3.south east);}
\draw (-0.5,0)--(0.5,0);
\draw (-0.5,0.11)--(0.5,0.11);
\draw (-0.5,-0.11)--(0.5,-0.11);
\draw (0+0.15,0.3) -- (0-0.08,0) -- (0+0.15,-0.3);
\end{tikzpicture}$\qquad
$\sG_{2;1}^{1}=\begin{tikzpicture}[scale=0.5,baseline=-0.5ex]
\node at (0,0.8) {};
\node at (0,-0.8) {};
\node [inner sep=0.8pt,outer sep=0.8pt] at (-0.5,0) (2) {$\bullet$};
\node [inner sep=0.8pt,outer sep=0.8pt] at (0.5,0) (3) {$\bullet$};
\phantom{\draw [line width=0.5pt,line cap=round,rounded corners] (2.north west)  rectangle (2.south east);}
\phantom{\draw [line width=0.5pt,line cap=round,rounded corners] (3.north west)  rectangle (3.south east);}
\draw (-0.5,0)--(0.5,0);
\draw (-0.5,0.11)--(0.5,0.11);
\draw (-0.5,-0.11)--(0.5,-0.11);
\draw (0+0.15,0.3) -- (0-0.08,0) -- (0+0.15,-0.3);
\draw [line width=0.5pt,line cap=round,rounded corners] (2.north west)  rectangle (2.south east);
\end{tikzpicture}$\qquad
$\sG_{2;1}^{2}=\begin{tikzpicture}[scale=0.5,baseline=-0.5ex]
\node at (0,0.8) {};
\node at (0,-0.8) {};
\node [inner sep=0.8pt,outer sep=0.8pt] at (-0.5,0) (2) {$\bullet$};
\node [inner sep=0.8pt,outer sep=0.8pt] at (0.5,0) (3) {$\bullet$};
\phantom{\draw [line width=0.5pt,line cap=round,rounded corners] (2.north west)  rectangle (2.south east);}
\phantom{\draw [line width=0.5pt,line cap=round,rounded corners] (3.north west)  rectangle (3.south east);}
\draw (-0.5,0)--(0.5,0);
\draw (-0.5,0.11)--(0.5,0.11);
\draw (-0.5,-0.11)--(0.5,-0.11);
\draw (0+0.15,0.3) -- (0-0.08,0) -- (0+0.15,-0.3);
\draw [line width=0.5pt,line cap=round,rounded corners] (3.north west)  rectangle (3.south east);
\end{tikzpicture}$\qquad
$\sG_{2;2}=\begin{tikzpicture}[scale=0.5,baseline=-0.5ex]
\node at (0,0.8) {};
\node at (0,-0.8) {};
\node [inner sep=0.8pt,outer sep=0.8pt] at (-0.5,0) (2) {$\bullet$};
\node [inner sep=0.8pt,outer sep=0.8pt] at (0.5,0) (3) {$\bullet$};
\phantom{\draw [line width=0.5pt,line cap=round,rounded corners] (2.north west)  rectangle (2.south east);}
\phantom{\draw [line width=0.5pt,line cap=round,rounded corners] (3.north west)  rectangle (3.south east);}
\draw (-0.5,0)--(0.5,0);
\draw (-0.5,0.11)--(0.5,0.11);
\draw (-0.5,-0.11)--(0.5,-0.11);
\draw (0+0.15,0.3) -- (0-0.08,0) -- (0+0.15,-0.3);
\draw [line width=0.5pt,line cap=round,rounded corners] (2.north west)  rectangle (2.south east);
\draw [line width=0.5pt,line cap=round,rounded corners] (3.north west)  rectangle (3.south east);
\end{tikzpicture}$
\end{center}
\caption{The admissible Dynkin diagrams of exceptional type}\label{fig:Dynkin}
\end{figure}
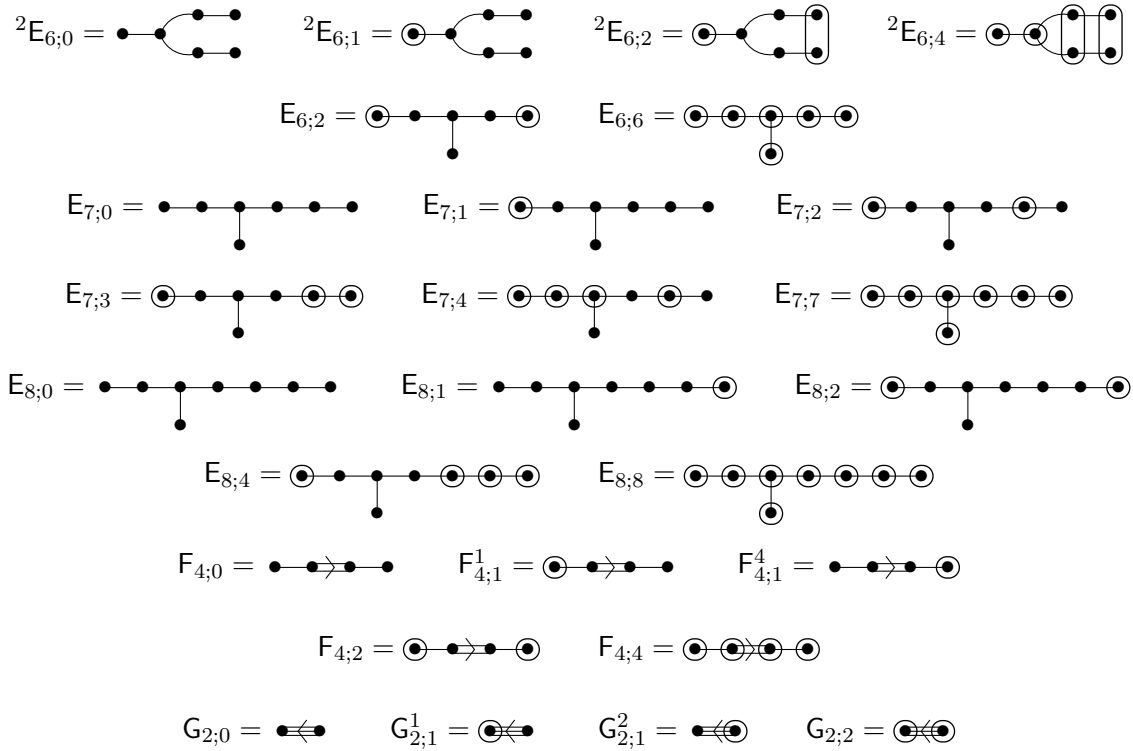

To summarise, if $\theta$ is an automorphism of a split spherical building of exceptional type, then the opposition diagram of $\theta$ is one of the diagrams listed in Figure~\ref{fig:Dynkin}. Uncapped automorphisms are studied in~\cite{PVM:19b}, and so for the remainder of this introduction we consider capped automorphisms (for example, if $\Delta$ is large then all automorphisms are capped). In this case, automorphisms with ``full'' opposition diagrams (in which all nodes are encircled) are necessarily not domestic, and hence are not discussed further here. Moreover, the ``empty'' diagrams (with no nodes encircled) correspond precisely to the trivial automorphisms, and hence are also of little interest here. Furthermore, automorphisms with diagram $\sE_{6;2}$ have been completely classified in \cite{HVM:13} (for large buildings) and \cite{PVM:19b} (for small buildings). This leaves us with $14$ remaining diagrams. 

The following terms will be defined more systematically in later sections, however for the purpose of this introduction, and in order to state our main results, we define:
\begin{compactenum}[$(1)$]
\item The \textit{polar diagrams} to be the diagrams ${^2}\sE_{6;1},\sE_{7;1},\sE_{8;1},\sF_{4;1}^1,\sG_{2;1}^2$;
\item The \textit{polar-copolar diagrams} to be the diagrams ${^2}\sE_{6;2}$, $\sE_{7;2}$, $\sE_{8;2}$, $\sF_{4;2}$;
\item The \textit{polar closed diagrams} to be all diagrams except $\sE_{6;2},\sE_{6;6},\sF_{4;1}^4,\sG_{2;1}^1$.
\end{compactenum}
We consider split spherical buildings arising from a Chevalley group $G=G_{\Phi}(\FF)$ associated to a crystallographic root system $\Phi$, with~$\FF$ a field. For the purpose of this introduction, unless stated explicitly otherwise we will assume that the characteristic of $\FF$ is not ``special'' (meaning $\mathrm{char}(\FF)\neq 2$ for $\sF_4$, and $\mathrm{char}(\FF)\neq 3$ for $\sG_2$). By a \textit{root elation} we mean an element, in Chevalley generators, conjugate to $x_{\alpha}(a)$ for some root $\alpha\in\Phi$ and some $a\neq 0$. In the non-simply laced case we talk of \textit{long} and \textit{short} root elations, and in the simply laced case all roots are considered long. Root elations $x_{\alpha}(a)$ and $x_{\beta}(b)$ are \textit{perpendicular} if $\alpha$ and $\beta$ are perpendicular roots. A \textit{positive root elation} is an element $x_{\alpha}(a)$ with $\alpha\in \Phi^+$, where $\Phi^+$ is a fixed choice of positive roots of~$\Phi$. By a \textit{homology} we mean an element conjugate to an element of the torus~$H$.

With these definitions and conventions, a summary of the main results of this paper is as follows. 
 We first give a complete classification of automorphisms with polar opposition diagram. 

\begin{thm1}\label{thm:polarclass}
An automorphism of a split spherical building of exceptional type has polar opposition diagram if and only if it is a long root elation.
\end{thm1}

In fact, some aspects of our analysis of long root elations applies to all Moufang spherical buildings, and leads to various corollaries, including the following.

\begin{cor1}\label{cor:existsdomestic}
Every irreducible Moufang spherical building, other than a projective plane, admits a nontrivial domestic collineation.
\end{cor1}

\begin{cor1}\label{cor:existsconjugacy}
Let $G$ be the collineation group of a Moufang spherical building $\Delta$ of type other than~$\sA_n$. There exists a nontrivial conjugacy class $\mathscr{C}$ in $G$ which is not transitive on the set of vertices of $\Delta$ of type~$s$, for any $s\in S$.
\end{cor1}

In the case of Ree-Tits octagons, Corollary~\ref{cor:existsdomestic} corrects a misunderstanding from \cite{ISX:18} (see Remark~\ref{rem:correction}), and Corollary~\ref{cor:existsconjugacy} answers a question asked to us by Barbara Baumeister. 

The classification of automorphisms with polar opposition diagram can be extended to the polar-copolar diagrams in certain cases. We prove:

\begin{thm1}\label{thm:pocopolarclass}
A collineation of a large split spherical building of type $\sE_7$, $\sE_8$, or $\sF_4$ has polar-copolar opposition diagram if and only if it is a product of two perpendicular long root elations. Moreover, for the $\sE_7$ and $\sE_8$ cases the collineations with polar-copolar diagram form a single conjugacy class. 
\end{thm1}

See Theorem~\ref{thm:F4class} below for more details on the conjugacy classes in type $\sF_4$. Also we note that the ``if'' part of Theorem~\ref{thm:pocopolarclass} holds for the $\sE_{6}$ case too, however the ``only if'' part fails, as there are also exist homologies with diagram~${^2}\sE_{6;2}$ (see Theorems~\ref{thm:homclass} and~\ref{thm:E6class} below). 

Our next main theorem classifies the opposition diagrams of elements of the unipotent subgroup~$U^+$ generated by the positive root elations. 

\begin{thm1}\label{thm:unipotent} Let $\Delta$ be a split spherical building with root system~$\Phi$ of exceptional type. An admissible Dynkin diagram~$\sX$ of type~$\Phi$ can be obtained as the opposition diagram of a product of positive root elations of $\Delta$ if and only if~$\sX$ is polar closed. 
\end{thm1}

Moreover we provide an algorithm to write down, for each polar closed diagram~$\sX$, an element  $\theta\in U^+$ with opposition diagram~$\sX$. In fact it turns out that every polar closed diagram can be obtained as the opposition diagram a product of mutually perpendicular positive root elations.

Next we give a complete classification of domestic homologies for split exceptional buildings. This classification is in terms of the type of the thick frame of the fixed subbuilding of the automorphism (c.f. \cite{Sch:87}). We summarise the statement below (see Section~\ref{sec:homologies} for explicit conjugacy class representatives for each case). 

\begin{thm1}\label{thm:homclass}
Let $\theta$ be a nontrivial homology of a split spherical building of exceptional type~$\Phi$, and let $\Phi'$ be the type of the thick frame of the subbuilding fixed by~$\theta$. Then $\theta$ is domestic if and only if
\begin{compactenum}[$(1)$]
\item $\Phi=\sE_6$ and $\Phi'=\sD_5$, in which case $\Diag(\theta)={^2}\sE_{6;2}$;
\item $\Phi=\sE_7$ and $\Phi'=\sE_6, \sD_6,\sD_6\times \sA_1$, in which case $\Diag(\theta)=\sE_{7;3},\sE_{7;4},\sE_{7;4}$ (respectively);
\item $\Phi=\sE_8$ and $\Phi'=\sE_7,\sE_7\times\sA_1$, in which case $\Diag(\theta)=\sE_{8;4}$;
\item $\Phi=\sF_4$ and $\Phi'=\sB_4$, in which case $\Diag(\theta)=\sF_{4;1}^4$;
\item $\Phi=\sG_2$ and $\Phi'=\sA_2$, in which case $\Diag(\theta)=\sG_{2;1}^1$.
\end{compactenum}
\end{thm1}

We also completely classify domestic automorphisms of split buildings of types~$\sE_6$, $\sF_4$, and $\sG_2$. For this introduction, let us state the result for $\sF_4$ over quadratically closed fields and finite fields, both of characteristic not~$2$ (see Subsection~\ref{sec:F4C} for statements applying to all fields). 

\begin{thm1}\label{thm:F4class}
Let $\Delta$ be the split spherical building of $G=\sF_4(\FF)$ with $\mathrm{char}(\FF)\neq 2$. If $\FF$ is quadratically closed (respectively finite) then there are precisely $3$ (respectively $4$) conjugacy classes of domestic collineations, consisting of
\begin{compactenum}[$(1)$]
\item the class of long root elations, with opposition diagram $\sF_{4;1}^1$;
\item the class of homologies fixing a subbuilding with thick frame of type~$\sB_4$, with opposition diagram~$\sF_{4;1}^4$;
\item one (respectively two) class(es) of products of two perpendicular long root elations, with opposition diagram $\sF_{4;2}$.
\end{compactenum}
\end{thm1}

For the $\sE_6$ case, domestic automorphisms of small buildings are already classified in \cite[Theorems~4.5 and~4.6]{PVM:19b}, and domestic dualities of large buildings are classified in~\cite{HVM:13}. Thus by Theorem~\ref{thm:polarclass} and the classification of admissible diagrams the remaining task is to classify the collineations of large $\sE_6$ buildings with diagram~${^2}\sE_{6;2}$. It turns out that the only examples are those described by Theorems~\ref{thm:unipotent} and~\ref{thm:homclass}, and thus for large buildings we have:

\begin{thm1}\label{thm:E6class}
Let $\Delta$ be a large building of type $\sE_6$. 
\begin{compactenum}[$(1)$]
\item A duality of $\Delta$ is domestic if and only if it is a symplectic polarity (that is, a duality fixing a split building of type $\sF_4$), in which case it has opposition diagram $\sE_{6;2}$. 
\item A collineation of $\Delta$ is domestic if and only if it is either
\begin{compactenum}[$(a)$]
\item a root elation, with opposition diagram ${^2}\sE_{6;1}$,
\item a product of two perpendicular root elations, with opposition diagram ${^2}\sE_{6;2}$, or
\item a homology fixing a subbuilding with thick frame of type~$\sD_5$, with opposition diagram~${^2}\sE_{6;2}$. 
\end{compactenum}
\end{compactenum}
\end{thm1}

We complete the analysis by classifying domestic automorphisms of split~$\sG_2$ buildings. Since no duality of a $\sG_2$ building is domestic \cite[Theorem~2.7]{PTM:15} it suffices to consider collineations. 

\begin{thm1}\label{thm:G2class}
Let $\Delta$ be the building of $\sG_2(\FF)$. There exists a unique conjugacy class $\mathscr{C}_1$ of collineations with opposition diagram $\sG_{2;1}^2$, and a unique conjugacy class $\mathscr{C}_2$ of collineations with opposition diagram $\sG_{2;1}^1$. The elements of $\mathscr{C}_1$ are long root elations, and the elements of $\mathscr{C}_2$
\begin{compactenum}[$(1)$]
\item are short root elations if $\mathrm{char}(\FF)=3$;
\item are homologies fixing a large full subhexagon if $\mathrm{char}(\FF)\neq 3$ and $z^2+z+1$ splits over~$\FF$;
\item fix a distance~$3$-ovoid if $z^2+z+1$ is irreducible over~$\FF$. 
\end{compactenum}
\end{thm1}

Consequently, the results of this paper (along with \cite{HVM:13} for the $\sE_{6;2}$ diagram) culminate in the classification of automorphisms of split spherical buildings of exceptional type having each non-full opposition diagram, with the exception of the $3$ diagrams $\sE_{7;3}$, $\sE_{7;4}$, and $\sE_{8;4}$. In these remaining cases we have provided examples of both unipotent elements and homologies with the given diagram (in Theorems~\ref{thm:unipotent} and~\ref{thm:homclass}). It turns out that for certain fields there also exist automorphisms with these opposition diagrams fixing no chamber of the building (hence these automorphisms are neither unipotent elements nor homologies). The description and classification of these automorphisms will be continued in future work~\cite{PVM:20c}.

We note that the results of this paper, combined with those of~\cite{PVM:20a} for the classical cases, show that every admissible Dynkin diagram can be obtained as the opposition diagram of an automorphism of a split spherical building. As discussed in~\cite{PVM:20a}, this statement is false for certain non-split buildings. More precisely, we have the following corollary.

\begin{cor1}\label{cor:existence}
Let $\Delta$ be a split spherical building of type $\Phi$. Every admissible Dynkin diagram of type $\Phi$ is the opposition diagram of some automorphism of $\Delta$.  Moreover, with only one exception, such an automorphism can be chosen such that it fixes a chamber of the building. This exception is the diagram $\sG_{2;1}^1$ in the case that the polynomial $z^2+z+1$ is irreducible over the underlying field~$\FF$. 
\end{cor1}

Finally, our results translate into group theoretic statements concerning conjugacy classes in Chevalley groups of exceptional type. To put these results into context, recall that by the \textit{Density Theorem} (see \cite[Section 22.2]{Hum:75}), if $G$ is a connected linear algebraic group over an algebraically closed field then the union of all conjugates of a Borel subgroup~$B$ is equal to~$G$. Equivalently, if $\mathscr{C}$ is a conjugacy class in $G$ then $\mathscr{C}\cap B\neq\emptyset$. This theorem is a cornerstone in the theory of algebraic groups, for example simple corollaries include the important facts that the centres of $G$ and $B$ coincide, and that the Cartan subgroups of $G$ are precisely the centralisers of maximal tori. 

The statement of the Density Theorem is clearly false in the general setting of a Chevalley group $G$ over an arbitrary field, as there typically exist elements $\theta\in G$ fixing no chamber of the building $\Delta=G/B$. However our classification theorems allow us to provide analogues in this setting, showing that every conjugacy class in $G$ intersects a union of a very small number of $B$-double cosets. For the purpose of this introduction we provide two examples; see Subsection~\ref{sec:conjugacyclasses} for further related statements. 

\begin{cor1}\label{cor:conj1}
Let $G$ be the Chevalley group of type $\sE_6$ or $\sF_4$ over a field $\FF$, and let $\mathscr{C}$ be a conjugacy class in $G$. Then $\mathscr{C}\cap (B\cup Bw_0B)\neq\emptyset$. 
\end{cor1}

The statement of Corollary~\ref{cor:conj1} fails for buildings of types $\sE_7$ and $\sE_8$ (see Remark~\ref{rem:nonchamberfix}).  Moreover, it is not true that $\mathscr{C}\cap Bw_0B\neq \emptyset$ for all nontrivial conjugacy classes. In fact, we have the following very general corollary of our results. 

\begin{cor1}\label{cor:conj2}
Let $G$ be the group of type preserving automorphisms of a Moufang spherical building not of type~$\sA_2$. There exists a nontrivial conjugacy class $\mathscr{C}$ with $\mathscr{C}\cap Bw_0B= \emptyset$.
\end{cor1}

Let us conclude this introduction with an outline of the structure of the paper. In Section~\ref{sec:background} we provide background on buildings, Chevalley groups, admissible diagrams, and prove some basic lemmas for later use. In Section~\ref{sec:polartype} we give the classification of automorphisms of split buildings with polar opposition diagram, proving Theorem~\ref{thm:polarclass} and Corollaries~\ref{cor:existsdomestic} and~\ref{cor:existsconjugacy}. Most of the arguments of this section are built around commutator relations in the Chevalley group, and we also discuss geometric characterisations of the polar diagram in the $\sE_{6,1}$ and $\sE_{7,7}$ Lie incidence geometries, and analyse short root elations in the non-simply laced case. 

In Section~\ref{sec:unipotent} we define polar closed diagrams, and present an algorithm for constructing unipotent elements with each polar closed diagram (proving Theorem~\ref{thm:unipotent}). Most of the arguments here are algebraic, however to complete the proof it is necessary to show that automorphisms with diagram $\sF_{4,1}^4$ are necessarily homologies (for $\mathrm{char}(\FF)\neq 2$), and we achieve this by arguing geometrically in the Lie incidence geometry $\sF_{4,4}(\FF)$. 

Section~\ref{sec:homologies} gives the complete classification of domestic homologies for split exceptional buildings (proving Theorem~\ref{thm:homclass}), making use of Scharlau's classification~\cite{Sch:87} of non-thick spherical buildings. In Section~\ref{sec:polarcopolar} we prove Theorem~\ref{thm:pocopolarclass} using geometric arguments  involving various Lie incidence geometries.

Finally, in Section~\ref{sec:classifications} we provide the complete classification of domestic collineations for split buildings of types $\sE_6$, $\sF_4$, and $\sG_2$, proving Theorems~\ref{thm:F4class}, \ref{thm:E6class}, and~\ref{thm:G2class}, and Corollaries~\ref{cor:existence}, \ref{cor:conj1}, and~\ref{cor:conj2}. We conclude with an appendix listing some relevant root system data for exceptional types. This data is useful at various stages of this paper, for example when performing commutator relations, or in Section~\ref{sec:homologies} when classifying domestic homologies.

\section{Background and definitions}\label{sec:background}

In this section we give a brief account of root systems, Chevalley groups and split spherical buildings, with our main references being~\cite{Bou:02,Car:89,St:16} (for root systems and Chevalley groups), and \cite{AB:08,Tit:74} (for buildings). We also recall the notions of admissible diagrams and opposition diagrams from \cite{PVM:19a,PVM:19b}, and record some basic lemmas for later use.

\subsection{Root systems and Chevalley groups}

Let $\Phi$ be a reduced irreducible crystallographic root system in an $n$-dimensional real vector space~$V$ with inner product $\langle\cdot,\cdot\rangle$, with $\alpha_1,\ldots,\alpha_n$ a choice of simple roots and $\Phi^+$ the associated positive roots. We will adopt the standard Bourbaki labelling~\cite{Bou:02} of the simple roots. Let $\alpha^{\vee}=2\alpha/\langle\alpha,\alpha\rangle$. Let $\omega_1,\ldots,\omega_n$ be the \textit{fundamental coweights}, defined by $\langle\omega_i,\alpha_j\rangle=\delta_{i,j}$. Let
$$
Q=\ZZ\alpha_1^{\vee}+\cdots+\ZZ\alpha_n^{\vee}\quad\text{and}\quad P=\ZZ\omega_1+\cdots+\ZZ\omega_n
$$
be the \textit{coroot lattice} and \textit{coweight lattice}, respectively, and note that $Q\subseteq P$. 

Let $\Gamma=\Gamma(\Phi)$ denote the Dynkin diagram of $\Phi$ (with the arrow pointing towards the short root in the case of double and triple bonds). The Coxeter diagram of $\Phi$ is obtained by removing all arrows from~$\Gamma$. The \textit{height} of a root $\alpha=k_1\alpha_1+\cdots+k_n\alpha_n$ is $\mathrm{ht}(\alpha)=k_1+\cdots+k_n$. There is a unique root $\varphi\in\Phi$ of maximal height (the \textit{highest root} of $\Phi$). The \textit{polar type} of $\Phi$ is the subset $\wp\subseteq\{1,2,\ldots,n\}$ given by
$$
\wp=\{1\leq i\leq n\mid \langle\alpha_i,\varphi\rangle\neq0\}.
$$
See Appendix~\ref{app:data} for the list of polar types. In particular, note that if $\Phi\neq \sA_n$ then $\wp=\{p\}$ is a singleton set, and in this case we often refer to the element $p$ as the \textit{polar node}.

Let $W=\langle s_{\alpha}\mid \alpha\in\Phi\rangle$ be the subgroup of $\mathsf{GL}(V)$ generated by the reflections $s_{\alpha}$, where
$$
s_{\alpha}(\lambda)=\lambda-\langle\lambda,\alpha\rangle\alpha^{\vee}\quad\text{for $\lambda\in V$}.
$$
Let $S=\{s_1,\ldots,s_n\}$, where $s_i=s_{\alpha_i}$. Then $(W,S)$ is a spherical Coxeter system. Writing $\ell: W\to\mathbb{Z}_{\geq 0}$ for the usual length function on~$W$, it is a well known fact that in the simply laced case, $\ell(s_{\alpha})=2\mathrm{ht}(\alpha)-1$.

Let $w_0$ denote the longest element of $(W,S)$, and let $\pi_0:\{1,\ldots,n\}\to\{1,\ldots,n\}$ be the \textit{opposition relation} given by $w_0\alpha_i=-\alpha_{\pi_0(i)}$ for $1\leq i\leq n$. We typically regard $\pi_0$ as an automorphism of the Dynkin diagram~$\Gamma$, and we say that ``opposition is type preserving'' if $\pi_0$ is the identity. If $J\subseteq S$ let $w_J$ be the longest element of the parabolic subgroup $W_J$ generated by $J$.

The \text{inversion set} of $w\in W$ is 
$
\Phi(w)=\{\alpha\in \Phi^+\mid w^{-1}\alpha\in -\Phi^+\}.
$
We note that the inversion set of the highest root $\varphi$ is
\begin{align}\label{eq:highestrootinversionset}
\Phi(s_{\varphi})=\{\alpha\in\Phi^+\mid \langle\alpha,\omega_i\rangle>0\text{ for some $i\in\wp$}\},
\end{align}
which follows directly from the equation $s_{\varphi}(\alpha)=\alpha-\langle\alpha,\varphi^{\vee}\rangle\varphi$.

Let $\mathbb{F}$ be a field, and let $G_0=G_0(\Phi,\mathbb{F})$ be the associated adjoint Chevalley group. Thus $G_0$ is generated by elements $x_{\alpha}(a)$ with $\alpha\in\Phi$ and $a\in\FF$, and writing (for $\alpha\in\Phi$ and $c\in\FF^{\times}$)
\begin{align*}
s_{\alpha}(c)=x_{\alpha}(c)x_{-\alpha}(-c^{-1})x_{\alpha}(c)\quad\text{and}\quad h_{\alpha^{\vee}}(c)=s_{\alpha}(c)s_{\alpha}(1)^{-1}
\end{align*}
the following relations hold (for $a,b\in\FF$, $\alpha,\beta\in \Phi$ with $\beta\neq \pm \alpha$, and $c,d\in\FF^{\times}$)
\begin{align*}
x_{\alpha}(a)x_{\alpha}(b)&=x_{\alpha}(a+b)\\
h_{\alpha^{\vee}}(c)h_{\alpha^{\vee}}(d)&=h_{\alpha^{\vee}}(cd)\\
x_{\alpha}(a)x_{\beta}(b)&=x_{\beta}(b)x_{\alpha}(a)\prod x_{i\alpha+j\beta}(C_{\alpha,\beta}^{i,j}a^ib^j),
\end{align*}
where the product is taken over $i,j\geq 1$ with $i\alpha+j\beta\in \Phi$ in any fixed order, and the elements $C_{\alpha,\beta}^{i,j}$ are integers (depending on the order chosen in the product). For example, in the simply laced case these commutator relations take the form $x_{\alpha}(a)x_{\beta}(b)=x_{\beta}(b)x_{\alpha}(a)$ (if $\alpha+\beta\notin\Phi$) or $x_{\alpha}(a)x_{\beta}(b)=x_{\beta}(b)x_{\alpha}(a)x_{\alpha+\beta}(C_{\alpha,\beta}ab)$ for some integer~$C_{\alpha,\beta}$ (if $\alpha+\beta\in\Phi$), and it turns out that in this case $C_{\alpha,\beta}=\pm 1$. 

The above relations imply the following useful formula (for $\alpha,\beta\in\Phi$ and $a\in\FF$)
\begin{align*}
s_{\alpha}(1)x_{\beta}(a)s_{\alpha}(1)^{-1}=x_{s_{\alpha}\beta}(\epsilon_{\alpha\beta}a)
\end{align*}
where $\epsilon_{\alpha\beta}=\pm 1$ are related to initial choices made in the Lie algebra (see \cite[Proposition~4.2.2]{Car:89}). For many calculations it is sufficient to simply know that $\epsilon_{\alpha\beta}\in\{-1,1\}$, however when more precise knowledge is required we will adopt the sign conventions from the Groups of Lie Type package in Magma~\cite{MAGMA,CMT:04}.

Let $G=G(\Phi,\mathbb{F})$ be the subgroup of $\mathrm{Aut}(G_0)$ generated by the inner automorphisms $G_0$ and the \textit{diagonal automorphisms}, as in \cite{St:60,Hum:69}. Thus $G$ is generated by $G_0$ and elements $h_{\lambda}(c)$ with $\lambda\in P$ and $c\in\FF^{\times}$, and the following relations hold (for $a\in \FF$, $c,d\in\FF^{\times}$, $\alpha,\beta\in\Phi$, and $\lambda,\mu\in P$)
\begin{align*}
h_{\lambda}(c)h_{\mu}(d)&=h_{\mu}(c)h_{\lambda}(d)& h_{\lambda}(c)h_{\lambda}(d)&=h_{\lambda}(cd)\\
h_{\lambda}(c)x_{\alpha}(a)h_{\lambda}(c)^{-1}&=x_{\alpha}(ac^{\langle\lambda,\alpha\rangle})&s_{\alpha}(1)h_{\lambda}(d)s_{\alpha}(1)^{-1}&=h_{s_{\alpha}\lambda}(d).
\end{align*}
%

For each $\alpha\in\Phi$ we write $U_{\alpha}=\langle x_{\alpha}(a)\mid a\in\mathbb{F}\rangle$ and $U^+=\langle U_{\alpha}\mid \alpha\in\Phi^+\rangle$. Let 
$$N=\langle s_{\alpha}(c)\mid \alpha\in\Phi, c\in\mathbb{F}^{\times}\rangle,\quad H=\langle h_{\lambda}(c)\mid \lambda\in P,c\in\mathbb{F}^{\times}\rangle,\quad\text{and}\quad B=\langle U^+,H\rangle=HU^+.$$ 
The subgroup $B$ is often called the (standard) \textit{Borel} subgroup. We have $H=B\cap N$, and $(B,N)$ is a $BN$-pair in $G$ with Weyl group $N/H\cong W$, where 
$$
s_{\alpha}(c)H\mapsto s_{\alpha}\quad\text{for all $c\in \FF^{\times}$}.
$$ 
We often write $wH$ (or $wB$) in place of $nH$ (or $nB$) whenever $n\in N$ with $nH\mapsto w$. In fact we will frequently write $s_{\alpha}$ in place of $s_{\alpha}(1)$ when there is no risk of confusion, however note that $s_{\alpha}\in G$ is typically not an involution.

The Bruhat decomposition gives
$$
G=\bigsqcup_{w\in W}BwB.
$$
For subsets $A\subseteq \Phi^+$ we write $U_A^+=\langle x_{\alpha}(a)\mid \alpha\in A,\,a\in\FF\rangle$. A subset $A\subseteq \Phi^+$ is \textit{closed} if $\alpha,\beta\in A$ and $\alpha+\beta\in \Phi$ implies that $\alpha+\beta\in A$. It is a fundamental fact that if $A\subseteq\Phi^+$ is closed, and if $(\beta_1,\ldots,\beta_k)$ is a fixed choice of ordering of the elements of~$A$, then each $u\in U_A^+$ has a unique expression as 
$
u=x_{\beta_1}(a_1)\cdots x_{\beta_k}(a_k)
$ for some $a_1,\ldots,a_k\in\FF$ (see~\cite[Lemma~17]{St:16}). In particular, since the set $A=\Phi(w)$, with $w\in W$, is closed, the $B$ cosets in $BwB$ are precisely
\begin{align}\label{eq:indexpoints}
x_{\beta_1}(a_1)\cdots x_{\beta_k}(a_k)wB,\quad\text{where $\Phi(w)=\{\beta_1,\ldots,\beta_k\}$ and $a_1,\ldots,a_k\in\FF$}.
\end{align}
We also note that
\begin{align}\label{eq:doublecoset}
BwB\cdot BsB=\begin{cases}
BwsB&\text{if $\ell(ws)=\ell(w)+1$}\\
BwB\cup BwsB&\text{if $\ell(ws)=\ell(w)-1$.}
\end{cases}
\end{align}

Let $U^-=\langle U_{\alpha}\mid \alpha\in-\Phi^+\rangle$. Throughout this paper we often need to convert an element in $U^-$ to an expression in $BwB$ for some $w$. To do so, we make frequent use of the relation 
\begin{align}\label{eq:folding}
x_{-\alpha}(a)=x_{\alpha}(a^{-1})s_{\alpha}(-a^{-1})x_{\alpha}(a^{-1})=x_{\alpha}(a^{-1})s_{\alpha}x_{\alpha}(a)h_{\alpha^{\vee}}(-a)\quad\text{for $a\neq 0$}
\end{align}
(which follows from the definition of $s_{\alpha}(a)$). We call this the \textit{folding relation}, due to connections with path models in algebraic combinatorics (see~\cite{PRS:09}).

We say that $\FF$ has ``special characteristic'' if $\mathrm{char}(\FF)=2$ for $\Phi=\sB_n,\sC_n,\sF_4$, or $\mathrm{char}(\FF)=3$ for $\Phi=\sG_2$. Often these cases behave differently due to additional symmetries being present.

We record some basic lemmas for later use.

\begin{lemma}\label{lem:perpendicularroots}
Let $\beta_1,\ldots,\beta_N\in\Phi^+$ be mutually perpendicular roots. Then 
\begin{align*}
x_{-\beta_1}(a_1)\cdots x_{-\beta_N}(a_N)\in Bs_{\beta_1}\cdots s_{\beta_N}B\quad\text{for all $a_1,\ldots,a_N\neq 0$.}
\end{align*}
\end{lemma}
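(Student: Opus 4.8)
The plan is to proceed by induction on $N$, using the folding relation \eqref{eq:folding} to peel off one factor at a time, with the mutual perpendicularity guaranteeing that the resulting Weyl-group element has the expected length at each stage. First I would treat the base case $N=1$: for $a_1\neq 0$, the folding relation gives $x_{-\beta_1}(a_1)=x_{\beta_1}(a_1^{-1})s_{\beta_1}x_{\beta_1}(a_1)h_{\beta_1^{\vee}}(-a_1)$, and since $x_{\beta_1}(a_1^{-1})\in U^+\subseteq B$ and $x_{\beta_1}(a_1)h_{\beta_1^{\vee}}(-a_1)\in B$, this exhibits $x_{-\beta_1}(a_1)\in Bs_{\beta_1}B$, as required.

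For the inductive step, suppose the claim holds for $N-1$ mutually perpendicular positive roots. Write $u=x_{-\beta_1}(a_1)\cdots x_{-\beta_{N}}(a_{N})$ and let $u'=x_{-\beta_1}(a_1)\cdots x_{-\beta_{N-1}}(a_{N-1})$, so that $u=u'\,x_{-\beta_N}(a_N)$. By induction $u'\in Bw'B$ where $w'=s_{\beta_1}\cdots s_{\beta_{N-1}}$. Applying \eqref{eq:folding} to the last factor, $x_{-\beta_N}(a_N)\in B s_{\beta_N} B$, so $u\in Bw'B\cdot B s_{\beta_N}B$. Now I invoke \eqref{eq:doublecoset}: the product equals $Bw's_{\beta_N}B$ provided $\ell(w's_{\beta_N})=\ell(w')+1$, i.e.\ $\beta_N\notin\Phi(w'^{-1})$ equivalently $w'^{-1}\beta_N\in\Phi^+$. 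Here is where perpendicularity does the work: since $\beta_N\perp\beta_i$ for $i<N$, each $s_{\beta_i}$ fixes $\beta_N$, so $w'^{-1}\beta_N=\beta_N\in\Phi^+$, giving the length-additivity; the same computation shows $w'^{-1}$ does not send $\beta_N$ to a negative root. Hence $u\in Bw's_{\beta_N}B=Bs_{\beta_1}\cdots s_{\beta_N}B$, completing the induction. (One should also note that $s_{\beta_1},\ldots,s_{\beta_N}$ pairwise commute, so the order of the product of reflections is immaterial, matching the fact that the order of the $x_{-\beta_i}$ was arbitrary.)

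I do not anticipate a serious obstacle here; the lemma is essentially a bookkeeping exercise. The one point requiring a little care is the interaction between the coset of $u'$ and the folding of the final factor: \eqref{eq:folding} produces not just $s_{\beta_N}$ but also tails $x_{\beta_N}(\cdot)$ on the left and $x_{\beta_N}(\cdot)h_{\beta_N^\vee}(\cdot)$ on the right, and one must be sure these get absorbed correctly. The left tail $x_{\beta_N}(a_N^{-1})$ lies in $U^+\subseteq B$, so after writing $u'=b_1 w' b_2$ with $b_1,b_2\in B$ we get $u=b_1 w' (b_2 x_{\beta_N}(a_N^{-1})) s_{\beta_N} x_{\beta_N}(a_N)h_{\beta_N^\vee}(-a_N)$, and $b_2 x_{\beta_N}(a_N^{-1})\in B$, $x_{\beta_N}(a_N)h_{\beta_N^\vee}(-a_N)\in B$, so indeed $u\in Bw'B\cdot Bs_{\beta_N}B$ and the double-coset multiplication formula \eqref{eq:doublecoset} applies cleanly. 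An alternative, perhaps slightly slicker route would be to observe directly that the mutually perpendicular roots $\beta_1,\ldots,\beta_N$ span a root subsystem of type $\sA_1^{\,N}$, reducing the statement to $N$ independent rank-one $\mathsf{SL}_2$ (or $\mathsf{PGL}_2$) computations inside commuting subgroups; but the inductive argument above is self-contained and suffices.
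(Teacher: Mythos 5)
There is a genuine gap in your inductive step. The product formula \eqref{eq:doublecoset} is stated only for $s\in S$ a \emph{simple} reflection, whereas $s_{\beta_N}$ is in general not simple, so it cannot be invoked directly; the relevant general fact is that $Bw'B\cdot Bs_{\beta_N}B=Bw's_{\beta_N}B$ provided $\ell(w's_{\beta_N})=\ell(w')+\ell(s_{\beta_N})$. Your stated condition ``$\ell(w's_{\beta_N})=\ell(w')+1$'' is a misquotation of the simple-reflection case and in fact can never hold when $\beta_N$ is not simple. More importantly, the verification you offer---that $w'^{-1}\beta_N=\beta_N\in\Phi^+$ because each $s_{\beta_i}$ ($i<N$) fixes $\beta_N$---only yields $\ell(w's_{\beta_N})>\ell(w')$, which is strictly weaker than the length-additivity you need. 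And length-additivity genuinely fails for mutually perpendicular positive roots: in $\sD_4$ take $\beta_1=\alpha_1+\alpha_2$ and $\beta_2=\alpha_2+\alpha_3$, which are perpendicular; then $\ell(s_{\beta_1})=\ell(s_{\beta_2})=3$, but $\Phi(s_{\beta_1})\cap\Phi(s_{\beta_2})=\{\alpha_2\}\neq\emptyset$ and one checks $\ell(s_{\beta_1}s_{\beta_2})=4<6$. So $Bs_{\beta_1}B\cdot Bs_{\beta_2}B$ is a union of several double cosets, and your argument does not identify which one contains $x_{-\beta_1}(a_1)x_{-\beta_2}(a_2)$.

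The paper avoids this entirely by proving a sharper statement by induction, namely that $x_{-\beta_1}(a_1)\cdots x_{-\beta_N}(a_N)\in U_N^+\,s_{\beta_1}\cdots s_{\beta_N}\,U_N^+H$ where $U_k^+=\langle U_{\beta_1},\ldots,U_{\beta_k}\rangle$. After applying the folding relation to the last factor, one uses perpendicularity to commute the resulting torus element $h_{\beta_k^\vee}(-a_k)$ and the factor $x_{\beta_k}(a_k^{-1})$ past $s_{\beta_1}\cdots s_{\beta_{k-1}}$ (which fixes $\beta_k$), producing an element that is visibly of the desired form $u\,(s_{\beta_1}\cdots s_{\beta_k})\,u'h$ with $u,u'\in U^+$, $h\in H$. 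Since $U^+H\subseteq B$ this lands in $Bs_{\beta_1}\cdots s_{\beta_k}B$ with no appeal to length arithmetic. Your base case and folding bookkeeping are fine, but the inductive step must be replaced by this kind of direct manipulation rather than a double-coset multiplication argument. The ``slicker'' $\sA_1^N$-subgroup route you sketch at the end suffers a related problem: the Bruhat cell computed inside the rank-$N$ subgroup does not automatically identify the Bruhat cell in $G$ without some compatibility argument, so it too requires more care than advertised.
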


\begin{proof}
Let $U_k^+=\langle U_{\beta_1},\ldots,U_{\beta_k}\rangle$ for $1\leq k\leq N$. We show, by induction, that
$$x_{-\beta_1}(a_1)\cdots x_{-\beta_N}(a_N)\in U_N^+s_{\beta_1}\cdots s_{\beta_N}U_N^+H.$$ 
The case $N=1$ is the folding relation~(\ref{eq:folding}). By the induction hypothesis, and the folding relation, for $k>1$ we have
\begin{align*}
x_{\beta_1}(a_1)\cdots x_{\beta_{k}}(a_{k})&=us_{\beta_1}\cdots s_{\beta_{k-1}}u'h\cdot x_{\beta_k}(a_k^{-1})s_{\beta_k}x_{\beta_k}(a_k)h_{\beta_k^{\vee}}(-a_k)
\end{align*}
for some $u,u'\in U_{k-1}^+$ and $h\in H$. Then since $h x_{\beta_k}(a_k^{-1})s_{\beta_k}x_{\beta_k}(a_k)=x_{\beta_k}(a)s_{\beta_k}x_{\beta_k}(b)h$ for some $a,b\in\FF$ and $s_{\beta_1}\cdots s_{\beta_{k-1}}u'x_{\beta_k}(a)=x_{\beta_k}(\pm a)s_{\beta_1}\cdots s_{\beta_{k-1}}u'$ (as $\beta_k$ is orthogonal to $\beta_1,\ldots,\beta_{k-1}$) we have
$
x_{\beta_1}(a_1)\cdots x_{\beta_{k}}(a_{k})=ux_{\beta_k}(\pm a)s_{\beta_1}\cdots s_{\beta_{k-1}}u'\cdot s_{\beta_k}x_{\beta_k}(b)hh_{\beta_k^{\vee}}(-a_k).$ 
Similarly, $u's_{\beta_k}=s_{\beta_k}u'$, and hence the result.
\end{proof}

 \begin{lemma}\label{lem:longelt}
Let $\Phi$ have rank~$n$, and suppose that the opposition relation is type preserving. If $\beta_1,\ldots,\beta_n\in\Phi^+$ are mutually perpendicular roots then $s_{\beta_1}\cdots s_{\beta_n}=w_0$. 
 \end{lemma}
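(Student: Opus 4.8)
The plan is to show that the product $s_{\beta_1}\cdots s_{\beta_n}$ acts as $-1$ on $V$, which (since opposition is type preserving, i.e. $w_0 = -1$) forces it to equal $w_0$. The key observation is that $n$ mutually perpendicular roots in an irreducible rank-$n$ root system must span $V$: they are pairwise orthogonal nonzero vectors, hence linearly independent, and there are $n$ of them in an $n$-dimensional space. So $\{\beta_1^\vee,\ldots,\beta_n^\vee\}$ is a basis of $V$.

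Next I would compute the action of $w:=s_{\beta_1}\cdots s_{\beta_n}$ on each $\beta_j$. Since the $\beta_i$ are mutually perpendicular, $s_{\beta_i}$ fixes $\beta_j$ for $i\neq j$ and $s_{\beta_j}(\beta_j)=-\beta_j$. Applying the reflections in order, each $\beta_j$ is untouched by all factors except $s_{\beta_j}$, which negates it; hence $w(\beta_j) = -\beta_j$ for every $j$. (One should be slightly careful that the reflections commute here, which they do because the roots are orthogonal, so the order is irrelevant.) Since $\{\beta_1,\ldots,\beta_n\}$ is a basis of $V$, linearity gives $w = -\mathrm{id}_V$.

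Finally, because the opposition relation is type preserving, $\pi_0 = \mathrm{id}$, which means $w_0\alpha_i = -\alpha_i$ for all $i$, i.e. $w_0$ also acts as $-\mathrm{id}_V$ on the span of the simple roots, which is all of $V$. Since $W$ acts faithfully on $V$, we conclude $w = w_0$.

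I do not anticipate a genuine obstacle here; the only point requiring a little care is the linear-independence/spanning argument for the orthogonal roots (which uses that $\dim V = n$), and the verification that the reflections $s_{\beta_i}$ pairwise commute so that ``applying them in order'' cleanly negates each basis vector. Everything else is routine linear algebra combined with the standard fact that type-preserving opposition is equivalent to $w_0 = -\mathrm{id}_V$.
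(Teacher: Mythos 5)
Your argument is exactly the paper's: the paper also shows that $s_{\beta_1}\cdots s_{\beta_n}$ acts as $-\mathrm{id}$ on $V$ (using orthogonality, implicitly via linear independence of the $n$ roots) and then notes that $w_0=-\mathrm{id}$ because opposition is type preserving. You have simply spelled out the routine details (spanning, commutativity of the reflections, faithfulness of the $W$-action) more explicitly.
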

 
 \begin{proof} 
 Since $\beta_1,\ldots,\beta_n$ are mutually perpendicular the product $s_{\beta_1}\cdots s_{\beta_n}$ acts by $-1$ on the vector space $V$. Since opposition is type preserving, the longest element $w_0$ also acts by $-1$ (mapping $\alpha_i$ to $-\alpha_i$ for all simple roots), hence the result.
 \end{proof}

 \begin{lemma}\label{lem:longeltE6}
Let $\Phi$ be a root system of type $\sE_6$ in a vector space~$V$, and let $\sigma:V\to V$ be the involution given by $\sigma(\alpha_i)=\alpha_{\pi_0(i)}$ for $1\leq i\leq 6$. Suppose that $\beta_1,\beta_2,\beta_3,\beta_4\in\Phi^+$ are mutually perpendicular roots with $\sigma(\beta_i)=\beta_i$ for $i=1,2,3,4$. Then $s_{\beta_1}s_{\beta_2}s_{\beta_3}s_{\beta_4}=w_0$.
 \end{lemma}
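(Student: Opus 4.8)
The statement concerns an $\sE_6$ root system, where opposition is \emph{not} type preserving, so Lemma~\ref{lem:longelt} does not apply directly. The natural idea is to reduce to the type-preserving situation by passing to the fixed space of the diagram automorphism $\pi_0$. Recall that $w_0$ acts on $V$ as $-\sigma$, where $\sigma$ is the involution induced by $\pi_0$ on $V$; equivalently, $-w_0 = \sigma$. So our task is to show that $s_{\beta_1}s_{\beta_2}s_{\beta_3}s_{\beta_4}$ acts on $V$ as $-\sigma$. Since the $\beta_i$ are mutually perpendicular, the product $s_{\beta_1}s_{\beta_2}s_{\beta_3}s_{\beta_4}$ acts as $-1$ on the $4$-dimensional subspace $U = \mathrm{span}(\beta_1,\beta_2,\beta_3,\beta_4)$ and as $+1$ on $U^{\perp}$, which has dimension $2$. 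On the other hand $-\sigma$ acts as $-1$ on the $\sigma$-fixed space $V^{\sigma}$ (dimension $4$) and as $+1$ on the $(-1)$-eigenspace $V^{-\sigma}$ of $\sigma$ (dimension $2$); here I use that $\sigma$ is an involution with a $4$-dimensional fixed space (the fixed space of $\pi_0$ acting on the $\sE_6$ diagram — type $\sF_4$ data). Thus it suffices to prove $U = V^{\sigma}$, equivalently $U^{\perp} = V^{-\sigma}$.

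\textbf{Key steps.} First, I would record that $\sigma$ is an orthogonal involution (it permutes simple roots, hence preserves the $W$-invariant inner product), with $\dim V^{\sigma} = 4$ and $\dim V^{-\sigma} = 2$. Second, from the hypothesis $\sigma(\beta_i) = \beta_i$ we get $U \subseteq V^{\sigma}$; since $\dim U = 4$ (the $\beta_i$ are mutually perpendicular nonzero vectors, hence linearly independent) $= \dim V^{\sigma}$, we conclude $U = V^{\sigma}$. Third, taking orthogonal complements (and using that $V^{\sigma}$ and $V^{-\sigma}$ are mutually orthogonal, which holds for any orthogonal involution) yields $U^{\perp} = (V^{\sigma})^{\perp} = V^{-\sigma}$. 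Fourth, with $U = V^{\sigma}$ and $U^{\perp} = V^{-\sigma}$ in hand, compare the two linear maps: $s_{\beta_1}s_{\beta_2}s_{\beta_3}s_{\beta_4}$ is $-1$ on $U = V^{\sigma}$ and $+1$ on $U^{\perp} = V^{-\sigma}$, while $-\sigma$ is $-1$ on $V^{\sigma}$ and $+1$ on $V^{-\sigma}$; since these agree on a direct sum decomposition of $V$, they are equal as elements of $\mathsf{GL}(V)$. Finally, since $-\sigma = w_0$ as an element of $W$ (from $w_0\alpha_i = -\alpha_{\pi_0(i)}$) and $W$ acts faithfully on $V$, we get $s_{\beta_1}s_{\beta_2}s_{\beta_3}s_{\beta_4} = w_0$.

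\textbf{Main obstacle.} The only genuinely non-formal input is the dimension count $\dim V^{\sigma} = 4$ (equivalently $\dim V^{-\sigma} = 2$) for the $\sE_6$ diagram automorphism, and the fact that such mutually perpendicular fixed roots $\beta_1,\dots,\beta_4$ actually exist — but existence is assumed in the hypothesis, and the dimension of the fixed space of $\pi_0$ on the reflection representation of $\sE_6$ is standard (it is the rank of the folded system $\sF_4$, namely $4$; the $(-1)$-eigenspace has dimension $6-4 = 2$). So I anticipate no real difficulty; the proof is essentially a linear-algebra argument once these standard facts about $\pi_0$ are cited. One should be slightly careful to note that mutual perpendicularity genuinely forces linear independence of the $\beta_i$ (nonzero pairwise-orthogonal vectors in a real inner product space are linearly independent), so that $\dim U = 4$ exactly. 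I would present this as a short, clean argument paralleling the proof of Lemma~\ref{lem:longelt} but carried out inside $V^{\sigma}$.
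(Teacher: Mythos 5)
Your argument is correct, and its first half coincides with the paper's own proof: both hinge on the observation that the fixed space $V^{\sigma}$ is $4$-dimensional, so the four mutually perpendicular $\sigma$-fixed roots $\beta_1,\dots,\beta_4$ span it and $w=s_{\beta_1}s_{\beta_2}s_{\beta_3}s_{\beta_4}$ restricts to $-1$ on $V^{\sigma}$. Where you diverge is the concluding step. The paper finishes combinatorially: since $\sigma$ fixes each $\beta_i$ it commutes with $w$, so $w\alpha_1\in\Phi^+$ if and only if $w\alpha_6\in\Phi^+$; as $w(\alpha_1+\alpha_6)=-(\alpha_1+\alpha_6)$ both must be negative, and treating $\{\alpha_3,\alpha_5\}$ similarly and using $w\alpha_2=-\alpha_2$, $w\alpha_4=-\alpha_4$, it concludes that $w$ sends every simple root to a negative root, which characterises $w_0$. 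You instead identify $w$ and $w_0$ outright as linear maps: $w$ is $-1$ on $V^{\sigma}$ and $+1$ on $(V^{\sigma})^{\perp}$, which equals the $(-1)$-eigenspace $V^{-\sigma}$ of $\sigma$, while $w_0=-\sigma$ (immediate from $w_0\alpha_i=-\alpha_{\pi_0(i)}$, which also gives orthogonality of $\sigma$, hence $V^{\sigma}\perp V^{-\sigma}$, for free) has exactly the same eigenspace description; faithfulness of the $W$-action on $V$ then finishes the proof. Your route is slightly more economical, avoiding the inversion-set argument and replacing the commutation trick by the trivial fact that the product of the four reflections is the identity on the orthogonal complement of their span; the paper's route avoids any explicit appeal to the eigenspace decomposition of $\sigma$. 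Since both rest on the same essential input (the dimension count $\dim V^{\sigma}=4$ together with $w_0=-\sigma$), the difference is mainly one of packaging, and your version is a clean, valid alternative.
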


 \begin{proof}
 Let $w=s_{\beta_1}s_{\beta_2}s_{\beta_3}s_{\beta_4}$. Let $V'=\{v\in V\mid \sigma(v)=v\}$. Then $V'$ is $4$-dimensional, and since $\beta_1,\beta_2,\beta_3,\beta_4\in V'$ are mutually perpendicular we have $s_{\beta_1}\cdots s_{\beta_4}|_{V'}=-1$. In particular, $\alpha_2,\alpha_4\in\Phi(w^{-1})$. Moreover, since $w\alpha_6=w\sigma(\alpha_1)=\sigma(w\alpha_1)$ (because $\sigma$ commutes with each reflection $s_{\beta}$ with $\beta\in V'$) we have $w\alpha_1\in\Phi^+$ if and only if $w\alpha_6\in\Phi^+$. But $\alpha_1+\alpha_6\in V'$, and so $w(\alpha_1+\alpha_6)=-\alpha_1-\alpha_6$. It follows that $\alpha_1,\alpha_6\in\Phi(w^{-1})$, and similarly $\alpha_3,\alpha_5\in\Phi(w^{-1})$. Thus $\alpha_1,\ldots,\alpha_6\in\Phi(w^{-1})$, and so $w=w_0$. 
\end{proof}

\subsection{Split spherical buildings}

We assume that the reader is already with the basic theory of buildings, and our main reference for the general theory is~\cite{AB:08}. By a \textit{split} spherical building we shall mean a building associated to a Chevalley group via the standard $BN$-pair construction. It is easiest to define this building as a $W$-metric space (c.f. \cite[Chapter~5]{AB:08}), as follows.

\begin{defn}
The split spherical building $\Delta=\Delta_{\Phi}(\mathbb{F})$ associated to $G=G_{\Phi}(\FF)$ has chamber set $\Delta=G/B$ and Weyl distance function given by 
$$
\delta(gB,hB)=w\quad\text{if and only if}\quad g^{-1}h\in BwB.
$$ 
Chambers $c,d\in\Delta$ are $s$-\textit{adjacent} (with $s\in S$) if $\delta(c,d)=s$, and are \textit{adjacent} if they are $s$-adjacent for some $s\in S$.
\end{defn}

In particular, if $c=gB$ is a chamber of $\Delta$, then by~(\ref{eq:indexpoints}) the set of chambers $d\in\Delta$ with $\delta(c,d)=w$ is precisely
$$
g\cdot\{x_{\beta_1}(a_1)\cdots x_{\beta_k}(a_k)wB\mid a_1,\ldots,a_k\in\FF\}\quad\text{where $\Phi(w)=\{\beta_1,\ldots,\beta_k\}$}.
$$

We often regard $\Delta$ as a simplicial complex in the standard way (c.f. \cite[Chapter~4]{AB:08}). Let us briefly describe this conversion in a group theoretic way in the split context. For subsets $J\subseteq S$ let 
$$
P_{J}=\bigcup_{w\in W_{J}}BwB
$$
be the standard parabolic subgroup of $G$ of type $J$. For each nonempty $J\subseteq S$ the set of ``type $J$-simplices'' of the building is the set of cosets $G/P_{S\backslash J}$, and the simplicial complex structure is given by \textit{reverse} containment of cosets. For example, in the simplicial complex language the chamber $B$ ``contains'' the simplices $P_{S\backslash J}$ for all nonempty $J\subseteq S$, whereas on the level of cosets it is in fact the parabolic subgroups $P_{S\backslash J}$ that contain the Borel subgroup~$B$. 

If $J=\{s\}$ is a singleton we often write 
\begin{align}\label{eq:notationparabolic}
W_s=W_{S\backslash J}\quad\text{and}\quad P_s=W_{S\backslash J}
\end{align}
for the standard parabolic subgroups of $W$ and $G$ of type $S\backslash J$. Moreover, if $s=s_i$ we will often write $W_{s_i}=W_i$ and $P_{s_i}=P_i$.

Let $\tau(x)\subseteq S$ denote the type of the simplex~$x$ of $\Delta$. 
 Thus vertices are simplices $x$ with $\tau(x)=\{s\}$ for some $s\in S$, and chambers are the simplices $x$ with $\tau(x)=S$. A \textit{panel} is a codimension~$1$ simplex; that is, $\tau(x)=S\backslash \{s\}$ for some $s\in S$.

 An \textit{automorphism} of $\Delta$ is an adjacency preserving bijection $\theta:\Delta\to\Delta$. Each automorphism $\theta$ of $\Delta$ induces an automorphism $\pi_{\theta}$ of the Coxeter diagram by $\delta(c,d)=s$ if and only if $\delta(\theta(c),\theta(d))=\pi_{\theta}(s)$. We say that $\theta$ is a \textit{collineation} (or \textit{type preserving}) if $\pi_{\theta}=\mathrm{id}$, and a \textit{duality} if $\pi_{\theta}$ has order~$2$. 
 
By \cite[Corollaries~5.9 and~5.10]{Tit:74} (and using \cite{Hum:69,St:60}), every automorphism $\theta$ of $\Delta$ is of the form $\theta=g\circ \pi\circ\sigma$, where $g\in G$, $\pi=\pi_{\theta}$ is a Dynkin diagram automorphism, and $\sigma$ is a field automorphism (in the special characteristic case $\pi$ is a Coxeter diagram automorphism). Note that the ``diagonal automorphisms'' are already built into~$G$. By the Bruhat decomposition, each element $g\in G$ can be written as $g=uwb$ with $u\in U^+$, $w\in W$, and $b\in B$, and so each automorphism of $\Delta$ can be written as $\theta=uwb\circ\pi\circ\sigma$. If $\sigma$ is trivial, we say that $\theta$ is \emph{linear}.

By a \textit{root elation} we shall mean an automorphism $\theta$ conjugate to $x_{\alpha}(a)$ for some $\alpha\in\Phi$ and $a\in\FF^{\times}$, and we call $\theta$ a \textit{long} (respectively \textit{short}) root elation if $\alpha$ is a long (respectively short) root. By a \textit{homology} we shall mean an automorphism $\theta$ conjugate to a nontrivial element $h_{\lambda}(c)$ with $\lambda\in P$ and $c\in\FF^{\times}$.

\subsection{Opposition diagrams and admissible diagrams}\label{sec:autos}

Chambers $c,d\in\Delta$ are \textit{opposite} one another if and only if $\delta(c,d)=w_0$. If $x,y$ are simplices, with types $J,K$ respectively, then $x$ and $y$ are \textit{opposite} one another if and only if $K=\pi_0(J)$ and there exist opposite chambers $c,d$ with $x\subseteq c$ and $y\subseteq d$. That is, simplices are opposite one another if they have opposite types, and are contained in opposite chambers. In terms of double cosets, chambers $c=gB$ and $d=hB$ are opposite if and only if $g^{-1}h\in Bw_0B$, and simplices $x=gP_{S\backslash J}$ and $y=hP_{S\backslash K}$ are opposite if and only if $K=\pi_0(J)$ and $g^{-1}h\in P_{S\backslash J}w_0P_{S\backslash K}$.

Let $\theta$ be an automorphism of $\Delta$. Recall, from the introduction, that $\Opp(\theta)$ denotes the set of all simplices $x$ such that $x^{\theta}$ is opposite $x$. The \textit{type} $\Type(\theta)$ of $\theta$ is the union of all subsets $J\subseteq S$ such that there exists a type~$J$ simplex mapped to an opposite simplex by~$\theta$. The \textit{opposition diagram} of $\theta$ is the triple $(\Gamma,\Type(\theta),\pi_{\theta})$. 

Less formally, the opposition diagram of $\theta$ is depicted by drawing $\Gamma$ and encircling the nodes of $\Type(\theta)$, where we encircle nodes in minimal subsets invariant under $\pi_0\circ\pi_{\theta}$. We draw the diagram ``bent'' (in the standard way) if $\pi_0\circ\pi_{\theta}\neq\mathrm{id}$. For example, consider the diagrams 
$$\sX=\begin{tikzpicture}[scale=0.5,baseline=-0.5ex]
\node [inner sep=0.8pt,outer sep=0.8pt] at (-2,0) (2) {$\bullet$};
\node [inner sep=0.8pt,outer sep=0.8pt] at (-1,0) (4) {$\bullet$};
\node [inner sep=0.8pt,outer sep=0.8pt] at (0,-0.5) (5) {$\bullet$};
\node [inner sep=0.8pt,outer sep=0.8pt] at (0,0.5) (3) {$\bullet$};
\node [inner sep=0.8pt,outer sep=0.8pt] at (1,-0.5) (6) {$\bullet$};
\node [inner sep=0.8pt,outer sep=0.8pt] at (1,0.5) (1) {$\bullet$};
\draw (-2,0)--(-1,0);
\draw (-1,0) to [bend left=45] (0,0.5);
\draw (-1,0) to [bend right=45] (0,-0.5);
\draw (0,0.5)--(1,0.5);
\draw (0,-0.5)--(1,-0.5);
\draw [line width=0.5pt,line cap=round,rounded corners] (2.north west)  rectangle (2.south east);
\draw [line width=0.5pt,line cap=round,rounded corners] (1.north west)  rectangle (6.south east);
\end{tikzpicture}\qquad\text{and}\qquad\mathsf{Y}=\,\begin{tikzpicture}[scale=0.5,baseline=-1.5ex]
\node [inner sep=0.8pt,outer sep=0.8pt] at (-2,0) (1) {$\bullet$};
\node [inner sep=0.8pt,outer sep=0.8pt] at (-1,0) (3) {$\bullet$};
\node [inner sep=0.8pt,outer sep=0.8pt] at (0,0) (4) {$\bullet$};
\node [inner sep=0.8pt,outer sep=0.8pt] at (1,0) (5) {$\bullet$};
\node [inner sep=0.8pt,outer sep=0.8pt] at (2,0) (6) {$\bullet$};
\node [inner sep=0.8pt,outer sep=0.8pt] at (0,-1) (2) {$\bullet$};
\draw (-2,0)--(2,0);
\draw (0,0)--(0,-1);
\draw [line width=0.5pt,line cap=round,rounded corners] (1.north west)  rectangle (1.south east);
\draw [line width=0.5pt,line cap=round,rounded corners] (6.north west)  rectangle (6.south east);
\end{tikzpicture}$$
Diagram $\sX$ represents a collineation $\theta$ of an $\sE_6$ building with $\Type(\theta)=\{1,2,6\}$, and diagram $\mathsf{Y}$ represents a duality of an $\sE_6$ building with $\Type(\theta)=\{1,6\}$.

An automorphism $\theta$ is \textit{domestic} if $\Opp(\theta)$ contains no chamber (that is, $\theta$ maps no chamber to an opposite chamber). More generally, if $J\subseteq S$ then $\theta$ is $J$-domestic if no type $J$ simplex is mapped onto an opposite simplex by~$\theta$. To avoid trivialities, the definition of $J$-domesticity is restricted to subsets $J$ with $\pi_0(J)=\pi_{\theta}(J)$ (for if $J$ does not satisfy this, then $\theta$ is $J$-domestic for trivial reasons).

An automorphism $\theta$ is called \textit{capped} if the following closure property holds: If there exist type $J_1$ and $J_2$ simplices in $\Opp(\theta)$, then there exists a type $J_1\cup J_2$ simplex in $\Opp(\theta)$. Equivalently, $\theta$ is capped if and only if there exists a type $\Type(\theta)$ simplex in $\Opp(\theta)$. By \cite[Theorem~1]{PVM:19a} every automorphism of a ``large'' spherical building of rank at least~$3$ is capped, where a building is called \textit{large} if it contains no Fano plane residues (for split buildings this simply means $|\FF|>2$).

In \cite{PVM:19a,PVM:19b} we showed that the opposition diagrams of automorphisms of spherical buildings satisfy various restrictive properties, and we used these properties to determine a list of all possible opposition diagrams. We call the diagrams $(\Gamma,J,\pi)$ in this list \textit{admissible Dynkin diagrams} (more precisely, in \cite{PVM:19a,PVM:19b} we considered Coxeter diagrams rather than Dynkin diagrams, however the arguments are nearly identical). The complete list of admissible Dynkin diagrams of exceptional type is given in Figure~\ref{fig:Dynkin} (taken from~\cite{PVM:19a}). Each admissible diagram $(\Gamma,J,\pi)$ of exceptional type is denoted by a symbol
$$
{^t}\sX_{n;i}\quad\text{or}\quad {^t}\sX_{n;i}^{k},
$$
where 
\begin{compactenum}[$(1)$]
\item $\sX\in\{\sE,\sF,\sG\}$ is the type of $\Gamma$, and $n$ is the rank;
\item $t\in\{1,2\}$ is the order of the graph automorphism $\pi_0\circ\pi$ (the ``twisting'');
\item $i$ is the number of distinguished orbits contained in $J$; and
\item $k$ is an additional index occurring only for $\sF_4$ and $\sG_2$ in the case that a single node is encircled, in which case $k$ is the type of this node.
\end{compactenum}
In the case $t=1$ (that is, when $\pi_0\circ\pi=\mathrm{id}$) we usually omit the $t$ from the notation, writing simply ${^t}\sX_{n;i}=\sX_{n;i}$. For example, the diagrams $\sX$ and $\mathsf{Y}$ given above are $\sX={^2}\sE_{6;2}$ and $\mathsf{Y}=\sE_{6;2}$. Similar notation is introduced in \cite{PVM:20a} for the classical types (except with a different meaning for the index~$k$).

In special characteristic one often ignores the arrows on Dynkin diagrams, thus giving the additional admissible (Coxeter) diagrams in Figure~\ref{fig:Coxeter}.
\begin{figure}[h!]
\begin{center}
$
{^2}\sG_{2;1}=\begin{tikzpicture}[scale=0.5,baseline=-0.5ex]
\node at (7,0.8) {};
\node at (7,-0.8) {};
\node at (7,0) (0) {};
\node [inner sep=0.8pt,outer sep=0.8pt] at (7,0.5) (6a) {$\bullet$};
\node [inner sep=0.8pt,outer sep=0.8pt] at (7,-0.5) (6b) {$\bullet$};
\draw [line width=0.5pt,line cap=round,rounded corners] (6a.north west)  rectangle (6b.south east);
\draw [domain=-90:90] plot ({7+(0.6)*cos(\x)}, {(0.4)*sin(\x)});
\draw [domain=-90:90] plot ({7+(0.7)*cos(\x)}, {(0.5)*sin(\x)});
\draw [domain=-90:90] plot ({7+(0.8)*cos(\x)}, {(0.6)*sin(\x)});
\end{tikzpicture}\qquad\text{and}\qquad
{^2}\sF_{4;2}=\begin{tikzpicture}[scale=0.5,baseline=-0.5ex]
\node at (6,0.8) {};
\node at (6,-0.8) {};
\node at (6,0) (0) {};
\node [inner sep=0.8pt,outer sep=0.8pt] at (6,0.5) (5a) {$\bullet$};
\node [inner sep=0.8pt,outer sep=0.8pt] at (6,-0.5) (5b) {$\bullet$};
\node [inner sep=0.8pt,outer sep=0.8pt] at (7,0.5) (6a) {$\bullet$};
\node [inner sep=0.8pt,outer sep=0.8pt] at (7,-0.5) (6b) {$\bullet$};
\draw (6,0.5)--(7,0.5);
\draw (6,-0.5)--(7,-0.5);
\draw [line width=0.5pt,line cap=round,rounded corners] (6a.north west)  rectangle (6b.south east);
\draw [line width=0.5pt,line cap=round,rounded corners] (5a.north west)  rectangle (5b.south east);
\draw [domain=-90:90] plot ({7+(0.6)*cos(\x)}, {(0.4)*sin(\x)});
\draw [domain=-90:90] plot ({7+(0.8)*cos(\x)}, {(0.6)*sin(\x)});
\end{tikzpicture}
$
\end{center}
\caption{Additional admissible diagrams for special characteristic}\label{fig:Coxeter}
\end{figure}

Summarising the above discussion, we have the following result from \cite{PVM:19a,PVM:19b}.

\begin{thm}\label{thm:admissible}
If $\theta$ is an automorphism of a spherical building of exceptional type, then the opposition diagram of $\theta$ is listed in Figures~\ref{fig:Dynkin} or~\ref{fig:Coxeter}.
\end{thm}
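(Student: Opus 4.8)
The plan is to deduce the statement directly from the two structural facts of~\cite{PVM:19a,PVM:19b} recalled above: (i) the opposition diagram of any automorphism of a spherical building is an \emph{admissible} diagram in the sense of~\cite[\S2.1]{PVM:19a}; and (ii) the complete list of admissible Dynkin (and, in special characteristic, Coxeter) diagrams of exceptional type is exactly that displayed in Figures~\ref{fig:Dynkin} and~\ref{fig:Coxeter}. Granting (i) and (ii) there is nothing further to do, so the task reduces to recalling why each holds.

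For (i) one separates large from small buildings, and rank $\geq 3$ from rank~$2$. If $\Delta$ is large (for split buildings, $|\FF|>2$) and of rank at least~$3$---which covers the types $\sE_6$, $\sE_7$, $\sE_8$, $\sF_4$ over fields with more than two elements---then by~\cite[Theorem~1]{PVM:19a} every automorphism $\theta$ is capped, i.e.\ there is a type $\Type(\theta)$ simplex in $\Opp(\theta)$; combined with the fact that the restriction of an opposition diagram to a residue is again an opposition diagram, together with the explicit stability and parity constraints of~\cite[\S2.1]{PVM:19a}, this forces $(\Gamma,\Type(\theta),\pi_\theta)$ to be admissible. The rank-$2$ buildings of type~$\sG_2$, and all exceptional buildings over~$\FF_2$, are not covered by cappedness, but these are precisely among the buildings treated in~\cite{PVM:19b}, where admissibility of the opposition diagram is established by a direct case analysis (uncapped automorphisms still have admissible diagrams, for other reasons). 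Hence (i) holds for every building of exceptional type.

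For (ii) one enumerates the triples $(\Gamma,J,\pi)$ with $\Gamma\in\{\sE_6,\sE_7,\sE_8,\sF_4,\sG_2\}$ satisfying the constraints of~\cite[\S2.1]{PVM:19a}: $J$ must be stable under $\pi_0\circ\pi$ and must restrict, on every subdiagram, to an admissible diagram of the corresponding (possibly reducible) type. Running through the node subsets type by type---recording for $\sF_4$ and $\sG_2$ the position $k$ of a single encircled node, and in special characteristic allowing $\pi$ to be a Coxeter- rather than a Dynkin-diagram automorphism---leaves exactly the diagrams of Figures~\ref{fig:Dynkin} and~\ref{fig:Coxeter}. The only genuine obstacle is (i), which rests on the capping theorem of~\cite{PVM:19a} and the supplementary work of~\cite{PVM:19b}; once admissibility is in hand, the enumeration in (ii) is purely mechanical. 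Thus the proof amounts to: combine~\cite[Theorem~1]{PVM:19a} with~\cite{PVM:19b}, and read off the exceptional-type entries of the resulting classification of admissible diagrams.
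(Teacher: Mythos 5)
Your proposal is correct and follows the same route as the paper, which offers no argument beyond citing \cite{PVM:19a,PVM:19b}: the admissibility of every opposition diagram is established there (via cappedness for large buildings of rank at least~$3$, and directly for small buildings and rank-$2$ cases), and the figures are simply the exceptional-type entries of the resulting classification of admissible diagrams. Your reconstruction of the two ingredients is a faithful expansion of that citation.
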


A striking feature of Theorem~\ref{thm:admissible} is that there are very few possible opposition diagrams. We note that the analysis in~\cite{PVM:19a,PVM:19b} does not prove the converse to Theorem~\ref{thm:admissible}. That is, a priori there may be redundancies in the list of admissible diagrams, in the sense that some admissible diagrams may not actually be the opposition diagram of any automorphism of any spherical building. It is a consequence of the work of this paper, combined with \cite{PVM:20a}, that no such redundancies exist -- more precisely, Corollary~\ref{cor:existence} holds.

We call an admissible diagram \textit{empty} if no nodes are encircled, and \textit{full} if all nodes are encircled. We call an admissible diagram $(\Gamma,J,\pi)$ \textit{type preserving} if $\pi=\mathrm{id}$. By inspection of the list in Figure~\ref{fig:Dynkin} we note that for each Dynkin diagram~$\Gamma$ there exists a type preserving admissible diagram $(\Gamma,J,\mathrm{id})$ with $J=\wp$. This diagram is called the \textit{polar diagram}. 

The \textit{polar-copolar diagram} is the type preserving admissible diagram with $J=\wp\cup \wp^*$, where $\wp^*$ is the polar type of the type $S\backslash \wp$ residue (we call $\wp^*$ the \textit{copolar type}; again, by inspection of Figure~\ref{fig:Dynkin} the triple $(\Gamma,\wp\cup\wp^*,\mathrm{id})$ is always an admissible diagram). See Appendix~\ref{app:data} for the list of copolar types. Alternatively, the polar-copolar diagrams of exceptional type are characterised as the type preserving diagrams in which exactly two orbits of nodes are encircled. Specifically, the polar diagrams of exceptional type are ${^2}\sE_{6;1}$, $\sE_{7;1}$, $\sE_{8;1}$, $\sF_{4;1}^1$, and $\sG_{2;1}^2$, and the polar-copolar diagrams are ${^2}\sE_{6;2}$, $\sE_{7;2}$, $\sE_{8;2}$, $\sF_{4;2}$, and $\sG_{2;2}$.

Every duality of a thick $\sG_2$ (respectively $\sF_4$) building has opposition diagram ${^2}\sG_{2;1}$ (respectively ${^2}\sF_{4;2}$), and no such dualities are domestic (see  \cite[Theorem~2.7]{PTM:15} and \cite[Lemma~4.1]{PVM:19a}). We note that dualities of split $\sF_4$ and $\sG_2$ buildings only exist for perfect fields of special characteristic.

\subsection{Basic techniques}

It is generally rather difficult to prove that a given automorphism is domestic, and moreover to compute its opposition diagram. In this section we describe some of the techniques that we will use in this paper. 

The \textit{displacement} of an automorphism~$\theta$ is
$$
\disp(\theta)=\max\{\ell(\delta(c,c^{\theta}))\mid c\in\Delta\}.
$$
Thus $\theta$ is domestic if and only if $\disp(\theta)<\ell(w_0)$. Moreover, by \cite[Corollary~2.29]{PVM:19b} we have
$$
\disp(\theta)=\begin{cases}
\ell(w_{S\backslash J}w_0)&\text{if $\theta$ is capped}\\
\ell(w_{S\backslash J}w_0)-1&\text{if $\theta$ is uncapped}
\end{cases}
$$
where $J=\Type(\theta)$. Uncapped automorphisms will not play a significant role here, so assume that $\theta$ is capped. By the above comments, and the classification of admissible diagrams, the list of possible displacements of $\theta$ is very restricted. For example, for a capped automorphism of an $\sE_7$ building, the displacements for the non-full non-empty opposition diagrams are:
\begin{align*}
33\text{ for $\sE_{7;1}$},\quad 50\text{ for $\sE_{7;2}$},\quad 51\text{ for $\sE_{7;3}$},\quad 60\text{ for $\sE_{7;4}$}.
\end{align*}
Thus, for example, to show that a capped automorphism $\theta$ of an $\sE_7$ building is:
\begin{compactenum}[$(1)$]
\item not domestic, it is sufficient to show that $\disp(\theta)>60$;
\item has opposition diagram $\sE_{7;3}$ it is sufficient to show that $\disp(\theta)<60$ and that there is a type $\{1,6,7\}$ simplex mapped to an opposite (in fact, from the classification of diagrams, it would be sufficient to show that there is a type $7$ vertex mapped to an opposite). 
\end{compactenum}
Such arguments will be used on multiple occasions throughout the paper.

The following lemma is useful to compute displacement. 

\begin{lemma}\label{lem:red2}
Let $\theta$ be an automorphism of a thick spherical building~$\Delta$, and let $N=\disp(\theta)$. Let $c$ be any chamber. Suppose that either
\begin{compactenum}[$(1)$]
\item each panel of $\Delta$ has at least $4$ chambers, or
\item $\theta$ is an involution, or
\item $\theta$ induces opposition and $N=\ell(w_0)$.
\end{compactenum}
Then $\theta$ is necessarily capped, and there exists a chamber $d$ with $\delta(c,d)=w_0$ and $\ell(\delta(d,d^{\theta}))=N$. In particular,
$$
\disp(\theta)=\max\{\ell(\delta(d,d^{\theta}))\mid d\in \Delta\text{ with $\delta(c,d)=w_0$}\}.
$$
\end{lemma}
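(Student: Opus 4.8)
The plan is to prove Lemma~\ref{lem:red2} in two stages: first show that under any of the three hypotheses there exists a chamber $d$ opposite to $c$ realising the maximal displacement $N$, and then deduce cappedness as a consequence. Fix a chamber $e$ with $\ell(\delta(e,e^\theta))=N$. The idea is to ``push'' $e$ towards a chamber opposite $c$ one panel at a time, controlling the displacement along the way. Given a chamber $f$ with $\delta(c,f)=v\neq w_0$, pick a simple reflection $s$ with $\ell(vs)=\ell(v)+1$, and consider the panel $\pi$ of type $s$ containing $f$. I would use the standard estimate that for any two chambers $f,f'$ in a common panel, $\ell(\delta(f',{f'}^\theta))$ differs from $\ell(\delta(f,f^\theta))$ by at most $2$; more precisely, among the chambers of the panel $\pi$, the value $\ell(\delta(f',{f'}^\theta))$ takes at most two values differing by $2$, and the ``small'' value is attained by at most one chamber (the projection of $f^\theta$'s relevant gallery), while moving to a chamber $f'\neq f$ in $\pi$ with $\delta(c,f')=vs$ keeps the displacement $\geq \ell(\delta(f,f^\theta))$ provided we avoid that single bad chamber.

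The three hypotheses are exactly what guarantee we can always avoid the bad chamber while simultaneously moving strictly closer to being opposite $c$. In case~(1), each panel has at least $4$ chambers: one chamber of $\pi$ is $f$ itself (with $\delta(c,f)=v$, not $vs$), at most one more is ``bad'' for displacement, so at least two chambers $f'$ remain with $\delta(c,f')=vs$ and $\ell(\delta(f',{f'}^\theta))\geq \ell(\delta(f,f^\theta))$ — in fact we only need one. In case~(2), $\theta$ an involution: here one uses that on a panel the displacement of an involution behaves even more rigidly (a standard fact, e.g. from the Abramenko--Brown analysis of involutions, or \cite[\S2]{PVM:19b}), and the bad chamber, if it exists, coincides with $f$ or is forced in a way that still leaves room. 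In case~(3), $\theta$ induces opposition and $N=\ell(w_0)$: then $\ell(\delta(f,f^\theta))\leq N$ always, and whenever $f$ is not already opposite $c$, the projection argument combined with the fact that $\theta$ maps type-$s$ panels to type-$\pi_\theta(s)$ panels lets us choose $f'\in\pi$ with $\delta(c,f')=vs$ and still $\ell(\delta(f',{f'}^\theta))=\ell(w_0)=N$, because the chamber realising opposition within the image panel is unique and thick panels have at least $3$ chambers. Iterating from $f=e$ produces, after at most $\ell(w_0)$ steps, the desired chamber $d$ with $\delta(c,d)=w_0$ and $\ell(\delta(d,d^\theta))=N$.

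Having produced such a $d$, cappedness follows quickly: by \cite[Corollary~2.29]{PVM:19b} the displacement of a capped automorphism with $\Type(\theta)=J$ equals $\ell(w_{S\setminus J}w_0)$, while for an uncapped one it is $\ell(w_{S\setminus J}w_0)-1$; so it suffices to exhibit a simplex of type $J=\Type(\theta)$ mapped to an opposite, equivalently to show $\disp(\theta)=\ell(w_{S\setminus J}w_0)$. But the chamber $d$ constructed above is opposite $c$, and the gallery from $c$ realising $\delta(d,d^\theta)$ has length $N$; running the projection/opposition bookkeeping in reverse (or invoking \cite[Corollary~2.29]{PVM:19b} directly once we know displacement is achieved on a chamber opposite a prescribed one) pins down $\disp(\theta)$ to the capped value. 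The final displayed formula $\disp(\theta)=\max\{\ell(\delta(d,d^\theta))\mid \delta(c,d)=w_0\}$ is then immediate: the right-hand side is $\leq \disp(\theta)$ trivially and $\geq N=\disp(\theta)$ by the chamber $d$ just found.

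The main obstacle I anticipate is the panel-by-panel displacement estimate — making precise the claim that within a panel the function $f'\mapsto \ell(\delta(f',{f'}^\theta))$ takes at most two values, that they differ by $2$, and that the smaller is attained at most once. This is a ``folklore'' fact about galleries and projections (it underlies much of the opposition-geometry literature), but writing it cleanly requires care with the cases where $\theta$ does or does not stabilise the panel, and where $\pi_\theta(s)=s$ or not; the three hypotheses of the lemma are precisely the hypotheses under which one can always dodge the exceptional chamber, so the case analysis must be organised around them rather than proven in full generality. A secondary, minor point is ensuring the ``avoid the bad chamber'' step is compatible with ``move closer to opposite $c$'' — i.e. that the single constraint $\delta(c,f')=vs$ does not already force $f'$ to be the bad chamber — which is where thickness ($\geq 3$ chambers per panel), or $\geq 4$ in case~(1), enters decisively.
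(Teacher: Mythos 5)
Your proof takes a genuinely different route from the paper, which simply delegates: cappedness is obtained by citing \cite[Theorem~1]{PVM:19a} in case~(1), \cite[Corollary~2.22]{PVM:19b} in case~(2), and the definition in case~(3) (since $N=\ell(w_0)$ means some chamber is already mapped to an opposite); the existence of $d$ opposite $c$ realising the displacement is then \cite[Lemma~4.1]{PVM:19b}. Your plan instead reproves the existence of $d$ from scratch via a panel-by-panel pushing argument, and then tries to \emph{derive} cappedness from the existence of $d$. That second step is where the argument breaks.

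Producing a chamber $d$ opposite $c$ with $\ell(\delta(d,d^\theta))=N=\disp(\theta)$ does not determine whether $\theta$ is capped. Corollary~2.29 of \cite{PVM:19b} says $\disp(\theta)=\ell(w_{S\setminus J}w_0)$ if $\theta$ is capped and $\ell(w_{S\setminus J}w_0)-1$ if not, where $J=\Type(\theta)$; but you do not independently know which of these two values $N$ equals, so you cannot read off cappedness from the mere fact that $N$ is attained at a chamber opposite $c$. You would have to know $\delta(d,d^\theta)$ itself (not just its length) and show it equals $w_{S\setminus J}w_0$, which amounts to the thing you are trying to prove. The paper avoids this circularity by establishing cappedness first, independently in each of the three cases, and only then invoking the auxiliary lemma; your argument needs the same ordering.

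There is also a secondary concern with the panel-level estimate. The claim that on a panel $\pi$ the function $f'\mapsto\ell(\delta(f',{f'}^\theta))$ takes at most two values differing by $2$, with the smaller attained at most once, is not a routine consequence of gate/projection properties, because $f'$ and ${f'}^\theta$ move simultaneously in the two panels $\pi$ and $\pi^\theta$; the usual projection lemma controls $\delta(f',x)$ for \emph{fixed} $x$, not $\delta(f',{f'}^\theta)$. A statement of roughly this shape does appear in the opposition-geometry literature, but its correct formulation depends on whether $\pi_\theta(s)=s$ and whether $\pi$ and $\pi^\theta$ are opposite, parallel, or neither, and the case analysis is precisely what \cite[Lemma~4.1]{PVM:19b} packages up. As written, your sketch treats this as folklore without the case split, and case~(2) (involutions) in particular relies on ``a standard fact'' that is not standard in the form you need. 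If you want a self-contained proof you must prove the panel estimate in each regime; otherwise the honest move is to cite \cite[Lemma~4.1]{PVM:19b}, as the paper does, and supply the three cappedness citations separately.
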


\begin{proof}
To see that $\theta$ is capped: If each panel has at least $4$ chambers then $\theta$ is capped by \cite[Theorem~1]{PVM:19a}, if $\theta$ is an involution then $\theta$ is capped by~\cite[Corollary~2.22]{PVM:19b}, and if $\disp(\theta)=\ell(w_0)$ then $\theta$ maps a chamber to an opposite, and hence is capped. The rest of the lemma is contained in \cite[Lemma~4.1]{PVM:19b}.
\end{proof}

The following Corollary is useful to prove $J$-domesticity. 

\begin{cor}\label{cor:oppsphere}
Let $\theta$ be an automorphism of a thick spherical building~$\Delta$, and suppose that the hypothesis of Lemma~\ref{lem:red2} is satisfied. Let $c$ be any chamber. For each subset $J\subseteq S$, the automorphism $\theta$ is $J$-domestic if and only if it is $J$-domestic when restricted to the sphere of chambers opposite~$c$. That is, for every chamber $d$ with $\delta(c,d)=w_0$, the type $J$-simplex of~$d$ is not mapped onto an opposite simplex. 
\end{cor}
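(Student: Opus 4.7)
The only-if direction is immediate, since the set of chambers opposite $c$ is a subset of the chamber set of $\Delta$. For the converse, I argue by contrapositive: assuming some type $J$ simplex is mapped onto an opposite simplex by $\theta$, I aim to construct a chamber $d$ opposite $c$ whose type $J$ face is also in $\Opp(\theta)$.

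Set $J^* = \Type(\theta)$. The existence of a type $J$ simplex in $\Opp(\theta)$ forces $J \subseteq J^*$ together with the type-compatibility condition $\pi_\theta(J) = \pi_0(J)$. Under the hypothesis of Lemma~\ref{lem:red2}, $\theta$ is capped, and by \cite[Corollary~2.29]{PVM:19b} we have $\disp(\theta) = \ell(w_{S\setminus J^*} w_0)$. Applying Lemma~\ref{lem:red2} yields a chamber $d$ with $\delta(c, d) = w_0$ and $\ell(\delta(d, d^\theta)) = \disp(\theta)$.

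The crux is to verify that the type $J^*$ face of this particular $d$ itself lies in $\Opp(\theta)$. Writing $w = \delta(d, d^\theta)$ and letting $K \subseteq S$ be the largest subset satisfying $\pi_\theta(K) = \pi_0(K)$ and having the type $K$ face of $d$ in $\Opp(\theta)$, the latter condition is equivalent to $w$ lying in the double coset $W_{S\setminus K} w_0 W_{S\setminus \pi_\theta(K)}$, which using $\pi_\theta(K) = \pi_0(K)$ collapses to the single right coset $W_{S\setminus K} w_0$. The definition of $\Type(\theta)$ gives $K \subseteq J^*$ at once, and the length identity $\ell(w) = \ell(w_{S\setminus J^*} w_0)$ together with the shape of the coset $W_{S\setminus K} w_0$ then needs to force $K = J^*$. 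I expect this to be the main obstacle: a pure length comparison is too weak on its own, but it should be resolved by the finer information in the proof of Lemma~\ref{lem:red2} (that is, \cite[Lemma~4.1]{PVM:19b}), where the displacement-realising chamber $d$ opposite $c$ can in fact be chosen so that $\delta(d, d^\theta)$ lies in the specific coset $W_{S\setminus J^*} w_0$ corresponding to cappedness.

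Once the type $J^*$ face of $d$ is known to lie in $\Opp(\theta)$, the type $J$ subface of $d$ automatically lies in $\Opp(\theta)$ as well: since $J \subseteq J^*$ and $\pi_\theta(J) = \pi_0(J)$, any pair of opposite chambers witnessing opposition for the type $J^*$ face contains the type $J$ subface and its image, which are then opposite by type compatibility. This produces the required type $J$ simplex in $\Opp(\theta)$ contained in a chamber $d$ opposite $c$, completing the contrapositive and hence the proof.
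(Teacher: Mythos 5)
Your overall strategy is the paper's: use Lemma~\ref{lem:red2} to produce a chamber $d$ opposite $c$ realising $\disp(\theta)$, show that the type $\Type(\theta)$ simplex of this $d$ is mapped onto an opposite, and then pass to the type $J$ subface. The last step (passing from $J^*=\Type(\theta)$ to $J\subseteq J^*$ using $\pi_\theta(J)=\pi_0(J)$) is fine, and you are also right that a pure length comparison cannot force $K=J^*$: knowing only that $\ell(\delta(d,d^\theta))=\ell(w_{S\backslash J^*}w_0)$ and $\delta(d,d^\theta)\in W_{S\backslash K}w_0$ with $K\subseteq J^*$ does not pin down $\delta(d,d^\theta)$, nor even give $J\subseteq K$.

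The genuine gap is precisely at the point you flag and then wave past. You ``expect'' the missing fact to be extractable from the proof of Lemma~\ref{lem:red2} (i.e.\ \cite[Lemma~4.1]{PVM:19b}), namely that the displacement-realising chamber opposite $c$ can be \emph{chosen} with $\delta(d,d^\theta)\in W_{S\backslash J^*}w_0$. But that lemma, as invoked here, only yields existence of a chamber $d$ opposite $c$ with $\ell(\delta(d,d^\theta))=\disp(\theta)$; it says nothing about which coset $\delta(d,d^\theta)$ lies in, so your argument as written does not close. The correct ingredient is cappedness itself: by \cite[Theorem~2.6]{PVM:19a}, for a capped automorphism any chamber attaining the maximal displacement automatically satisfies $\delta(d,d^\theta)=w_{S\backslash \Type(\theta)}w_0$ exactly (not merely an element of that length), so \emph{every} displacement-realising chamber has its type $\Type(\theta)$ simplex in $\Opp(\theta)$ --- no special choice of $d$ is needed. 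This is the single citation the paper's proof uses to bridge the step you left open; with it inserted, your argument becomes the paper's proof.
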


\begin{proof}
By Lemma~\ref{lem:red2} $\theta$ is capped and there is a chamber $d$ with $\delta(c,d)=w_0$ such that $\ell(\delta(d,d^{\theta}))=\disp(\theta)$. Since $\theta$ is capped, by \cite[Theorem~2.6]{PVM:19a} we have $\delta(d,d^{\theta})=w_{S\backslash J}w_0$, where $J=\Type(\theta)$. In particular, the type $J$-simplex of $d$ is mapped to an opposite simplex, hence the result.
\end{proof}

The following proposition gives a useful technique for proving domesticity. We refer to this technique as the ``standard technique''.

\begin{prop}\label{prop:standardtechnique}
Let $\Delta=\Delta_{\Phi}(\mathbb{F})$ be split, and let $\theta\in G=G_{\Phi}(\mathbb{F})$. Suppose that the hypothesis of Lemma~\ref{lem:red2} is satisfied for the automorphism $\theta$. If there is $w_1\in W$ with 
$$w_1^{-1}w_0^{-1}u^{-1}\theta uw_0w_1\in B\quad\text{for all $u\in U^+$}
$$ 
then $
\disp(\theta)\leq 2\ell(w_1)-1$. Thus, in particular, if $\ell(w_1)\leq \ell(w_0)/2$ then $\theta$ is domestic. 
\end{prop}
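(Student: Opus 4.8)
The plan is to unwind the definition of displacement via the characterisation in Lemma~\ref{lem:red2}, using the fixed chamber $c = B$. By part of Lemma~\ref{lem:red2} (applicable since the hypothesis is satisfied), $\theta$ is capped and
$$
\disp(\theta) = \max\{\ell(\delta(d,d^{\theta})) \mid d \in \Delta,\ \delta(B,d)=w_0\}.
$$
Every chamber $d$ opposite $B$ has, by~(\ref{eq:indexpoints}), the form $d = u w_0 B$ for some $u \in U^+$ (since $\Phi(w_0)=\Phi^+$, the chambers opposite $B$ are exactly the $uw_0B$ with $u\in U^+$, and in fact $U^+$ acts simply transitively on them). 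So it suffices to bound $\ell(\delta(uw_0B, (uw_0B)^{\theta}))$ for each $u\in U^+$.

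Now $(uw_0B)^{\theta} = \theta u w_0 B$ (writing $\theta\in G$ acting by left multiplication), so $\delta(uw_0B, \theta uw_0B) = v$ where $(uw_0)^{-1}\theta u w_0 \in BvB$, i.e. $w_0^{-1}u^{-1}\theta u w_0 \in BvB$. The hypothesis says $w_1^{-1}\cdot w_0^{-1}u^{-1}\theta u w_0 \cdot w_1 \in B$, hence $w_0^{-1}u^{-1}\theta u w_0 \in w_1 B w_1^{-1} \subseteq Bw_1B\cdot B\cdot Bw_1^{-1}B$. Iterating the double-coset product rule~(\ref{eq:doublecoset}) (or rather its obvious consequence $Bw_1B \cdot Bw_1^{-1}B \subseteq \bigcup_{\ell(v)\leq 2\ell(w_1)} BvB$, proved by induction on $\ell(w_1)$ writing $w_1$ as a reduced word and multiplying one generator at a time), we get $w_0^{-1}u^{-1}\theta u w_0 \in \bigcup_{\ell(v)\le 2\ell(w_1)} BvB$. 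Therefore $\ell(\delta(uw_0B,\theta uw_0B)) \le 2\ell(w_1)$ for every $u$, giving $\disp(\theta)\le 2\ell(w_1)$.

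To sharpen this to $2\ell(w_1)-1$ one observes that $w_1Bw_1^{-1}$ does not meet $Bw_1w_1^{-1}B = B$-translates indexed by the unique length-$2\ell(w_1)$ element of the set $\{v : \ell(v)\le 2\ell(w_1)\}$ arising from $Bw_1B\cdot Bw_1^{-1}B$; more carefully, since $\delta$ takes values in $W$ and $\ell(w_1w_1^{-1})=0$ while $w_1Bw_1^{-1}$ maps into $\bigsqcup_{v} BvB$ with $v$ ranging over elements appearing in $Bw_1B\cdot Bw_1^{-1}B$, and the element of maximal length $2\ell(w_1)$ occurs only when the two reduced words concatenate without cancellation — but here the second word is the inverse of the first, so total cancellation of at least the last letter is forced, whence every $v$ occurring has $\ell(v)\le 2\ell(w_1)-1$. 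Combining, $\disp(\theta)\le 2\ell(w_1)-1 < \ell(w_0)$ whenever $\ell(w_1)\le \ell(w_0)/2$ (using that $\ell(w_0)$ is even in all relevant cases; if $\ell(w_0)$ is odd the inequality $2\ell(w_1)-1\le \ell(w_0)-2 <\ell(w_0)$ holds as well since $\ell(w_1)\le\lfloor\ell(w_0)/2\rfloor$). Hence $\theta$ is domestic.

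The main obstacle is the bookkeeping in the double-coset multiplication: one must be careful that $Bw_1Bw_1^{-1}B$ lands inside $\bigcup_{\ell(v)\le 2\ell(w_1)-1}BvB$ rather than merely $\bigcup_{\ell(v)\le 2\ell(w_1)}BvB$, i.e. that the ``$-1$'' really is gained. This is a standard fact about products of double cosets when the second factor is the inverse of the first (the length cannot be additive because $\ell(w_1)+\ell(w_1^{-1}) = 2\ell(w_1)$ but $\ell(w_1 w_1^{-1})=0$, forcing a drop), and can be proved cleanly by induction on $\ell(w_1)$ using~(\ref{eq:doublecoset}): peeling off a generator $s$ from the right of $w_1$ turns $Bw_1B\cdot Bw_1^{-1}B$ into $Bw_1'sB\cdot BsB\cdot B(w_1')^{-1}B \subseteq Bw_1'B(B\cup BsB)B(w_1')^{-1}B$, and the base case $Bw_1'B\cdot B(w_1')^{-1}B$ has the improved bound by the inductive hypothesis while $Bw_1'BsB\cdot\cdots$ is controlled by the weaker bound at level $\ell(w_1')$ — reassembling gives the claim at level $\ell(w_1)$.
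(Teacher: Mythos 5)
Your proof is correct and follows essentially the same approach as the paper: reduce to chambers opposite the base chamber via Lemma~\ref{lem:red2}, use the hypothesis to place $w_0^{-1}u^{-1}\theta u w_0$ inside $w_1Bw_1^{-1}\subseteq Bw_1B\cdot Bw_1^{-1}B$, and then peel a simple reflection off $w_1$ to extract the ``$-1$'' from the double-coset combinatorics. One remark: your middle paragraph (the one beginning ``To sharpen this to $2\ell(w_1)-1$\ldots'') is not really a valid argument as stated --- in general a product $Bw_1B\cdot Bw_1^{-1}B$ \emph{can} contain $BvB$ with $\ell(v)$ close to $2\ell(w_1)$, and the informal claim that ``cancellation of at least the last letter is forced'' is not something one can read off directly; the correct reason is precisely the induction you give in your last paragraph, where writing $w_1=w_1's$ reduced yields $Bw_1B\cdot Bw_1^{-1}B\subseteq (Bw_1'sB\cup Bw_1'B)\cdot B(w_1')^{-1}B$, and then the first factor is bounded by $\ell(w_1)+\ell(w_1')=2\ell(w_1)-1$ while the second is bounded by $2\ell(w_1')-1$ by the inductive hypothesis. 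So the heuristic paragraph could simply be deleted and the proof still stands. Also, the parity fuss over $\ell(w_0)$ in the final sentence is unnecessary: $\ell(w_1)\le\ell(w_0)/2$ immediately gives $2\ell(w_1)-1\le\ell(w_0)-1<\ell(w_0)$.
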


\begin{proof} Each chamber $gB$ of the building $G/B$ can be written uniquely as $uwB$ for some $w\in W$ and some $u\in \langle U_{\alpha}\mid \alpha\in \Phi(w)\rangle$. Then $\delta(gB,\theta gB)$ is the unique element $v\in W$ such that $w^{-1}u^{-1}\theta uw\in BvB$. If the hypothesis of Lemma~\ref{lem:red2} is satisfied, then the displacement of $\theta$ is achieved for some chamber $gB$ opposite the base chamber $B$. These chambers are of the form $uw_0B$ with $u\in U^+$. By the hypothesis we have
$$
w_0^{-1}u^{-1}\theta uw_0\in w_1Bw_1^{-1}\subseteq Bw_1B\cdot Bw_1^{-1}B.
$$
In particular, if $w=\delta(uw_0B,\theta uw_0B)$ then $BwB\subseteq Bw_1B\cdot Bw_1^{-1}B$. Thus
\begin{align}\label{eq:further}
\disp(\theta)\leq\max\{\ell(w)\mid BwB\subseteq Bw_1B\cdot Bw_1^{-1}B\}.
\end{align}
Writing $w_1=w_2s$ with $\ell(w_2s)=\ell(w_2)+1$, by~(\ref{eq:doublecoset}) we have
 $$Bw_1B\cdot Bw_1^{-1}B=(Bw_2sB\cdot Bw_2^{-1}B)\cup (Bw_2B\cdot Bw_2^{-1}B),$$
  and it follows from double coset combinatorics (\ref{eq:doublecoset}) that $
\disp(\theta)\leq 2\ell(w_1)-1$.
\end{proof}

In the case that $\theta\in U^+$ the following lemma is helpful in finding an element $w_1\in W$ as in Proposition~\ref{prop:standardtechnique}. If $A\subseteq \Phi$ let 
$
A_{\geq}=\{\beta\in \Phi\mid \beta\geq \alpha\text{ for some $\alpha\in A$}\},
$
where $\alpha\leq \beta$ if and only if $\beta-\alpha$ is a nonnegative linear combination of positive roots. Let $\pi_0$ be the automorphism of $\Phi$ given by $\pi_0(\alpha)=-w_0\alpha$ (thus $\pi_0$ is the automorphism induced by the opposition diagram automorphism). 

\begin{lemma}\label{lem:technique}
If $\theta\in \langle U_{\alpha}\mid \alpha\in A\rangle$ for some $A\subseteq \Phi^+$ then  
$
u^{-1}\theta u\in \langle U_{\alpha}\mid \alpha\in A_{\geq}\rangle
$ for all $u\in U^+$. Moreover, if $w_1\in W$ is such that $\pi_0(A_{\geq})\subseteq \Phi(w_1)$ then $w_1^{-1}w_0^{-1}u^{-1}\theta u w_0w_1\in B$ for all $u\in U^+$. 
\end{lemma}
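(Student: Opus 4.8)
The plan is to prove the two assertions of Lemma~\ref{lem:technique} in turn, the first being a direct consequence of the commutator relations and the second being a purely combinatorial consequence of the Bruhat decomposition together with~(\ref{eq:indexpoints}).

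\textbf{Step 1: the set $A_{\geq}$ is closed and absorbs commutators.} First I would observe that $A_{\geq}\subseteq\Phi^+$ (since $A\subseteq\Phi^+$ and adding positive roots keeps us positive) and that $A_{\geq}$ is a closed subset of $\Phi^+$: if $\beta,\gamma\in A_{\geq}$ with $\beta+\gamma\in\Phi$, then $\beta\geq\alpha$ for some $\alpha\in A$, hence $\beta+\gamma\geq\alpha$, so $\beta+\gamma\in A_{\geq}$. Moreover $A_{\geq}$ is ``upward closed'' in the sense that if $\beta\in A_{\geq}$ and $\gamma\in\Phi^+$ with $\beta+\gamma\in\Phi$ then $\beta+\gamma\in A_{\geq}$ (same argument, not even needing $\gamma\in A_{\geq}$). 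Now for the first claim: write $\theta$ as a product of factors $x_{\alpha}(a)$ with $\alpha\in A$, and $u\in U^+$ as a product of factors $x_{\gamma}(c)$ with $\gamma\in\Phi^+$. Conjugating a single generator $x_{\alpha}(a)$ (with $\alpha\in A_{\geq}$) by a single $x_{\gamma}(c)$ and using the commutator relation produces $x_{\alpha}(a)$ times a product of terms $x_{i\gamma+j\alpha}(\ast)$ with $i,j\geq1$ and $i\gamma+j\alpha\in\Phi$; each such root is $\geq j\alpha\geq\alpha$ (as $i\gamma$ and $(j-1)\alpha$ are nonnegative combinations of positive roots), hence lies in $A_{\geq}$. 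So $\langle U_{\alpha}\mid\alpha\in A_{\geq}\rangle$ is normalised by $U^+$; since $\theta\in\langle U_{\alpha}\mid\alpha\in A\rangle\subseteq\langle U_{\alpha}\mid\alpha\in A_{\geq}\rangle$, we get $u^{-1}\theta u\in\langle U_{\alpha}\mid\alpha\in A_{\geq}\rangle$ for all $u\in U^+$. (A cleaner way to phrase this: $U_{A_{\geq}}^+$ is a normal subgroup of $U^+$ because $A_{\geq}$ is closed and upward closed, i.e. $\Phi^+\setminus A_{\geq}$ is closed too and $U^+=U_{\Phi^+\setminus A_{\geq}}^+\cdot U_{A_{\geq}}^+$.)

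\textbf{Step 2: applying $w_0$ and $w_1$.} For the second claim, set $v=u^{-1}\theta u\in\langle U_{\alpha}\mid\alpha\in A_{\geq}\rangle$. Conjugating by $w_0$ (using the relation $s_{\alpha}(1)x_{\beta}(a)s_{\alpha}(1)^{-1}=x_{s_{\alpha}\beta}(\epsilon_{\alpha\beta}a)$ repeatedly) sends each $x_{\alpha}(a)$ with $\alpha\in A_{\geq}$ to $x_{w_0\alpha}(\pm a)$, and $w_0\alpha=-\pi_0(\alpha)$. Hence $w_0^{-1}vw_0\in\langle U_{-\beta}\mid\beta\in\pi_0(A_{\geq})\rangle$. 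Now I would conjugate by $w_1$: since $\pi_0(A_{\geq})\subseteq\Phi(w_1)$, for each $\beta\in\pi_0(A_{\geq})$ we have $w_1^{-1}(-\beta)\in\Phi^+$ (because $w_1^{-1}\beta\in-\Phi^+$ by definition of the inversion set), so $w_1^{-1}x_{-\beta}(\ast)w_1=x_{w_1^{-1}(-\beta)}(\pm\ast)\in U^+$. Therefore $w_1^{-1}w_0^{-1}vw_0w_1\in U^+\subseteq B$, which is exactly the asserted membership $w_1^{-1}w_0^{-1}u^{-1}\theta uw_0w_1\in B$ for all $u\in U^+$.

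\textbf{Main obstacle.} The only genuinely delicate point is the bookkeeping in Step~1 that $A_{\geq}$ really is preserved under the commutator terms $i\gamma+j\alpha$ and, correspondingly, that $\langle U_\alpha\mid \alpha\in A_\geq\rangle$ is a subgroup normalised by all of $U^+$ (not merely by individual root subgroups) — one has to be a little careful that, after the first commutation step produces new roots in $A_\geq$, further commutations with the remaining factors of $u$ stay inside $A_\geq$, which is handled uniformly by the ``upward closed'' property noted above. Steps~2 is then routine: it is just transport of root subgroups under Weyl group elements, controlled entirely by which roots lie in the relevant inversion sets. I would present Step~1 via the normal-subgroup reformulation (it is the shortest correct route) and keep Step~2 to two or three lines.
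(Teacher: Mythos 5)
Your main argument (Steps 1 and 2) is correct and coincides with the paper's proof: both normalise $F=\langle U_\alpha\mid\alpha\in A_\geq\rangle$ by $U^+$ one root-subgroup generator at a time via the Chevalley commutator relation (using that each $i\gamma+j\alpha$ with $i,j\geq 1$ is $\geq\alpha$), and both then transport by $w_0$ and $w_1$ using the inversion-set hypothesis to land inside $B$. However, the parenthetical ``cleaner way'' at the end of Step 1 is wrong: upward closure of $A_\geq$ does \emph{not} imply that $\Phi^+\setminus A_\geq$ is closed. For instance with $\Phi=\sA_2$ and $A=\{\alpha_1+\alpha_2\}$ one has $A_\geq=\{\alpha_1+\alpha_2\}$ and $\Phi^+\setminus A_\geq=\{\alpha_1,\alpha_2\}$, which is not closed since $\alpha_1+\alpha_2\in\Phi$ lies in $A_\geq$; so $U^+_{\Phi^+\setminus A_\geq}$ as defined in the paper need not be a complement to $U^+_{A_\geq}$ (it may even be all of $U^+$). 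The direct commutator check you actually gave, which is the paper's route, is the correct argument; drop the parenthetical reformulation.
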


\begin{proof}
Let $F=\langle U_{\alpha}\mid \alpha\in A_{\geq}\rangle$. If $\beta\in \Phi^+$ and $\alpha\in A_{\geq}$ then by the commutator relations we have, for all $a\in\FF$,
$
x_{\beta}(a)^{-1}U_{\alpha}x_{\beta}(a)\subseteq U_{\alpha}U_{\alpha+\beta}U_{2\alpha+\beta}\cdots\subseteq F$ (with the convention that $U_{\gamma}=\{1\}$ if $\gamma\notin \Phi$) and it follows that $x_{\beta}(a)^{-1}Fx_{\beta}(a)\subseteq F$. Thus, by induction, $u^{-1}Fu\subseteq F$ for all $u\in U$, and since $\theta\in F$ we have $u^{-1}\theta u\in F$ as required. Thus if $w_1\in W$ is such that $\pi_0(A_{\geq})\subseteq \Phi(w_1)$ then
$$
w_0^{-1}u^{-1}\theta u w_0\in \langle U_{-\alpha}\mid \alpha\in \pi_0(A_{\geq})\rangle =w_1\langle U_{-w_1^{-1}\alpha}\mid \alpha\in \pi_0(A_{\geq})\rangle w_1^{-1}\subseteq w_1Bw_1^{-1},
$$
hence the result. 
\end{proof}

\subsection{Parapolar spaces and Lie incidence geometries}\label{sec:parapolar}

At certain points of this paper (in particular in Section~\ref{sec:polarcopolar}) we will work with Lie incidence geometries~$\sX_{n,J}(\FF)$. For example, the long root geometries $\sE_{6,2}(\FF)$, $\sE_{7,1}(\FF)$, $\sE_{8,8}(\FF)$, $\sF_{4,1}(\FF)$, and the geometries $\sE_{6,1}(\FF)$, $\sE_{7,7}(\FF)$ and $\sF_{4,4}(\FF)$. We note the similarity of notation with that used for opposition diagrams, however no confusion should arise. 

In general, the Lie incidence geometry $\sX_{n,J}(\FF)$, with $J\subseteq S$, is defined from the building $\Delta=\Delta_\Phi(\FF)$, with $\Phi$ the root system of type $\sX_n$, as the point-line geometry with point set the set of simplices (or flags) of type~$J$ of~$\Delta$, and a typical line is the set of flags of type $J$ incident with a flag of type $S\backslash\{j\}$ for some $j\in J$. 

If $J=\wp$, then we call the geometry $\sX_{n,J}(\FF)$ a \emph{long root} geometry. 

The geometries listed in the first paragraph of this subsection are all examples of \textit{parapolar spaces} (see \cite[Chapter~13]{Sch:10} for the basic terminology and definition). In particular, these point-line geometries $\mathscr{G}=(\mathcal{P},\mathcal{L})$ contain \textit{symplecta} (or \textit{symps} for short), being convex subsets that are non-degenerate polar spaces of rank at least~$2$. If all symplecta have the same rank~$r\geq 2$ then $\mathscr{G}$ is said to have \textit{symplectic rank}~$r$. Recall that in any incidence geometry, $x^{\perp}$ denotes the set of all points collinear to the point~$x$. Also, each symp $\xi$ is, by convexity, determined by any pair $\{x,y\}$ of non-collinear points and we denote $\xi=\xi(x,y)$. If $\sX_n=\sE_6,\sE_7,\sE_8,\sF_4$, then $\sX_{n,\wp}(\FF)$ has symplectic rank $4,5,7,3$, respectively. Moreover, the symps precisely correspond to the residues of vertices of type $\wp^*$.   

In the parapolar spaces $\sE_{6,2}(\FF)$, $\sE_{7,1}(\FF)$, $\sE_{8,8}(\FF)$, $\sF_{4,1}(\FF)$ and $\sF_{4,4}(\FF)$ there are precisely $5$ possible ``distances'' between two points $x,y$. Either (1) $x=y$,  (2) $x$ and $y$ are collinear, (3) $\{x,y\}$ lies in a symplecton, (4) $|x^{\perp}\cap y^{\perp}|=1$, or
(5) $|x^{\perp}\cap y^{\perp}|=0$. In case (3) we say that $x,y$ are \textit{symplectic} (or \textit{at symplectic distance}) and we denote by $x^{\perp\!\!\!\perp}$ the set of points at symplectic distance from $x$. in case (4) we say that $x,y$ are \textit{special} (or a \textit{special pair}, or \textit{at special distance}), and we denote by $x^{\Join}$ the set of points at special distance from $x$. Finally, in case (5) the points $x,y$ are opposite each other (in the building theoretic sense). 

The parapolar spaces $\sF_{4,1}(\FF)$ and $\sF_{4,4}(\FF)$ are also called \textit{metasymplectic spaces}. In the parapolar spaces $\sE_{6,1}(\FF)$ and $\sE_{7,7}(\FF)$ there are no special pairs, and so in these spaces there are only $4$ possible distances between two points (these spaces are called \textit{strong parapolar spaces}).

Let us briefly describe the long root geometry parapolar spaces in more algebraic terms. Let $\sX_n=\sE_n$ ($n=6,7,8$) or $\sF_4$, and let $p\in S$ be the polar node. Recall the notation~(\ref{eq:notationparabolic}). Let $M_p$ denote the set of minimal length coset representatives of cosets in $W/W_p$, and let $R_p$ denote the set of minimal length representatives for the double cosets in $W_p\backslash W/W_p$. The points of the long root geometry $\sX_{n,p}(\FF)$ are the cosets in $G/P_p$. We have $
G=\bigsqcup_{w\in R_p}P_pwP_p,
$
and the \textit{Weyl-distance} $\delta(g_1P_p,g_2P_p)$ between points $g_1P_p$ and $g_2P_p$ is defined to be the unique element $\delta(g_1P_p,g_2P_p)=w\in R_p$ with $g_1^{-1}g_2\in P_pwP_p$. 

In each case $R_p$ contains precisely~$5$ elements $e,s_p,w_1,w_2,s_{\varphi}$, arranged in increasing length, corresponding to the $5$ possible distances between points. Explicitly, the points $x=g_1P_p$ and $y=g_2P_p$ are collinear (respectively symplectic, at special distance, opposite) if $\delta(x,y)=s_p$ (respectively $w_1$, $w_2$, $s_{\varphi}$). Similar remarks hold for the metasymplectic space $\sF_{4,4}(\FF)$, and also the strong parapolar spaces $\sE_{6,1}(\FF)$ and $\sE_{7,7}(\FF)$ (where only $4$ distances are possible, and hence $w_2$ is omitted).

\section{Root elations and the polar type}\label{sec:polartype}

In this section we prove Theorem~\ref{thm:polarclass} for exceptional types (see Theorems~\ref{thm:RGD} and~\ref{thm:converse}). In fact we focus on types $\sE_6,\sE_7,\sE_8$ and $\sF_4$, with the case of $\sG_2$ following from the classification in Theorem~\ref{thm:G2class} (see Subsection~\ref{sec:G2}). We also provide a geometric characterisation of root elations for buildings of type $\sE_6$ and $\sE_7$ in Theorem~\ref{thm:geomchar}, and we discuss short root elations in the non-simply laced case in Theorem~\ref{thm:short}. The proofs of Corollaries~\ref{cor:existsdomestic} and~\ref{cor:existsconjugacy} are given in Subsection~\ref{sec:exists} (see also Corollary~\ref{cor:barbara2}). 

\subsection{Long root elations}\label{sec:lre}

To prove Theorem~\ref{thm:polarclass} we must first show that long root elations have polar opposition diagram, and conversely that every automorphism with polar opposition diagram is necessarily a long root elation. We prove the first statement in Theorem~\ref{thm:RGD} in a more general context of Moufang spherical buildings, and the second statement is proved in Theorem~\ref{thm:converse}.

%
%
%
%
%

Let $\Delta$ be an irreducible Moufang spherical building. Recall (for example, from~\cite{Tim:00}) that if $\Delta$ is not a generalised octagon, then one can associate a crystallographic (not necessarily reduced) root system $\Phi$ to $\Delta$ in such a way that the root subgroup $U_{\varphi}$, with $\varphi$ the highest root of $\Phi$, is contained in the centre of the positive root subgroup $U^+=\langle U_{\alpha}\mid \alpha\in \Phi^+\rangle$. In the case that $\Phi$ has only one root length we call all roots long. Let $\wp\subseteq S$ denote the polar type of the Dynkin diagram of $\Phi$. If $\Delta$ is a generalised octagon then by \cite{Tit:83} one may associate a non-crystallographic (and non-reduced) root system~$\Phi$, and again there is a ``highest root'' $\varphi$ such that $U_{\varphi}$ is contained in the centre of $U^+$ (the root $\varphi$ is $\alpha_4'$ in \cite{Tit:83}, and the polar type corresponds to ``points'' of the octagon). In all cases, let $W$ be the Weyl group of $\Phi$ (so $W$ is a dihedral group of order~$16$ in the octagon case).

\begin{thm}\label{thm:RGD} Let $\Delta$ be an irreducible Moufang spherical building with associated root system $\Phi$ and Weyl group~$W$ as above. Let $\alpha\in\Phi$ be a long root, and let $\theta\in U_{\alpha}\backslash\{1\}$. Then the collineation $\theta$ of $\Delta=G/B$ has polar opposition diagram. Moreover,
\begin{align}
\label{eq:attained}
\{\delta(c,c^{\theta})\mid c\in \Delta\}=\{1\}\cup\{s_{\alpha}\mid \alpha\in W\varphi\}.
\end{align}
\end{thm}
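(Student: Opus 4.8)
The plan is to reduce everything to a statement about a single root subgroup acting on the building, using the Moufang property to move $\theta$ into a convenient position, and then to compute the Weyl distance $\delta(c,c^\theta)$ chamber-by-chamber via the folding relation. First I would fix $\theta = x_\alpha(a)$ for a long root $\alpha$ and $a\neq 0$; since all long roots are conjugate under $W$ (and hence, by the Moufang structure, $x_\alpha(a)$ is conjugate in $G$ to $x_\varphi(a')$ for some $a'\neq 0$ and the highest root $\varphi$), and since conjugation does not change the opposition diagram nor the set in~\eqref{eq:attained}, it suffices to treat $\theta = x_\varphi(a)$ with $\varphi$ the highest root. The key structural fact to exploit is that $U_\varphi$ lies in the centre of $U^+$, which will make the action of $\theta$ on chambers of the form $uw B$ transparent when $u\in U^+$.

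The main computation is then: for each $w\in W$ and each chamber $c = u\,w\,B$ with $u\in U_{\Phi(w)}^+$, determine $\delta(c,c^\theta)$, i.e.\ the element $v\in W$ with $w^{-1}u^{-1}x_\varphi(a)\,u\,w\in BvB$. Because $x_\varphi(a)$ is central in $U^+$, we get $u^{-1}x_\varphi(a)u = x_\varphi(a)$ whenever $u\in U^+$, so this reduces to computing $\delta(wB, x_\varphi(a)wB)$, i.e.\ finding $v$ with $w^{-1}x_\varphi(a)w\in BvB$. Now $w^{-1}x_\varphi(a)w \in U_{w^{-1}\varphi}$, so the answer depends only on the sign of the root $w^{-1}\varphi$: if $w^{-1}\varphi\in\Phi^+$ then $w^{-1}x_\varphi(a)w\in U^+\subseteq B$ and $v = 1$; if $w^{-1}\varphi = -\gamma$ with $\gamma\in\Phi^+$, then the folding relation~\eqref{eq:folding} gives $x_{-\gamma}(b) = x_\gamma(b^{-1})s_\gamma x_\gamma(b)h_{\gamma^\vee}(-b)\in Bs_\gamma B$, so $v = s_\gamma = s_{w^{-1}\varphi}$. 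As $w$ ranges over $W$, the roots $w^{-1}\varphi$ range over the $W$-orbit $W\varphi$ of $\varphi$ (the long roots, in the non-simply-laced case — these are precisely $W\varphi$), so the reflections $s_{w^{-1}\varphi}$ that actually occur are exactly $\{s_\beta\mid\beta\in W\varphi\}$; together with $v=1$ (attained, e.g., at $w$ with $w^{-1}\varphi\in\Phi^+$, which exists since $\varphi\in\Phi^+$), this yields~\eqref{eq:attained}. For the chambers $c$ not of the form $uwB$ with $u\in U^+$ — there are none after using the Bruhat/Birkhoff decomposition $\Delta = \bigsqcup_w U_{\Phi(w)}^+ wB$ — so in fact every chamber is covered, and no larger displacement is ever achieved.

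From~\eqref{eq:attained} the opposition diagram statement follows quickly: the maximal displacement is $\max\{\ell(s_\beta)\mid\beta\in W\varphi\} = \ell(s_\varphi)$, which is strictly less than $\ell(w_0)$ (as $\Delta$ is not a projective plane), so $\theta$ is domestic; more precisely the simplices mapped to opposite simplices are governed by which panels/residues are ``moved far'', and a type-$J$ simplex is mapped to an opposite one iff $J$ meets the support determined by the inversion sets $\Phi(s_\beta)$, $\beta\in W\varphi$. Using~\eqref{eq:highestrootinversionset}, $\Phi(s_\varphi) = \{\alpha\in\Phi^+\mid \langle\alpha,\omega_i\rangle>0\text{ for some }i\in\wp\}$, one reads off that $\Type(\theta) = \wp$: a vertex of type $i$ is moved to an opposite vertex for some chamber exactly when $s_\varphi$ (or some $W$-conjugate reflection of a long root) has a reduced word hitting $s_i$ in the appropriate coset-representative sense, and the minimal such set is $\wp$. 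Hence $\Diag(\theta)$ is the polar diagram $(\Gamma,\wp,\mathrm{id})$.

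The hard part — or rather the part requiring the most care — is the passage from the set of distances~\eqref{eq:attained} to the precise claim that $\Type(\theta) = \wp$ exactly (not merely $\Type(\theta)\subseteq\wp$ or $\supseteq\wp$), and handling the non-reduced / generalised octagon cases uniformly, where ``long root'' and the description of $W\varphi$ need the references to~\cite{Tim:00,Tit:83}. For the octagon one must check directly that $U_{\alpha'_4}$ is central in $U^+$ and that the dihedral Weyl group of order $16$ behaves as claimed; this is where I expect to lean hardest on the cited structure theory rather than on a root-system computation. One should also verify that the displacement $\ell(s_\varphi)$ genuinely corresponds to a type-$\wp$ simplex being mapped to an opposite one (so that $\theta$ is capped with $\Type(\theta)=\wp$), which follows from the standard fact that $\delta(c,c^\theta) = w_{S\setminus\wp}w_0$ for an appropriate chamber $c$ — here one checks $s_\varphi = w_{S\setminus\wp}w_0$, a short root-system verification done case-by-case or via the observation that both sides are the unique longest element inverting exactly the roots in $\Phi(s_\varphi)$.
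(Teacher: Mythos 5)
Your computation of the displacement set~\eqref{eq:attained} is exactly the paper's argument: conjugate $\theta$ into $U_\varphi\setminus\{1\}$ (hence central in $U^+$), write each chamber as $uwB$ with $u\in U^+$, use centrality to reduce to $w^{-1}\theta w\in U_{w^{-1}\varphi}$, and split on the sign of $w^{-1}\varphi$, applying the folding relation when it is negative. That part is correct and essentially verbatim the paper's.

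Where your proposal has a genuine gap is the passage from~\eqref{eq:attained} to $\Type(\theta)=\wp$ — and you are right to flag this as ``the hard part,'' because the phrases ``one reads off'' and ``reduced word hitting $s_i$ in the appropriate coset-representative sense'' do not constitute an argument and in fact point in a misleading direction (a reduced word for $s_\varphi$ typically uses all generators, so ``which letters appear'' cannot be the criterion). The inclusion $\wp\subseteq\Type(\theta)$ you do handle correctly: for a chamber $c$ with $\delta(c,c^\theta)=s_\varphi=w_{S\setminus\wp}w_0$, the type-$\wp$ sub-simplex of $c$ is sent to an opposite. What is missing is the reverse inclusion $\Type(\theta)\subseteq\wp$, and the paper's argument for it is short but not a tautology: let $x$ be any simplex of type $J$ sent to an opposite and set $J'=S\setminus J$. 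Since $J$ is $\pi_0$-stable, $W_{J'}w_0W_{J'}=w_0W_{J'}$, so by~\eqref{eq:attained} there is a long root $\alpha$ with $s_\alpha=w_0w$ for some $w\in W_{J'}$. Then $s_\alpha\varphi=w_0(w\varphi)\in-\Phi^+$ because $W_{J'}$ is a \emph{proper} parabolic (so $w\varphi\in\Phi^+$); but $s_\alpha\varphi=\varphi-\langle\varphi,\alpha^\vee\rangle\alpha$, and for the highest root this is negative only when $\alpha=\pm\varphi$. Hence $w_0^{-1}s_\varphi\in W_{J'}$, and applying it to a simple root $\alpha_t$ with $t\in J'$ gives $w_0^{-1}s_\varphi(\alpha_t)=-\alpha_{\pi_0(t)}+\langle\alpha_t,\varphi^\vee\rangle\varphi$; since this must lie in $\Phi_{J'}$ and $\varphi\notin\Phi_{J'}$, we get $\langle\alpha_t,\varphi^\vee\rangle=0$ for every $t\in J'$, i.e.\ $\wp\subseteq J$. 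Applying this with $J$ the $\pi_0$-orbit of any single vertex type mapped to an opposite then forces that orbit to equal $\wp$, giving $\Type(\theta)\subseteq\wp$. Without this ``forcing $\alpha=\pm\varphi$ and reading off $\langle\alpha_t,\varphi^\vee\rangle=0$'' step your proof does not close.
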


\begin{proof} Since $\alpha$ is a long root it is in the $W$-orbit of the highest root, and it follows from standard RGD properties (see \cite[Section~7.8]{AB:08}) that $\theta$ is conjugate to an element of $U_{\varphi}\backslash\{1\}$. Thus, after conjugation, we may assume that $\theta$ is central in $U^+$. A chamber $gB$ is opposite its image $\theta gB$ if and only if $\delta(gB,\theta gB)=w_0$, if and only if $g^{-1}\theta g\in Bw_0B$. By the Bruhat decomposition each chamber $gB$ can be written as
$
gB=uwB$ for some $u\in U^+$ and $w\in W$. Since $\theta$ is central in $U^+$ we have 
$
w^{-1}u^{-1}\theta uw=w^{-1}\theta w\in U_{w^{-1}\varphi}
$. Thus if $w^{-1}\varphi\in \Phi^+$ we have $\delta(gB,\theta gB)=1$, and if $w^{-1}\varphi\in -\Phi^+$ then $w^{-1}\theta w\in U_{w^{-1}\varphi}\subseteq Bs_{w^{-1}\varphi}B$, and so $\delta(gB,\theta gB)=s_{w^{-1}\varphi}$. Equation~(\ref{eq:attained}) follows.

Since $\theta$ is a nontrivial collineation there is some simplex mapped onto an opposite simplex. Let $J\subseteq S$ be the type of such a simplex $x$, and write $J'=S\backslash J$. Thus for each chamber $c\in\Delta$ containing $x$ we have $\delta(c,c^{\theta})\in W_{J'}w_0W_{J'}$. Since $J$, and hence also $J'$, are stable under opposition we have $W_{J'}w_0W_{J'}=w_0W_{J'}$. It follows from (\ref{eq:attained}) that there is a root $\alpha\in W\varphi$ with $s_{\alpha}=w_0w$ for some $w\in W_{J'}$. Since $W_{J'}$ is a proper parabolic subgroup of $W$, each $w\in W_{J'}$ maps the highest root $\varphi$ to a positive root, and since $w_0$ maps all positive roots to negative roots we have $s_{\alpha}\varphi\in -\Phi^+$. Since $\varphi$ is the highest root of $\Phi$ this forces $\alpha=\pm\varphi$, and so $w_0^{-1}s_{\varphi}\in W_{J'}$. For all $t\in S$ we have
$$
w_0^{-1}s_{\varphi}(\alpha_t)=-\alpha_{\pi_0(t)}+\langle\alpha_t,\varphi^{\vee}\rangle\varphi
$$
(where $\pi_0(t)=w_0tw_0^{-1}$), and since $w(\Phi_{J'})\subseteq \Phi_{J'}$ for all $w\in W_{J'}$ we deduce that $\langle\alpha_t,\varphi^{\vee}\rangle=0$ for all $t\in {J'}=S\backslash J$. It follows that $J=\wp$. 
\end{proof}

We note the following corollary.

\begin{cor}\label{cor:existsdomestic}
Every irreducible Moufang spherical building distinct from a projective plane admits a nontrivial domestic collineation.
\end{cor}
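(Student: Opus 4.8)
The plan is to deduce Corollary~\ref{cor:existsdomestic} directly from Theorem~\ref{thm:RGD}. The theorem tells us that for any irreducible Moufang spherical building $\Delta$ with associated root system $\Phi$, a nontrivial element $\theta\in U_\alpha$ for a long root $\alpha$ is a collineation whose opposition diagram is the polar diagram $(\Gamma,\wp,\mathrm{id})$. So the only thing left to check is that this opposition diagram is \emph{not} full, i.e.\ that $\wp\neq S$, for every irreducible Moufang spherical building other than a projective plane; if $\wp\neq S$ then the polar diagram has an uncircled node, hence $\theta$ is not opposition-inducing, hence $\theta$ is a nontrivial domestic collineation. (One should also note $\theta\neq 1$ is genuinely a \emph{nontrivial} automorphism, which is immediate since $\theta\in U_\alpha\setminus\{1\}$ acts nontrivially on $G/B$.)

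First I would observe that $\wp = S$ forces the rank of $\Phi$ to be very small. Indeed, $\wp$ consists of the nodes $i$ with $\langle\alpha_i,\varphi\rangle\neq 0$, i.e.\ the support of the highest root $\varphi$ meeting the simple roots nontrivially; for an irreducible root system the highest root is a dominant weight, and the standard fact (visible from the extended Dynkin diagram, where the lowest root $-\varphi$ attaches to the nodes of $\wp$) is that $\wp$ is a proper subset of $S$ unless the diagram has rank $\leq 2$. Concretely: for $\sA_1$ we have $\wp=S$; for $\sA_2$ we have $\varphi=\alpha_1+\alpha_2$ and $\wp=\{1,2\}=S$; for $\sB_2=\sC_2$ and $\sG_2$ the highest root is connected to both simple nodes, so $\wp=S$; but in every irreducible rank-$\geq 3$ Coxeter/Dynkin diagram, and in $\sA_1\times$-free rank $2$ apart from the above, the highest root has $\langle\alpha_i,\varphi\rangle=0$ for at least one $i$ (for $\sA_n$, $n\geq 3$, only the two ends of the chain lie in $\wp$; for $\sD_n$, $\sE_n$, $\sF_4$ only one or two nodes). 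So the only irreducible Moufang spherical buildings with $\wp=S$ are the rank-$1$ buildings (which are not buildings in the relevant sense, or are excluded), the projective planes ($\sA_2$), and the generalised quadrangles, hexagons and octagons ($\sB_2$, $\sG_2$, and the octagon case where $\wp$ corresponds to "points" and the associated dihedral root system again has the top root meeting both classes).

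This last point is the only genuine subtlety: for generalised $2m$-gons (rank $2$), the polar diagram encircles the "point" node but not the "line" node, so $\wp\neq S$ precisely because a generalised polygon has \emph{two} types of vertices and $\wp$ is a single node. The apparent worry is the generalised quadrangle, hexagon and octagon cases, where one might fear $\wp=S=\{1,2\}$; but in fact for these the relevant root system has a distinguished highest (long) root $\varphi$ and the polar type $\wp$ is a single node by the definition $\wp=\{i:\langle\alpha_i,\varphi\rangle\neq 0\}$ together with $\varphi$ being the highest \emph{long} root (for $\sB_2$, $\sG_2$ the highest root is long and is orthogonal to the short simple root — wait, this needs care). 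I would therefore handle the rank-$2$ case by direct inspection: in a generalised $n$-gon the polar node is the "point" node and the long root elations fix the "line" type, so the diagram $\sG_{2;1}^2$ (and its $\sB_2$, octagon analogues) has exactly one uncircled node; the only case where this fails is $n=3$, the projective plane, where point–line duality makes the highest-root configuration symmetric and $\wp$ comprises both nodes. Since the statement explicitly excludes projective planes, this completes the argument.

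The main obstacle is thus not any deep computation but making the rank-$2$ bookkeeping airtight — in particular confirming that for \emph{every} Moufang generalised quadrangle, hexagon, and octagon (including the non-split and mixed ones, and the Ree–Tits octagons where $\Phi$ is the non-crystallographic, non-reduced root system of \cite{Tit:83}) the long root elations are domestic, i.e.\ the "highest root" $\varphi$ whose root group is central in $U^+$ has $\langle\alpha_i,\varphi\rangle=0$ for exactly one of the two simple roots $\alpha_i$. This is recorded (or immediate) in the sources cited in the setup of Theorem~\ref{thm:RGD} — \cite{Tim:00} for the crystallographic cases and \cite{Tit:83} for the octagon — where the polar type is identified with "points". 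Hence $\wp$ is always a proper nonempty subset of $S$ away from the projective plane, the polar diagram always has an uncircled node, and Theorem~\ref{thm:RGD} produces the desired nontrivial domestic collineation. I would write the proof as: "\emph{Proof.} Fix a long root $\alpha\in\Phi$ and $\theta\in U_\alpha\setminus\{1\}$. By Theorem~\ref{thm:RGD}, $\theta$ is a nontrivial collineation with opposition diagram the polar diagram $(\Gamma,\wp,\mathrm{id})$. It remains to show $\wp\neq S$. If $\mathrm{rank}(\Delta)\geq 3$ this is clear from the list of polar types in Appendix~\ref{app:data}; if $\mathrm{rank}(\Delta)=2$ then $\Delta$ is a generalised $n$-gon with $n\geq 3$, and $\wp$ is the single 'point' node, which is proper in $S$ unless $n=3$. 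Since $\Delta$ is not a projective plane, $\wp\neq S$, so $\theta$ is domestic. \qed"
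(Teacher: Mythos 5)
Your argument is correct and coincides with the paper's own proof: both deduce the corollary from Theorem~\ref{thm:RGD} together with the observation that the polar type $\wp$ is a proper subset of $S$ for every irreducible type other than $\sA_2$. The middle of your writeup contains a momentary slip (you initially assert that for $\sB_2$ and $\sG_2$ the highest root meets both simple nodes, when in fact it is orthogonal to the short simple root, so $\wp$ is a single node), but you catch and correct this before the final cleaned-up proof, which is sound.
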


\begin{proof}
This follows immediately from Theorem~\ref{thm:RGD} and that fact that $\wp$ is a strict subset of $S$ in all cases except for $\Phi=\sA_2$ (which is the case of projective planes).
\end{proof}

\begin{remark}\label{rem:correction} Theorem~\ref{thm:RGD} implies that every Ree-Tits octagon admits nontrivial line-domestic collineations. We note that it is erroneously stated in \cite{ISX:18} that these octagons do not admit line-domestic collineations. The error appears to be as follows. If a thick generalised octagon admits a line-domestic colllineation $\theta$ then by \cite[Theorem~2.8 and Proposition~4.1]{PTM:15} the fixed element structure of~$\theta$ is either a large full suboctagon, a distance $4$-ovoid, or a ball of radius $4$ in the incidence graph centred at a point. For finite Ree-Tits octagons we proved in \cite[Proposition~4.4]{PTM:15} that large full suboctagons do not exist, and in \cite{ISX:18} it is shown that distance $4$-ovoids do not exist. Thus any line-domestic collineation of a finite Ree-Tits octagon necessarily fixes a ball of radius $4$ centred at a point (and is thus a central collineation, the example given by Theorem~\ref{thm:RGD}). It follows from \cite[Proposition~4.5]{PTM:15} that no collineation of a finite Ree-Tits octagon fixes a ball of radius $4$ centred at a \textit{line}, and we believe that this may be the source of the misunderstanding in~\cite{ISX:18} (with $s=t^2$ misread as $s^2=t$ in \cite[Proposition~4.5]{PTM:15}). 
\end{remark}

We now prove the converse to Theorem~\ref{thm:RGD} for split buildings of types $\sE_n$ and $\sF_4$. Let $\Phi$ be a root system of type $\sE_n$ for $n=6,7,8$, or of type $\sF_4$. Let $\varphi$ be the highest root. Let $i_0$ be the polar node. Let $\Phi_1$ be the polar subsystem, generated by the simple roots $\{\alpha_i\mid i\neq i_0\}$, and let $W_1$ be the parabolic subgroup generated by $\{s_i\mid i\neq i_0\}$. Let $w_0$ be the longest element, and let $w_1$ be the longest element of~$W_1$. Let $j_0$ be the unique node joined to the polar node in the Dynkin diagram. Write $\pi=\alpha_{i_0}$ and $\pi'=\alpha_{j_0}$. Explicitly, $(\pi,\pi')=(\alpha_2,\alpha_4)$, $(\alpha_1,\alpha_3)$, $(\alpha_8,\alpha_7)$, and $(\alpha_1,\alpha_2)$ for types $\sE_6$, $\sE_7$, $\sE_8$, and $\sF_4$, respectively. Let $\omega=\omega_{i_0}$ be the fundamental coweight corresponding to the polar node (thus $\langle\omega,\alpha_{i}\rangle=
\delta_{i,i_0}$). We note the following facts:
\begin{compactenum}[$(1)$]
\item $\omega=\varphi^{\vee}$ (because $\langle\varphi^{\vee},\alpha_{i}\rangle=\delta_{i,i_0}$).
\item $\varphi$ is the unique root whose coefficient of $\pi$ is $2$ (because $2=\langle\varphi,\varphi^{\vee}\rangle=\langle\varphi,\omega\rangle$). 
\item $\varphi-\alpha_i\in \Phi$ if and only if $i=i_0$ (by (2)).
\item The elements $\varphi-\pi$ and $\varphi-\pi-\pi'$ are roots, but $\varphi-\pi'$ is not a root (by (2)). 
\item $s_{\varphi}=w_1w_0$ (since both elements have inversion set $\Phi^+\backslash\Phi_1=\{\alpha\in\Phi^+\mid \langle\alpha,\omega\rangle\in\{1,2\}\}$).
\end{compactenum}

\begin{thm}\label{thm:converse}
Let $\Delta$ be a split building of type $\sE_n$ or $\sF_4$, and let $\theta$ be an automorphism of $\Delta$. If $\Type(\theta)=\wp$ then $\theta$ is a long root elation.
\end{thm}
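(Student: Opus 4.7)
The plan is to combine the Bruhat decomposition, the opposition-sphere technique of Lemma~\ref{lem:red2}, and the commutator relations, in order to identify $\theta$ (up to conjugation) with a nontrivial element of $U_\varphi$. First I would argue that $\theta$ is type-preserving and linear. Inspection of the polar diagrams in Figure~\ref{fig:Dynkin} shows that $\pi_\theta=\mathrm{id}$ in each of ${}^2\sE_{6;1}$, $\sE_{7;1}$, $\sE_{8;1}$ and $\sF_{4;1}^1$ (the bending of ${}^2\sE_{6;1}$ is due to $\pi_0\neq\mathrm{id}$ on $\sE_6$ rather than to $\pi_\theta$). A nontrivial field-automorphism component would fix a proper subfield building on which opposition occurs across many types, incompatible with $\Type(\theta)=\{i_0\}$, so I may assume $\theta\in G$.

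Next, Lemma~\ref{lem:red2} produces a chamber $c$ with $\delta(c,c^\theta)=s_\varphi$, using Fact~(5) that $s_\varphi=w_1w_0$ and hence $\ell(w_{S\setminus\wp}w_0)=\ell(s_\varphi)$. Conjugating $\theta$ by a suitable element of $G$ brings $c$ to the base chamber $B$, so I may take $\theta\in Bs_\varphi B$ and write $\theta=u\cdot n_{s_\varphi}\cdot b$ with $u\in U_{\Phi(s_\varphi)}$ and $b\in B$ in Bruhat normal form. The hypothesis that $\theta$ is $\{j\}$-domestic for every $j\neq i_0$ then translates, via the standard commutator formula, into vanishing conditions on the support of $u$ and on the components of $b$. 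Concretely, a nonzero coefficient at a root $\alpha\in\Phi(s_\varphi)\setminus\{\varphi\}$ should, after multiplying by a suitable $x_{-\beta}(t)$ and applying the folding relation~(\ref{eq:folding}), create a type-$j$ vertex with $j\neq i_0$ mapped to an opposite, contradicting the polar hypothesis.

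Using the distinguishing property from the fact list that $\varphi$ is the unique positive root with $\alpha_{i_0}$-coefficient equal to $2$, I would iterate the commutator analysis to conclude that $u\in U_\varphi$ and that the remaining factor $b$ is trivial. A final conjugation then puts $\theta$ in the form $x_\varphi(a)$ for some $a\in\FF^\times$, so $\theta$ is a long root elation. The main obstacle will be the bookkeeping in this commutator step: in the $\sF_4$ case short root subgroups also lie inside $U_{\Phi(s_\varphi)}$ and must be eliminated separately, and removing the diagonal/unipotent contribution from $b$ may require ruling out homological contributions, either by direct inspection or by an appeal to the structural results on homologies established in Theorem~\ref{thm:homclass}.
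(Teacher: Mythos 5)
Your normalisation is the same as the paper's: reduce to $\theta\in Bs_\varphi B$ (up to a field automorphism) and use the polar hypothesis, via commutator relations and the folding relation, to constrain the Bruhat components. However, there are two genuine gaps. First, your a priori disposal of the field automorphism $\sigma$ does not work as stated: an automorphism $g\circ\sigma$ with $\sigma\neq\mathrm{id}$ need not fix a subfield building, and a fixed substructure would in any case contain no simplices mapped to opposites, so the claimed incompatibility with $\Type(\theta)=\wp$ is not there. In the paper $\sigma$ is eliminated only \emph{after} the unipotent part is shown to be supported off $\Phi_1$ and the torus part is shown to be $h_{\omega}(c)$; one then conjugates by $h_{\alpha_i^\vee}(a)$ with $a^{\sigma}\neq a$, $i\neq i_0$, and contradicts the shape of the torus part. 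So linearity has to be extracted from the same commutator analysis, not assumed at the outset.

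Second, and more seriously, your expected endpoint ($u\in U_\varphi$ and the remaining Borel factor $b$ trivial) is not what the constraints give, and the analysis cannot end there. The polar hypothesis forces $\theta$ (after normalising $\theta B=x_{-\varphi}(1)B$) into the form $x_{-\varphi}(1)\,x_{\varphi}(a)\,h_{\omega}(c)$ in which a genuinely nontrivial diagonal component $h_\omega(c)$ survives; domesticity does not kill it, but instead pins down the precise relation $a=-(c-1)^2$ between the $U_\varphi$-coefficient and the torus coefficient (a delicate computation requiring consistent sign conventions in the commutator relations). Only then does an explicit, non-obvious conjugation by $x_{\varphi}(-c)x_{-\varphi}(1)s_{\varphi}$ show that this element is conjugate to $x_{-\varphi}(c^{-1})$, i.e.\ a long root elation. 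Your fallback of ``ruling out homological contributions'' by appealing to Theorem~\ref{thm:homclass} is both conceptually off (the homology component is not ruled out, it is absorbed) and circular: the proofs of the homology classification (e.g.\ Theorem~\ref{thm:homologyE6}) invoke Theorem~\ref{thm:converse} to identify the opposition diagrams of the domestic homologies. So the heart of the proof --- the coupled determination of the unipotent and diagonal coefficients and the final conjugation to a root elation --- is missing from your plan and cannot be patched by the suggested citation.
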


\begin{proof}
By the classification of admissible diagrams, if $\Type(\theta)=\wp$ then $\theta$ is necessarily type preserving, and capped. Thus $\disp(\theta)=\ell(s_{\varphi})$, and since $\theta$ is capped $\ell(\delta(gB,\theta gB))=\ell(s_{\varphi})$ if and only if $\delta(gB,\theta gB)=s_{\varphi}$. Moreover, after replacing $\theta$ by a conjugate, we may assume that the base chamber $B$ is mapped to Weyl distance $s_{\varphi}=w_1w_0$. Since the stabiliser of $B$ is transitive on each $w$-sphere centred at~$B$ we may assume that $B$ is mapped to the chamber $x_{\varphi}(1)s_{\varphi}B$. By the folding relation we have $x_{\varphi}(1)s_{\varphi}B=x_{-\varphi}(1)B$. The condition $\theta(B)=x_{-\varphi}(1)B$ gives
$$
\theta=x_{-\varphi}(1)uh\sigma\quad\text{for some $u\in U^+$, $h\in H$, and $\sigma\in\mathrm{Aut}(\FF)$}. 
$$
We will now determine $u,h$ and $\sigma$. The primary strategy is to show that if these elements do not take certain particular forms, then one can find elements $g\in G$ such that $g^{-1}\theta g\in BwB\sigma$ with $\ell(w)>\ell(w_1w_0)$, which contradicts the fact that $\Type(\theta)=\wp$. A useful observation is that if $w=s_{\varphi}v$ with $v\in W_1$ then $\ell(w)=\ell(s_{\varphi})+\ell(v)$ (because $s_{\varphi}=w_1w_0=w_0w_1$). We now proceed with the analysis.
\medskip

\noindent\textit{Claim 1: We have $u\in U_{\Phi\backslash\Phi_1}^+$}. Write $u=u_1u_2$ with $u_1\in U_{\Phi_1}^+$ and $u_2\in U_{\Phi\backslash\Phi_1}^+$. Then 
$$
w_{1}^{-1}\theta w_{1}= x_{-\varphi}(\pm 1)u_1^-u_2'h'\sigma\quad\text{with $u_1^-=w_{1}^{-1}u_1w_{1}\in U_{\Phi_1}^-$, $u_2'\in U_{\Phi\backslash\Phi_1}^+$, $h'\in H$}.
$$
Since $u_1^-\in BW_{1}B$ we have $u_1^-\in BvB$ for some $v\in W_{1}$. But since $\ell(s_{\varphi}v)=\ell(s_{\varphi})+\ell(v)$ we have 
$$
w_{1}^{-1}\theta w_{1}\in Bs_{\varphi}B\cdot BvB\sigma=Bs_{\varphi}vB\sigma,
$$ 
and so $v=1$ (as $\disp(\theta)=\ell(s_{\varphi})$). So $u_1^-\in B\cap U_{\Phi_1}^-$, and so $u_1^-=1$, hence $u_1=1$. 
\medskip

\noindent\textit{Claim 2: We have $h=h_{\omega}(c)$ for some $c\in\mathbb{F}^{\times}$}. Write $h=h_{\omega_1}(c_1)\cdots h_{\omega_n}(c_n)$. Let $i\neq i_0$. Then
$$
x_{-\alpha_i}(-1)\theta x_{-\alpha_i}(1)=x_{-\varphi}(1)x_{-\alpha_i}(-1)ux_{-\alpha_i}(c_i^{-1})h\sigma=x_{-\varphi}(1)x_{-\alpha_i}(c_i^{-1}-1)u'h\sigma,
$$
with $u'\in U^+$ (here we have used the fact, from Claim 1, that $x_{\alpha_i}(a)$ does not appear as a factor in $u$). Thus, if $c_i\neq 1$ the folding relation gives
$$
x_{-\alpha_i}(-1)\theta x_{-\alpha_i}(1)\in Bs_{\varphi}B\cdot Bs_iB\sigma=Bs_{\varphi}s_iB\sigma,
$$
a contradiction as before. Hence $c_i=1$ for all $i\neq i_0$, hence the claim.
\medskip

\noindent\textit{Claim 3: We have $\sigma=\mathrm{id}$}. Suppose not. Let $i\neq i_0$ and let $a\in \mathbb{F}$ with $a^{\sigma}\neq a$. Then 
$$
h_{\alpha_i^{\vee}}(a)^{-1}\theta h_{\alpha_i^{\vee}}(a)=x_{-\varphi}(1)uh_{\omega_i}(c)h_{\alpha_i^{\vee}}(a^{\sigma}a^{-1})\sigma.
$$
Claim 2 now gives a contradiction. 
\medskip

\noindent\textit{Claim 4: We have $u\in U_{\varphi}$}. Suppose not, and write $\theta=x_{-\varphi}(1)uh_{\omega}(c)$ with
$
u=x_{\beta_k}(a_k)\cdots x_{\beta_1}(a_1)
$
in decreasing root height. By assumption, $\beta_1\neq\varphi$. If $\beta_1\neq \pi$ (the polar simple root) then there exists $\alpha\in \Phi_1^+$ with $\beta_1-\alpha\in\Phi^+$. Then
$$
x_{-\alpha}(-b)ux_{-\alpha}(b)=u'x_{\beta_1-\alpha}(\pm a_1b),
$$
with $u'$ a product of roots in $\Phi^+\backslash\Phi_1$ of height at least $\mathrm{ht}(\beta_1-\alpha)$. Continuing  in this way, there exists an element $g\in U_{\Phi_1}^-$ with 
$$
g^{-1}ug=u'x_{\pi+\pi'}(b)x_{\pi}(a)\quad\text{with $a\neq 0$ and $b\in \FF$},
$$
where $u'$ is a product of elements $x_{\beta}(\cdot)$ with $\beta\in\Phi^+\backslash(\Phi_1\cup\{\pi,\pi+\pi'\})$. We have $g^{-1}x_{-\varphi}(1)g=x_{-\varphi}(1)$ (as $-\varphi+\alpha\notin\Phi$ for all $\alpha\in \Phi_1$), and $h_{\omega}(c)gh_{\omega}(c)^{-1}=g$ (as $\langle\omega,\alpha\rangle=0$ for all $\alpha\in\Phi_1$), and hence
\begin{align*}
g^{-1}\theta g&=g^{-1}x_{-\varphi}(1)uh_{\omega}(c)g=x_{-\varphi}(1)g^{-1}ugh_{\omega}(c)=x_{-\varphi}(1)u'x_{\pi+\pi'}(b)x_{\pi}(a)h_{\omega}(c).
\end{align*}

Let $d\in \FF$ with $d\neq 0$, and write $g_1=x_{-\pi-\pi'}(d)$. Then
\begin{align*}
Bg_1^{-1}g^{-1}\theta gg_1B&=Bx_{-\pi-\pi'}(-d)x_{-\varphi}(1)u'x_{\pi+\pi'}(b)x_{\pi}(a)h_{\omega}(c)x_{-\pi-\pi'}(d)B\\
&= Bx_{-\varphi}(1)x_{-\pi-\pi'}(-d)u'x_{\pi+\pi'}(b)x_{\pi}(a)x_{-\pi-\pi'}(dc^{-1})B\\
&= Bx_{-\varphi}(1)x_{-\pi-\pi'}(-d)u'x_{\pi+\pi'}(b)x_{-\pi-\pi'}(dc^{-1})x_{-\pi'}(\pm adc^{-1})B,
\end{align*}
where we have used the commutator relation $x_{\pi}(a)x_{-\pi-\pi'}(dc^{-1})=x_{-\pi-\pi'}(dc^{-1})x_{-\pi'}(\pm adc^{-1})x_{\pi}(a)$. Note that
\begin{align*}
u'x_{\pi+\pi'}(b)x_{-\pi-\pi'}(dc^{-1})=x_{\pi+\pi'}(b)x_{-\pi-\pi'}(dc^{-1})u''\quad\text{where $u''\in U^+$}
\end{align*}
(this follows from the fact that $u'$ is a product of elements $x_{\beta}(\cdot)$ with $\beta\in\Phi^+\backslash(\Phi_1\cup\{\pi,\pi+\pi'\})$, and for such $\beta$, if $\beta-\pi-\pi'\in\Phi$ then $\beta-\pi-\pi'\in \Phi^+$). Therefore
\begin{align*}
Bg_1^{-1}g^{-1}\theta gg_1B&=Bx_{-\varphi}(1)x_{-\pi-\pi'}(-d)x_{\pi+\pi'}(b)x_{-\pi-\pi'}(dc^{-1})u''x_{-\pi'}(\pm adc^{-1})B\\
&=Bs_{\varphi}x_{\varphi}(1)x_{-\pi-\pi'}(-d)x_{\pi+\pi'}(b)x_{-\pi-\pi'}(dc^{-1})u''x_{-\pi'}(\pm adc^{-1})B.
\end{align*}
From the commutator relations we have
$$
x_{\varphi}(1)x_{-\pi-\pi'}(-d)x_{\pi+\pi'}(b)x_{-\pi-\pi'}(dc^{-1})=x_{-\pi-\pi'}(-d)x_{\pi+\pi'}(b)x_{-\pi-\pi'}(dc^{-1})x_{\varphi-\pi-\pi'}(a')x_{\varphi}(b')
$$
for some $a',b'\in\FF$, and hence
\begin{align*}
Bg_1^{-1}g^{-1}\theta gg_1B&= Bs_{\varphi}x_{-\pi-\pi'}(-d)x_{\pi+\pi'}(b)x_{-\pi-\pi'}(dc^{-1})u'''x_{-\pi'}(\pm adc^{-1})B
\end{align*}
for some $u'''\in U^+$. 

There are now two cases to consider. If $b=0$ then, since $s_{\varphi}(-\pi-\pi')\in\Phi^+$, we have
\begin{align*}
Bg_1^{-1}g^{-1}\theta gg_1B&= Bs_{\varphi}x_{-\pi-\pi'}(-d+dc^{-1})u'''x_{-\pi'}(\pm adc^{-1})B\\
&=Bs_{\varphi}u'''x_{-\pi'}(\pm adc^{-1})B\\
&\subseteq Bs_{\varphi}B\cdot Bs_{\pi'}B\\
&=Bs_{\varphi}s_{\pi'}B,
\end{align*}
a contradiction. If $b\neq 0$ then, again using $s_{\varphi}(-\pi-\pi')\in\Phi^+$, we have $Bs_{\varphi}x_{-\pi-\pi'}(-d)=Bs_{\varphi}x_{-\pi-\pi'}(dc^{-1})$, and so
\begin{align*}
Bg_1^{-1}g^{-1}\theta gg_1B&= Bs_{\varphi}x_{-\pi-\pi'}(dc^{-1})x_{\pi+\pi'}(b)x_{-\pi-\pi'}(dc^{-1})u'''x_{-\pi'}(\pm adc^{-1})B.
\end{align*}
Choosing $d=-b^{-1}c$ we have $Bs_{\varphi}x_{-\pi-\pi'}(dc^{-1})x_{\pi+\pi'}(b)x_{-\pi-\pi'}(dc^{-1})=Bs_{\varphi}s_{\pi+\pi'}$, and hence
\begin{align*}
Bg_1^{-1}g^{-1}\theta gg_1B&= Bs_{\varphi}s_{\pi+\pi'}u'''x_{-\pi'}(\pm adc^{-1})B\subseteq Bs_{\varphi}s_{\pi+\pi'}B\cdot Bs_{\pi'}B.
\end{align*}
We have $\Phi(s_{\varphi}s_{\pi+\pi'})=(\Phi(s_{\varphi})\backslash\{\varphi,\varphi-\pi\})\cup \{\pi'\}$, and it follows that $\ell(s_{\varphi}s_{\pi+\pi'})=\ell(s_{\varphi})-1$ and $\ell(s_{\varphi}s_{\pi+\pi'}s_{\pi'})=\ell(s_{\varphi})$. Thus
\begin{align*}
Bg_1^{-1}g^{-1}\theta gg_1B&=Bs_{\varphi}s_{\pi+\pi'}s_{\pi'}B.
\end{align*}
Thus the chamber $gg_1B$ is mapped to distance~$\ell(s_{\varphi})$ by $\theta$, contradicting the second sentence of the proof (as $s_{\varphi}s_{\pi+\pi'}s_{\pi'}\neq s_{\varphi}$). This completes the proof of the claim.
\medskip

\noindent\textit{Claim 5: We have $\theta=x_{-\varphi}(1)x_{\varphi}(-(c-1)^2)h_{\omega}(c)$ for some $c\in\mathbb{F}^{\times}$}. From Claims 1--4 we have 
$$
\theta=x_{-\varphi}(1)x_{\varphi}(a)h_{\omega}(c)\quad\text{for some $a\in \FF$ and $c\in\FF^{\times}$}.
$$
We will now make a careful commutator relation calculation, using the consistent sign conventions:
\begin{align*}
x_{\varphi}(a)x_{-\varphi+\pi}(b)&=x_{-\varphi+\pi}(b)x_{\varphi}(a)x_{\pi}(ab)\\
x_{\pi}(a)x_{-\pi-\pi'}(b)&=x_{-\pi-\pi'}(b)x_{\pi}(a)x_{-\pi'}(-ab)\\
x_{\varphi}(a)x_{-\pi-\pi'}(b)&=x_{-\pi-\pi'}(b)x_{\varphi}(a)x_{\varphi-\pi-\pi'}(-ab)\\
x_{\varphi-\pi-\pi'}(a)x_{-\varphi+\pi}(b)&=x_{-\varphi+\pi}(b)x_{\varphi-\pi-\pi'}(a)x_{-\pi'}(ab).
\end{align*}
Let $g=x_{-\varphi+\pi}(1)x_{-\pi-\pi'}(1)$. Then
\begin{align*}
Bg^{-1}\theta gB&=Bx_{-\pi-\pi'}(-1)x_{-\varphi+\pi}(-1)x_{-\varphi}(1)x_{\varphi}(a)h_{\omega}(c)x_{-\varphi+\pi}(1)x_{-\pi-\pi'}(1)B\\
&=Bx_{-\varphi}(1)x_{-\pi-\pi'}(-1)x_{-\varphi+\pi}(-1)x_{\varphi}(a)x_{-\varphi+\pi}(c^{-1})x_{-\pi-\pi'}(c^{-1})B\\
&= Bs_{\varphi}x_{\varphi}(1)x_{-\pi-\pi'}(-1)x_{-\varphi+\pi}(c^{-1}-1)x_{\varphi}(a)x_{\pi}(ac^{-1})x_{-\pi-\pi'}(c^{-1})B.
\end{align*}
We have
\begin{align*}
x_{\varphi}(a)x_{\pi}(ac^{-1})x_{-\pi-\pi'}(c^{-1})B&=x_{\varphi}(a)x_{-\pi-\pi'}(c^{-1})x_{-\pi'}(-ac^{-2})B\\
&=x_{-\pi-\pi'}(c^{-1})x_{\varphi}(a)x_{\varphi-\pi-\pi'}(-ac^{-1})x_{-\pi'}(-ac^{-2})B\\
&=x_{-\pi-\pi'}(c^{-1})x_{\varphi}(a)x_{-\pi'}(-ac^{-2})B\\
&=x_{-\pi-\pi'}(c^{-1})x_{-\pi'}(-ac^{-2})B,
\end{align*}
aan hence
\begin{align*}
Bg^{-1}\theta gB&= Bs_{\varphi}x_{\varphi}(1)x_{-\pi-\pi'}(-1)x_{-\varphi+\pi}(c^{-1}-1)x_{-\pi-\pi'}(c^{-1})x_{-\pi'}(-ac^{-2})B\\
&= Bs_{\varphi}x_{\varphi}(1)x_{-\pi-\pi'}(c^{-1}-1)x_{-\varphi+\pi}(c^{-1}-1)x_{-\pi'}(-ac^{-2})B\\
&= Bs_{\varphi}x_{-\pi-\pi'}(c^{-1}-1)x_{\varphi}(1)x_{\varphi-\pi-\pi'}(1-c^{-1})x_{-\varphi+\pi}(c^{-1}-1)x_{-\pi'}(-ac^{-2})B\\
&= Bs_{\varphi}x_{\varphi}(1)x_{-\varphi+\pi}(c^{-1}-1)x_{\varphi-\pi-\pi'}(1-c^{-1})x_{-\pi'}(-(c^{-1}-1)^2)x_{-\pi'}(-ac^{-2})B\\
&= Bs_{\varphi}x_{\varphi}(1)x_{-\varphi+\pi}(c^{-1}-1)x_{-\pi'}(-ac^{-2}-(c^{-1}-1)^2)B\\
&= Bs_{\varphi}x_{-\varphi+\pi}(c^{-1}-1)x_{\varphi}(1)x_{\pi}(c^{-1}-1)x_{-\pi'}(-ac^{-2}-(c^{-1}-1)^2)B\\
&= Bs_{\varphi}x_{\varphi}(1)x_{\pi}(c^{-1}-1)x_{-\pi'}(-ac^{-2}-(c^{-1}-1)^2)B.
\end{align*}
Thus, if $ac^{-2}+(c^{-1}-1)^2\neq 0$ we have
$$
Bg^{-1}\theta gB= Bs_{\varphi}B\cdot Bs_{\pi'}B=Bs_{\varphi}s_{\pi'}B,
$$
a contradiction. Thus $a=-(c-1)^2$, completing the proof of the claim.
\medskip

\noindent\textit{Claim 6: $\theta$ is a long root elation.} Let $g=x_{\varphi}(-c)x_{-\varphi}(1)s_{\varphi}$. Then, from Claim 5, we have
\begin{align*}
g\theta g^{-1}&=x_{\varphi}(-c)x_{-\varphi}(1)s_{\varphi}x_{-\varphi}(1)x_{\varphi}(-(c-1)^2)h_{\omega}(c)s_{\varphi}^{-1}x_{-\varphi}(-1)x_{\varphi}(c).
\end{align*}
Using the relations $s_{\varphi}x_{\pm\varphi}(a)s_{\varphi}^{-1}=x_{\mp\varphi}(-a)$ and $s_{\varphi}h_{\omega}(c)s_{\varphi}^{-1}=h_{\omega}(c^{-1})$ we have
\begin{align*}
g\theta g^{-1}&=x_{\varphi}(-c)x_{-\varphi}(1)x_{\varphi}(-1)x_{-\varphi}((c-1)^2)h_{\omega}(c^{-1})x_{-\varphi}(-1)x_{\varphi}(c)\\
&=x_{\varphi}(-c)x_{-\varphi}(1)x_{\varphi}(-1)x_{-\varphi}((c-1)^2)x_{-\varphi}(-c^2)x_{\varphi}(c^{-1})h_{\omega}(c^{-1})\\
&=x_{\varphi}(-c)[x_{-\varphi}(1)x_{\varphi}(-1)x_{-\varphi}(1)]x_{-\varphi}(-2c)x_{\varphi}(c^{-1})h_{\omega}(c^{-1}).
\end{align*}
Now, $x_{-\varphi}(1)x_{\varphi}(-1)x_{-\varphi}(1)=s_{\varphi}(-1)=h_{\omega}(-1)s_{\varphi}$, and so
\begin{align*}
g\theta g^{-1}&=x_{\varphi}(-c)h_{\omega}(-1)s_{\varphi}x_{-\varphi}(-2c)x_{\varphi}(c^{-1})h_{\omega}(c^{-1})\\
&=x_{\varphi}(-c)h_{\omega}(-1)x_{\varphi}(2c)x_{-\varphi}(-c^{-1})s_{\varphi}h_{\omega}(c^{-1})\\
&=x_{\varphi}(c)x_{-\varphi}(-c^{-1})s_{\varphi}h_{\omega}(-c^{-1})
\end{align*}
We have $x_{\varphi}(c)x_{-\varphi}(-c^{-1})=s_{\varphi}(c)x_{\varphi}(-c)=h_{\omega}(c)s_{\varphi}x_{\varphi}(-c)=x_{-\varphi}(c^{-1})h_{\omega}(c)s_{\varphi}$, and since $s_{\varphi}^2=h_{\omega}(-1)$ we have
\begin{align*}
g\theta g^{-1}&=x_{-\varphi}(c^{-1})h_{\omega}(c)h_{\omega}(-1)h_{\omega}(-c^{-1})=x_{-\varphi}(c^{-1}).
\end{align*}
Thus $\theta$ is conjugate to the long root elation $x_{-\varphi}(c^{-1})$, completing the proof.
\end{proof}

\subsection{A geometric characterisation of root elations for $\sE_6$ and $\sE_7$ buildings}

Theorems~\ref{thm:RGD} and~\ref{thm:converse} imply an interesting geometric characterisation of root elations for types $\sE_6$ and $\sE_7$ (see Theorem~\ref{thm:geomchar} below). In the following lemma, and again in the following subsection, we make use of \cite[\S10.3 Lemma~B]{Hum:72}, which says that if $\la\in P$ is dominant (that is, $\lambda\in \ZZ_{\geq 0}\omega_1+\cdots+\ZZ_{\geq 0}\omega_n$), and if $w\in W$ with $w\lambda=\lambda$, then $w\in W_J$ where $J=\{s\in S\mid s\lambda=\lambda\}$.

\begin{lemma}\label{lem:skipping}
Let $i=1$ if $\Phi=\sE_6$ and $i=7$ if $\Phi=\sE_7$. Then $s_{\alpha}\in W_i\cup W_is_iW_i$ for all $\alpha\in \Phi$, where $W_i$ denotes the parabolic subgroup of the Weyl group generated by $S\backslash\{s_i\}$. 
\end{lemma}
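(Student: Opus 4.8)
The plan is to use that node $i$ is \emph{cominuscule} in both cases: the coefficient of $\alpha_i$ in the highest root $\varphi$ equals $1$, as one reads off from the explicit highest roots recorded in the appendix. Since $s_\alpha=s_{-\alpha}$ I may assume $\alpha\in\Phi^+$; write $k_i$ for the coefficient of $\alpha_i$ in $\alpha$, so that $k_i=\langle\omega_i,\alpha\rangle$. As $\varphi$ is the highest root, $\varphi-\alpha$ is a non-negative integral combination of simple roots, and comparing $\alpha_i$-coefficients gives $0\le k_i\le 1$. The whole problem thus splits into the cases $k_i=0$ and $k_i=1$. The case $k_i=0$ is immediate: then $\alpha$ lies in the root subsystem $\{\beta\in\Phi:\langle\omega_i,\beta\rangle=0\}$, whose simple system is $\{\alpha_j:j\ne i\}$ and whose Weyl group is exactly $W_i$, so $s_\alpha\in W_i$.

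The substance is the case $k_i=1$, where I want to show $\alpha$ is $W_i$-conjugate to $\alpha_i$; granting this, writing $\alpha=w\alpha_i$ with $w\in W_i$ gives $s_\alpha=ws_iw^{-1}\in W_is_iW_i$. I would argue via the set $\Psi=\{\beta\in\Phi^+:\langle\omega_i,\beta\rangle=1\}$. Each generator $s_j$ with $j\ne i$ changes only the $\alpha_j$-coefficient of a root, so $W_i$ stabilises $\Psi$; since $\alpha$, $\alpha_i$ and $\varphi$ all lie in $\Psi$, it suffices to show that $\Psi$ is a single $W_i$-orbit, equivalently that every $\beta\in\Psi$ is $W_i$-conjugate to $\varphi$. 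To see this, pick $\gamma\in W_i\beta$ of maximal height. Maximality forces $\langle\gamma,\alpha_j^\vee\rangle\ge 0$ for all $j\ne i$ (otherwise $s_j\gamma$ would be an element of $W_i\beta$ of strictly larger height), and $\gamma$ still has $\alpha_i$-coefficient $1$. Writing $\varphi-\gamma=\sum_j c_j\alpha_j$ with all $c_j\ge 0$, comparison of $\alpha_i$-coefficients gives $c_i=0$, whence $\langle\gamma,\varphi-\gamma\rangle=\sum_{j\ne i}c_j\langle\gamma,\alpha_j\rangle\ge 0$, since each $\langle\gamma,\alpha_j\rangle$ has the same sign as $\langle\gamma,\alpha_j^\vee\rangle$. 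As $\Phi$ is simply laced, $\langle\gamma,\gamma\rangle=\langle\varphi,\varphi\rangle$, so $\langle\gamma,\varphi\rangle=\langle\gamma,\gamma\rangle+\langle\gamma,\varphi-\gamma\rangle\ge\langle\varphi,\varphi\rangle=|\gamma|\,|\varphi|$, and Cauchy--Schwarz $\langle\gamma,\varphi\rangle\le|\gamma|\,|\varphi|$ then forces $\gamma=\varphi$.

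I expect the only genuine obstacle to be the orbit statement in the case $k_i=1$, namely that the roots with $\alpha_i$-coefficient $1$ form a single $W_i$-orbit. This is the standard cominuscule phenomenon (the abelian unipotent radical of $P_i$ is an irreducible, indeed minuscule, module for the Levi factor), and it could simply be quoted; the maximal-height plus Cauchy--Schwarz argument sketched above is a short self-contained alternative that uses only that $\varphi$ is highest and that $\Phi$ is simply laced. Everything else is routine bookkeeping with root coefficients.
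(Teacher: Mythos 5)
Your proof is correct, and it takes a genuinely different route on the key step. Both you and the paper split into the cases where the $\alpha_i$-coefficient of $\alpha$ is $0$ or $1$, and handle the first case identically (the reflection lies in the parabolic $W_i$). The substance is showing that the set $\Psi$ of positive roots with $\alpha_i$-coefficient $1$ is a single $W_i$-orbit. The paper does this by orbit--stabiliser counting: it expresses $\alpha_7$ in terms of fundamental coweights, invokes \cite[\S10.3 Lemma~B]{Hum:72} to identify the stabiliser of $\alpha_7$ in $W_7$ as $W_{\sD_5}$, computes $|W_7|/|W_{\sD_5}|=27$, and matches this with $|\Phi^+\setminus\Phi_7|=63-36=27$. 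You instead take a maximal-height representative $\gamma$ in the $W_i$-orbit, deduce $\langle\gamma,\alpha_j^\vee\rangle\ge 0$ for $j\ne i$, and combine this with the fact that $c_i=0$ in $\varphi-\gamma=\sum c_j\alpha_j$ and the Cauchy--Schwarz inequality to force $\gamma=\varphi$. Your argument buys uniformity and self-containedness: it works for any cominuscule node of a simply-laced root system without computing any group orders or identifying the precise stabiliser, and it treats $\sE_6$ and $\sE_7$ in one stroke rather than "the argument for $\sE_6$ is similar." The paper's counting argument is shorter once one is comfortable quoting the relevant stabiliser, and it also verifies the incidental fact that $W_i$ acts transitively on the complementary positive roots, which is reused in the orbit-counting arguments of Theorem~\ref{thm:skippeddistances}. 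Both are valid; yours is the more transportable proof of this particular lemma.
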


\begin{proof}
Consider the $\Phi=\sE_7$ case. Since $s_{-\alpha}=s_{\alpha}$ we may assume that $\alpha\in\Phi^+$. Let $\Phi_7=\{\alpha\in\Phi\mid \langle\alpha,\omega_7\rangle=0\}$ be the $\sE_6$ subsystem. If $\alpha\in \Phi_7^+$ then, since $W_7$ is transitive on $\Phi_7$, we have $\alpha=w\alpha_1$ for some $w\in W_7$, and hence $s_{\alpha}=ws_1w^{-1}\in W_7$. If $\alpha\in \Phi^+\backslash\Phi_7$ then we claim that $\alpha\in W_7\cdot \alpha_7$, from which it follows that $s_{\alpha}\in W_7s_7W_7$. To see this, note that $\alpha_7=-\omega_6+2\omega_7$, and hence for $w\in W_7$ we have $w\alpha_7=\alpha_7$ if and only if $w\omega_6=\omega_6$ (as $w\omega_7=\omega_7$). Since $\omega_6$ is dominant (in the space of $\sE_6$ coweights) it follows from \cite[\S10.3 Lemma~B]{Hum:72} that $w\in W_{\sD_5}$ (the subgroup of $W_7$ generated by $s_1,\ldots,s_5$). Thus the stabiliser of $\alpha_7$ in $W_7$ is $W_{\sD_5}$, and so by counting $|W_7\cdot\alpha_7|=|W_7|/|W_{\sD_5}|=27$. Clearly each root $w\alpha_7$ with $w\in W_7$ is in $\Phi^+\backslash\Phi_7$ (as the coefficient of $\alpha_7$ is~$1$), and since $|\Phi^+\backslash \Phi_7|=63-36=27$ we conclude that $W_7$ is transitive on $\Phi^+\backslash\Phi_7$, and hence the result. 

The argument for the $\sE_6$ case is similar. 
\end{proof}

\begin{thm}\label{thm:geomchar}
Let $\theta$ be a type preserving automorphism of a thick building $\Delta$. If $\Delta$ has type $\sE_6$ (respectively $\sE_7$) then $\theta$ is a root elation if and only if each point of the Lie incidence geometry $\sE_{6,1}(\FF)$ (respectively $\sE_{7,7}(\FF)$) is either fixed or mapped to a collinear point by~$\theta$. 
\end{thm}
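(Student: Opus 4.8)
The plan is to exploit the point-distance structure of the strong parapolar spaces $\sE_{6,1}(\FF)$ and $\sE_{7,7}(\FF)$ together with the Weyl-distance description from Subsection~\ref{sec:parapolar} and the displacement computations available via cappedness. Write $\Phi=\sE_6$ (or $\sE_7$) and let $i$ be the relevant polar-dual node ($i=1$ for $\sE_6$, $i=7$ for $\sE_7$), so that the points of the geometry are the cosets $G/P_i$ and $R_i=\{e,s_i,w_1,s_\varphi\}$ indexes the four distances (equal, collinear, symplectic, opposite). The statement ``each point is fixed or mapped to a collinear point'' translates precisely to: for every $g\in G$ one has $g^{-1}\theta g\in P_i\cup P_is_iP_i$, i.e. $\delta(gP_i,\theta gP_i)\in\{e,s_i\}$.

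\medskip

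\noindent\textit{($\Rightarrow$) Root elations fix or collinearly move every point.} Suppose $\theta$ is a root elation; since all roots of $\sE_n$ are long, by Theorem~\ref{thm:RGD} (and conjugation) we may take $\theta\in U_\varphi\setminus\{1\}$ central in $U^+$, and then \eqref{eq:attained} gives $\{\delta(c,c^\theta)\mid c\in\Delta\}=\{1\}\cup\{s_\alpha\mid \alpha\in W\varphi\}$. For a point $x=gP_i$, choose a chamber $c\supseteq x$ with $c=gB$; then the Weyl distance between $x$ and $x^\theta$ in the geometry is the minimal-length element of $W_iwW_i$ where $w=\delta(c,c^\theta)\in\{1\}\cup\{s_\alpha:\alpha\in W\varphi\}$. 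By Lemma~\ref{lem:skipping}, $s_\alpha\in W_i\cup W_is_iW_i$ for every root $\alpha$, so $W_iwW_i\subseteq W_i\cup W_is_iW_i$, whence $\delta(x,x^\theta)\in\{e,s_i\}$. Thus $x$ is fixed or mapped to a collinear point. (One should also observe that $\theta$ actually moves some point, so the condition is non-vacuous, but this is immediate since $\theta$ is nontrivial and $\sE_n$ buildings are thick.)

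\medskip

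\noindent\textit{($\Leftarrow$) The geometric condition forces a root elation.} Assume every point is fixed or moved to a collinear point. The key step is to bound the displacement $\disp(\theta)$ and identify $\Type(\theta)=\wp$, after which Theorem~\ref{thm:converse} finishes the job. First, $\theta$ is capped (by Lemma~\ref{lem:red2}, since $\sE_n$ buildings of interest are thick with panels of size $\geq 4$, or one reduces to that case; if $|\FF|=2$ one invokes the uncapped analysis, but the cleanest route is to note thickness suffices here). For any chamber $c=gB$ and $w=\delta(c,c^\theta)$, the point $x=gP_i$ satisfies $\delta(x,x^\theta)\in\{e,s_i\}$, which says $g^{-1}\theta g\in P_iwP_i\cap(P_i\cup P_is_iP_i)$; since $P_iwP_i$ is a single double coset this means $w\in W_i\cup W_is_iW_i=W_{S\setminus\{s_i\}}\cup W_{S\setminus\{s_i\}}s_iW_{S\setminus\{s_i\}}$. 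Hence $\delta(c,c^\theta)$ always lies in the parabolic-plus-one-double-coset set, and in particular no chamber is mapped to an opposite one (as $w_0\notin W_i\cup W_is_iW_i$, since $\ell(w_0)$ far exceeds $2\ell(w_i)+1$), so $\theta$ is domestic. Consulting the admissible diagrams in Figure~\ref{fig:Dynkin} for $\sE_6$ and $\sE_7$, the possible $\Type(\theta)$ are severely restricted; I would rule out all of them except $\wp$ by the displacement bound coming from the condition $\delta(c,c^\theta)\in W_i\cup W_is_iW_i$ for \emph{all} $c$. Concretely, one shows that this containment, holding for every chamber, forces $\Type(\theta)\subseteq\wp$ (for a type $J$ simplex mapped to an opposite one, the chambers through it have Weyl distance in $w_0W_{S\setminus J}$, and one checks this meets $W_i\cup W_is_iW_i$ only when $J=\wp$, using the polar-node/highest-root relations (1)--(5) preceding Theorem~\ref{thm:converse}). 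Since $\theta$ is nontrivial, $\Type(\theta)\neq\emptyset$, so $\Type(\theta)=\wp$, and Theorem~\ref{thm:converse} gives that $\theta$ is a long (hence, in type $\sE_n$, just a) root elation.

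\medskip

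\noindent\textit{Main obstacle.} The delicate point is the ``only if'' direction's combinatorial heart: showing that the hypothesis $\delta(c,c^\theta)\in W_i\cup W_is_iW_i$ for every chamber $c$ pins $\Type(\theta)$ down to exactly $\wp$ rather than merely to some small admissible type. This requires a careful comparison, for each candidate admissible diagram $\sX_{n;j}$, of the set $w_0W_{S\setminus J}$ (the possible Weyl distances between chambers through a type-$J$ simplex mapped to an opposite) against $W_i\cup W_is_iW_i$, and verifying the intersection is empty unless $J=\wp$. The fact that $i$ is precisely the node adjacent to (indeed, the ``dual'' of) the polar node $\wp$ is what makes $W_i\cup W_is_iW_i$ just large enough to accommodate the reflections $s_\alpha$ ($\alpha\in W\varphi$) — via Lemma~\ref{lem:skipping} — and no larger; making this dichotomy precise, perhaps by a direct length computation $\ell(w_iw_0)$ versus the displacements listed after Corollary~\ref{cor:oppsphere}, is where the real work lies. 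Everything else is bookkeeping with the Bruhat decomposition and the parapolar-space dictionary of Subsection~\ref{sec:parapolar}.
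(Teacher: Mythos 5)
Your proof of the forward direction (root elation $\Rightarrow$ fixed-or-collinear) matches the paper's exactly: Theorem~\ref{thm:RGD} plus Lemma~\ref{lem:skipping}. No issues there.

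For the converse direction you propose a purely Weyl-combinatorial route, and here there is a genuine gap which you yourself flag but do not close. Your translation of the hypothesis into ``$\delta(c,c^\theta)\in W_i\cup W_is_iW_i$ for every chamber $c$'' is correct, and it immediately yields $\{i\}$-domesticity (no point sent to an opposite point). But $\{i\}$-domesticity does \emph{not} by itself isolate the polar diagram: in type $\sE_7$ the admissible diagrams $\sE_{7;2}$ (nodes $\{1,6\}$) and $\sE_{7;4}$ (nodes $\{1,3,4,6\}$) both omit node~$7$, so both survive this first cut. You correctly identify that to finish one must show $w_{S\setminus J}\,w_0\notin W_i\cup W_is_iW_i$ for every nonempty admissible $J\neq\wp$ (using that a capped automorphism with $\Type(\theta)=J$ achieves Weyl displacement exactly $w_{S\setminus J}w_0$ at some chamber), but you leave this as ``where the real work lies.'' As written the argument is incomplete precisely at the step that carries the load, and the imprecision in phrasing (``$w_0W_{S\setminus J}$ meets $W_i\cup W_is_iW_i$ only when $J=\wp$'' is not literally the statement you need, since even for $J=\wp$ the set $w_0W_{S\setminus J}$ contains $w_0$) would need tightening.

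The paper avoids this computation by upgrading the hypothesis to a stronger domesticity geometrically rather than combinatorially. For $\sE_7$: if every point is fixed or mapped collinear, then no \emph{line} of $\sE_{7,7}(\FF)$ can be mapped to an opposite line (opposite lines carry no collinear point pairs, yet any $p\in L$ must be fixed or collinear to $p^\theta\in L^\theta$), and since lines correspond to type~$6$ vertices this gives $\{6\}$-domesticity; a glance at Figure~\ref{fig:Dynkin} then pins $\Type(\theta)$ to $\wp$. For $\sE_6$ the paper runs the contrapositive directly: if $\theta$ is nontrivial and not a root elation then some type-$\{1,6\}$ flag $(p,\xi)$ goes to an opposite, whence $p^\theta\in\xi^\theta$ is at symplectic distance from $p$, not collinear. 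Both arguments substitute a one-line geometric observation for the double-coset bookkeeping your plan requires. If you want to complete your version, you would need to verify, case by case over the admissible type-preserving diagrams of $\sE_6$ and $\sE_7$, that the distinguished element $w_{S\setminus J}w_0$ does not lie in the ``fixed-or-collinear'' union of $W_i$-double cosets unless $J=\wp$ — doable, but strictly more work than the route the paper takes, and not something your write-up actually carries out.
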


\begin{proof}
If $\theta$ is a root elation, then by Theorem~\ref{thm:RGD} we have that $\delta(c,c^{\theta})$ is a reflection (or the identity) for all chambers~$c\in\Delta$. It follows from Lemma~\ref{lem:skipping} that $\delta(c,c^{\theta})\in W_i\cup W_is_iW_i$ (with $i=1$ in the $\sE_6$ case and $i=7$ in the $\sE_7$ case). In geometric terms, this says that points of the geometries $\sE_{6,1}(\FF)$ and $\sE_{7,7}(\FF)$ are either fixed, or are mapped to collinear points (see Lemma~\ref{paraargument} for another proof, applying to geometries including the $\sE_{7,7}(\FF)$ case). 

To prove the converse for $\sE_7$, note that if each point of $\sE_{7,7}(\FF)$ is either fixed or mapped to a collinear point, then no line of the $\sE_{7,7}(\FF)$ geometry is mapped to an opposite line. Thus $\theta$ is $\{6\}$-domestic, and from the classification of admissible diagrams this forces $\theta$ to have the polar diagram. Thus $\theta$ is a root elation by Theorem~\ref{thm:converse}.

We now prove the converse for $\sE_6$. If nontrivial $\theta$ is not a root elation, then $\theta$ does not have polar diagram (by Theorem~\ref{thm:converse}), and hence by the classification of admissible diagrams $\theta$ maps a (point,symp)-pair  $(p,\xi)$ of $\sE_{6,1}(\FF)$ to an opposite (here points are type $1$ vertices, and symps are type~$6$ vertices). Then, since no point of $\xi^\theta$ is collinear to $p$, the point $p^\theta$ is at distance $2$ from~$p$, completing the proof.
\end{proof}

\subsection{Distances attained by long root elations}\label{sec:exists}

Here we prove Corollaries~\ref{cor:existsdomestic} and~\ref{cor:existsconjugacy}. 
Let $\Delta$ be an irreducible Moufang spherical building other than a generalised octagon, and recall (as in Subsection~\ref{sec:lre}) that one may associate a crystallographic root system $\Phi$ to $\Delta$. Let $\Phi_r$ denote the associated reduced root system (thus $\Phi_r=\Phi$ if $\Phi$ is reduced, and $\Phi_r$ is the $\sC_n$ subsystem consisting of the middle and long length roots in the non-reduced $\mathsf{BC}_n$ case). Consider the long root geometry~$\mathscr{G}$. Let $P=P_{S\backslash \wp}$ be the standard parabolic subgroup of $G$ of type $S\backslash\wp$, and let $W'=W_{S\backslash\wp}$. The points of $\mathscr{G}$ are the cosets in $G/P$, and we have
$
G=\bigsqcup_{w\in R(\wp)}PwP,
$
where $R(\wp)$ is the set of minimal length double coset representatives for $W'\backslash W/W'$. The \textit{Weyl-distance} $\delta(g_1P,g_2P)$ between points $g_1P$ and $g_2P$ is defined to be the unique element $\delta(g_1P,g_2P)=w\in R(\wp)$ with $g_1^{-1}g_2\in PwP$. Points $g_1P$ and $g_2P$ are (i) \textit{collinear} if $\delta(g_1P,g_2P)=s$ for some $s\in \wp$ (thus in type $\sA$ there are two ``flavours'' of collinearity), and (ii) \textit{opposite} if $\delta(g_1P,g_2P)=w_{S\backslash\wp}w_0$. Note that $w_{S\backslash \wp}w_0$ is the minimal length representative of $W'w_0W'$, and that $w_{S\backslash \wp}w_0=s_{\varphi}$ (by comparing inversion sets).

\begin{thm}\label{thm:skippeddistances} Let $\theta$ be a long root elation of a Moufang spherical building~$\Delta$.
\begin{compactenum}[$(1)$]
\item Suppose that $\theta$ is not a generalised octagon. Let~$\Phi_r$ be the reduced root system of~$\Delta$, and let $\mathscr{G}$ be the long root geometry. 
\begin{compactenum}[$(a)$]
\item If $\Phi_r=\sC_n$ with $n\geq 2$, or if $\Phi_r=\sB_2$, then every point of $\mathscr{G}$ is either fixed, or is mapped onto an opposite point by~$\theta$. 
\item In all other cases, every point of $\mathscr{G}$ is either fixed, mapped onto a collinear point, or mapped onto an opposite point by~$\theta$. 
\end{compactenum}
\item Suppose that $\Delta$ is a Ree-Tits octagon. Then every point of $\Delta$ is mapped by $\theta$ onto a point at distance $0$, $4$, or $8$ in the incidence graph. 
\end{compactenum}
In particular, for each type there exists at least one element $w\in R(\wp)$ such that no point is mapped onto a point at distance $w$ by $\theta$.
\end{thm}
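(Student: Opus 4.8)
The plan is to combine the chamber-level description in Theorem~\ref{thm:RGD} with the translation from chamber distances to distances in the long root geometry $\mathscr{G}$. As in the proof of Theorem~\ref{thm:RGD}, after conjugation we may assume $\theta\in U_{\varphi}\setminus\{1\}$, so that $\theta$ is central in $U^+$; write $W'=W_{S\backslash\wp}$ and $\omega=\sum_{p\in\wp}\omega_p$. The first step is the elementary observation that if $x=gP$ is a point of $\mathscr{G}$ (with $P=P_{S\backslash\wp}$) and $c$ is any chamber containing $x$, then
\[
\delta(c,c^{\theta})\in W'\,\delta(x,x^{\theta})\,W',
\]
which follows immediately on writing $c=gpB$ with $p\in P$ and using $P=\bigcup_{w\in W'}BwB$. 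Combined with~\eqref{eq:attained}, this shows that $\delta(x,x^{\theta})$ is the minimal-length element of $W'vW'$ for some $v\in\{1\}\cup\{s_{\alpha}\mid\alpha\in W\varphi\}$.

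The second step converts the double-coset bookkeeping into the coweight lattice. A short inspection of the highest root in each type gives $\langle\varphi^{\vee},\alpha_j\rangle=0$ for $j\notin\wp$ (by definition of $\wp$) and $\langle\varphi^{\vee},\alpha_p\rangle=1$ for $p\in\wp$, hence $\varphi^{\vee}=\omega$. Since $\omega$ is dominant and $\mathrm{Stab}_W(\omega)=W'$, the double cosets in $W'\backslash W/W'$ correspond bijectively to the $W'$-orbits on $W\omega$ (by \cite[\S10.3 Lemma~B]{Hum:72}, as already used in this paper), with the minimal-length representative of $W'vW'$ the element $w$ for which $w\omega$ is $W'$-dominant. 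For a long root $\alpha$ one has $s_{\alpha}\omega=\omega-\langle\alpha,\varphi^{\vee}\rangle\alpha^{\vee}$, and because $\varphi$ is the highest root and $\alpha$ is long, $\langle\alpha,\varphi^{\vee}\rangle\in\{-2,-1,0,1,2\}$, with $\pm2$ only for $\alpha=\pm\varphi$. Replacing $\alpha$ by $-\alpha$ if necessary, there are three cases: if $\langle\alpha,\varphi^{\vee}\rangle=0$ then $s_{\alpha}\omega=\omega$ and $x^{\theta}=x$; if $\langle\alpha,\varphi^{\vee}\rangle=2$ then $\alpha=\varphi$, $s_{\varphi}\omega=-\omega=w_0\omega$, and $x^{\theta}$ is opposite $x$ (note $s_{\varphi}=w_{S\backslash\wp}w_0$); and if $\langle\alpha,\varphi^{\vee}\rangle=1$ then $s_{\alpha}\omega=\omega-\alpha^{\vee}$, which lies in $W'(\omega-\alpha_p^{\vee})=W'(s_p\omega)$ as soon as $\alpha\in W'\alpha_p$, whence $x^{\theta}$ is collinear to $x$. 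This proves part~(1)(b) modulo the claim that every long root $\alpha$ with $\langle\alpha,\varphi^{\vee}\rangle=1$ lies in $W'\alpha_p$ for some $p\in\wp$ — equivalently $\{\alpha\text{ long}\mid\langle\alpha,\varphi^{\vee}\rangle=1\}=\bigsqcup_{p\in\wp}W'\alpha_p$ — which I would verify by an orbit--stabiliser count: $\mathrm{Stab}_{W'}(\alpha_p)$ is the standard parabolic of $W'$ generated by those $s_j$ $(j\notin\wp)$ orthogonal to $\alpha_p$, and one compares $\sum_{p\in\wp}|W'\alpha_p|$ with the number of long positive roots $\alpha$ with $\langle\alpha,\varphi^{\vee}\rangle=1$, read off from Appendix~\ref{app:data} for the exceptional types and computed directly for the classical ones.

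For part~(1)(a) the point is precisely that this last ``level-one'' case is vacuous: when $\Phi_r=\sC_n$ $(n\ge2)$ or $\sB_2$ the polar node $\alpha_p$ is a \emph{short} root (equivalently, the affine node attaches to it by a multiple bond), and a direct check shows that no long root $\alpha$ has $\langle\alpha,\varphi^{\vee}\rangle=\pm1$ — for $\sC_n$ with $\varphi^{\vee}=\omega_1$ one has $\langle\pm2\epsilon_i,\varphi^{\vee}\rangle\in\{0,\pm2\}$, and similarly for $\sB_2$ — so that only the ``same point'' and ``opposite'' double cosets occur, giving~(1)(a). For part~(2) the identical scheme applies to the Ree--Tits octagons, with $W$ the dihedral group of order $16$, $W'=\langle s\rangle$ for the unique $s\notin\wp$, and~\eqref{eq:attained} again available from Theorem~\ref{thm:RGD}: using Tits's non-crystallographic, non-reduced root system~\cite{Tit:83} one identifies $W\varphi$ with the orbit of the highest root $\alpha_4'$, and the four reflections $s_{\alpha}$ with $\alpha\in W\varphi$ (one of which is $s_{\varphi}=w_{S\backslash\wp}w_0$) are located among the five $W'$-double cosets in $W$ by a direct length computation in the dihedral group, turning out to be exactly the classes at incidence-graph distance $0$, $4$ and $8$. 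Finally, the concluding ``in particular'' statement is immediate: in each case the set of attained distances is a proper subset of $R(\wp)$ — the ``collinear'' classes are omitted in case~(1)(a), the ``symplectic''/``special'' classes (and the unique intermediate class for $\sG_2$) in case~(1)(b), and the classes at incidence-distance $2$ and $6$ for octagons.

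The main obstacle is the transitivity claim on the long roots of level one: it is exactly what makes the ``collinear'' case collapse to a single double coset per polar node, and I do not see a uniform argument avoiding the type-by-type root-data inspection. A secondary technicality is the octagon bookkeeping, which must be carried out with Tits's non-crystallographic root system rather than a classical one; once the orbit $W\varphi$ and its reflections are pinned down, however, the remaining computation lives in a group of order $16$ and is routine.
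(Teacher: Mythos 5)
Your part~(1) is, in substance, the paper's own proof: Theorem~\ref{thm:RGD} reduces everything to locating the reflections $s_\alpha$, $\alpha$ a long root, among the $W'$-double cosets, and the decisive step you isolate --- that every long root with $\langle\alpha,\varphi^\vee\rangle=1$ lies in $W'\alpha_p$ for some $p\in\wp$ --- is exactly what the paper verifies type by type, with the same orbit--stabiliser counts (via \cite[\S10.3 Lemma~B]{Hum:72}) that you sketch; likewise your treatment of $\sC_n$ and $\sB_2$ (no long root pairs to $\pm1$ against $\varphi^\vee$, equivalently the polar node is short) is the paper's. So the ``obstacle'' you flag is real but is resolved precisely as you propose, and your packaging through the single coweight $\omega=\varphi^\vee$ and the bijection between $W'\backslash W/W'$ and $W'$-orbits on $W\omega$ is only a slightly tidier bookkeeping of the paper's explicit partitions of $\Phi^+$. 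The genuine divergence is in $\sG_2$ and part~(2): the paper disposes of these geometrically, using that the fixed structure of a central elation is a ball of radius $3$ (hexagon) or $4$ (octagon) about a point, whereas you extend the double-coset computation to the dihedral group of order $16$ with Tits's octagonal root system. That route does work --- the four reflections $s_\alpha$, $\alpha\in W\varphi$, have lengths $1,3,5,7$, and with $W'=\langle s_t\rangle$ ($t$ the line node) they land in the double cosets at incidence-distances $0,4,4,8$ --- but be aware that the computation hinges on knowing that $s_\varphi$ is conjugate to $s_t$, equivalently $s_\varphi=w_{S\backslash\wp}w_0$ with $\wp$ the point node: if $s_\varphi$ lay in the other reflection class the same computation would output $0,2,6$ instead. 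You do cite this identity, but for the octagon it must be checked against Tits's non-crystallographic data rather than by the inversion-set argument available in the crystallographic cases; that is the one non-routine input your version of part~(2) must supply, and it is exactly the dependence the paper's geometric argument avoids, at the price of leaving the uniform Weyl-group framework.
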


\begin{proof}
Let $W'=W_{S\backslash\wp}$, and let $D(\theta)=\{\delta(gP,\theta gP) \mid gP\in G/P\}\subseteq R(\wp)$ be the set of distances realised by $\theta$. From Theorem~\ref{thm:RGD} we see that $D(\theta)$ consists precisely of the identity, along with the minimal length representatives of the double cosets $W's_{\alpha}W'$ with $\alpha$ a long root.

Consider the $\sA_n$ case. We claim that
\begin{align}\label{eq:decomposition}
\Phi^+=\Phi_{S\backslash\wp}^+\sqcup(W'\cdot\alpha_1)\sqcup (W'\cdot\alpha_n)\sqcup\{\varphi\}.
\end{align}
The result follows from this claim, because if $\alpha\in \Phi_{S\backslash\wp}^+$ then $s_{\alpha}\in W'$, if $\alpha\in W'\cdot\alpha_1$ then $s_{\alpha}=ws_1w^{-1}$ for some $w\in W'$ and so $W's_{\alpha}W'=W's_1W'$, if $\alpha\in W'\cdot \alpha_n$ then $W's_{\alpha}W'=W's_nW'$, and if $\alpha=\varphi$ then $W's_{\alpha}W'=W'w_{S\backslash\wp}w_0W'$. 

To prove~(\ref{eq:decomposition}), we first claim that
\begin{align*}
W'\cdot\alpha_1=\{\alpha\in\Phi^+\mid \langle\alpha,\omega_1\rangle=1\text{ and }\langle\alpha,\omega_n\rangle=0\}.
\end{align*}
Denote the right hand side by~$X$. If $\alpha=w\alpha_1$ with $w\in W'$ then $\langle\alpha,\omega_1\rangle=\langle\alpha_1,w^{-1}\omega_1\rangle=\langle\alpha_1,\omega_1\rangle=1$, because $w^{-1}\omega_1=\omega_1$ for all $w\in W'$. It follows that $\alpha\in \Phi^+$, and similarly we have $\langle\alpha,\omega_n\rangle=0$, and so $W'\cdot\alpha_1\subseteq X$. By the orbit-stabiliser theorem we have
$$
|W'\cdot\alpha_1|=|W'|/|\mathrm{stab}_{W'}(\alpha_1)|=|W_{\sA_{n-2}}|/|W_{\sA_{n-3}}|=n-1
$$
where the stabiliser computation follows from the fact that if $w\in W'$ then $w\alpha_1=\alpha_1$ if and only if $w\omega_2=\omega_2$ (as $\alpha_1=2\omega_1-\omega_2$ and $w\omega_1=\omega_1$), if and only if $w\in W_{S\backslash \{1,2,n\}}$ (by \cite[\S10.3 Lemma~B]{Hum:72}). Since $|X|=n-1$ we have $W'\cdot\alpha_1=X$, and hence the claim. 

Dually we have $W'\cdot\alpha_n=\{\alpha\in\Phi^+\mid \langle\alpha,\omega_1\rangle=0\text{ and }\langle\alpha,\omega_n\rangle=1\}$. Since every positive root $\alpha$ either has $\langle\alpha,\omega_1\rangle=\langle\alpha,\omega_n\rangle=0$ (in which case $\alpha\in \Phi_{S\backslash\wp}^+$), or $\langle\alpha,\omega_1\rangle=1$ and $\langle \alpha,\omega_n\rangle=0$ (in which case $\alpha\in W'\cdot\alpha_1$), or the dual situation (with $\alpha\in W'\cdot\alpha_n$), or $\langle\alpha,\omega_1\rangle=\langle\alpha,\omega_n\rangle=1$ (in which case $\alpha=\varphi$) the claim~(\ref{eq:decomposition}) follows.

Consider the $\sB_n$ case with $n\geq 3$, and let $\Phi_L$ be the set of long roots. Let $Y=\{\alpha\in\Phi_L^+\mid \langle\alpha,\omega_2\rangle=0\}$ and $X=\Phi_L^+\backslash(Y\cup\{\varphi\})$. Thus $\Phi_L^+=X\sqcup Y\sqcup \{\varphi\}$. If $\alpha\in Y$ then $W's_{\alpha}W'=W'$. By inspection of the root system we have
$$
X=\{\alpha\in\Phi_L^+\mid \langle\alpha,\omega_2\rangle=1\text{ and }\langle\alpha,\omega_n\rangle\in\{0,2\}\}.
$$
From this description it is clear that $W'\cdot\alpha_2\subseteq X$, and a similar orbit-stabiliser calculation as in the $\sA_n$ case gives $X=W'\cdot\alpha_2$. Hence the result in this case.

The $\sB_2$ and $\sC_n$ cases are immediate, as the polar node corresponds to a short root in these cases, and the $\sD_n$ case is very similar to the $\sB_n$ case. 

Consider the cases $\sE_n$ ($n=6,7,8$) and $\sF_4$. Let $\wp=\{p\}$, $Y=\{\alpha\in \Phi_L^+\mid \langle\alpha,\omega_p\rangle=0\}$, and $X=\Phi_L^+\backslash (Y\cup\{\varphi\})$. Then $s_{\alpha}\in W'$ for all $\alpha\in Y$, and by inspection of the root systems we have
$
X=\{\alpha\in\Phi_L^+\mid \langle\alpha,\omega_p\rangle=1\},
$
from which it follows that $W'\cdot\alpha_p\subseteq X$. Then
$
|W'\cdot \alpha_p|=|W'|/|\mathrm{stab}_{W'}(\alpha_p)|,
$
and we compute $|\mathrm{stab}_{W'}(\alpha_p)|=|W_{\sA_2\times\sA_2}|$, $|W_{\sA_5}|$, $|W_{\sE_6}|$, $|W_{\sA_2}|$ in the cases $\sE_6$, $\sE_7$, $\sE_8$, $\sF_4$ (respectively). For example, in the $\sE_6$ case if $w\in W'$ then one has $w\alpha_2=\alpha_2$ if and only if $w\omega_4=\omega_4$ (as $\alpha_2=2\omega_2-\omega_4$ and $w\omega_2=\omega_2$ for all $w\in W'$), if and only if $w\in W_{S\backslash\{2,4\}}$. Thus $|W'\cdot\alpha_p|=20,32,56,8$ in the cases $\sE_6$, $\sE_7$, $\sE_8$, $\sF_4$, and thus $W'\cdot\alpha_p=X$ in all cases. 

The cases $\mathsf{G}_2$ and Ree-Tits octagons are elementary from the geometry of these generalised polygons, because the fixed elements of $\theta$ form a ball centred at a point with radius $3$ (for $\mathsf{G}_2$) or $4$ (for octagons) in the incidence graph.

Finally, we note that in all cases $|D(\theta)|<|R(\wp)|$, and so there exists at least one $w\in R(\wp)$ such that no point is mapped onto distance $w$ by~$\theta$. For example, in type $\sA_n$ we have $|R(\wp)|=7$ and $|D(\theta)|=4$, and in the cases $\sE_n$ and $\sF_4$ we have $|R(\wp)|=5$ and $|D(\theta)|=3$. 
\end{proof}

The following corollary stems from a question asked to us by Barbara Baumeister.

\begin{cor}\label{cor:barbara} Let $G$ be the group of type preserving automorphisms of a Moufang spherical building $\Delta$ of type other than~$\sA_n$. There exists a nontrivial conjugacy class $\mathscr{C}$ in $G$ which is not transitive on any vertex type.
\end{cor}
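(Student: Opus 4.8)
The plan is to derive Corollary~\ref{cor:barbara} as an almost immediate consequence of Theorem~\ref{thm:skippeddistances}. Fix a long root elation $\theta$ of $\Delta$ (one exists since $\Delta$ is Moufang, by choosing any long root $\alpha$ and any nontrivial $\theta\in U_\alpha$), and let $\mathscr{C}$ be its conjugacy class in $G$; this is nontrivial since $\theta\neq 1$. The key observation is that transitivity of $\mathscr{C}$ on vertices of a given type $s\in S$ would force $\theta$ to map some vertex of that type to every vertex of that type, in particular to vertices at every possible Weyl distance; but Theorem~\ref{thm:skippeddistances} shows that the distances realised by $\theta$ are highly restricted. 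I would spell this out via the long root geometry $\mathscr{G}=\sX_{n,\wp}(\FF)$ (points are type-$\wp$ flags), where $\wp\subseteq S$ is the polar type: by Theorem~\ref{thm:skippeddistances}, $|D(\theta)|<|R(\wp)|$, so there is a Weyl distance $w\in R(\wp)$ not realised by $\theta$ between any point and its image.

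First I would make precise the reduction from ``transitive on vertices of type $s$'' to ``realises all distances between points of $\mathscr{G}$''. Suppose $\mathscr{C}$ is transitive on vertices of some type $s$. Since $\wp$ is nonempty we may as well work with the type-$s$ vertices for $s$ the polar node $p$ (if $\Delta$ is not of type $\sA_n$, then $\wp=\{p\}$ is a singleton; for $\sA_n$ the statement is excluded). Actually the argument works uniformly: pick any vertex type $s$. If $g\theta g^{-1}$ can send a fixed vertex $v$ of type $s$ to an arbitrary vertex $v'$ of type $s$, then equivalently $\theta$ sends the vertex $g^{-1}v$ (of type $s$) to $g^{-1}v'$, i.e.\ $\theta$ realises every $W_{S\setminus\{s\}}$-double-coset distance between type-$s$ vertices. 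But the set of such distances realised by $\theta$ is determined by $\{\delta(c,c^\theta)\mid c\in\Delta\}=\{1\}\cup\{s_\alpha\mid \alpha\in W\varphi\}$ from Theorem~\ref{thm:RGD}, projected onto $W_{S\setminus\{s\}}\backslash W/W_{S\setminus\{s\}}$. For $s=p$ this projection is exactly $D(\theta)$ of Theorem~\ref{thm:skippeddistances}(1), which is a proper subset of $R(\wp)$; for other vertex types $s$ the same reasoning applies since the projected distance set cannot be all of $W_{S\setminus\{s\}}\backslash W/W_{S\setminus\{s\}}$ (it misses the image of $w_0$, because $\theta$ is domestic by Theorem~\ref{thm:RGD} together with the polar diagram being non-full for $\Phi\neq\sA_n$). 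Hence $\mathscr{C}$ fails to be transitive on vertices of type $s$.

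I would organise the proof as: (i) existence of a nontrivial long root elation $\theta$, hence a nontrivial class $\mathscr{C}$; (ii) the observation that $\mathscr{C}$ is transitive on type-$s$ vertices iff $\theta$ realises every distance between type-$s$ vertices; (iii) invoke Theorem~\ref{thm:RGD} to get $\{\delta(c,c^\theta)\}=\{1\}\cup\{s_\alpha\mid\alpha\in W\varphi\}$, and note that since $\theta$ is domestic (polar diagram, $\wp\subsetneq S$ as $\Phi\neq\sA_n$) no chamber, hence no vertex of any type, is mapped to an opposite; (iv) conclude the distance $w_0$-coset is never realised, so no $\mathscr{C}$-transitivity. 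For the Ree-Tits octagon case the same goes through using Theorem~\ref{thm:skippeddistances}(2) (distances $0,4,8$ only, so distance between opposite points is missed). The main obstacle — really the only subtlety — is step (ii): I need to be careful that transitivity of the \emph{conjugacy class} on a vertex orbit is genuinely equivalent to a \emph{single representative} realising all distances. This holds because for any $v,v'$ of type $s$ and any $g\in G$, $\delta(v,(g\theta g^{-1})v)=\delta(g^{-1}v,\theta(g^{-1}v))$, so the set of pairs $(v,(g\theta g^{-1})v)$ over all $g$ has the same distance-set as the set of pairs $(v,\theta v)$; transitivity on type-$s$ vertices would require this distance set to be the full set of relative positions, including the one coming from $w_0$, contradicting domesticity.

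\begin{proof}[Proof of Corollary~\ref{cor:barbara}]
Since $\Delta$ is Moufang, fix a long root $\alpha$ of the associated root system and a nontrivial element $\theta\in U_\alpha$; then $\theta$ is a nontrivial long root elation, and we let $\mathscr{C}$ be its conjugacy class in $G$, which is nontrivial. By Theorem~\ref{thm:RGD}, $\theta$ has polar opposition diagram and
$$
\{\delta(c,c^{\theta})\mid c\in \Delta\}=\{1\}\cup\{s_{\alpha}\mid \alpha\in W\varphi\}.
$$
Since $\Delta$ is not of type $\sA_n$, the polar type $\wp$ is a proper subset of $S$, so the polar diagram is not full and $\theta$ is domestic; in particular no chamber, and hence no vertex of any type $s\in S$, is mapped by $\theta$ onto an opposite simplex.

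Now fix $s\in S$ and suppose, for contradiction, that $\mathscr{C}$ acts transitively on the set $V_s$ of vertices of type~$s$. Two vertices $x,y\in V_s$ are opposite one another precisely when $\delta(x,y)$ lies in the double coset $W_{S\setminus\{s\}}w_0W_{S\setminus\{s\}}$; since $\Delta$ is thick and $|V_s|>1$, there exist $x,y\in V_s$ with $x$ opposite $y$. By the assumed transitivity, there is $g\in G$ with $(g\theta g^{-1})x=y$. But then
$$
\delta\bigl(g^{-1}x,\theta(g^{-1}x)\bigr)=\delta\bigl(x,(g\theta g^{-1})x\bigr)=\delta(x,y)\in W_{S\setminus\{s\}}w_0W_{S\setminus\{s\}},
$$
so the type-$s$ vertex $g^{-1}x$ is mapped by $\theta$ onto an opposite vertex, contradicting the domesticity of $\theta$. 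Hence $\mathscr{C}$ is not transitive on $V_s$, for any $s\in S$.
\end{proof}
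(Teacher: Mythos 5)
The proof as written contains a critical error. You claim that since $\theta$ is domestic, ``no chamber, and hence no vertex of any type $s\in S$, is mapped by $\theta$ onto an opposite simplex.'' This implication is false. Domesticity means no \emph{chamber} is mapped to an opposite chamber; it does not prevent $\theta$ from mapping individual vertices onto opposite vertices. In fact, Theorem~\ref{thm:RGD} tells you that $\theta$ has the polar opposition diagram, which by the very definition of opposition diagram means that type-$\wp$ vertices \emph{are} mapped onto opposite vertices (that is exactly what the encircled polar node records). So $\theta$ is simultaneously domestic and maps some polar-node vertex to an opposite --- there is no tension. Concretely, in the displayed set $\{1\}\cup\{s_\alpha\mid \alpha\in W\varphi\}$ the reflection $s_\varphi$ satisfies $W_{S\setminus\wp}\,s_\varphi\,W_{S\setminus\wp}=W_{S\setminus\wp}\,w_0\,W_{S\setminus\wp}$, so the ``opposite'' double coset for the polar node is indeed realised.

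As a consequence, your argument collapses precisely for $s$ equal to the polar node $p$: the contradiction you want to derive cannot arise, because $\theta$ really does map some type-$p$ vertex onto an opposite one. This is exactly the case where the paper's proof needs genuine work, and it is handled by Theorem~\ref{thm:skippeddistances}, which shows that some \emph{intermediate} distance in the long root geometry (strictly between fixed and opposite) is skipped by $\theta$, so that $\mathscr{C}$ cannot be transitive. Your own preamble gestures at this (``$|D(\theta)|<|R(\wp)|$, so there is a Weyl distance $w\in R(\wp)$ not realised''), but the formal proof block then abandons it in favour of the incorrect domesticity shortcut. Your preamble also asserts that the projected distance set ``misses the image of $w_0$''; as noted above, for $s=p$ it does not.

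There is a secondary gap: when opposition is not type preserving (types $\sD_{2n+1}$ and $\sE_6$), there are $s\in S$ with $\pi_0(s)\neq s$, and then no two type-$s$ vertices are opposite one another, so your line ``since $\Delta$ is thick and $|V_s|>1$, there exist $x,y\in V_s$ with $x$ opposite $y$'' fails outright, and your argument gives no information about transitivity on those types. The paper treats these types by a separate analysis showing, for instance, that in $\sE_{6,1}(\FF)$ and in $\sD_{2n+1,2n}(\FF)$ and $\sD_{2n+1,2n+1}(\FF)$ a long root elation fixes a point or maps it to a collinear one, again omitting some attainable relative position.

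In short: the correct engine for the types not encircled in the polar diagram (and with $\pi_0(s)=s$) is indeed the opposition diagram from Theorem~\ref{thm:RGD}, not domesticity per se; the polar node requires Theorem~\ref{thm:skippeddistances}; and the $\pi_0$-unstable types require the additional geometric argument. Your proof as written establishes only the first of these three pieces.
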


\begin{proof}
Let $\mathscr{C}$ be the conjugacy class of long root elations. Consider first the case that opposition is type preserving, and let $i$ be a vertex type. Let $x$ be a type $i$ vertex. If $i$ is not the polar node then from Theorem~\ref{thm:RGD} no element of $\mathscr{C}$ maps $x$ to an opposite vertex, and hence $\mathscr{C}$ is not transitive on type $i$ vertices. If $i$ is the polar node then by Theorem~\ref{thm:skippeddistances} there is a distance in the long root geometry such that no element of $\mathscr{C}$ maps a point of the long root geometry to this distance, and hence the result in this case. 

Now suppose that opposition is not type preserving. Thus $\Delta$ is of type $\sD_{2n+1}$ or $\sE_6$. Consider the $\sD_{2n+1}$ case. By Theorem~\ref{thm:RGD} no vertex of type $1,3,4,\ldots,2n-1$ is mapped to an opposite vertex, and so $\mathscr{C}$ is not transitive on these vertex types, and by Theorem~\ref{thm:skippeddistances} $\mathscr{C}$ is not transitive on the vertices of polar type~$2$. It is easy to see, as in Theorem~\ref{thm:skippeddistances}, that in the $\sD_{2n+1,2n+1}(\FF)$ geometry points are either fixed or mapped to collinear points by long root elations, and similarly in the $\sD_{2n+1,2n}(\FF)$ geometry. Thus $\mathscr{C}$ is not transitive on the vertices of types $2n$ or $2n+1$ either. 

Consider the $\sE_6$ case. As above, $\mathscr{C}$ is not transitive on the vertices of types $2$ or $4$. Moreover, since points of the $\sE_{6,1}(\FF)$ geometry are either fixed or mapped to collinear points by long root elations (see Theorem~\ref{thm:geomchar}) we see that $\mathscr{C}$ is not transitive on vertices of type $1$, or dually type $6$. Similar calculations show that in the $\sE_{6,3}(\FF)$ geometry, a long root elation either fixes points, maps them to collinear points, or maps them to distance $s_{\varphi_{\sD_5}}$ (with the $\sD_5$ system generated by $\alpha_1,\ldots,\alpha_5$). Thus $\mathscr{C}$ is not transitive on any vertex type.
\end{proof}

\begin{remark}
In the $\sA_n$ case the class $\mathscr{C}$ of long root elations is transitive on vertices of types $1$ and~$n$, and is not transitive on any other vertex types.
\end{remark}

\subsection{Short root elations}

We now record the situation for short root elations of split buildings. In this case there is some dependence on the characteristic of the underlying field. The proof for the $\sF_4$ case is postponed to Section~\ref{sec:polarcopolar}. For $i\leq n$ let $\sB_{n;i}^1$ (respectively $\sC_{n;i}^1$) denote the admissible $\sB_n$ (respectively $\sC_n$) diagram $(\Gamma,\{1,\ldots,i\},\mathrm{id})$.

\begin{thm}\label{thm:short}
Let $\theta\in U_{\alpha}\backslash\{1\}$ for some short root $\alpha$. 
\begin{compactenum}[$(1)$]
\item If $\Phi=\sB_n$ then $\theta$ has opposition diagram $\sB_{n;2}^1$ if $\mathrm{char}(\FF)\neq 2$, and $\sB_{n;1}^1$ if $\mathrm{char}(\FF)=2$. 
\item If $\Phi=\sC_n$ then $\theta$ has opposition diagram $\sC_{n;2}^1$ if $\mathrm{char}(\FF)\neq 2$, and $\sC_{n;1}^2$ if $\mathrm{char}(\FF)=2$.
\item If $\Phi=\sF_4$ then $\theta$ has opposition diagram $\sF_{4;2}$ if $\mathrm{char}(\FF)\neq 2$, and $\sF_{4;1}^4$ if $\mathrm{char}(\FF)=2$.
\item If $\Phi=\sG_2$ then $\theta$ has opposition diagram $\sG_{2;2}$ if $\mathrm{char}(\FF)\neq 3$, and $\sG_{2;1}^1$ if $\mathrm{char}(\FF)=3$. 
\end{compactenum}
In particular, with the exception of the cases $\Phi=\sB_2$ and $\Phi=\sC_2$ with $\mathrm{char}(\FF)\neq 2$, and $\Phi=\sG_2$ with $\mathrm{char}(\FF)\neq 3$, the collineation $\theta$ is domestic. 
\end{thm}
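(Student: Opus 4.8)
The plan is to treat the four root systems separately, but using a common strategy in the $\sB_n$, $\sC_n$ and $\sG_2$ cases, with the $\sF_4$ case deferred to Section~\ref{sec:polarcopolar} as announced. In each case, after conjugating we may assume $\theta=x_\alpha(a)$ with $\alpha$ a fixed short root of our choosing (short roots form a single $W$-orbit); the key point is that short root elations are \emph{not} central in $U^+$, so unlike in Theorem~\ref{thm:RGD} the conjugates $u^{-1}\theta u$ move $\theta$ around, and the displacement is governed by the ideal of roots $\geq\alpha$. Concretely, I would apply Lemma~\ref{lem:technique}: choosing $A=\{\alpha\}$ for a \emph{well-chosen} short root $\alpha$, one has $u^{-1}\theta u\in\langle U_\beta\mid\beta\in A_{\geq}\rangle$, and then picks $w_1\in W$ with $\pi_0(A_\geq)\subseteq\Phi(w_1)$ to feed into Proposition~\ref{prop:standardtechnique}. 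The characteristic dependence enters because the commutator relations among the roots in $A_\geq$ produce coefficients divisible by $2$ (for $\sB_n,\sC_n,\sF_4$) or by $3$ (for $\sG_2$) in precisely the spots that determine whether a short-root reflection $s_{\alpha_1}$ (type~$\sB$), $s_{\alpha_2}$ (type~$\sC$) or $s_{\alpha_1}$ (type~$\sG_2$) also appears as a ``second orbit'' of the opposition diagram. So the upper bound on $\disp(\theta)$ from the standard technique will already pin down that $\theta$ has $\Type(\theta)\subseteq\{1,2\}$ in types $\sB_n$, $\{1,2\}$ in type $\sC_n$ (with the first node being the short one in $\sB$ and the long one in $\sC$, hence the different superscripts $\sB^1_{n;\cdot}$ versus $\sC^1_{n;\cdot}$ and $\sC^2_{n;1}$), and $\subseteq\{1,2\}$ in $\sG_2$.

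First I would do the generic (non-special) characteristic cases. Here the strategy is: (i) produce a type~$2$ simplex (in $\sB_n,\sC_n$) or full chamber-in-a-hexagon obstruction (in $\sG_2$) mapped onto an opposite simplex — equivalently exhibit a chamber $gB$ with $\ell(\delta(gB,\theta gB))$ equal to the displacement predicted by the $\sB^1_{n;2}$ / $\sC^1_{n;2}$ / $\sG_{2;2}$ diagram — and (ii) bound the displacement above via Proposition~\ref{prop:standardtechnique} and Lemma~\ref{lem:technique} so that no larger diagram can occur. For step (i) the cleanest approach is an explicit chamber: using the folding relation~(\ref{eq:folding}), $x_{-\alpha}(c)\in Bs_\alpha B$, and more generally Lemma~\ref{lem:perpendicularroots} gives chambers realising products of perpendicular reflections; one then conjugates $\theta=x_\alpha(a)$ by a suitable product of negative root groups and reads off the Weyl distance from the Bruhat form, as in the Claims of the proof of Theorem~\ref{thm:converse}. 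The admissible-diagram classification (Theorem~\ref{thm:admissible}) does most of the work: once we know $\Type(\theta)\subseteq\{1,2\}$ and that some appropriate simplex \emph{is} mapped to an opposite, only one diagram remains. For $\sG_2$ one additionally invokes $\disp$ values: $\sG_{2;2}$ is the full diagram (so $\disp=\ell(w_0)$), which is consistent with $\theta$ being non-domestic there.

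For the special characteristic cases the same displacement bound from Lemma~\ref{lem:technique}/Proposition~\ref{prop:standardtechnique} now \emph{decreases} by one orbit: the commutator coefficients that previously forced the second node into $\Type(\theta)$ vanish mod $2$ (resp. mod $3$), so the upper bound on $\disp(\theta)$ becomes $\ell(w_{S\setminus\{1\}}w_0)$ in type $\sB_n$ (giving $\sB^1_{n;1}$), $\ell(w_{S\setminus\{2\}}w_0)$ in type $\sC_n$ (giving $\sC^2_{n;1}$), and $\ell(w_{S\setminus\{1\}}w_0)$ in type $\sG_2$ (giving $\sG^1_{2;1}$). One must then still show this single node \emph{is} attained, i.e. that $\theta$ is not trivial on it — but this is immediate since a short root elation is a nontrivial collineation, so some simplex is mapped to an opposite, and the only admissible diagram compatible with the displacement bound and with being nontrivial is the one claimed. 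I expect the main obstacle to be the bookkeeping in step (i) of the non-special cases — writing down the explicit conjugating element and tracking all the commutator terms (especially the ``middle'' roots of the $\sB_n/\sC_n$ string and the several roots of height up to $5$ above a short root in $\sG_2$) carefully enough to be sure the Bruhat cell is exactly the predicted one and not a smaller one. The $\sF_4$ case is genuinely harder and is why it is postponed: there the ideal $A_\geq$ above a short root is large and the standard technique alone does not separate $\sF_{4;2}$ from $\sF^4_{4;1}$ in characteristic~$2$; the geometric argument in the metasymplectic space $\sF_{4,4}(\FF)$ referenced in Section~\ref{sec:polarcopolar} is needed, in particular to show the characteristic~$2$ short root elation is (conjugate to) a homology-like object with diagram $\sF^4_{4;1}$ rather than having the full polar--copolar diagram. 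The final ``in particular'' clause then follows by inspecting which of the listed diagrams are full: $\sB_2$ and $\sC_2$ in odd characteristic give $\sB^1_{2;2}=\sC^1_{2;2}$, the full $B_2$ diagram, and $\sG_2$ in characteristic $\neq 3$ gives $\sG_{2;2}$, the full $G_2$ diagram; in every other listed case at least one node is un-encircled, so by the characterisation $\disp(\theta)<\ell(w_0)$ and $\theta$ is domestic.
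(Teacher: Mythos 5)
Your proposal takes a genuinely different route from the paper's, and it has a gap in the part that handles the characteristic-dependence. The paper's proof is terse: for $\sB_n$ and $\sC_n$ it simply invokes the matrix descriptions (no details given); for $\sF_4$ in characteristic $2$ and $\sG_2$ in characteristic $3$ it uses a single clean observation, namely that in special characteristic the short root geometry $\sF_{4,4}(\FF)$ (resp.\ $\sG_{2,1}(\FF)$) isometrically embeds into the long root geometry with short root elations becoming \emph{long} root elations, so Theorem~\ref{thm:RGD} applies directly and gives $\sF_{4;1}^4$ (resp.\ $\sG_{2;1}^1$) with no further work; the $\sF_4$, $\mathrm{char}(\FF)\neq 2$ case is deferred to Corollary~\ref{cor:F4longandshort}; and the $\sG_2$, $\mathrm{char}(\FF)\neq 3$ case is settled by one explicit Bruhat computation showing non-domesticity.

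The gap in your argument is in the special-characteristic step. You write that ``the same displacement bound from Lemma~\ref{lem:technique}/Proposition~\ref{prop:standardtechnique} now decreases by one orbit'' because the relevant structure constants vanish mod $2$ or mod $3$. But Lemma~\ref{lem:technique}, as stated, only gives $u^{-1}\theta u\in\langle U_\beta\mid\beta\in A_\geq\rangle$, and the set $A_\geq$ is purely combinatorial: it does not see the characteristic. So Proposition~\ref{prop:standardtechnique} applied via Lemma~\ref{lem:technique} yields exactly the same upper bound on $\disp(\theta)$ in every characteristic, and in particular cannot by itself tell $\sG_{2;2}$ apart from $\sG_{2;1}^1$, or $\sB_{n;2}^1$ from $\sB_{n;1}^1$. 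To make your plan work, you would have to replace $A_\geq$ by a strictly smaller set $A'\subsetneq A_\geq$ (computed directly from the commutator relations with the relevant structure constants annihilated mod $p$) and then rerun Proposition~\ref{prop:standardtechnique} with a $w_1$ adapted to $A'$; as written, this crucial step is asserted rather than established. A second, smaller inaccuracy is your description of the $\sF_4$ situation: you suggest the char $2$ case requires the geometric machinery of Section~\ref{sec:polarcopolar}, but in the paper that case is disposed of in the proof of the theorem itself by the embedding argument; only the char $\neq 2$ case is deferred. Finally, you offer nothing in place of the embedding argument, which is the key idea that makes both special-characteristic cases two-line corollaries of Theorem~\ref{thm:RGD}; without it your approach in those cases becomes substantially more computational and, because of the gap above, is not yet sound.

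Your scaffolding for the generic (non-special characteristic) cases --- exhibit an explicit chamber realising the required Weyl distance via the folding relation and Lemma~\ref{lem:perpendicularroots}, bound the displacement from above, and let the admissible diagram classification pin things down --- is reasonable and is closer in spirit to how the paper argues the $\sG_2$, $\mathrm{char}(\FF)\neq 3$ case, though it would need the detailed commutator bookkeeping carried out before one could call it a proof.
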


\begin{proof}
The statements for the polar spaces $\sB_n$ and $\sC_n$ are easily proved using the matrix descriptions of these groups, and we omit the details.

Consider the case $\Phi=\sF_4$. If $\mathrm{char}(\FF)=2$ then the $\sF_{4,4}(\FF)$ geometry isometrically embeds into the $\sF_{4,1}(\FF)$ geometry (surjectively if $\FF$ is perfect), with short root elations becoming long root elations, and so Theorem~\ref{thm:RGD} implies that the opposition diagram of $\theta$ is $\sF_{4;1}^4$. The proof for the case $\mathrm{char}(\FF)\neq 2$ is postponed until Corollary~\ref{cor:F4longandshort}.

Consider the case $\Phi=\sG_2$. If $\mathrm{char}(\FF)=3$ then, as in the $\sF_4$ case, we have opposition diagram~$\sG_{2,1}^1$. Thus suppose that $\mathrm{char}(\FF)\neq 3$. A direct calculation shows that
$$
Bw_0^{-1}x_{\alpha_1+\alpha_2}(1)^{-1}x_{\varphi'}(a)x_{\alpha_1+\alpha_2}(1)w_0B=Bw_0B,
$$
and so $\theta$ is not domestic, completing the proof. 
\end{proof}

One can also show that the class $\mathscr{C}$ of short root elations in split type $\sF_4$ gives another class not transitive on any vertex type (cf. Corollary~\ref{cor:barbara}). 

\begin{cor}\label{cor:barbara2}
The class $\mathscr{C}$ of short root elations of $\sF_4(\FF)$ does not act transitively on the set of type~$i$ vertices, for each $i=1,2,3,4$. 
\end{cor}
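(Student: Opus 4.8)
The plan is to leverage Theorem~\ref{thm:short} together with the displacement analysis from Theorem~\ref{thm:RGD} and the geometric dictionary between short root elations of $\sF_4(\FF)$ (for $\mathrm{char}(\FF)\neq 2$) and long root elations via the polarity duality. Recall that a short root elation $\theta\in U_\alpha\setminus\{1\}$ of $\sF_4(\FF)$ with $\mathrm{char}(\FF)\neq 2$ has opposition diagram $\sF_{4;2}$ by Theorem~\ref{thm:short}, so $\Type(\theta)=\{1,2\}$ (the polar type $\wp=\{1\}$ together with the copolar type $\wp^*=\{2\}$ in Bourbaki labelling). First I would observe that since $\theta$ is capped (all buildings of type $\sF_4$ over a field with $|\FF|>2$ are large, and for $|\FF|=2$ the characteristic is $2$, which is excluded), Theorem~\ref{thm:RGD}-style reasoning applies: the set of Weyl distances $\{\delta(c,c^\theta)\mid c\in\Delta\}$ is a proper subset of the possible values. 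Concretely, in the short root geometry $\sF_{4,4}(\FF)$, a short root elation fixes a ball of bounded radius, and so there exists at least one of the five building-theoretic distances in $R_{4}$ (the $\wp$-relative distances) that is never realised — mirroring the argument in the proof of Theorem~\ref{thm:skippeddistances} for long root elations.

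The key steps, in order, would be: (1) fix a vertex type $i\in\{1,2,3,4\}$ and a type-$i$ vertex $x$; (2) if $i$ is \emph{not} among the types stabilised appropriately — that is, if $\pi_0(i)\notin\Type(\theta)$ or more precisely if $i\notin\{1,2\}$ — then by the opposition diagram $\sF_{4;2}$ no type-$i$ vertex is mapped onto an opposite vertex by any $\theta\in\mathscr{C}$, so $\mathscr{C}$ cannot be transitive on type-$i$ vertices (an opposite pair of type-$i$ vertices exists by thickness, but is unreachable); (3) for $i\in\{1,2\}$, work inside the relevant Lie incidence geometry ($\sF_{4,1}(\FF)$ for $i=1$, and $\sF_{4,2}(\FF)$ for $i=2$) and show, exactly as in the $\sE_6$/$\sF_4$ cases of the proof of Theorem~\ref{thm:skippeddistances}, that the orbit $W'\cdot\alpha_s$ of the short root $\alpha$ under the relevant parabolic $W'=W_{S\setminus\{i\}}$ is a proper subset of the long-or-short roots contributing distinct $W'$-double cosets; concretely one computes that the set $D(\theta)$ of realised distances has $|D(\theta)|<|R(\{i\})|$, via an orbit–stabiliser count of $|W'\cdot\alpha|$ against the total number of roots of the relevant length not in $\Phi_{S\setminus\{i\}}$. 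This yields a distance in the type-$i$ geometry that $\mathscr{C}$ never realises, hence non-transitivity.

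The main obstacle I expect is step (3) for the types $i=3$ and $i=4$: these are not covered by step (2) in the naive sense because the relevant geometries ($\sF_{4,3}$ and $\sF_{4,4}$) may have $\pi_0$-stable type, and one must genuinely examine the distance spectrum of a short root elation in the metasymplectic space $\sF_{4,4}(\FF)$ and its dual. Here I would invoke the fact (used in Theorem~\ref{thm:short} and to be established in Corollary~\ref{cor:F4longandshort}) that for $\mathrm{char}(\FF)\neq 2$ a short root elation of $\sF_{4,4}(\FF)$ fixes a large full subgeometry or analogous substructure, so its displacement in that geometry is strictly less than the diameter — giving the missing distance directly. The bookkeeping of which orbit sizes appear is routine given the root system data in Appendix~\ref{app:data}, but getting the correspondence between the abstract opposition diagram $\sF_{4;2}$ and the concrete vertex-type-by-vertex-type non-transitivity clean is the delicate part; the safest route is to mimic the structure of the proof of Corollary~\ref{cor:barbara} line by line, replacing "long root elation" by "short root elation" and "polar node" by the appropriate node(s), and citing Theorem~\ref{thm:short} for the opposition diagram and Theorem~\ref{thm:RGD}/Theorem~\ref{thm:skippeddistances} for the distance spectrum. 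I would conclude by noting that for each $i\in\{1,2,3,4\}$ we have produced an opposite pair, or a pair at some realisable-in-principle distance, of type-$i$ vertices not connected by any conjugate of $\theta$, so $\mathscr{C}$ fails transitivity on every vertex type.
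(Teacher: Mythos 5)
Your plan has the right high-level shape (use the opposition diagram to kill some vertex types at a stroke, then analyse the remaining types as Lie incidence geometries), but several of the details are wrong, and the wrong detail in step (2) propagates through the whole argument.

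First, the copolar node of $\sF_4$ is $\wp^*=\{4\}$, not $\{2\}$ (see Appendix~\ref{app:data}), so $\sF_{4;2}$ encircles nodes $1$ and $4$. Consequently the opposition-diagram argument immediately disposes of vertex types $2$ and $3$ (no type-$2$ or type-$3$ vertex is ever sent to an opposite by $\theta$), and the types that genuinely need work are $1$ and $4$, not $3$ and $4$. Your step (2) as written would attempt to cover type $4$ by the opposition-diagram observation, which fails because node $4$ \emph{is} encircled, and your step (3) spends its effort on type $2$, which is already trivial.

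Second, the proposal to "mimic Theorem~\ref{thm:skippeddistances} line by line" for the remaining types does not go through. The proof of Theorem~\ref{thm:skippeddistances} (like Theorem~\ref{thm:RGD}) depends essentially on the long root elation being conjugate to an element of $U_\varphi$, which is central in $U^+$, so that $u^{-1}\theta u=\theta$ for all $u\in U^+$ and the realised distances are controlled purely by the $W$-orbit of $\varphi$. A short root elation $x_{\varphi'}(a)$ is \emph{not} central in $U^+$, so that reduction collapses: conjugating by $u$ smears $\theta$ across all the root subgroups $U_\alpha$ with $\alpha\geq\varphi'$, and one needs to control where those roots land under the coset representatives $v\in M_1$. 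The paper's proof does exactly this: it computes $u^{-1}x_{\varphi'}(1)u=x_{\alpha}(a)x_{\beta}(b)x_{\gamma}(c)x_{\delta}(d)$ for the four roots of height $\geq 8$, then checks by explicit computation (in MAGMA) that for every $v\in M_1$ the image set $\{v^{-1}\alpha,\ldots,v^{-1}\delta\}\cap(\Phi^-\setminus\Phi_1)$ lies in a fixed small closed set, and from that reads off that the special distance $s_1s_2s_3s_2s_4s_3s_2s_1$ is never realised in $\sF_{4,1}(\FF)$. This is a genuinely different computation from the orbit-stabiliser count, and the replacement you suggest (invoking a ``large full subgeometry'' for the fixed structure in characteristic $\neq 2$, attributed to Corollary~\ref{cor:F4longandshort}) is not something that corollary asserts; in fact in characteristic $\neq 2$ the fixed structure of a short root elation is not of that form.

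Finally, you should not discard $\mathrm{char}(\FF)=2$: the corollary is stated for all fields. The paper handles that case cleanly by noting that in characteristic $2$ the $\sF_{4,4}(\FF)$ geometry embeds isometrically into $\sF_{4,1}(\FF)$, carrying short root elations to long root elations, so the result reduces to Corollary~\ref{cor:barbara}. You should keep that reduction as a first step rather than excluding the case.
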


\begin{proof} If $\mathrm{char}(\FF)=2$ then the $\sF_{4,4}(\FF)$ geometry isomoetrically embeds into the $\sF_{4,1}(\FF)$ geometry, with short root elations becoming long root elations, and hence the result.  Suppose that $\mathrm{char}(\FF)\neq 2$. As in the proof of Corollary~\ref{cor:barbara}, it is sufficient to show that for each $i=1,2,3,4$ there exists at least one distance such that no point of the geometry $\sF_{4,i}(\FF)$ is mapped to this distance by a short root elation~$\theta$. By Theorem~\ref{thm:short} no vertices of types $2$ or $3$ are mapped onto opposite vertices, and so it remains to consider vertices of types $1$ and $4$. 

After conjugating, we may assume that $\theta=x_{\varphi'}(1)$, where $\varphi'$ is the highest short root. As in Subsection~\ref{sec:parapolar}, let $M_1$ denote the set of minimal length coset representatives of cosets in $W/W_1$ (recall the notation~(\ref{eq:notationparabolic})), and let $R_1$ denote the set of minimal length representatives for the double cosets in $W_1\backslash W/W_1$. Each vertex of type~$1$ is of the form $x=uvP_1$, $v\in M_1$, and $u\in U_{\Phi(v)}^+$, and the distance between $x$ and $x^{\theta}$ is the unique element $w\in R_1$ such that $v^{-1}u^{-1}\theta uv\in P_1wP_1$.

For any $u\in U^+$ we have, by commutator relations,
$$
u^{-1}\theta u=u^{-1}x_{\varphi'}(1)u=x_{\alpha}(a)x_{\beta}(b)x_{\gamma}(c)x_{\delta}(d)
$$
for some $a,b,c,d\in\FF$, where $\alpha,\beta,\gamma,\delta$ are the unique roots of heights $8,9,10,11$. Explicitly these roots are $\varphi'$, $\varphi'+\alpha_3$, $\varphi'+\alpha_2+\alpha_3$, and $\varphi$, and since the corresponding root subgroups commute with each other the order in the above product is irrelevant. 

It follows that 
$$
v^{-1}u^{-1}\theta uv=x_{v^{-1}\alpha}(a)x_{v^{-1}\beta}(b)x_{v^{-1}\gamma}(c)x_{v^{-1}\delta}(d)
$$
for some $a,b,c,d\in\FF$. If either $v^{-1}\alpha\in \Phi^+$ or $v^{-1}\alpha\in \Phi_1$ (where $\Phi_1$ is generated by $\alpha_2,\alpha_3,\alpha_4$) then $x_{v^{-1}\alpha}(a)\in P_1$, and hence can be ignored as we are interested in $P_1$-double cosets. Similarly for the other terms (as they pairwise commute). By a direct calculation (using $\mathsf{MAGMA}$), for all $v\in M_1$ it turns out that
$$
\{v^{-1}\alpha,v^{-1}\beta,v^{-1}\gamma,v^{-1}\delta\}\cap (\Phi^-\backslash\Phi_1)\subseteq \{-(1000),-(1100),-(1110),-(2342)\}.
$$
It therefore suffices to consider $P_1gP_1$, where
$$
g=x_{-(2342)}(a)x_{-(1110)}(b)x_{-(1100)}(c)x_{-(1000)}(d)\quad\text{with $a,b,c,d\in\FF$}.
$$
A straightforward calculation, using the folding relation, gives
\begin{align*}
BgB=\begin{cases}
Bs_{(2342)}B&\text{if $a\neq 0$}\\
Bs_{(1110)}B&\text{if $a= 0$ and $b\neq 0$}\\
Bs_{(1100)}B&\text{if $a=b=0$ and $c\neq 0$}\\
Bs_{(1000)}B&\text{if $a=b=c=0$ and $d\neq 0$}\\
B&\text{if $a=b=c=d=0$}.
\end{cases}
\end{align*}
Since $P_1s_{(1100)}P_1=P_1s_{(1000)}P_1=P_1s_1P_1$, and $s_{(1110)}=s_1s_2s_3s_2s_1$, it follows that
$$
P_1gP_1\in \{P_1,P_1s_1P_1,P_1s_1s_2s_3s_2s_1P_1,P_1s_{\varphi}P_1\}
$$
In the language of parapolar spaces, this means that every point of $\sF_{4,1}(\FF)$ is either fixed, mapped to a collinear point, mapped to a symplectic point, or mapped to an opposite point by~$\theta$. In particular, no point is mapped to a point at special distance $s_1s_2s_3s_2s_4s_3s_2s_1$ (see Subsection~\ref{sec:parapolar}), and hence $\mathscr{C}$ is not transitive on type $1$ vertices. The arguments for type $4$ vertices are entirely analogous.
\end{proof}

\section{Unipotent elements}\label{sec:unipotent}

Let $\Delta$ be a split irreducible spherical building of exceptional type with root system~$\Phi$. In this section we give an extension of Theorem~\ref{thm:RGD}, showing that every ``polar closed'' (see below) type preserving admissible Dynkin diagram of type $\Phi$ can be realised as the opposition diagram of a unipotent element $u\in U^+$. In fact, we show that these are precisely the diagrams that arise as opposition diagrams of elements in~$U^+$ (for non-special characteristic). 

Let $\sX$ be a type preserving  admissible Dynkin diagram, and let $\sX=\sX_0,\sX_1,\ldots$ be sub-diagrams such that, for $j\geq 1$, the diagram $\sX_{j}$ is obtained from $\sX_{j-1}$ by removing the polar type from one of the connected components of $\sX_{j-1}$. Suppose that this process terminates at step $j=k$ (that is, $\sX_k$ has no polar nodes encircled). We say that a type preserving diagram $\sX$ is \textit{polar closed} if $\sX_k$ is an empty diagram (that is, has no nodes encircled). 

For example, the following diagrams are polar closed
\begin{align*}
{^2}\sE_{6;4}&=\begin{tikzpicture}[scale=0.5,baseline=-0.5ex]
\node at (0,0.8) {};
\node at (0,-0.8) {};
\node [inner sep=0.8pt,outer sep=0.8pt] at (-2,0) (2) {$\bullet$};
\node [inner sep=0.8pt,outer sep=0.8pt] at (-1,0) (4) {$\bullet$};
\node [inner sep=0.8pt,outer sep=0.8pt] at (0,-0.5) (5) {$\bullet$};
\node [inner sep=0.8pt,outer sep=0.8pt] at (0,0.5) (3) {$\bullet$};
\node [inner sep=0.8pt,outer sep=0.8pt] at (1,-0.5) (6) {$\bullet$};
\node [inner sep=0.8pt,outer sep=0.8pt] at (1,0.5) (1) {$\bullet$};
\draw (-2,0)--(-1,0);
\draw (-1,0) to [bend left=45] (0,0.5);
\draw (-1,0) to [bend right=45] (0,-0.5);
\draw (0,0.5)--(1,0.5);
\draw (0,-0.5)--(1,-0.5);
\draw [line width=0.5pt,line cap=round,rounded corners] (2.north west)  rectangle (2.south east);
\draw [line width=0.5pt,line cap=round,rounded corners] (4.north west)  rectangle (4.south east);
\draw [line width=0.5pt,line cap=round,rounded corners] (3.north west)  rectangle (5.south east);
\draw [line width=0.5pt,line cap=round,rounded corners] (1.north west)  rectangle (6.south east);
\end{tikzpicture}\quad\mapsto\quad
 \begin{tikzpicture}[scale=0.5,baseline=-0.5ex]
\node [inner sep=0.8pt,outer sep=0.8pt] at (-1,0) (4) {$\bullet$};
\node [inner sep=0.8pt,outer sep=0.8pt] at (0,-0.5) (5) {$\bullet$};
\node [inner sep=0.8pt,outer sep=0.8pt] at (0,0.5) (3) {$\bullet$};
\node [inner sep=0.8pt,outer sep=0.8pt] at (1,-0.5) (6) {$\bullet$};
\node [inner sep=0.8pt,outer sep=0.8pt] at (1,0.5) (1) {$\bullet$};
\draw (-1,0) to [bend left=45] (0,0.5);
\draw (-1,0) to [bend right=45] (0,-0.5);
\draw (0,0.5)--(1,0.5);
\draw (0,-0.5)--(1,-0.5);
\draw [line width=0.5pt,line cap=round,rounded corners] (4.north west)  rectangle (4.south east);
\draw [line width=0.5pt,line cap=round,rounded corners] (3.north west)  rectangle (5.south east);
\draw [line width=0.5pt,line cap=round,rounded corners] (1.north west)  rectangle (6.south east);
\end{tikzpicture}\quad\mapsto\quad
 \begin{tikzpicture}[scale=0.5,baseline=-0.5ex]
\node [inner sep=0.8pt,outer sep=0.8pt] at (-1,0) (4) {$\bullet$};
\node [inner sep=0.8pt,outer sep=0.8pt] at (0,-0.5) (5) {$\bullet$};
\node [inner sep=0.8pt,outer sep=0.8pt] at (0,0.5) (3) {$\bullet$};
\draw (-1,0) to [bend left=45] (0,0.5);
\draw (-1,0) to [bend right=45] (0,-0.5);
\draw [line width=0.5pt,line cap=round,rounded corners] (4.north west)  rectangle (4.south east);
\draw [line width=0.5pt,line cap=round,rounded corners] (3.north west)  rectangle (5.south east);
\end{tikzpicture}\quad\mapsto\quad
 \begin{tikzpicture}[scale=0.5,baseline=-0.5ex]
\node [inner sep=0.8pt,outer sep=0.8pt] at (-1,0) (4) {$\bullet$};
\draw [line width=0.5pt,line cap=round,rounded corners] (4.north west)  rectangle (4.south east);
\end{tikzpicture}
\quad\mapsto\quad
 \emptyset\\
 \sE_{7;4}&=\begin{tikzpicture}[scale=0.5,baseline=-0.5ex]
\node at (0,0.8) {};
\node at (0,-0.8) {};
\node [inner sep=0.8pt,outer sep=0.8pt] at (-2,0) (1) {$\bullet$};
\node [inner sep=0.8pt,outer sep=0.8pt] at (-1,0) (3) {$\bullet$};
\node [inner sep=0.8pt,outer sep=0.8pt] at (0,0) (4) {$\bullet$};
\node [inner sep=0.8pt,outer sep=0.8pt] at (1,0) (5) {$\bullet$};
\node [inner sep=0.8pt,outer sep=0.8pt] at (2,0) (6) {$\bullet$};
\node [inner sep=0.8pt,outer sep=0.8pt] at (3,0) (7) {$\bullet$};
\node [inner sep=0.8pt,outer sep=0.8pt] at (0,-1) (2) {$\bullet$};
\draw (-2,0)--(3,0);
\draw (0,0)--(0,-1);
\draw [line width=0.5pt,line cap=round,rounded corners] (1.north west)  rectangle (1.south east);
\draw [line width=0.5pt,line cap=round,rounded corners] (3.north west)  rectangle (3.south east);
\draw [line width=0.5pt,line cap=round,rounded corners] (4.north west)  rectangle (4.south east);
\draw [line width=0.5pt,line cap=round,rounded corners] (6.north west)  rectangle (6.south east);
\end{tikzpicture}\quad\mapsto\quad
\begin{tikzpicture}[scale=0.5,baseline=-0.5ex]
\node [inner sep=0.8pt,outer sep=0.8pt] at (-1,0) (3) {$\bullet$};
\node [inner sep=0.8pt,outer sep=0.8pt] at (0,0) (4) {$\bullet$};
\node [inner sep=0.8pt,outer sep=0.8pt] at (1,0) (5) {$\bullet$};
\node [inner sep=0.8pt,outer sep=0.8pt] at (2,0) (6) {$\bullet$};
\node [inner sep=0.8pt,outer sep=0.8pt] at (3,0) (7) {$\bullet$};
\node [inner sep=0.8pt,outer sep=0.8pt] at (0,-1) (2) {$\bullet$};
\draw (-1,0)--(3,0);
\draw (0,0)--(0,-1);
\draw [line width=0.5pt,line cap=round,rounded corners] (3.north west)  rectangle (3.south east);
\draw [line width=0.5pt,line cap=round,rounded corners] (4.north west)  rectangle (4.south east);
\draw [line width=0.5pt,line cap=round,rounded corners] (6.north west)  rectangle (6.south east);
\end{tikzpicture}\quad\mapsto\quad
\begin{tikzpicture}[scale=0.5,baseline=-0.5ex]
\node [inner sep=0.8pt,outer sep=0.8pt] at (-1,0) (3) {$\bullet$};
\node [inner sep=0.8pt,outer sep=0.8pt] at (0,0) (4) {$\bullet$};
\node [inner sep=0.8pt,outer sep=0.8pt] at (1,0) (5) {$\bullet$};
\node [inner sep=0.8pt,outer sep=0.8pt] at (2,0) (6) {$\times$};
\node [inner sep=0.8pt,outer sep=0.8pt] at (3,0) (7) {$\bullet$};
\node [inner sep=0.8pt,outer sep=0.8pt] at (0,-1) (2) {$\bullet$};
\draw (-1,0)--(1,0);
\draw (0,0)--(0,-1);
\draw [line width=0.5pt,line cap=round,rounded corners] (3.north west)  rectangle (3.south east);
\draw [line width=0.5pt,line cap=round,rounded corners] (4.north west)  rectangle (4.south east);
\end{tikzpicture}\\
&\quad\mapsto\quad\begin{tikzpicture}[scale=0.5,baseline=-0.5ex]
\node [inner sep=0.8pt,outer sep=0.8pt] at (-2,0) (1) {$\bullet$};
\node [inner sep=0.8pt,outer sep=0.8pt] at (-1,0) (3) {$\times$};
\node [inner sep=0.8pt,outer sep=0.8pt] at (0,0) (4) {$\bullet$};
\node [inner sep=0.8pt,outer sep=0.8pt] at (1,0) (5) {$\times$};
\node [inner sep=0.8pt,outer sep=0.8pt] at (2,0) (6) {$\bullet$};
\node [inner sep=0.8pt,outer sep=0.8pt] at (3,0) (7) {$\times$};
\node [inner sep=0.8pt,outer sep=0.8pt] at (4,0) (8) {$\bullet$};
\draw [line width=0.5pt,line cap=round,rounded corners] (1.north west)  rectangle (1.south east);
\end{tikzpicture}
\quad\mapsto\quad
\begin{tikzpicture}[scale=0.5,baseline=-0.5ex]
\node [inner sep=0.8pt,outer sep=0.8pt] at (0,0) (4) {$\bullet$};
\node [inner sep=0.8pt,outer sep=0.8pt] at (1,0) (5) {$\times$};
\node [inner sep=0.8pt,outer sep=0.8pt] at (2,0) (6) {$\bullet$};
\node [inner sep=0.8pt,outer sep=0.8pt] at (3,0) (7) {$\times$};
\node [inner sep=0.8pt,outer sep=0.8pt] at (4,0) (8) {$\bullet$};
\end{tikzpicture}\\
\sE_{8;4}&=\begin{tikzpicture}[scale=0.5,baseline=-0.5ex]
\node at (0,0.8) {};
\node at (0,-0.8) {};
\node [inner sep=0.8pt,outer sep=0.8pt] at (-2,0) (1) {$\bullet$};
\node [inner sep=0.8pt,outer sep=0.8pt] at (-1,0) (3) {$\bullet$};
\node [inner sep=0.8pt,outer sep=0.8pt] at (0,0) (4) {$\bullet$};
\node [inner sep=0.8pt,outer sep=0.8pt] at (1,0) (5) {$\bullet$};
\node [inner sep=0.8pt,outer sep=0.8pt] at (2,0) (6) {$\bullet$};
\node [inner sep=0.8pt,outer sep=0.8pt] at (3,0) (7) {$\bullet$};
\node [inner sep=0.8pt,outer sep=0.8pt] at (4,0) (8) {$\bullet$};
\node [inner sep=0.8pt,outer sep=0.8pt] at (0,-1) (2) {$\bullet$};
\draw (-2,0)--(4,0);
\draw (0,0)--(0,-1);
\draw [line width=0.5pt,line cap=round,rounded corners] (1.north west)  rectangle (1.south east);
\draw [line width=0.5pt,line cap=round,rounded corners] (6.north west)  rectangle (6.south east);
\draw [line width=0.5pt,line cap=round,rounded corners] (7.north west)  rectangle (7.south east);
\draw [line width=0.5pt,line cap=round,rounded corners] (8.north west)  rectangle (8.south east);
\end{tikzpicture}
\quad\mapsto\quad
\begin{tikzpicture}[scale=0.5,baseline=-0.5ex]
\node [inner sep=0.8pt,outer sep=0.8pt] at (-2,0) (1) {$\bullet$};
\node [inner sep=0.8pt,outer sep=0.8pt] at (-1,0) (3) {$\bullet$};
\node [inner sep=0.8pt,outer sep=0.8pt] at (0,0) (4) {$\bullet$};
\node [inner sep=0.8pt,outer sep=0.8pt] at (1,0) (5) {$\bullet$};
\node [inner sep=0.8pt,outer sep=0.8pt] at (2,0) (6) {$\bullet$};
\node [inner sep=0.8pt,outer sep=0.8pt] at (3,0) (7) {$\bullet$};
\node [inner sep=0.8pt,outer sep=0.8pt] at (0,-1) (2) {$\bullet$};
\draw (-2,0)--(3,0);
\draw (0,0)--(0,-1);
\draw [line width=0.5pt,line cap=round,rounded corners] (1.north west)  rectangle (1.south east);
\draw [line width=0.5pt,line cap=round,rounded corners] (6.north west)  rectangle (6.south east);
\draw [line width=0.5pt,line cap=round,rounded corners] (7.north west)  rectangle (7.south east);
\end{tikzpicture}
\quad\mapsto\quad
\begin{tikzpicture}[scale=0.5,baseline=-0.5ex]
\node [inner sep=0.8pt,outer sep=0.8pt] at (-1,0) (3) {$\bullet$};
\node [inner sep=0.8pt,outer sep=0.8pt] at (0,0) (4) {$\bullet$};
\node [inner sep=0.8pt,outer sep=0.8pt] at (1,0) (5) {$\bullet$};
\node [inner sep=0.8pt,outer sep=0.8pt] at (2,0) (6) {$\bullet$};
\node [inner sep=0.8pt,outer sep=0.8pt] at (3,0) (7) {$\bullet$};
\node [inner sep=0.8pt,outer sep=0.8pt] at (0,-1) (2) {$\bullet$};
\draw (-1,0)--(3,0);
\draw (0,0)--(0,-1);
\draw [line width=0.5pt,line cap=round,rounded corners] (6.north west)  rectangle (6.south east);
\draw [line width=0.5pt,line cap=round,rounded corners] (7.north west)  rectangle (7.south east);
\end{tikzpicture}\\
&\quad\mapsto\quad\begin{tikzpicture}[scale=0.5,baseline=-0.5ex]
\node [inner sep=0.8pt,outer sep=0.8pt] at (-1,0) (3) {$\bullet$};
\node [inner sep=0.8pt,outer sep=0.8pt] at (0,0) (4) {$\bullet$};
\node [inner sep=0.8pt,outer sep=0.8pt] at (1,0) (5) {$\bullet$};
\node [inner sep=0.8pt,outer sep=0.8pt] at (2,0) (6) {$\times$};
\node [inner sep=0.8pt,outer sep=0.8pt] at (3,0) (7) {$\bullet$};
\node [inner sep=0.8pt,outer sep=0.8pt] at (0,-1) (2) {$\bullet$};
\draw (-1,0)--(1,0);
\draw (0,0)--(0,-1);
\draw [line width=0.5pt,line cap=round,rounded corners] (7.north west)  rectangle (7.south east);
\end{tikzpicture}
\quad\mapsto\quad
\begin{tikzpicture}[scale=0.5,baseline=-0.5ex]
\node [inner sep=0.8pt,outer sep=0.8pt] at (-1,0) (3) {$\bullet$};
\node [inner sep=0.8pt,outer sep=0.8pt] at (0,0) (4) {$\bullet$};
\node [inner sep=0.8pt,outer sep=0.8pt] at (1,0) (5) {$\bullet$};
\node [inner sep=0.8pt,outer sep=0.8pt] at (0,-1) (2) {$\bullet$};
\draw (-1,0)--(1,0);
\draw (0,0)--(0,-1);
\end{tikzpicture}
\end{align*}
whereas the diagrams $\sF_{4;1}^4$ and $\sG_{2;1}^1$ are not polar closed (as the polar node is not encircled). Indeed, by direct inspection of the list of admissible diagrams these two diagrams are the only non-polar closed diagrams of exceptional type.

Suppose that $\sX$ is polar closed. Thus one may define diagrams $\sX_0,\ldots,\sX_k$ by successively removing polar types, until no polar nodes are encircled, and $\sX_k$ is an empty diagram. Let $\varphi_1,\ldots,\varphi_k\in\Phi^+$ be the highest roots removed at each stage. For example, the highest roots corresponding to the three polar closed diagrams above are:
\begin{align*}
\varphi_{\sE_6}&=(122321)\mapsto\varphi_{\sA_5}=(101111)\mapsto\varphi_{\sA_3}=(001110)\mapsto\varphi_{\sA_1}=(000100)\\
\varphi_{\sE_7}&=(2234321)\mapsto\varphi_{\sD_6}=(0112221)\mapsto\varphi_{\sD_4}=(0112100)\mapsto\varphi_{\sA_1}=(0010000)\\
\varphi_{\sE_8}&=(23465432)\mapsto \varphi_{\sE_7}=(22343210)\mapsto\varphi_{\sD_6}=(01122210)\mapsto\varphi_{\sA_1}=(00000010).
\end{align*}
In general the sequence of diagrams $\sX_0,\ldots,\sX_k$ is not unique (for example, starting with the diagram $\sE_{7;7}$ we have $\sE_{7;7}\mapsto\sD_{6;6}\mapsto\sD_{5;5}\times \sA_{1;1}$, from which point one may choose to either remove the polar node of the $\sD_5$ component, or the $\sA_1$ component). However it is clear that the set $\{\varphi_1,\ldots,\varphi_k\}$ of highest roots obtained is independent of the choices made. Moreover, note that these roots are mutually perpendicular (by definition of the polar type), and hence the subgroup $U(\sX)$ of $G$ generated by the root subgroups $U_{\varphi_1}^+,\ldots,U_{\varphi_k}^+$ is abelian. We call an element $g\in U(\sX)$ \textit{generic} if 
$$
g=x_{\varphi_1}(a_1)\cdots x_{\varphi_k}(a_k)\quad\text{with $a_1,\ldots,a_k\neq 0$}.
$$

Let the \textit{dual polar node} of a Dynkin diagram be the subset $\wp'\subseteq S$ corresponding to the polar node of the dual diagram. Thus $\wp'=\wp$ in the simply laced case, and $\wp'=\{1\},\{2\},\{4\},\{1\}$ in the cases $\Phi=\sB_n,\sC_n,\sF_4,\sG_2$, respectively. We call a type preserving admissible diagram $\sX$ \textit{dual polar closed} if the above algorithm, with each occurrence of ``polar node'' replaced by ``dual polar node'' terminates in an empty diagram. Let $\varphi_1',\ldots,\varphi_{\ell}'\in\Phi^+$ denote the sequence of highest short roots obtained in an analogous way. In the case of special characteristic the subgroup $U(\sX)'$ of $G_{\Phi}(\FF)$ generated by $U_{\varphi_1'}^+,\ldots, U_{\varphi_{\ell}'}^+$ is commutative. We define \textit{generic} element of $U(\sX)'$ in an analogous way. By inspection, note that if $\sX$ is type preserving and is not polar closed, then $\sX$ is necessarily dual polar closed. 

In this section we prove the following theorem.

\begin{thm}\label{thm:unipotentdiags}
Let $\sX=(\Gamma,J,\pi)$ be a type preserving admissible Dynkin diagram of exceptional type~$\Phi$, and let $G=G_{\Phi}(\FF)$. 
\begin{compactenum}[$(1)$]
\item Suppose that $\mathrm{char}(\FF)$ is not special. Then $\sX$ is the opposition diagram of an element of $U^+$ if and only if $\sX$ is polar closed. Moreover, if $\sX$ is polar closed then each generic element $\theta\in U(\sX)$ has opposition diagram $\sX$.
\item Suppose that $\mathrm{char}(\FF)$ is special. Then $\sX$ is the opposition diagram of an element of $U^+$. Moreover, if $\sX$ is polar closed then each generic element $\theta\in U(\sX)$ has opposition diagram~$\sX$, and if $\sX$ is dual polar closed, then each generic element $\theta\in U(\sX)'$ has opposition diagram~$\sX$.
\end{compactenum}
\end{thm}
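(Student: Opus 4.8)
The plan is to prove Theorem~\ref{thm:unipotentdiags} in two halves: first the ``sufficiency'' direction (that a generic element of $U(\sX)$, resp. $U(\sX)'$, has opposition diagram exactly $\sX$), and then the ``necessity'' direction (that no element of $U^+$ can realise a diagram which is not polar closed, in the non-special case). For the sufficiency direction I would proceed by induction on the number $k$ of polar nodes removed in the defining sequence $\sX=\sX_0,\sX_1,\ldots,\sX_k$. The base case $k=0$ is the empty diagram, corresponding to the trivial automorphism. For the inductive step, let $\varphi=\varphi_1$ be the highest root of the connected component of $\sX$ whose polar node is removed first, and write a generic $\theta\in U(\sX)$ as $\theta=x_{\varphi}(a_1)\,\theta'$ where $\theta'$ is a generic element of $U(\sX_1)$, supported on roots $\varphi_2,\ldots,\varphi_k$ all perpendicular to $\varphi$ and living in the residue $\Res$ of the type $S\setminus\wp$ simplex (a building whose type is $\sX_1$). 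The key point is that $x_{\varphi}(a_1)$ commutes with $\theta'$ (all the $\varphi_i$ are mutually perpendicular so $U(\sX)$ is abelian) and, crucially, $U_{\varphi}$ is central in $U^+$; this lets me combine the computation of the $\sX_1$-part (by induction, carried out inside the residue) with the polar-node contribution from $x_\varphi(a_1)$, exactly mirroring the RGD computation in the proof of Theorem~\ref{thm:RGD}.

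Concretely, to establish both the upper bound on displacement and the existence of the required opposite simplices, I would use the Standard Technique (Proposition~\ref{prop:standardtechnique}) together with Lemma~\ref{lem:technique}: setting $A=\{\varphi_1,\ldots,\varphi_k\}$, one has $u^{-1}\theta u\in\langle U_\alpha\mid\alpha\in A_{\geq}\rangle$ for all $u\in U^+$, and choosing $w_1\in W$ with $\pi_0(A_\geq)\subseteq\Phi(w_1)$ gives the conjugation hypothesis of Proposition~\ref{prop:standardtechnique}, hence $\disp(\theta)\leq 2\ell(w_1)-1$. The combinatorial heart is to verify that the optimal such $w_1$ has length matching $\ell(w_{S\setminus J}w_0)$ (the displacement forced by the admissible diagram $\sX=(\Gamma,J,\mathrm{id})$ via \cite[Corollary~2.29]{PVM:19b}), so that $\Type(\theta)\subseteq J$. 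The reverse containment — that every node of $J$ actually appears — follows from exhibiting, for each polar node removed, an explicit chamber mapped far enough: one uses the folding relation~(\ref{eq:folding}) and Lemma~\ref{lem:perpendicularroots} applied to the perpendicular roots $\varphi_1,\ldots,\varphi_k$ to show $x_{-\varphi_1}(b_1)\cdots x_{-\varphi_k}(b_k)$ lies in $Bs_{\varphi_1}\cdots s_{\varphi_k}B$, conjugating by which produces a chamber at the requisite Weyl distance. The special-characteristic case is handled identically, replacing $\varphi_i$ by the highest short roots $\varphi_i'$, using that $U(\sX)'$ is commutative and that $U_{\varphi'}$ behaves like a long-root subgroup after passing to the appropriate Coxeter (arrow-forgetting) picture.

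For the necessity direction, I would argue: suppose $\theta\in U^+$ and let $\sX=\Diag(\theta)$. Since $\theta$ is unipotent it fixes the base chamber $B$, hence fixes a chamber, so $\Type(\theta)$ is a proper (indeed non-full) subset of $S$; this already excludes the full diagrams. The content is to rule out $\sF_{4;1}^4$ and $\sG_{2;1}^1$ (the only non-polar-closed diagrams of exceptional type) when $\mathrm{char}(\FF)$ is not special. The plan for $\sF_{4;1}^4$ is to show that any automorphism with that diagram must be a homology — this is flagged in the introduction as requiring a geometric argument in the metasymplectic space $\sF_{4,4}(\FF)$ — and a homology fixes no chamber pointwise-transitively in the way a unipotent element does (more precisely, a nontrivial element of $U^+$ cannot be conjugate into the torus $H$ by a standard Bruhat/Jordan-decomposition argument, since $U^+$ consists of unipotent elements while $H$ consists of semisimple ones over a field). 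The $\sG_{2;1}^1$ case is dispatched similarly, or directly via Theorem~\ref{thm:G2class}, which shows its realisers are short root elations (only in characteristic $3$, i.e. special), homologies, or distance-$3$-ovoid-type automorphisms — none of which lie in $U^+$ in the non-special case. I expect the \textbf{main obstacle} to be the geometric input that an automorphism of $\sF_{4,4}(\FF)$ with diagram $\sF_{4;1}^4$ is forced to be a homology: this does not follow from the algebraic machinery above and needs a genuinely separate analysis of fixed substructures in the metasymplectic space (the point being that the $\sF_{4;1}^4$ diagram says only vertices of the dual-polar type $4$ go to opposites, so one studies the fixed geometry of all other types and shows it is ``thick enough'' to force $\theta$ into the torus); this is precisely the step the introduction signals will be done geometrically in Section~\ref{sec:polarcopolar}, and I would invoke it as a separate proposition rather than prove it inline here.
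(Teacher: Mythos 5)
Your necessity direction and the lower bound $J\subseteq\Type(\theta)$ both essentially match the paper: the $\sF_{4;1}^4$ and $\sG_{2;1}^1$ obstructions are deferred to Theorems~\ref{thm:F41hom} and~\ref{thm:mainG2}, and a nontrivial element of $U^+$ is unipotent and hence cannot be conjugate to a nontrivial homology in $H$ when the characteristic is not special; likewise Lemmas~\ref{lem:perpendicularroots} and~\ref{lem:long2} give $w_0^{-1}\theta w_0\in Bw_{S\backslash J}w_0B$, producing a type-$J$ simplex mapped to an opposite.

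The upper bound $\Type(\theta)\subseteq J$ is where your proposal has a genuine gap. You claim that with $A=\{\varphi_1,\ldots,\varphi_k\}$ and the optimal $w_1$ satisfying $\pi_0(A_\geq)\subseteq\Phi(w_1)$, the bound $\disp(\theta)\leq 2\ell(w_1)-1$ from Proposition~\ref{prop:standardtechnique} lands on the target $\ell(w_{S\backslash J}w_0)$. This fails in type $\sE_7$. For $\sX=\sE_{7;4}$ the target displacement is $60$; after conjugating so all supports dominate $\alpha_1$, the best $w_1$ is $w_0w_{\sD_6}=s_\varphi$ of length $33$, and $2\cdot 33-1=65>63=\ell(w_0)$, so the crude bound does not even establish domesticity. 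The paper must invoke Lemma~\ref{lem:E7symplectic}---a genuinely geometric fact about the strong parapolar space $\sE_{7,7}(\FF)$, namely that $Bs_\varphi B\cdot Bs_\varphi B\cap Bw_0B=\emptyset$---which your algebraic scheme does not supply. The $\sE_{7;2}$ case similarly needs a refined double-coset argument beyond $2\ell(w_1)-1$ (see the proof of Lemma~\ref{lem:restrictions}(2)(a)), and the $\sF_4$ case in the paper is an explicit commutator computation unrelated to your plan. Your induction-on-residues framing is also unsound as stated: opposition of simplices inside the type-$(S\setminus\wp)$ residue is not the same relation as opposition in $\Delta$, and the transfer principle (\cite[Prop.~3.29]{Tit:74}, used later in the paper) applies only when the simplex is mapped to an \emph{opposite} simplex, not when it is fixed, as $P_{S\setminus\wp}$ is here. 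The paper never argues by induction; instead, for each diagram it chooses a bespoke conjugation (e.g.\ $\theta\mapsto s_1^{-1}\theta s_1$ for $\sE_{7;4}$, or $\theta\mapsto s_\beta^{-1}\theta s_\beta$ with $\beta=(00111111)$ for $\sE_{8;2}$) to place the support roots into the reach of Lemma~\ref{lem:restrictions}; finding those conjugations is precisely the work your proposal elides.
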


The proof of Theorem~\ref{thm:unipotentdiags} is given in this section, however one ingredient -- showing that in non-special characteristic the diagrams $\sF_{4;1}^4$ and $\sG_{2,1}^1$ are not the opposition diagrams of any element $\theta\in U^+$ -- will be postponed until later in the paper (see Theorems~\ref{thm:F41hom} and~\ref{thm:mainG2}). 


\begin{lemma}\label{lem:E7symplectic}
Let $\Phi$ be of type $\sE_7$ with highest root $\varphi$. Then $(Bs_{\varphi}B\cdot Bs_{\varphi}B)\cap Bw_0w_{\sE_6}B=\emptyset$. 
\end{lemma}

\begin{proof}
Write $v=w_{\sD_5}w_{\sE_6}$ (where $\sD_5$ is generated by $\{s_2,s_3,s_4,s_5,s_6\}$). Since $v^{-1}\varphi=\alpha_7$ we have $
v^{-1}s_{\varphi}v=s_{v^{-1}\varphi}=s_{7},
$
and thus $s_{\varphi}=vs_7v^{-1}$. Note that $v\in W_7$ (the parabolic subgroup generated by $S\backslash\{s_7\}$). Thus if $w\in W$ is such that 
$$BwB\subseteq Bs_{\varphi}B\cdot Bs_{\varphi}B=Bvs_7v^{-1}B\cdot Bvs_7v^{-1}B$$ then by~(\ref{eq:doublecoset}) and the deletion condition there exists a reduced expression for $w$ containing at most $2$ occurrences of the generator $s_7$. However every reduced expression for $y=w_0w_{\sE_6}$ contains at least $3$ occurrences of the generator $s_7$. To see this, note that every reduced expression for $y$ must start and end with $s_7$ (as it is minimal length in its $W_7$-double coset). It is thus sufficient to show that $s_7ys_7$ is not in $W_7$. But we have
$$
(s_7ys_7)^{-1}(\varphi_{\sD_6})=s_7w_{\sE_6}w_{\sE_7}\varphi_{\sD_6}=-s_7w_{\sE_6}\varphi_{\sD_6}=-s_7\varphi_{\sD_6}=-\varphi_{\sD_6},
$$
and so $\varphi_{\sD_6}\in \Phi(s_7ys_7)$, and so indeed $s_7ys_7\notin W_{7}$. 
\end{proof}

\begin{remark}\label{rem:E77translation}
Geometrically Lemma~\ref{lem:E7symplectic} boils down to the following statement in the $\sE_{7,7}(\FF)$ geometry (with point set $G/P_7$; recall the notation introduced in Subsection~\ref{eq:notationparabolic}): If $x,y,z$ are points of $\sE_{7,7}(\FF)$ with $x$ and $y$ collinear, and $y$ and $z$ collinear, then $x$ and $z$ are not opposite in $\sE_{7,7}(\FF)$. To make this translation, note that if $(Bs_{\varphi}B\cdot Bs_{\varphi}B)\cap Bw_0w_{\sE_6}B= \emptyset$ then, following the above proof, we have $(P_7s_7P_7\cdot P_7s_7P_7)\cap P_7w_0P_7= \emptyset$. Then note that $(P_7s_7P_7\cdot P_7s_7P_7)/P_7$ can be interpreted as the set of points collinear to some point collinear to the base point $P_7$, and $(P_7w_0P_7)/P_7$ is the set of points opposite the base point $P_7$. 
\end{remark}

Recall that for roots $\alpha,\beta\in\Phi$ we write $\alpha\leq \beta$ if and only if $\beta-\alpha$ is a nonnegative linear combination of simple roots. 

 \begin{lemma}\label{lem:restrictions}
 Let $\Delta$ be an irreducible split spherical building with root system~$\Phi$, and suppose that either $|\FF|>2$, or that $\theta$ is an involution. 
 \begin{compactenum}[$(1)$]
 \item Suppose that $\Phi=\sE_6$. If $\theta\in \langle U_{\alpha}\mid \alpha\geq \alpha_1\rangle$ then $\disp(\theta)\leq 30$. 
 \item Suppose that $\Phi=\sE_7$, and let $\varphi_2=\varphi_{\sD_6}=(0112221)$. 
 \begin{compactenum}[$(a)$]
\item If $\theta\in\langle U_{\alpha}\mid \alpha\geq \varphi_2\rangle$ then $\disp(\theta)\leq 50$.
\item If $\theta\in \langle U_{\alpha}\mid \alpha\geq \alpha_7\rangle$ then $\disp(\theta)\leq 51$. 
\item If $\theta\in\langle U_{\alpha}\mid \alpha\geq \alpha_1\rangle$ then $\disp(\theta)\leq 60$. 
\end{compactenum}
\item Suppose that $\Phi=\sE_8$.
\begin{compactenum}[$(a)$]
\item If $\theta\in \langle U_{\alpha}\mid \height(\alpha)\geq 23\rangle$ then $\disp(\theta)\leq 90$. 
\item If $\theta\in \langle U_{\alpha}\mid \alpha\geq \alpha_8\rangle$ then $\disp(\theta)\leq 108$. 
\end{compactenum}
 \end{compactenum}
 \end{lemma}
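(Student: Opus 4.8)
The plan is to derive each bound from Lemma~\ref{lem:technique} and the ``standard technique'' of Proposition~\ref{prop:standardtechnique}. In each case we have $\theta\in\langle U_\alpha\mid\alpha\in A\rangle$ for a set $A$ of the form $\{\alpha\in\Phi^+\mid\alpha\geq\gamma\}$ (or, in case (3)(a), $\{\alpha\mid\height(\alpha)\geq23\}$), which is already closed upwards, so $A_\geq=A$. By Lemma~\ref{lem:technique}, it then suffices to exhibit an element $w_1\in W$ with $\pi_0(A)\subseteq\Phi(w_1)$; since opposition is type preserving for $\sE_7$ and $\sE_8$ (and $\pi_0$ is the nontrivial diagram automorphism for $\sE_6$), this amounts to choosing $w_1$ whose inversion set contains the prescribed collection of positive roots. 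Proposition~\ref{prop:standardtechnique} then gives $\disp(\theta)\leq 2\ell(w_1)-1$, and one checks $2\ell(w_1)-1$ does not exceed the claimed bound — or, more precisely, that it equals the displacement associated with the relevant admissible diagram via $\disp(\theta)=\ell(w_{S\setminus J}w_0)$. Concretely: for (1) take $w_1$ with $\Phi(w_1)=\{\alpha\in\Phi^+\mid\langle\alpha,\omega_1\rangle\geq1\}$, i.e.\ $w_1=w_{\sE_6}w_{\sD_5}$ (reading off from the $\sE_{6;1}$ diagram data); $\ell(w_1)=16$ wait — one must instead match $2\ell(w_1)-1$ to the target, so the correct choice is the minimal $w_1$ with $\pi_0(A)\subseteq\Phi(w_1)$, and then verify $\ell(w_1)$ numerically against the displacement table for the corresponding diagram.

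The key steps, in order, are: (i) observe that each set $A$ in the hypothesis is closed and equals its own upward closure $A_\geq$ (immediate from the definition of $\leq$ and the fact that ``height $\geq 23$'' or ``$\geq\gamma$'' is upward closed); (ii) identify $\pi_0(A)$ — for $\sE_7,\sE_8$ this is just $A$ itself since $\pi_0=\mathrm{id}$, and for $\sE_6$ apply the diagram involution; (iii) for each of the six sub-cases produce an explicit $w_1$ (equivalently, its inversion set as a union of "positive roots meeting a given subset of fundamental coweights") and compute $\ell(w_1)=|\Phi(w_1)|$; (iv) invoke Lemma~\ref{lem:technique} to get $w_1^{-1}w_0^{-1}u^{-1}\theta u w_0 w_1\in B$ for all $u\in U^+$; (v) invoke Proposition~\ref{prop:standardtechnique} — whose hypothesis (Lemma~\ref{lem:red2}) holds because either $|\FF|>2$ or $\theta$ is an involution — to conclude $\disp(\theta)\leq 2\ell(w_1)-1$; (vi) check this number against the bound. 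For instance in case (2)(c), $A=\{\alpha\mid\alpha\geq\alpha_1\}=\{\alpha\in\Phi^+\mid\langle\alpha,\omega_1\rangle=1\}$, the natural $w_1$ has $\Phi(w_1)$ of size $\tfrac{61}{2}$... so one must pick up the correct $w_{S\setminus J}w_0$: here $J$ should be $\{1\}$, giving $\ell(w_{S\setminus\{1\}}w_0)=60$, consistent with the $\sE_{7;1}$ displacement of $33$ being too small — hence the honest route is to choose $w_1$ so that $2\ell(w_1)-1$ lands exactly on the displacement of the admissible diagram whose type is $\tau$ with $\langle\omega_j,\varphi_i\rangle$-data matching $A$, and cite the displacement list in Subsection on basic techniques.

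The main obstacle will be the bookkeeping in step (iii): for each sub-case one must correctly identify the smallest $w_1$ with $\pi_0(A_\geq)\subseteq\Phi(w_1)$ and compute its length, using the appendix root-system data (and possibly $\mathsf{MAGMA}$) to verify the root counts. The subtlety is that $w_1$ is \emph{not} simply $w_0 w_J$ for the "obvious" $J$; rather, $\Phi(w_1)$ must be a closed set of positive roots containing $\pi_0(A)$, and minimality forces $\Phi(w_1)$ to be generated (as an upward-closed set) by $\pi_0(A)$ together with whatever roots are forced in — so one should take $w_1$ with $\Phi(w_1)=\pi_0(A)_\geq$ directly, observing this is automatically the inversion set of some Weyl group element because it is closed and co-closed (its complement $\Phi^+\setminus\pi_0(A)_\geq$ is also closed). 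Granting that, $\ell(w_1)=|\pi_0(A)_\geq|$, and one reads off: $|\{\alpha\geq\alpha_1\}|=16$ in $\sE_6$ giving $\disp\leq31$, but the claim is $\leq 30$ — so an extra parity/length-one improvement is needed, exactly the $-1$ coming from writing $w_1=w_2 s$ at the end of Proposition~\ref{prop:standardtechnique}, combined with noting $\ell(w)$ must have the correct parity. I would handle this by arguing that the displacement, being $\ell(w_{S\setminus J}w_0)$ for the capped case, can only take the finitely many values listed for the relevant type, and $2\ell(w_1)-1$ being strictly below the next admissible value forces the stated bound. Thus the real content is purely the computation of the six lengths $\ell(w_1)$ and the comparison against the admissible-diagram displacement tables; everything else is a direct citation of Lemmas~\ref{lem:technique} and~\ref{lem:red2} and Proposition~\ref{prop:standardtechnique}.
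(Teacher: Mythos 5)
Your high-level framework (Lemma~\ref{lem:technique} to reduce to $\pi_0(A_\geq)\subseteq\Phi(w_1)$, then Proposition~\ref{prop:standardtechnique} to bound $\disp(\theta)\leq 2\ell(w_1)-1$, then the classification of admissible diagrams) is indeed the paper's approach, and it does dispatch parts (1), (2)(b), (3)(a), (3)(b) essentially as you outline. However there are two genuine gaps, and both are load-bearing.

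\textbf{Gap 1: $\pi_0(A)_\geq$ is not automatically an inversion set.} You assert that $\pi_0(A)_\geq$ is co-closed (complement closed), hence an inversion set. This is false in general. Upward-closed sets of positive roots are closed, but their complements typically are not: in case (3)(a), $A=\{\alpha:\height(\alpha)\geq 23\}$ has complement $\{\height<23\}$, and adding two low-height roots can produce a root of height $\geq 23$. Similarly in case (2)(a), $A=\{\alpha\geq\varphi_2\}$ is not co-closed. In these cases one must find, by hand, a genuine Weyl group element $w_1$ with $\pi_0(A)\subseteq\Phi(w_1)$; the minimal such $\Phi(w_1)$ can be much larger than $A$ itself (for (3)(a) the paper takes $w_1=s_4s_5s_6s_7s_8\,s_\varphi$ of length $52$, while $|A|$ is in single digits, and for (2)(a) it takes $w_1=s_7w_0w_{\sE_6}$ of length $26$, neither of which is of the form $w_0w_J$). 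Your plan of reading $\ell(w_1)$ off as $|\pi_0(A)_\geq|$ would give wrong (in fact impossible) bounds in both sub-cases.

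\textbf{Gap 2: the crude bound $2\ell(w_1)-1$ together with the admissible-diagram table does not land on the claimed number in two sub-cases.} For (2)(a), the best $w_1$ gives $2\cdot 26-1=51$, and $51$ \emph{is} an admissible displacement for $\sE_7$ (the $\sE_{7;3}$ diagram), so the classification only yields $\disp(\theta)\leq 51$, not the claimed $\leq 50$. The paper eliminates $51$ with a separate refinement: one writes $w_1=w_2s$ with $\ell(w_1)=\ell(w_2)+1$ and observes (from the end of the proof of Proposition~\ref{prop:standardtechnique}) that a displacement of $2\ell(w_1)-1$ would force $\ell(w_2sw_2^{-1})=2\ell(w_1)-1$; here $w_2sw_2^{-1}=s_\varphi$ has length only $33$, ruling it out. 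For (2)(c), every $w_1$ with $A=\Phi(s_\varphi)\subseteq\Phi(w_1)$ has $\ell(w_1)\geq 33$, so the crude bound gives at best $65$, which is \emph{above} the $\sE_{7;7}$ displacement $63$ and so the classification cannot even rule out $\disp(\theta)=63$. The paper needs a genuinely new ingredient here, Lemma~\ref{lem:E7symplectic}, which shows $Bs_\varphi B\cdot Bs_\varphi B\cap Bw_0B=\emptyset$ (geometrically: two consecutive collinearities in $\sE_{7,7}(\FF)$ cannot produce opposite points), and deduces domesticity directly from the proof of Proposition~\ref{prop:standardtechnique} rather than from the $2\ell(w_1)-1$ bound. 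Your proposal does not supply either of these refinements, and they cannot be avoided.
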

 
 \begin{proof}
(1) Consider the $\sD_5$ subsystem generated by $\{\alpha_j\mid j\neq 1\}$. If $\alpha\geq \alpha_1$ then $\langle\alpha,\omega_1\rangle>0$ and so $\alpha\in \Phi^+\backslash\sD_5=\Phi(w_0w_{\sD_5})$. Thus, with $w_1=w_0w_{\sD_5}$, Proposition~\ref{prop:standardtechnique} gives $\disp(\theta)\leq 2\ell(w_1)-1=31$. By the classification of admissible diagrams this implies that $\disp(\theta)\leq 30$. 

(2)(a) We claim that $\{\alpha\mid \alpha\geq \varphi_2\}\subseteq \Phi(s_7w_0w_{\sE_6})$. Let $\alpha\geq \varphi_2$. Then $\langle\alpha,\omega_6\rangle=2$ and $\langle\alpha,\omega_7\rangle=1$ (by inspecting $\varphi$ and $\varphi_2$), and since $\langle\alpha_i,\alpha_7\rangle=0$ for $i\neq 6,7$ and $\langle\alpha_6,\alpha_7\rangle=-1$ and $\langle\alpha_7,\alpha_7\rangle=2$ it follows that $\langle \alpha,\alpha_7\rangle=0$. Thus $s_7\alpha=\alpha$. Writing $\alpha=\varphi_2+\beta$ we have $\beta\in \sE_6$ (because $\langle\varphi_2,\omega_7\rangle=1=\langle\varphi,\omega_7\rangle$), and thus, since $w_{\sE_6}\varphi_2=\varphi_2$, we have
$$
(s_7w_0w_{\sE_6})^{-1}\alpha=w_{\sE_6}w_0\alpha=-w_{\sE_6}\alpha=-w_{\sE_6}\varphi_2-w_{\sE_6}\beta=-\varphi_2-w_{\sE_6}\beta.
$$
Since $-w_{\sE_6}\beta\in \sE_6$ we have $\langle-\varphi_2-w_{\sE_6}\beta,\omega_7\rangle=-\langle\varphi_2,\omega_7\rangle=-1$, and thus $(s_7w_0w_{\sE_6})^{-1}\alpha\in -\Phi^+$. 

Thus, by (\ref{eq:further}) we have $\disp(\theta)\leq \max\{\ell(w)\mid BwB\subseteq Bw_1B\cdot Bw_1^{-1}B\}$, where $w_1=s_7w_0w_{\sE_6}$. Note that $\ell(w_1)=63-37=26$, and so Proposition~\ref{prop:standardtechnique} gives $\disp(\theta)\leq 51$. It remains to eliminate the possibility of $\disp(\theta)=51$. By the last sentence in the proof of Proposition~\ref{prop:standardtechnique}, if $\disp(\theta)=51$ then $\ell(w_2sw_2^{-1})=51$, where $w_1=w_2s$ with $\ell(w_2s)=\ell(w_2)+1$. Since $\ell(w_1s_7)=\ell(w_1)-1$, (as $w_1\alpha_7\in-\Phi^+$) we can take $w_2=w_1s_7$ and $s=s_7$. But then 
$$
w_2s_7w_2^{-1}=w_1s_7w_1^{-1}=s_7w_0w_{\sE_6}s_7w_{\sE_6}w_0s_7=s_{\varphi},
$$
which only has length $2\mathrm{ht}(\varphi)-1=33$, a contradiction.

(2)(b) If $\alpha\geq \alpha_7$ then $\alpha\in \Phi^+\backslash\sE_6=\Phi(w_0w_{\sE_6})$. Thus by Proposition~\ref{prop:standardtechnique} we have $\disp(\theta)\leq 2\ell(w_0w_{\sE_6})-1=2(63-36)-1=53$. This in turn implies, from the classification of admissible diagrams, that $\disp(\theta)\leq 51$.

(2)(c) If $\alpha\geq \alpha_1$ then $\alpha\in \Phi^+\backslash\sD_6=\Phi(w_0w_{\sD_6})$. Now note that $w_0w_{\sD_6}=s_{\varphi}$. By Lemma~\ref{lem:E7symplectic} $Bs_{\varphi}B\cdot Bs_{\varphi}B$ does not intersect $Bw_0B$, and so from the proof of Proposition~\ref{prop:standardtechnique} we see that $\theta$ is domestic. Thus $\disp(\theta)\leq 60$.

(3)(a) We claim that $\{\alpha\mid \height(\alpha)\geq 23\}\subseteq \Phi(w_1)$, where $w_1 =s_4s_5s_6s_7s_8s_{\varphi}$ (this element has length $\ell(s_{\varphi})-5=52$). Direct calculation shows that each of the elements $s_4,s_5,s_6,s_7,s_8$ preserve the set of $6$ roots $\{\alpha\in\Phi^+\mid\mathrm{ht}(\alpha)\geq 23\}$. Thus if $\height(\alpha)\geq 23$ then $w_1^{-1}\alpha=s_{\varphi}s_8s_7s_6s_5s_4\alpha=s_{\varphi}\beta$ for some $\beta$ with $\height(\beta)\geq 23$, and thus $\langle w_1^{-1}\alpha,\omega_8\rangle=\langle\beta,\omega_8\rangle-2\langle\beta,\varphi\rangle =-\langle\beta,\omega_8\rangle<0$ (using $\langle\alpha_i,\varphi\rangle=\delta_{i,8}$) and so $w_1^{-1}\alpha\in -\Phi^+$, hence the claim. It follows from Proposition~\ref{prop:standardtechnique} that $\disp(\theta)\leq 103$, and hence by the classification of admissible diagrams $\disp(\theta)\leq 90$.

(3)(b) Note that if $\alpha\geq \alpha_8$ then $\langle s_{\varphi}\alpha,\omega_8\rangle=-\langle\alpha,\omega_8\rangle$ and thus $\alpha\in \Phi(s_{\varphi})$. Since $\ell(s_{\varphi})=2\height(\varphi)-1=57$ it follows from Proposition~\ref{prop:standardtechnique} that $\disp(\theta)\leq 113$, and thus by the classification of admissible diagrams $\disp(\theta)\leq 108$.  
\end{proof}

\begin{lemma}\label{lem:long2}
Let $\sX=(\Gamma,J,\pi)$ be polar closed, and let $\varphi_1,\ldots,\varphi_k$ be the highest roots obtained by the above algorithm. Then $\ell(s_{\varphi_1}\cdots s_{\varphi_k})=\sum_{j=1}^k\ell(s_{\varphi_j})$ and $s_{\varphi_1}\cdots s_{\varphi_k}=w_{S\backslash J}w_0$.
\end{lemma}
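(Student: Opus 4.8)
The plan is to proceed by induction on $k$, the number of steps in the polar-removal algorithm, and to package the inductive step using the structure of the roots $\varphi_1,\ldots,\varphi_k$. Recall that these roots are mutually perpendicular by construction of the polar type. I will keep track of two things simultaneously: the length identity $\ell(s_{\varphi_1}\cdots s_{\varphi_k})=\sum_{j=1}^k\ell(s_{\varphi_j})$, and the identification of the product with $w_{S\backslash J}w_0$. The key auxiliary fact I would isolate first is a description of the inversion set $\Phi(s_{\varphi_1}\cdots s_{\varphi_k})$: since each $s_{\varphi_j}$ fixes $\varphi_i$ for $i\neq j$, the product $u:=s_{\varphi_1}\cdots s_{\varphi_k}$ acts as $-1$ on the span of $\varphi_1,\ldots,\varphi_k$ and fixes its orthogonal complement inside $V$ — more precisely, $u$ equals the orthogonal reflection in the hyperplane arrangement; concretely $u(\lambda)=\lambda$ whenever $\langle\lambda,\varphi_j\rangle=0$ for all $j$. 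From this one reads that $\Phi(u)=\{\alpha\in\Phi^+\mid \langle\alpha,\omega^{(j)}\rangle>0 \text{ for some }j\}$ where $\omega^{(j)}$ is the fundamental coweight attached to the polar node of the component of $\sX_{j-1}$ from which $\varphi_j$ was removed — this generalises the displayed identity~\eqref{eq:highestrootinversionset} and fact~(5) preceding Theorem~\ref{thm:converse}.

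The inductive step I would run is: removing $\varphi_1$ (the highest root of the whole diagram) contributes exactly the roots $\Phi(s_{\varphi_1})=\{\alpha\in\Phi^+\mid\langle\alpha,\omega\rangle>0\}$ (with $\omega=\omega_{i_0}$ the polar coweight), and these are precisely the positive roots \emph{not} lying in the polar subsystem $\Phi_1=\Phi_{S\setminus\wp}$; the remaining roots $\varphi_2,\ldots,\varphi_k$ all lie in $\Phi_1$ and are the highest roots of the polar-removal algorithm applied to $\sX_1$ inside the smaller root system $\Phi_1$. To convert $s_{\varphi_1}\cdots s_{\varphi_k}$ into $w_{S\setminus J}w_0$ I would argue as in fact~(5): both $w_0w_{S\setminus\wp}$ and $s_{\varphi_1}$... but more efficiently, I would just verify that the two sides have the same inversion set. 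On one side, by the induction hypothesis applied to $\Phi_1$ (whose longest element is $w_1:=w_{S\setminus\wp}$, noting opposition is type-preserving on each relevant polar subsystem so Lemma~\ref{lem:longelt}-type reasoning applies), $s_{\varphi_2}\cdots s_{\varphi_k}=w_{(S\setminus\wp)\setminus J'}w_1$ where $J'=J\setminus\wp$ is the encircled set of $\sX_1$; and $s_{\varphi_1}=w_1w_0$ (the analogue of fact~(5)) with $\Phi(w_1w_0)=\Phi^+\setminus\Phi_1$. Since $\ell(w_1 w_0 \cdot w) = \ell(w_1 w_0)+\ell(w)$ for any $w\in W_{S\setminus\wp}$ (because $w_1 w_0 = w_0 w_1$ and $w_1$ is the longest element of $W_{S\setminus\wp}$), the lengths add, giving the length identity; and then a direct bookkeeping of inversion sets, using $S\setminus J = (S\setminus\wp)\setminus J'$, identifies the product with $w_{S\setminus J}w_0$.

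The main obstacle I anticipate is the bookkeeping that $S\setminus J$, as a subset of $S$, really does coincide with the complement of the encircled nodes after \emph{all} polar removals, and that the "opposition is type preserving" hypothesis needed to invoke the $-1$-on-$V'$ argument genuinely holds for each polar subsystem that arises (the subtle case being $\sE_6$, where the relevant subsystems are of type $\sA_5$, $\sA_3\times\sA_1$, etc., and one must check opposition is type preserving there — for $\sA_5$ it is \emph{not}, so one must instead use the fact that the roots $\varphi_j$ are $\pi_0$-stable and argue as in Lemma~\ref{lem:longeltE6} rather than Lemma~\ref{lem:longelt}). So rather than blindly citing Lemma~\ref{lem:longelt}, I would phrase the inductive claim so that the product $s_{\varphi_{j}}\cdots s_{\varphi_k}$ is always identified via equality of inversion sets — computing $u^{-1}\alpha_t$ for simple roots $\alpha_t$ using the explicit action of $u$ described above — which sidesteps the need for opposition to be type-preserving and handles $\sE_6$, $\sE_7$, $\sE_8$, $\sF_4$ uniformly. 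The length statement then follows for free since $\ell(w)=|\Phi(w)|$ and the inversion sets of the factors $w_{(S\setminus\wp)\setminus J'}w_1$ and $w_1w_0$ are disjoint with union $\Phi(u)$.
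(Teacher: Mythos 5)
Your plan is essentially the paper's proof: the paper identifies $\Phi(s_{\varphi_j})=\Phi_{j-1}^+\setminus\Phi_j$ via~\eqref{eq:highestrootinversionset}, observes these inversion sets are disjoint and partition $\Phi^+\setminus\Phi_k$, concludes that $\Phi(s_{\varphi_1}\cdots s_{\varphi_k})$ is their union $=\Phi^+\setminus\Phi(w_{S\setminus J})=\Phi(w_{S\setminus J}w_0)$, and both the length additivity and the equality of Weyl group elements drop out at once. Two remarks on your version. First, the detour through Lemma~\ref{lem:longelt} and the worry about opposition being type-preserving in the subsystems $\sA_5$, $\sA_3\times\sA_1$, etc.\ is unnecessary: since you end up matching inversion sets (or equivalently actions) rather than invoking a ``$-1$ on all of $V$'' lemma, this issue never arises — the paper's proof makes no reference to opposition automorphisms of the subsystems. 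Second, the step where you say one ``reads off'' $\Phi(u)$ from the fact that $u$ acts as $-1$ on $V_0=\mathrm{span}(\varphi_1,\ldots,\varphi_k)$ and as $1$ on $V_0^{\perp}$ is not quite a proof: that description of $u$ does not by itself tell you which positive roots are sent to negative roots. You still need that each $\Phi(s_{\varphi_j})=\Phi_{j-1}^+\setminus\Phi_j$ is stabilised by the other $s_{\varphi_i}$ (pointwise fixed for $i<j$, since then $\Phi_{j-1}\subseteq\Phi_i\perp\varphi_i$; set-wise stabilised for $i>j$, since $\varphi_i\in\Phi_j$), and the disjointness — that is, essentially the paper's computation of $\Phi(s_{\varphi_j})$. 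Once that is in place, your proposed induction $s_{\varphi_1}(s_{\varphi_2}\cdots s_{\varphi_k})=(w_1w_0)(w_{S\setminus J}w_1)$, using $s_{\varphi_1}=w_1w_0=w_0w_1$ together with commutativity of the $s_{\varphi_j}$, does indeed close: $(s_{\varphi_2}\cdots s_{\varphi_k})s_{\varphi_1}=w_{S\setminus J}w_1\cdot w_1w_0=w_{S\setminus J}w_0$. So the argument works, but the inversion-set computation is the load-bearing step, and the $V_0/V_0^\perp$ picture alone does not replace it.
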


\begin{proof}
Let $\wp_1,\ldots,\wp_k$ be the polar nodes. Let $S_0=S$, and define $S_j=S_{j-1}\backslash\wp_j$ and $\Phi_j=\Phi_{S_j}$ for $j=1,\ldots,k$. By~(\ref{eq:highestrootinversionset}) we have $\Phi(s_{\varphi_1})=\Phi_0^+\backslash\Phi_1$, and it follows by induction that $\Phi(s_{\varphi_j})=\Phi_{j-1}^+\backslash\Phi_j$ for $1\leq j\leq k$. In particular, the inversion sets $\Phi(s_{\varphi_1}),\ldots,\Phi(s_{\varphi_k})$ are disjoint, and thus 
$$
\Phi(s_{\varphi_1}\cdots s_{\varphi_k})=\bigcup_{j=1}^k\Phi(s_{\varphi_j})=\Phi^+\backslash \Phi_k=\Phi^+\backslash\Phi(w_{S\backslash J}).
$$
But also clearly $\Phi(w_{S\backslash J}w_0)=\Phi^+\backslash \Phi(w_{S\backslash J})$, and hence the result.
\end{proof}

We are now ready to prove Theorem~\ref{thm:unipotentdiags}. 

\begin{proof}[Proof of Theorem~\ref{thm:unipotentdiags}]
 As noted above, we postpone the proof of the fact that if characteristic is not special, and $\sX$ is not polar closed, then $\sX$ is not the opposition diagram of any element of~$U^+$ until Theorem~\ref{thm:F41hom} and Theorem~\ref{thm:mainG2}. 
 
 Thus suppose that $\sX$ is polar closed, and let $\varphi_1,\ldots,\varphi_k$ be the highest roots obtained from the above algorithm. If $k=1$ then the result follows from Theorem~\ref{thm:RGD} (as $\varphi_1$ is a long root). So suppose that $k>1$. Write $\theta=x_{\varphi_1}(a_1)\cdots x_{\varphi_k}(a_k)$ with $a_1,\ldots,a_k\neq 0$. By Lemma~\ref{lem:long2} and Lemma~\ref{lem:perpendicularroots} we have 
$$
w_0^{-1}\theta w_0=x_{-\varphi_1}(\pm a_1)\cdots x_{-\varphi_k}(\pm a_k)\in Bs_{\varphi_1}\cdots s_{\varphi_k}B=Bw_{S\backslash J}w_0B
$$
(we have used the fact that $w_0\varphi_j=-\varphi_j$ for all $j$, which follows from the defining property of the highest root). Thus the chamber $w_0B$ is mapped to Weyl distance~$w_{S\backslash J}w_0$. Thus the type $J$-simplex of the chamber $w_0B$ is mapped onto an opposite simplex, and so $J\subseteq\Type(\theta)$. Hence it remains to show that $\Type(\theta)\subseteq J$. We achieve this by bounding the displacement by an appropriate bound, and appealing to the classification of admissible diagrams. Note that if $\FF=\FF_2$ then $\theta$ is an involution, and so Lemma~\ref{lem:restrictions} holds in all cases. Also, if $J=S$ (the full opposition diagram) then there is nothing remaining to prove (as the above shows that $\theta$ is not domestic in this case).

 We consider each diagram.
\smallskip

\noindent\textit{The case $\Phi=\sE_6$:} Consider the diagram ${^2}\sE_{6;2}$. Then $\varphi=\varphi_1=\varphi_{\sE_6}$ and $\varphi_2=\varphi_{\sA_5}$. Since $\theta\in \langle U_{\alpha}\mid \alpha\geq \varphi_2\rangle$ and since $\varphi_2\geq \alpha_1$, we have $\disp(\theta)\leq 30$ by Lemma~\ref{lem:restrictions}, and the result follows. 
\smallskip

\noindent\textit{The case $\Phi=\sE_7$:} Consider the diagram $\sE_{7;2}$. Then $\varphi_1=\varphi_{\sE_7}$ and $\varphi_2=\varphi_{\sD_6}$. By Lemma~\ref{lem:restrictions} we have $\disp(\theta)\leq 50$, hence the result. Consider the diagram $\sE_{7;3}$. Then $\varphi_1=\varphi_{\sE_7}$, $\varphi_2=\varphi_{\sD_6}$, and $\varphi_3=\alpha_7$. Lemma~\ref{lem:restrictions} gives $\disp(\theta)\leq 51$, and hence the result. Consider the diagram $\sE_{7;4}$. Then $\varphi_1=\varphi_{\sE_7}$, $\varphi_2=\varphi_{\sD_6}$, $\varphi_3=\varphi_{\sD_4}$, and $\varphi_4=\alpha_3$. Replace $\theta$ by the conjugate $\theta'=s_1^{-1}\theta s_1$. Since $s_1\varphi_1, s_1\varphi_2, s_1\varphi_3,s_1\varphi_4\geq \alpha_1$ Lemma~\ref{lem:restrictions} gives $\disp(\theta)=\disp(\theta')\leq 60$. 
\smallskip

\noindent\textit{The case $\Phi=\sE_8$:} Consider the diagram $\sE_{8;2}$. Then $\varphi_1=\varphi_{\sE_8}$ and $\varphi_2=\varphi_{\sD_6}$. We claim that $\disp(\theta)\leq 90$. To see this, let $\beta=(00111111)$ be the highest root of an $\sA_6$ subsystem. Then $s_{\beta}\varphi_1=(23354321)$ and $s_{\beta}\varphi_2=(22454321)$ are the two roots of $\sE_8$ with height $23$. Thus the conjugate $\theta'=s_{\beta}^{-1}\theta s_{\beta}$ satisfies $\theta'\in \langle U_{\alpha}\mid \height(\alpha)\geq 23\rangle$, and so by Lemma~\ref{lem:restrictions} we have $\disp(\theta)=\disp(\theta')\leq 90$.

Consider the diagram $\sE_{8;4}$. Then $\varphi_1=\varphi_{\sE_8}$, $\varphi_2=\varphi_{\sE_7}$, $\varphi_3=\varphi_{\sD_6}$, and $\varphi_4=\alpha_7$. By direct calculation the roots $s_8\varphi_1$, $s_8\varphi_2$, $s_8\varphi_3$, and $s_8\alpha_7$ are all elements of $\Phi^+\backslash\Phi_{\sE_7}$. Therefore the conjugate $\theta'=s_8^{-1}\theta s_8$ satisfies $\theta'\in\langle U_{\alpha}\mid \alpha\geq \alpha_8\rangle$, and so by Lemma~\ref{lem:restrictions} we have $\disp(\theta)=\disp(\theta')\leq 108$, completing the proof for $\sE_8$.
\smallskip

\noindent\textit{The case $\Phi=\sF_4$:} Consider the diagram $\sF_{4;2}$. Then $\varphi_1=\varphi_{\sF_4}$ and $\varphi_2=\varphi_{\sC_3}$. Let $v=s_1s_2s_1$. Then $v\varphi_1=\varphi'-\alpha_3$ and $v\varphi_2=\varphi'+\alpha_3$, where $\varphi'=(1232)$ is the highest short root. Replace $\theta$ by the conjugate $\theta'=v\theta v^{-1}=x_{\varphi'-\alpha_3}(a)x_{\varphi'+\alpha_3}(b)$. Let $X=\{(0100),(0010),(1100),(0120),(1120)$. Then $x_{\gamma}(c)\theta'=\theta' x_{\gamma}(c)$ for all $\gamma\in\Phi^+\backslash X$. Each $u\in U^+$ can be written as $u=u_1u_2$ with $u_1\in U_{\Phi^+\backslash X}^+$ and $u_2\in U_X^+$. Write $u_2=x_{(0010)}(z_1)x_{(0120)}(z_2)x_{(1120)}(z_3)x_{(0100)}(z_4)x_{(1100)}(z_5)$. Since $u_1^{-1}\theta' u_1=\theta'$, a calculation using commutator relations gives
\begin{align*}
u^{-1}\theta' u&=x_{(1222)}(a)x_{(1232)}(-z_1a)x_{(1242)}(c)x_{(1342)}(-z_4c+z_2a)x_{(2342)}(-z_5c+z_3a),
\end{align*}
where $c=z_1^2a+b$, and hence
\begin{align*}
w_0^{-1}u^{-1}\theta' uw_0&=x_{-(1222)}(-a)x_{-(1232)}(z_1a)x_{-(1242)}(-c)x_{-(1342)}(z_4c-z_2a)x_{-(2342)}(z_5c-z_3a).
\end{align*}
Using the folding relation and commutator relations we obtain
\begin{align*}
Bw_0^{-1}u^{-1}\theta uw_0B&=\begin{cases}
Bs_{\varphi_{\sC_3}}s_{\varphi}B=Bw_{\sB_2}w_0B&\text{if $z_5c-z_3a\neq 0$}\\
Bs_{(0110)}s_{\varphi'}B&\text{if $z_5c-z_3a=0$ and $z_4c-z_2a\neq 0$}\\
Bs_{(0010)}s_{\varphi'}B&\text{if $z_5c-z_3a=0$ and $z_4c-z_2a= 0$ and $c\neq 0$}\\
Bs_{\varphi'}B&\text{if $z_5c-z_3a=0$ and $z_4c-z_2a= 0$ and $c=0$},
\end{cases}
\end{align*}
and so by the standard technique $\theta$ is domestic with opposition diagram $\sF_{4;2}$. 
\smallskip

Finally, in special characteristic, in type $\sF_4$ the element $x_{\varphi'}(a)$ ($a\neq 0$) has opposition diagram $\sF_{4;4}^1$ and in type $\sG_2$ the element $x_{\varphi'}(a)$ ($a\neq 0$) has opposition diagram $\sG_{2;1}^1$ (by Theorem~\ref{thm:short}). Moreover, very similar calculations to those above shows that in type $\sF_4$, for all fields, $x_{\varphi'}(a)x_{\varphi_{\sB_3}'}(b)$ with $a,b\neq 0$ has opposition diagram $\sF_{4;2}$, completing the proof.
\end{proof}

\section{Classification of domestic homologies}\label{sec:homologies}

In this section we classify the domestic homologies of split buildings of exceptional types. Throughout this section we may assume that $\Delta$ is a large building, for over the field $\FF_2$ there are no nontrivial homologies. In Lemma~\ref{lem:thickframe} we recall the basic fact that the fixed element structure of a homology $\theta$ is a (typically non-thick) building~$\Delta_{\theta}$ of the same type as~$\Delta$. Following Scharlau~\cite{Sch:87}, the \textit{thick frame} $\Delta_{\theta}'$ of $\Delta_{\theta}$ is a thick building naturally associated to~$\Delta_{\theta}$, and the type $W_{\theta}$ of this building is a reflection subgroup of~$W$. Our classification of domestic homologies is in terms of these reflection subgroups. The data provided in Appendix~\ref{app:data} is useful for this section.

\begin{lemma}\label{lem:thickframe}
Let $\Delta$ be a split spherical building of type $(W,S)$ and let $\theta$ be a homology of $\Delta$. Let $\Delta_{\theta}$ be the set of fixed chambers of $\theta$. Then $\Delta_{\theta}$ is a (typically non-thick) building of type $(W,S)$. Moreover, if $P$ is an $s$-panel of $\Delta$ with $\cC(P)\cap\Delta_{\theta}\neq\emptyset$ then either $\cC(P)\subseteq\Delta_{\theta}$ or $|\cC(P)\cap\Delta_{\theta}|=2$.
\end{lemma}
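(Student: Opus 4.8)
\textbf{Proof plan for Lemma~\ref{lem:thickframe}.}

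The statement asserts two things about the fixed-chamber set $\Delta_\theta$ of a homology $\theta$: that it is a sub-building of type $(W,S)$, and the ``residue dichotomy'' that on each panel meeting $\Delta_\theta$, either all chambers are fixed or exactly two are.

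The plan is to reduce everything to the local picture on panels, using the Bruhat-cell description of $\theta$. First I would recall that, up to conjugacy, $\theta=h_\lambda(c)$ with $\lambda\in P$ and $c\in\FF^\times$ nontrivial; conjugation does not affect the truth of the statement, so assume $\theta$ is of this form and fixes the base chamber $B$ (since $h_\lambda(c)\in H\subseteq B$). A chamber $gB$ is fixed iff $g^{-1}h_\lambda(c)g\in B$. Writing $g=uw b$ via the Bruhat decomposition with $u\in U_{\Phi(w)}^+$, and using that $h_\lambda(c)$ normalises each $U_\alpha$ (acting by $x_\alpha(a)\mapsto x_\alpha(ac^{\langle\lambda,\alpha\rangle})$) and normalises $H$, one computes $g^{-1}\theta g = b^{-1}w^{-1}u^{-1}\theta u w b$; since $u^{-1}\theta u = \theta \cdot u' $ for a suitable $u'\in U^+_{\Phi(w)}$ built from the factors of $u$ for which $c^{\langle\lambda,\alpha\rangle}\ne 1$, the condition $g^{-1}\theta g\in B$ forces $x_\alpha(a)$ to appear in $u$ only for roots $\alpha\in\Phi(w)$ with $\langle\lambda,\alpha\rangle=0$. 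This exhibits $\Delta_\theta$ explicitly as a sub-chamber-system which is itself a $W$-metric space (closed under the Weyl-distance and galleries inside it), hence a building of type $(W,S)$ — this is the content of the first sentence, and is essentially a restatement of the standard fact (c.f. \cite[Chapter~5]{AB:08} and \cite{PMW:15}) that fixed point sets of type-preserving automorphisms that fix a chamber are sub-buildings.

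For the panel dichotomy, the key reduction is to rank-one residues. Let $P$ be an $s$-panel with $\cC(P)\cap\Delta_\theta\ne\emptyset$; pick a fixed chamber $c\in\cC(P)\cap\Delta_\theta$, and after conjugating by an element of $G_\theta$ (which acts transitively enough) assume $c=B$ and $s=s_i$ for some $i$. Then $\cC(P)=\{x_{\alpha_i}(a)B\mid a\in\FF\}$, a set of size $|\FF|$, and $x_{\alpha_i}(a)B$ is fixed iff $x_{\alpha_i}(-a)\theta x_{\alpha_i}(a)\in B$. Using the commutator/diagonal relation $\theta x_{\alpha_i}(a)\theta^{-1}=x_{\alpha_i}(ac^{\langle\lambda,\alpha_i\rangle})$ and the folding relation~(\ref{eq:folding}), one gets
\[
x_{\alpha_i}(-a)\theta x_{\alpha_i}(a)=\theta\cdot x_{\alpha_i}\bigl(a(c^{\langle\lambda,\alpha_i\rangle}-1)\bigr),
\]
which lies in $B$ iff either $\langle\lambda,\alpha_i\rangle=0$ (so $c^{\langle\lambda,\alpha_i\rangle}=1$ and every chamber of $P$ is fixed) or $a(c^{\langle\lambda,\alpha_i\rangle}-1)=0$, i.e.\ $a=0$. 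In the latter case the fixed chambers of $P$ are exactly $B$ and $s_iB=x_{-\alpha_i}(0)\cdots$ — more precisely one checks using~(\ref{eq:folding}) that the chamber opposite to $B$ across $P$, namely $s_i B$, is also fixed because $s_i^{-1}\theta s_i = h_{s_i\lambda}(c)$ still fixes $B'=s_iB$'s panel-mate $B$; so $|\cC(P)\cap\Delta_\theta|=2$. This gives the dichotomy.

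The main obstacle I anticipate is the bookkeeping in the reduction to an arbitrary panel $P$: I have assumed above that $G_\theta$ (the fixator of $\theta$, equivalently the centraliser-type subgroup) acts transitively on pairs (fixed chamber, panel type) in a way that lets me move $P$ to a standard panel through the base chamber — this needs the fact that $\Delta_\theta$ is itself a building (first part) together with the chamber-transitivity of the fixed-point building's automorphism group on its own chambers of each type, which follows because $\theta$ being a homology means its fixed building carries a $BN$-pair structure inherited from $G$ (see \cite{Sch:87}). Once that transitivity is in hand the rank-one computation above is routine. An alternative, avoiding the transitivity claim, is to argue directly: given $c\in\cC(P)\cap\Delta_\theta$ write $c=gB$ with $g$ fixing $\theta$-conjugately, conjugate $\theta$ by $g^{-1}$ to reduce to $c=B$ while replacing $\theta$ by $g^{-1}\theta g\in B$ which, being a fixed-chamber homology, is $G$-conjugate into $H$ but may only be a product of $h_\lambda(c)$'s after a further unipotent conjugation that fixes $B$; tracking that the unipotent part commutes appropriately with $U_{\alpha_i}$ modulo $B$ is the one place requiring care. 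I would present the transitivity route as the clean argument and relegate the direct computation to a remark.
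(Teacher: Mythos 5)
Your overall strategy (reduce the panel dichotomy to a rank-one computation and exploit that a homology acts on the residue by a diagonal element) is the right idea, but the execution has two concrete problems.

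First, the panel parametrization is wrong, and this corrupts the rank-one computation. After reducing to the base chamber, the chambers of the $s_i$-panel of $B$ are not $\{x_{\alpha_i}(a)B\mid a\in\FF\}$ (those cosets all equal $B$, since $x_{\alpha_i}(a)\in B$); they are $\{B\}\cup\{x_{\alpha_i}(a)s_iB\mid a\in\FF\}$, a set of size $|\FF|+1$. The fixedness condition for $x_{\alpha_i}(a)s_iB$ is that $s_i^{-1}x_{\alpha_i}(-a)\,\theta\, x_{\alpha_i}(a)s_i\in B$; the conjugation by $s_i$ is what turns the $x_{\alpha_i}\bigl(a(c^{\langle\lambda,\alpha_i\rangle}-1)\bigr)$ that you correctly extract into the element $x_{-\alpha_i}\bigl(\pm a(c^{\langle\lambda,\alpha_i\rangle}-1)\bigr)$ of the \emph{opposite} root subgroup, and only then does the condition ``lies in $B$'' become the non-trivial statement $a(c^{\langle\lambda,\alpha_i\rangle}-1)=0$. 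As written, your displayed element $\theta\cdot x_{\alpha_i}\bigl(a(c^{\langle\lambda,\alpha_i\rangle}-1)\bigr)$ is \emph{always} in $B$, so your claimed equivalence is vacuous.

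Second, the reduction that puts the fixed chamber at $B$ while keeping $\theta\in H$ does not go through. Your ``clean'' route assumes the fixed-point group $G_\theta$ acts transitively on $\Delta_\theta$; this is false in general. Already for $\theta=\mathrm{diag}(1,1,c)$ in $\mathrm{SL}_3(\FF)$ the centraliser preserves both the fixed point $e_3$ and the fixed line $\{x_3=0\}$, so it has three orbits on the fixed flags and cannot move a flag of one shape to a flag of another shape. Your fallback (conjugate by an arbitrary $g$ and absorb the resulting unipotent part) is workable but requires a further argument that the $\alpha_i$-component of the unipotent part vanishes whenever the $\alpha_i$-eigenvalue of the toral part is $1$ --- precisely the case your computation must rule out --- and that argument is not supplied. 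A route that avoids all of this: keep $\theta=h_\lambda(c)$ with no further conjugation, observe that $\theta$ fixes the base apartment pointwise, in particular the opposite pair $(B,w_0B)$; since $\theta$ is type-preserving and fixes a chamber of $P$, it stabilises $P$, hence fixes $\proj_P(B)$ and $\proj_P(w_0B)$, which are opposite in $\Res(P)$ and therefore distinct. This gives $|\cC(P)\cap\Delta_\theta|\geq 2$ with no Bruhat bookkeeping; since $\theta$ acts linearly on $\Res(P)\cong\PG(1,\FF)$, i.e.\ via $\mathrm{PGL}_2(\FF)$, a non-identity action fixes at most two points, yielding the ``$2$ or all'' dichotomy. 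The first assertion of the lemma then follows, because $\Delta_\theta$ is convex (closed under projections onto stabilised residues), contains an apartment, and meets every relevant panel in $\geq 2$ chambers.
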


\begin{proof} The proof is straightforward. 
\end{proof}

If $\Delta_{\theta}$ is the fixed subbuilding of a homology~$\theta$, we refer to the \textit{thick} (if $\cC(P)\subseteq\Delta_{\theta}$) and \textit{thin} (if $|\cC(P)\cap\Delta_{\theta}|=2$) panels of $\Delta_{\theta}$. If $\cA$ is an apartment of $\Delta_{\theta}$, then we refer to the thin and thick walls of $\cA$.

The \textit{thick frame} $\Delta_{\theta}'$ of $\Delta_{\theta}$ is a building whose chambers are the thin-classes of chambers of $\Delta_{\theta}$, with adjacency given by adjacency of representatives of these classes in $\Delta_{\theta}$ (see \cite{Sch:87}; here \textit{thin-classes} are the classes of the finest equivalence relation containing the thin panels). The building $\Delta_{\theta}'$ has type $W_{\theta}$, where $W_{\theta}$ is the reflection subgroup of $W$ generated by the reflections about the thick walls of any apartment of $\Delta_{\theta}$. 

The embedding $W_{\theta}\hookrightarrow W$ is only defined up to conjugacy, however in practice if $\theta\in H$ we fix this embedding by taking reflections in the base apartment. If $\theta\in H$ then the thick walls of the base apartment $\cA_0$ are easily computed as follows. Writing $\theta=h_{\omega_1}(c_1)\cdots h_{\omega_N}(c_N)$ we have
$$
\theta x_{\alpha}(a)\theta^{-1}=x_{\alpha}(ac_1^{\langle\alpha,\omega_1\rangle}\cdots c_N^{\langle\alpha,\omega_N\rangle}),
$$
and so the $\alpha$-wall of the base apartment is thick if and only if $c_1^{\langle\alpha,\omega_1\rangle}\cdots c_N^{\langle\alpha,\omega_N\rangle}=1$. For homologies $\theta\in H$, we define the \textit{root system of $\theta$} by
$$
\Phi_{\theta}=\{\alpha\in\Phi\mid \theta x_{\alpha}(1)\theta^{-1}=x_{\alpha}(1)\}=\{\alpha\in \Phi\mid \text{ the $\alpha$-wall of $\cA_0$ is thick}\}.
$$
It is easy to check that $\Phi_{\theta}$ is indeed a crystallographic root system, and note that $W_{\theta}$ is generated by the reflections in the hyperplanes perpendicular to the roots in~$\Phi_{\theta}$.

\begin{example} Consider the homology $\theta=h_{\omega_5}(c)h_{\omega_6}(c^{-2})$ with $c^2\neq 1$ of an $\sE_6$ building. Then $\alpha\in\Phi_{\theta}$ if and only if $c^{\langle\alpha,\omega_5\rangle-2\langle\alpha,\omega_6\rangle}=1$, and since $c^2\neq 1$, inspection of the root system shows that $\Phi_{\theta}^+$ consists  precisely the roots of the form $(****00)$ (there are $10$ such roots) or $(****21)$ (there are $5$ such roots). Hence $|\Phi_{\theta}^+|=15$. Scharlau's classification~\cite[Proposition~2]{Sch:87} (see below) forces $\Delta_{\theta}'$ to have type $\sA_5$, lying inside of a maximal reflection subgroup of type $\sA_5\times\sA_1$. To see this, note that $\sA_2\times\sA_2\times\sA_2$ has only $9$ reflections, eliminating this case. Then $\sD_5$ has $20$ reflections, however the maximal reflection subgroups of $\sD_5$ have types $\sD_3\times\sD_2$ ($6+2=8$ reflections), $\sD_4$ ($12$ reflections), or $\sA_4$ ($10$ reflections), eliminating these possibilities.
\end{example}

Let us now outline our strategy for classifying domestic homologies. Let $\theta$ be a homology, and after conjugating we assume that $\theta\in H$, and so the base apartment~$\cA_0$ of $\Delta$ is fixed by~$\theta$. Let $W_{\theta}$ be the reflection subgroup of $W$ generated by the thick walls of~$\cA_0$. This subgroup in turn lies in a maximal reflection subgroup $W_{\theta}'$ of $W$. By \cite[Proposition~2]{Sch:87} each maximal reflection subgroup is determined up to conjugation by its type, and hence we may assume the root system of $W_{\theta}'$ has simple roots as listed below for the cases that we will require (where $\sD_2=\sA_1\times\sA_1$, and $\sD_3=\sA_3$ in the natural way): 
\begin{compactenum}[$(1)$]
\item The maximal reflection subgroups of $\sB_4$ are
\begin{compactenum}[$(a)$]
\item $\sB_3\times\sA_1$, with simple roots $(\alpha_2,\alpha_3,\alpha_4)\times(\alpha_1+\alpha_2+\alpha_3+\alpha_4)$.
\item $\sB_2\times\sB_2$, with simple roots $(\alpha_3,\alpha_4)\times(\alpha_1,\alpha_2+\alpha_3+\alpha_4)$.
\item $\sD_4$, with simple roots $(\alpha_1,\alpha_2,\alpha_3,\alpha_3+2\alpha_4)$.
\end{compactenum}
\item The maximal reflection subgroups of $\sF_4$ are
\begin{compactenum}[$(a)$]
\item $\sB_4$, with simple roots $(\varphi_{\sC_3},\alpha_1,\alpha_2,\alpha_3)$ where $\varphi_{\sC_3}=\alpha_2+2\alpha_3+2\alpha_4$. 
\item $\sC_3\times\sA_1$, with simple roots $(\alpha_4,\alpha_3,\alpha_2)\times(\varphi)$.
\item $\sA_2\times\sA_2$, with simple roots $(\alpha_3,\alpha_4)\times(\alpha_1,\varphi-\alpha_1)$.
\end{compactenum}
\item The maximal reflection subgroups of $\sD_5$ are
\begin{compactenum}[$(a)$]
\item $\sD_3\times\sD_2$, with simple roots $(\alpha_3,\alpha_4,\alpha_5)\times(\alpha_1,\varphi)$. 
\item $\sD_4$, with simple roots $(\alpha_2,\alpha_3,\alpha_4,\alpha_5)$. 
\item $\sA_4$, with simple roots $(\alpha_1,\alpha_2,\alpha_3,\alpha_4)$. 
\end{compactenum}
\item The maximal reflection subgroups of $\sD_6$ are
\begin{compactenum}[$(a)$]
\item $\sD_4\times\sD_2$, with simple roots $(\alpha_3,\alpha_4,\alpha_5,\alpha_6)\times(\alpha_1,\varphi)$. 
\item $\sD_3\times \sD_3$, with simple roots $(\alpha_4,\alpha_5,\alpha_6)\times(\alpha_1,\alpha_2,\varphi-\alpha_1-\alpha_2)$.
\item $\sD_5$, with simple roots $(\alpha_2,\alpha_3,\alpha_4,\alpha_5,\alpha_6)$. 
\item $\sA_5$, with simple roots $(\alpha_1,\alpha_2,\alpha_3,\alpha_4,\alpha_5)$. 
\end{compactenum}
\item The maximal reflection subgroups of $\sE_6$ are
\begin{compactenum}[$(a)$]
\item $\sA_5\times\sA_1$, with simple roots $(\alpha_1,\alpha_3,\alpha_4,\alpha_5,\alpha_6)\times(\varphi)$.
\item $\sA_2\times\sA_2\times\sA_2$, with simple roots $(\alpha_1,\alpha_3)\times(\alpha_5,\alpha_6)\times(\alpha_2,\varphi-\alpha_2)$. 
\item $\sD_5$, with simple roots $(\alpha_1,\alpha_3,\alpha_4,\alpha_2,\alpha_5)$. 
\end{compactenum}
\item The maximal reflection subgroups of $\sE_7$ are
\begin{compactenum}[$(a)$]
\item $\sA_7$, with simple roots $(\alpha_1,\alpha_3,\alpha_4,\alpha_5,\alpha_6,\alpha_7,\varphi_{\sE_6})$. 
\item $\sD_6\times\sA_1$, with simple roots $(\alpha_7,\alpha_6,\alpha_5,\alpha_4,\alpha_2,\alpha_3)\times(\varphi)$. 
\item $\sA_5\times\sA_2$, with simple roots $(\alpha_2,\alpha_4,\alpha_5,\alpha_6,\alpha_7)\times(\alpha_1,\varphi-\alpha_1)$.
\item $\sE_6$, with simple roots $(\alpha_1,\alpha_2,\alpha_3,\alpha_4,\alpha_5,\alpha_6)$. 
\end{compactenum}
\item The maximal reflection subgroups of $\sE_8$ are
\begin{compactenum}[$(a)$]
\item $\sD_8$, with simple roots $(\varphi_{\sE_7},\alpha_8,\alpha_7,\alpha_6,\alpha_5,\alpha_4,\alpha_2,\alpha_3)$. 
\item $\sA_8$, with simple roots $(\varphi-\varphi_{\sA_7},\alpha_1,\alpha_3,\alpha_4,\alpha_5,\alpha_6,\alpha_7,\alpha_8)$. 
\item $\sA_4\times\sA_4$, with simple roots $(\alpha_1,\alpha_3,\alpha_4,\alpha_2)\times(\varphi-\alpha_6-\alpha_7-\alpha_8,\alpha_6,\alpha_7,\alpha_8)$.
\item $\sE_6\times\sA_2$, with simple roots $(\alpha_1,\alpha_2,\alpha_3,\alpha_4,\alpha_5,\alpha_6)\times(\alpha_8,\varphi-\alpha_8)$. 
\item $\sE_7\times\sA_1$, with simple roots $(\alpha_1,\alpha_2,\alpha_3,\alpha_4,\alpha_5,\alpha_6,\alpha_7)\times(\varphi)$. 
\end{compactenum}
\end{compactenum}
In cases (2)--(7) the root system of the reflection subgroup is simply the intersection of the $\ZZ$-span of the simple roots with the ambient root system. In case (1) the intersection of the $\ZZ$-span of the simple roots with the ambient root system is larger than the stated type, and thus one must restrict coefficients in the linear combinations appropriately to obtain the correct type.

Our strategy is as follows.   
\begin{compactenum}
\item[(A)] We first identify a list of reflection subgroups~$W'$ of $W$ such that every homology with $W_{\theta}=W'$ is domestic (we will then call $W'$ domestic).
\item[(B)] We then show that if $W'$ is a reflection subgroup of $W$ not in our list from~(A), then every homology with $W_{\theta}=W'$ is not domestic (we will then call $W'$ non-domestic). 
\end{compactenum}
We prove (A) using the standard technique (Proposition~\ref{prop:standardtechnique}), with some more refined arguments required in the $\sE_7$ and $\sF_4$ cases. Our strategy for proving (B) is via the following lemma.

\begin{lemma}\label{lem:nondomestic}
Let $\theta\in H$ be a homology with root system $\Phi_{\theta}$. Suppose there exist mutually perpendicular roots $\beta_1,\ldots,\beta_k\in\Phi^+\backslash\Phi_{\theta}$. Then
$$
\disp(\theta)\geq M,\quad\text{where $M=\ell(s_{\beta_1}\cdots s_{\beta_k})$}.
$$
Moreover, if $\theta'\in H$ is a homology with root system $\Phi_{\theta'}\subseteq \Phi_{\theta}$ then $\disp(\theta')\geq M$. 
\end{lemma}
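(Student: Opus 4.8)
\textbf{Proof strategy for Lemma~\ref{lem:nondomestic}.}

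The plan is to produce, for the base apartment $\mathcal{A}_0$, a single chamber whose displacement under $\theta$ is at least $M$, and then to observe that the argument is monotone in the root system of the homology. First I would set $w=s_{\beta_1}\cdots s_{\beta_k}$, which by Lemma~\ref{lem:longelt}-type reasoning (or directly, since the $\beta_j$ are mutually perpendicular) has $\ell(w)=\ell(s_{\beta_1})+\cdots+\ell(s_{\beta_k})=M$ and inversion set $\Phi(w)=\bigsqcup_j\Phi(s_{\beta_j})$. The key point is that $\theta$ acts trivially on the base apartment (since $\theta\in H$), and on each root subgroup by $\theta x_{\beta}(a)\theta^{-1}=x_{\beta}(\lambda_\beta a)$ for a scalar $\lambda_\beta$ depending on $\theta$, with $\lambda_\beta=1$ precisely when $\beta\in\Phi_\theta$. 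So for each $j$, since $\beta_j\in\Phi^+\setminus\Phi_\theta$ we have $\lambda_{\beta_j}\neq 1$; pick $a_j\neq 0$ and set $b_j=(\lambda_{\beta_j}-1)a_j\neq 0$.

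Next I would compute $\delta(c,c^\theta)$ for the chamber $c=x_{-\beta_1}(a_1)\cdots x_{-\beta_k}(a_k)\,B$ (or, more conveniently, for a chamber built from the corresponding positive root subgroups). Using the folding relation~(\ref{eq:folding}) together with Lemma~\ref{lem:perpendicularroots}, one sees that $x_{-\beta_1}(c_1)\cdots x_{-\beta_k}(c_k)\in Bs_{\beta_1}\cdots s_{\beta_k}B=BwB$ whenever all $c_j\neq 0$. So writing $c=uw'B$ in the standard form with $w'\in W$ and $u\in\langle U_\alpha\mid \alpha\in\Phi(w')\rangle$, the displacement at $c$ is the Weyl-distance $v$ with $(w')^{-1}u^{-1}\theta u\,w'\in BvB$; the cleanest choice is to take $c$ so that $u^{-1}\theta u$ (or the relevant conjugate of $\theta$) lands in $\langle U_{-\beta_j}(b_j)\rangle$ with all $b_j\neq 0$, giving $\delta(c,c^\theta)=w$ and hence $\disp(\theta)\geq\ell(w)=M$. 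Concretely: since $\theta$ commutes with $H$ and acts diagonally on root groups, for $g=x_{\beta_1}(a_1)\cdots x_{\beta_k}(a_k)$ one gets $g^{-1}\theta g=\theta\cdot x_{-\beta_1}(\ast)\cdots x_{-\beta_k}(\ast)$ after applying the commutator and folding relations — the mutual perpendicularity of the $\beta_j$ is exactly what keeps the $k$ folding moves independent — and this lies in $\theta\cdot BwB\subseteq BwB\cdot H$ (using $H\subseteq B$), so $\delta(gB,\theta gB)=w$.

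Finally, the "moreover" clause: if $\Phi_{\theta'}\subseteq\Phi_\theta$ then $\Phi^+\setminus\Phi_{\theta}\subseteq\Phi^+\setminus\Phi_{\theta'}$, so the same mutually perpendicular roots $\beta_1,\ldots,\beta_k$ lie in $\Phi^+\setminus\Phi_{\theta'}$, and the first part applied to $\theta'$ gives $\disp(\theta')\geq M$ with the identical bound.

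\textbf{Main obstacle.} The delicate point is the bookkeeping in the folding computation: after conjugating $\theta$ by $g=\prod_j x_{\beta_j}(a_j)$ one must check that the extra commutator terms $x_{i\beta_j+l\beta_m}(\cdot)$ that could in principle appear either vanish (because $\beta_j+\beta_m\notin\Phi$ when $\langle\beta_j,\beta_m\rangle=0$, which is guaranteed by reducedness of $\Phi$) or can be absorbed into $B$, so that the leading Weyl-group term genuinely equals $w=s_{\beta_1}\cdots s_{\beta_m}$ and not some shorter element. This is where mutual perpendicularity is used twice over — once to make $\ell(w)$ additive and once to make the foldings non-interfering — and it is essentially the same mechanism as in the proof of Lemma~\ref{lem:perpendicularroots}, so I would cite that lemma and adapt its inductive argument rather than redo it from scratch.
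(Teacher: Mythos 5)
Your overall strategy coincides with the paper's: choose a single chamber for which Lemma~\ref{lem:perpendicularroots} and the folding relation force the Weyl distance to $\theta$-image to be $s_{\beta_1}\cdots s_{\beta_k}$, then note monotonicity for the ``moreover'' clause. Your first choice of chamber, $c=x_{-\beta_1}(a_1)\cdots x_{-\beta_k}(a_k)B$, is correct and equivalent to the paper's choice $gB=uw_0B$ with $u=x_{\beta_1}(1)\cdots x_{\beta_k}(1)\in U^+$, since conjugating by $w_0$ converts the positive root subgroups into the negative ones.

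However, the ``Concretely'' paragraph where you switch to ``the corresponding positive root subgroups'' contains an error that would sink the argument as written. For $g=x_{\beta_1}(a_1)\cdots x_{\beta_k}(a_k)\in U^+$, mutual perpendicularity gives
$$
g^{-1}\theta g \;=\; g^{-1}(\theta g\theta^{-1})\theta \;=\; x_{\beta_1}\bigl((\lambda_1-1)a_1\bigr)\cdots x_{\beta_k}\bigl((\lambda_k-1)a_k\bigr)\,\theta \;\in\; U^+H \subseteq B,
$$
so $\delta(gB,\theta gB)=1$: the chamber $gB$ is \emph{fixed} by $\theta$. No negative root group elements appear and no folding is applicable, so the claimed identity $g^{-1}\theta g=\theta\cdot x_{-\beta_1}(\ast)\cdots x_{-\beta_k}(\ast)$ is false. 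What is missing is a further conjugation by $w_0$ (the paper's chamber is $uw_0B$, not $uB$, so one computes $w_0^{-1}u^{-1}\theta u w_0$, and it is $w_0^{-1}(\cdot)w_0$ that flips $x_{\beta_j}(\cdot)$ to $x_{-\beta_j}(\cdot)$). Equivalently, you could simply stay with your first, correct choice of chamber built from negative root subgroups. With either correction the argument becomes the paper's proof.
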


\begin{proof}
Consider the chamber $gB=uw_0B$ with $u=x_{\beta_1}(1)\cdots x_{\beta_k}(1)$. Since \mbox{$\beta_1,\ldots,\beta_k\notin\Phi_{\theta}$} we have $\theta u\theta^{-1}=x_{\beta_1}(c_1)\cdots x_{\beta_k}(c_k)$ with $c_1,\ldots,c_k\neq 1$. Moreover, the elements $x_{\beta_1}(a_1),\ldots,x_{\beta_k}(a_k)$, with $a_1,\ldots,a_k\in\FF$, commute with each other (as they are mutually perpendicular), and hence
$$
Bg^{-1}\theta gB=Bw_0^{-1}u^{-1}\theta uw_0B=Bx_{-\beta_1}(c_1-1)\cdots x_{-\beta_k}(c_k-1)B.
$$
Thus by Lemma~\ref{lem:perpendicularroots} we have
$
Bg^{-1}\theta gB=Bs_{\beta_1}\cdots s_{\beta_k}B,
$
and hence $\disp(\theta)\geq \ell(s_{\beta_1}\cdots s_{\beta_k})$. The final statement follows as $\beta_1,\ldots,\beta_k\in \Phi^+\backslash \Phi_{\theta}'$. 
\end{proof}

In particular, note that if Lemma~\ref{lem:nondomestic} is used to prove non-domesticity for a homology~$\theta$ (by finding mutually perpendicular roots $\beta_1,\ldots,\beta_k\in\Phi^+\backslash\Phi_{\theta}$ with $\ell(s_{\beta_1}\cdots s_{\beta_k})$ sufficiently large), then every homology $\theta'$ with $W_{\theta'}$ a reflection subgroup of $W_{\theta}$ is also non-domestic, as $\Phi_{\theta'}\subseteq \Phi_{\theta}$. Thus to prove (B) it suffices to prove non-domesticity for the maximal reflection subgroups in the poset of reflection subgroups of $W$ excluding those in the list from~(A).

%
%
%
We now proceed with our classification of domestic homologies of split exceptional buildings. 
%

\begin{thm}\label{thm:homologyE6}
A nontrivial homology $\theta$ of $\sE_6(\FF)$ is domestic if and only if $\Delta_{\theta}'$ is of type $\sD_5$. Moreover, all such homologies have opposition diagram ${^2}\sE_{6;2}$, and are conjugate to an element of the form $h_{\omega_6}(c)$ with $c\in\FF\backslash\{0,1\}$. 
\end{thm}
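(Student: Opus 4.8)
The plan is to use the strategy outlined just before the theorem: classify homologies of $\sE_6(\FF)$ by their root system $\Phi_\theta$, separating the domestic cases from the non-domestic ones. After conjugating we may assume $\theta\in H$, and by Scharlau's classification~\cite[Proposition~2]{Sch:87} the subgroup $W_\theta$ lies in one of the maximal reflection subgroups of $\sE_6$, namely $\sA_5\times\sA_1$, $\sA_2\times\sA_2\times\sA_2$, or $\sD_5$, with simple roots as listed in item~(5) of the enumeration above. So the proof splits into two halves: (A) show that if $\Delta_\theta'$ has type $\sD_5$ then $\theta$ is domestic with diagram ${}^2\sE_{6;2}$; (B) show that in all other cases $\theta$ is not domestic.

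For half~(A): if $\Delta_\theta'$ has type $\sD_5$ then $\Phi_\theta$ is (up to conjugacy in $W$) the $\sD_5$ subsystem with simple roots $(\alpha_1,\alpha_3,\alpha_4,\alpha_2,\alpha_5)$. The complement $\Phi^+\backslash\Phi_\theta^+$ consists of the $6$ positive roots $\alpha$ with $\langle\alpha,\omega_6\rangle\neq 0$, i.e. exactly the inversion set $\Phi(s_\varphi)=\Phi^+\backslash\Phi_{\sD_5}$. Since $\theta$ acts trivially on every $U_\alpha$ with $\alpha\in\Phi_\theta$ and nontrivially on each $U_\alpha$ with $\alpha\notin\Phi_\theta$, a standard-technique computation (Proposition~\ref{prop:standardtechnique} together with Lemma~\ref{lem:technique}-style bookkeeping) shows $w_0^{-1}u^{-1}\theta u w_0\in w_1 B w_1^{-1}$ with $w_1=w_0w_{\sD_5}=s_\varphi$, giving $\disp(\theta)\le 2\ell(s_\varphi)-1=65$. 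By the classification of admissible $\sE_6$ diagrams (Figure~\ref{fig:Dynkin}) the only diagram with displacement $\le 65$ other than the full one is ${}^2\sE_{6;2}$, which has displacement $\ell(w_{\sA_5}w_0)=... $; and to rule out the full diagram one exhibits a chamber whose displacement is strictly below $\ell(w_0)=36$... more precisely one checks directly that the type $\{2\}$ vertex is never mapped to an opposite (equivalently $\theta$ is $\{2\}$-domestic, which is immediate since $\alpha_2\in\Phi_\theta$ so $s_{\alpha_2}$ fixes the relevant residue), forcing the diagram to be ${}^2\sE_{6;2}$. Finally, since all reflection subgroups of type $\sD_5$ inside $W(\sE_6)$ are conjugate, and the torus element realizing a given $\Phi_\theta=\sD_5$ is, up to the kernel of the $H$-action and up to conjugacy, $h_{\omega_6}(c)$ with $c^{\langle\alpha,\omega_6\rangle}=1$ forcing nothing on $\alpha\in\sD_5$ but $c\neq 1$, we get the stated normal form $h_{\omega_6}(c)$, $c\in\FF\backslash\{0,1\}$.

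For half~(B): we use Lemma~\ref{lem:nondomestic}. It suffices to treat the maximal reflection subgroups $\sA_5\times\sA_1$, $\sA_2\times\sA_2\times\sA_2$, and $\sD_5$ in the poset of reflection subgroups \emph{other than} those of type $\sD_5$; since $\sD_5$ itself is on the domestic list, the cases to kill are $W_\theta\subseteq \sA_5\times\sA_1$ not of type $\sD_5$, and $W_\theta\subseteq \sA_2\times\sA_2\times\sA_2$, plus proper subsystems of $\sD_5$. For the last of these, note that if $\Phi_\theta$ is a \emph{proper} subsystem of $\sD_5$ then $\Phi^+\backslash\Phi_\theta$ contains, in addition to the $6$ roots of $\Phi^+\backslash\Phi_{\sD_5}$, at least one further root from inside $\sD_5$; one then chooses mutually perpendicular roots $\beta_1,\dots,\beta_4\in\Phi^+\backslash\Phi_\theta$ (note $\sE_6$ has rank $6$ but $\pi_0\neq\mathrm{id}$, so we use Lemma~\ref{lem:longeltE6}: four mutually perpendicular roots fixed by $\sigma$ multiply to $w_0$) with $\ell(s_{\beta_1}\cdots s_{\beta_4})=\ell(w_0)=36$, giving $\disp(\theta)\ge 36$, i.e. $\theta$ is not domestic. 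For $W_\theta=\sA_5\times\sA_1$ (and the maximal non-$\sD_5$ subgroups it contains) and for $W_\theta=\sA_2\times\sA_2\times\sA_2$, the same argument applies: in each case $\Phi^+\backslash\Phi_\theta$ contains a set of four mutually perpendicular roots — explicitly, pick four $\sigma$-invariant mutually perpendicular roots avoiding the (at most $20$ positive) roots of $\Phi_\theta$, which is possible since $\sE_6$ contains a $\sigma$-stable $\sD_4$-subsystem worth of mutually orthogonal root-tuples — and Lemma~\ref{lem:nondomestic} combined with Lemma~\ref{lem:longeltE6} yields $\disp(\theta)=36$.

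The main obstacle will be half~(B): one must verify, for each maximal reflection subgroup $W'$ other than $\sD_5$, that $\Phi^+\backslash\Phi_\theta$ genuinely contains four mutually perpendicular $\sigma$-invariant roots whose product of reflections is $w_0$ — this is the point where a careful root-system computation in $\sE_6$ (using the data in Appendix~\ref{app:data}) is unavoidable, since the naive count of mutually orthogonal roots in the complement could in principle fall short, and because the twisting $\pi_0$ means we cannot simply invoke Lemma~\ref{lem:longelt} but must track $\sigma$-invariance throughout. Once the four-orthogonal-roots claim is secured in each case, the rest is bookkeeping with displacements and the admissible-diagram list. A secondary (minor) point is pinning down the conjugacy normal form $h_{\omega_6}(c)$: this requires observing that $\Phi_\theta=\sD_5$ forces the character $\alpha\mapsto\prod c_i^{\langle\alpha,\omega_i\rangle}$ to be trivial on $\sD_5$ and nontrivial on $\alpha_6$, whose solution space modulo the (trivial, since $\sE_6$ is simply connected of adjoint type here — more precisely modulo redundancy in the $h_\lambda$ presentation) is one-dimensional and spanned by $h_{\omega_6}(c)$.
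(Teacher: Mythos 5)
Your high-level plan (split into domestic-$\sD_5$ and non-domestic-everything-else, treat the latter by Lemma~\ref{lem:nondomestic} at the maximal-subgroup level) does match the paper's. But half~(A) contains several concrete errors that make the argument fall apart.

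First, you claim $\Phi^+\setminus\Phi_\theta^+$ consists of the $6$ positive roots with $\langle\alpha,\omega_6\rangle\neq 0$, and that this equals $\Phi(s_\varphi)$. Both are false. There are $36-20=16$ positive roots with $\langle\alpha,\omega_6\rangle>0$, and $\Phi(s_\varphi)$ is $\Phi^+\setminus\Phi_{\sA_5}^+$ (by equation~\eqref{eq:highestrootinversionset} with polar node $\wp=\{2\}$, not $\{6\}$), which has $36-15=21$ elements, not the set you describe. Consequently $w_0w_{\sD_5}\neq s_\varphi$: they have lengths $16$ and $21$ respectively. The correct invocation of Proposition~\ref{prop:standardtechnique} is with $w_1=w_0w_{\sD_5}$, $\ell(w_1)=16$, yielding $\disp(\theta)\leq 31$. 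Your figure of $65$ (even the alternative $2\cdot 21-1=41$) exceeds $\ell(w_0)=36$ and therefore proves nothing. The $31$ bound is exactly what lets the classification of admissible diagrams kick in: the only type-preserving non-empty diagrams with displacement below $36$ are ${}^2\sE_{6;1}$ (displacement $21$) and ${}^2\sE_{6;2}$ (displacement $30$), and Theorem~\ref{thm:converse} rules out the former since $h_{\omega_6}(c)$ is not a root elation. Your alternative ``$\alpha_2\in\Phi_\theta$ so $\theta$ is $\{2\}$-domestic'' does not follow: that $\theta$ centralizes $U_{\alpha_2}$ says nothing a priori about whether some type-$2$ vertex elsewhere in the building is sent to an opposite one.

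Half~(B) is flagged by you as the hard part but is not carried out, and as stated it is also stronger than necessary: the paper does not find four $\sigma$-invariant perpendicular roots in every case. For $\Phi_\theta$ contained in $\sA_5\times\sA_1$, $\sA_2^3$, $\sD_3\times\sD_2$, or $\sA_4$, the paper exhibits three explicit roots $\alpha,\beta,\gamma\notin\Phi_\theta$ that are mutually perpendicular and $\sigma$-invariant together with $\alpha_2$; Lemma~\ref{lem:longeltE6} then gives $s_\alpha s_\beta s_\gamma s_2=w_0$, so $\ell(s_\alpha s_\beta s_\gamma)=35>30$. For $\Phi_\theta=\sD_4$ the paper instead uses three perpendicular roots $\alpha',\beta',\gamma'\notin\Phi_\theta$ with $\Phi(s_{\alpha'}s_{\beta'}s_{\gamma'})=\Phi^+\setminus\{\alpha_1,\alpha_4,\alpha_6\}$, giving length $33>30$; no fourth $\sigma$-invariant root is needed. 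Your phrase ``pick four $\sigma$-invariant mutually perpendicular roots avoiding the roots of $\Phi_\theta$'' is exactly the point that needs a concrete verification, and indeed in the $\sD_4$ case a $\sigma$-invariant quadruple in the complement is not what the paper produces, so the existence claim you lean on is not self-evident. As written, this is a genuine gap.
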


\begin{proof}
Let $\theta\in H$. Suppose that $\Delta_{\theta}'$ is of type $\sD_5$. After conjugation, we may assume that $\Phi_{\theta}$ has simple roots $(\alpha_1,\alpha_3,\alpha_4,\alpha_2,\alpha_5)$. The condition $\theta x_{\alpha}(a)\theta^{-1}=x_{\alpha}(a)$ for these simple roots forces $\theta=h_{\omega_6}(c)$ for some $c\neq 1$. We show that $\theta$ is domestic, with diagram ${^2}\sE_{6;2}$. Let $\Phi_6^+=\{\alpha\in\Phi^+\mid \langle \alpha,\omega_6\rangle=1\}$, and let $w_1=w_0w_{\sD_5}$. Let $u\in U^+$, and write $u=u_1u_2$ with $u_1\in U_{\sD_5}^+$ and $u_2\in U_{\Phi_6}^+$. Since $u_1^{-1}\theta u_1=\theta$ and $u_2^{-1}\theta u_2\in U_{\Phi_6}^+\theta$, we have
\begin{align*}
w_1^{-1}w_0^{-1}u^{-1}\theta uw_0w_1&=w_{\sD_5}^{-1}u_2^{-1}\theta u_2w_{\sD_5}\in w_{\sD_5}^{-1}U_{\Phi_6}^+w_{\sD_5}\theta\subseteq B,
\end{align*}
where we have used the fact that $\Phi(w_{\sD_5})=\Phi^+\backslash \Phi_6$. Since $\ell(w_1)=36-20=16$ the standard technique (Proposition~\ref{prop:standardtechnique}) gives $\disp(\theta)\leq 31$. Moreover, since $\theta=h_{\omega_6}(c)$ is not conjugate to a root elation, it must have opposition diagram ${^2}\sE_{6;2}$ (by Theorem~\ref{thm:converse} and the classification of admissible diagrams). 

We now use Lemma~\ref{lem:nondomestic} to show that if $\theta\in H$ is a nontrivial homology with $\Delta_{\theta}'$ not of type $\sD_5$ then $\theta$ is not domestic. As noted above, it is sufficient to consider the maximal elements in the poset of reflection subgroups of $W$ excluding those of type~$\sD_5$. Thus we may assume that $W_{\theta}$ is either a maximal reflection subgroup of $W$ of type $\sA_5\times\sA_1$ or $\sA_2\times\sA_2\times\sA_2$, or a maximal reflection subgroup of the standard $\sD_5$ subgroup of~$W$. Up to conjugation we may suppose that either:
\begin{compactenum}[$(1)$]
\item $\Phi_{\theta}=\sA_5\times\sA_1$ with simple roots $(\alpha_1,\alpha_3,\alpha_4,\alpha_5,\alpha_6)\times(\varphi)$. The $16$ positive roots of $\Phi$ are precisely the roots $\alpha\in\Phi^+$ with $\langle\alpha,\omega_2\rangle\in 2\ZZ$. 
\item $\Phi_{\theta}=\sA_2\times\sA_2\times\sA_2$ with simple roots $(\alpha_1,\alpha_3)\times(\alpha_5,\alpha_6)\times(\alpha_2,\varphi-\alpha_2)$. The $9$ positive roots are precisely the roots $\alpha\in \Phi^+$ with $\langle\alpha,\omega_4\rangle\in 3\ZZ$.
\item $\Phi_{\theta}=\sD_3\times\sD_2$ with simple roots $(\alpha_4,\alpha_2,\alpha_5)\times(\alpha_1,\alpha_1+\alpha_2+2\alpha_3+2\alpha_4+\alpha_5)$. The $8$ positive roots are precisely the roots $\alpha\in\Phi^+$ with both $\langle\alpha,\omega_3\rangle\in2\ZZ$ and $\langle\alpha,\omega_6\rangle=0$. 
\item $\Phi_{\theta}=\sD_4$ with simple roots are $(\alpha_3,\alpha_4,\alpha_2,\alpha_5)$. The $12$ positive roots are precisely the roots $\alpha\in \Phi^+$ with both $\langle\alpha,\omega_1\rangle=0$ and $\langle\alpha,\omega_6\rangle=0$. 
\item $\Phi_{\theta}=\sA_4$ with simple roots are $(\alpha_1,\alpha_3,\alpha_4,\alpha_2)$. The $10$ positive roots are precisely the roots $\alpha\in \Phi^+$ with both $\langle\alpha,\omega_5\rangle=0$ and $\langle\alpha,\omega_6\rangle=0$. 
\end{compactenum}
In cases (1), (2), (3), and (5) we have $\alpha,\beta,\gamma\in\Phi^+\backslash\Phi_{\theta}$, where $\alpha,\beta,\gamma$ are the mutually perpendicular roots $\alpha=(112221)$, $\beta=(111211)$ and $\gamma=(011210)$. Thus by Lemma~\ref{lem:nondomestic} we have $\disp(\theta)\geq \ell(s_{\alpha}s_{\beta}s_{\gamma})$. Moreover, note that the roots $\alpha$, $\beta$, $\gamma$, $\alpha_2$ are mutually perpendicular and invariant under the diagram automorphism of $\sE_6$, and so by Lemma~\ref{lem:longeltE6} we have $s_{\alpha}s_{\beta}s_{\gamma}s_2=w_0$. Thus $\disp(\theta)\geq 35$, and so $\theta$ is not domestic.

In case (4) we have $\alpha',\beta',\gamma'\in\Phi^+\backslash \Phi_{\theta}$, where $\alpha',\beta',\gamma'$ are the mutually perpendicular roots $\alpha'=\varphi$, $\beta'=(001111)$, and $\gamma'=(101110)$. Thus $\disp(\theta)\geq \ell(w)$, where $w=s_{\alpha'}s_{\beta'}s_{\gamma'}$. It is easy to see that $\Phi(w)=\Phi^+\backslash\{\alpha_1,\alpha_4,\alpha_6\}$, and hence $\ell(w)=33$. It follows that $\theta$ is not domestic. 
\end{proof}
%

\begin{lemma}\label{lem:homologyE7}
Let $c\in \FF\backslash\{0,1\}$. The homology $\theta=h_{\omega_1}(c)$ of $\sE_7(\FF)$ is domestic, with opposition diagram $\sE_{7;4}$.
\end{lemma}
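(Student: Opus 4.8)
The plan is to follow the same two-pronged strategy used for $\sE_6$ in Theorem~\ref{thm:homologyE6}: first establish an upper bound on the displacement of $\theta = h_{\omega_1}(c)$ via the standard technique (Proposition~\ref{prop:standardtechnique}), and then exhibit a chamber witnessing a large enough displacement (equivalently, exhibit a simplex of type $\{1,3,4,6\}$ mapped to an opposite) so that the opposition diagram is pinned down to $\sE_{7;4}$. Recall from the list of displacements for $\sE_7$ that the capped possibilities for non-full non-empty diagrams are $33$ ($\sE_{7;1}$), $50$ ($\sE_{7;2}$), $51$ ($\sE_{7;3}$), $60$ ($\sE_{7;4}$), so it suffices to show $\disp(\theta) \le 60$ together with a type $1$ vertex (say) mapped to an opposite, which already forces the diagram to be $\sE_{7;4}$ by the classification of admissible diagrams.

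For the upper bound: the root system $\Phi_\theta$ of $\theta = h_{\omega_1}(c)$ (with $c \ne 1$) consists precisely of the roots $\alpha$ with $\langle\alpha,\omega_1\rangle = 0$, which is the standard $\sD_6$ subsystem generated by $\alpha_2,\ldots,\alpha_7$. Thus conjugation by $\theta$ is trivial on $U_{\sD_6}^+$ and nontrivial on $U_{\Phi^+\setminus\sD_6}^+$. Since $\Phi^+ \setminus \sD_6 = \Phi(w_0 w_{\sD_6})$ and $w_0 w_{\sD_6} = s_\varphi$ (as recorded before Theorem~\ref{thm:converse}, fact~(5)), writing an arbitrary $u \in U^+$ as $u = u_1 u_2$ with $u_1 \in U_{\sD_6}^+$, $u_2 \in U_{\Phi^+\setminus\sD_6}^+$, and using $u_1^{-1}\theta u_1 = \theta$, one gets $w_0^{-1} u^{-1}\theta u w_0 \in w_0^{-1} U_{\Phi^+\setminus\sD_6}^+ w_0 \,\theta = U_{-\varphi\text{-side}}$; more precisely one should take $w_1 = s_\varphi = w_0 w_{\sD_6}$ in Proposition~\ref{prop:standardtechnique}, which gives $w_1^{-1} w_0^{-1} u^{-1}\theta u w_0 w_1 \in B$ for all $u \in U^+$. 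Then Proposition~\ref{prop:standardtechnique} yields $\disp(\theta) \le 2\ell(s_\varphi) - 1 = 2\cdot 33 - 1 = 65$, but by Lemma~\ref{lem:E7symplectic} the double coset $Bs_\varphi B \cdot B s_\varphi B$ misses $Bw_0B$, so in fact $\theta$ is domestic and $\disp(\theta) \le 60$. This is exactly the content of Lemma~\ref{lem:restrictions}(2)(c): since every $\alpha \in \Phi^+\setminus\sD_6$ satisfies $\alpha \ge \alpha_1$, we have $\theta \in \langle U_\alpha \mid \alpha \ge \alpha_1\rangle$ (note $\theta \in H$, which normalises each such $U_\alpha$, so this holds after absorbing $\theta$ appropriately — more carefully one applies the $w_1 = s_\varphi$ argument directly as above), and Lemma~\ref{lem:restrictions}(2)(c) gives $\disp(\theta)\le 60$ directly.

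For the lower bound / type computation: I will exhibit a chamber $gB = u w_0 B$ on which $\theta$ realises displacement $60$. By Lemma~\ref{lem:nondomestic}, if $\beta_1,\ldots,\beta_k \in \Phi^+ \setminus \Phi_\theta$ are mutually perpendicular then $\disp(\theta) \ge \ell(s_{\beta_1}\cdots s_{\beta_k})$. So I need mutually perpendicular roots, each with nonzero $\omega_1$-coefficient, whose product of reflections has length $60$. A natural choice is to take $\beta_1 = \varphi$ together with the highest roots appearing in the $\sE_{7;4}$ polar-closed decomposition — recall from Section~\ref{sec:unipotent} that $\varphi_{\sE_7} = (2234321)$, $\varphi_{\sD_6} = (0112221)$, $\varphi_{\sD_4} = (0112100)$, $\alpha_3 = (0010000)$, but only $\varphi_{\sE_7}$ has nonzero $\omega_1$-coefficient here, so that set does not lie in $\Phi^+\setminus\Phi_\theta$. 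Instead I should pick mutually perpendicular roots all of the form $(1******)$; one can take e.g. the four mutually perpendicular roots obtained from a $\sD_4$-type configuration inside the roots with $\omega_1$-coefficient $1$, or simply verify by direct computation (using the root-system data in the appendix / Magma) that $\Phi^+\setminus\Phi_\theta = \{\alpha : \langle\alpha,\omega_1\rangle = 1\}$ (there are $32$ such roots) contains mutually perpendicular $\beta_1,\beta_2,\beta_3,\beta_4$ with $\Phi(s_{\beta_1}s_{\beta_2}s_{\beta_3}s_{\beta_4}) = \Phi^+\setminus\{\alpha_2,\alpha_5,\alpha_7\}$ (or some other co-size-$3$ inversion set), giving $\ell = 60$. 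Combined with the upper bound $\disp(\theta)\le 60$, this gives $\disp(\theta) = 60$; since $60$ is the displacement of the $\sE_{7;4}$ diagram and the displacement determines the (capped) diagram uniquely among the admissible ones — or, alternatively, since $\Type(\theta) = \{1,3,4,6\}$ can be read off from $\delta(gB,\theta gB) = w_{S\setminus J}w_0$ for the witnessing chamber — we conclude $\Diag(\theta) = \sE_{7;4}$.

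The main obstacle I anticipate is the explicit root-theoretic bookkeeping in the lower-bound step: finding the correct mutually perpendicular quadruple $\beta_1,\ldots,\beta_4 \in \Phi^+\setminus\Phi_\theta$ (i.e. with $\omega_1$-coefficient $1$) and verifying that $\ell(s_{\beta_1}s_{\beta_2}s_{\beta_3}s_{\beta_4}) = 60$ rather than something smaller. This is a finite check — compute the inversion set, or equivalently check that $s_{\beta_1}s_{\beta_2}s_{\beta_3}s_{\beta_4}$ sends exactly three simple roots to positive roots — but it requires care since not every quadruple of perpendicular roots in this set will give the maximal length; one may need to use an analogue of Lemma~\ref{lem:longeltE6} or a direct Weyl-group computation. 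The upper bound, by contrast, is essentially immediate from Lemma~\ref{lem:restrictions}(2)(c) once one observes $\Phi_\theta = \sD_6 = \{\alpha : \langle\alpha,\omega_1\rangle = 0\}$, so that $\theta$ stabilises $\langle U_\alpha \mid \alpha \ge \alpha_1\rangle$ and one can apply Proposition~\ref{prop:standardtechnique} with $w_1 = s_\varphi = w_0 w_{\sD_6}$.
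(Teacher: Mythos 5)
Your upper-bound argument is essentially sound: one applies Proposition~\ref{prop:standardtechnique} directly to $\theta=h_{\omega_1}(c)\in H$ with $w_1=s_\varphi=w_0w_{\sD_6}$ (using $\Phi_\theta^+\supseteq\Phi_{\sD_6}^+=\Phi^+\setminus\Phi(s_\varphi)$, so $w_1^{-1}w_0^{-1}u^{-1}\theta uw_0w_1\in B$ for all $u\in U^+$), and combines this with the fact — invoked in the proof of Lemma~\ref{lem:restrictions}(2)(c) — that $Bs_\varphi B\cdot Bs_\varphi B$ misses $Bw_0B$, giving domesticity and hence $\disp(\theta)\le 60$. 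This mirrors the paper's appeal to Lemma~\ref{lem:restrictions}(2)(c).

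The second half of the argument has two genuine problems, however, and here your plan diverges from the paper's in a way that does not quite close the gap.

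First, showing ``a type~$1$ vertex mapped to opposite'' does not pin the diagram to $\sE_{7;4}$: the node $1$ is encircled in every one of $\sE_{7;1},\sE_{7;2},\sE_{7;3},\sE_{7;4}$, so this observation together with $\disp\le 60$ rules out nothing beyond the full diagram. What must actually be ruled out is $\sE_{7;3}$ (type $\{1,6,7\}$, displacement $51$), and for that you need either $\disp(\theta)\ge 52$ or $\{7\}$-domesticity or a type-$3$/type-$4$ vertex mapped to an opposite.

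Second, the proposed route to $\disp\ge 60$ via Lemma~\ref{lem:nondomestic} is not available. Note that $\Phi^+\setminus\Phi_\theta$ consists of the $32$ roots with $\langle\alpha,\omega_1\rangle=1$ (together with $\varphi$ when $c\ne -1$, but $\varphi$ is not perpendicular to any level-$1$ root, so any mutually perpendicular set must be entirely at level~$1$). If $\beta_1,\ldots,\beta_4$ are mutually perpendicular level-$1$ roots, then $\ell(s_{\beta_1}\cdots s_{\beta_4})=63-\ell(s_{\gamma_1}s_{\gamma_2}s_{\gamma_3})$, where $\gamma_1,\gamma_2,\gamma_3$ span the orthogonal complement. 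For $\ell(s_{\beta_1}\cdots s_{\beta_4})=60$ one needs $\ell(s_{\gamma_1}s_{\gamma_2}s_{\gamma_3})=3$, which forces the $\gamma_i$ to be three mutually perpendicular simple roots. A case check shows this is impossible: any triple of mutually perpendicular simple roots containing $\alpha_3$ has a parity obstruction against level-$1$ roots (from $2k_3=1+k_4$); triples containing $\alpha_1$ admit at most two level-$1$ roots perpendicular to them; and for the one remaining triple $\{\alpha_2,\alpha_5,\alpha_7\}$ there are six level-$1$ roots perpendicular to all three, whose perpendicularity graph is two disjoint triangles, so no $4$-clique. Thus $\ell(s_{\beta_1}\cdots s_{\beta_4})<60$ for every such quadruple, and the ``verify by direct computation'' you propose would come up empty. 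Worse, the natural explicit quadruple of mutually perpendicular level-$1$ roots $\gamma_1=(1111000)$, $\gamma_2=(1011111)$, $\gamma_4=(1123210)$, $\gamma_5=(1223321)$ (from the proof of Theorem~\ref{thm:homologyE7}) has orthogonal complement spanned by $\alpha_3,\alpha_6$, and $\gamma_3=(0112111)$; one computes $\ell(s_3s_6s_{\gamma_3})=13+1+1=15$ (there is no length cancellation), so $\ell(s_{\gamma_1}s_{\gamma_2}s_{\gamma_4}s_{\gamma_5})=48<52$, which does not even eliminate $\sE_{7;3}$.

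The paper's proof is structured differently and avoids these difficulties entirely: it works in the Lie incidence geometry $\sE_{7,7}(\FF)$, uses Corollary~\ref{cor:oppsphere} to reduce to points opposite $P_7$, and after a commutator computation places the relevant $P_7$-double coset in $P_7s_\varphi P_7\cdot P_7s_\varphi P_7$, which by Lemma~\ref{lem:E7symplectic} is disjoint from the opposite coset. This proves that $\theta$ is $\{7\}$-domestic, which in one stroke gives both domesticity and the exclusion of $\sE_{7;3}$. The diagrams $\sE_{7;1}$ and $\sE_{7;2}$ are then ruled out via the conjugacy-class characterisations (Theorems~\ref{thm:converse} and~\ref{polar-copolar}), and the nonempty diagram follows since $c\ne 1$. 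To repair your argument you would either need to find a perpendicular quadruple in $\Phi^+\setminus\Phi_\theta$ with reflection-product length at least $52$ (not at all obvious to exist), prove $\{7\}$-domesticity as the paper does, or directly exhibit a type-$\{1,3,4,6\}$ simplex mapped to an opposite.
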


\begin{proof} We first prove directly that $\theta$ is $\{7\}$-domestic. Consider the $\sE_{7,7}(\FF)$ geometry, with point set $G/P_7$. By Corollary~\ref{cor:oppsphere} it is sufficient to show that no point opposite the base point $P_7$ in this geometry is mapped onto an opposite point by~$\theta$. The points opposite $P_7$ are of the form $uyP_7$, where $y=w_{\sE_7}w_{\sE_6}$, and $u\in U_{\Phi(y)}^+$. We are required to show that 
$$
P_7y^{-1}u^{-1}\theta uyP_7\neq P_7yP_7\quad\text{for all $u\in U_{\Phi(y)}^+$}.
$$

Let $\Phi_{1}$ be the subsystem with simple roots $(\alpha_7,\alpha_6,\alpha_5,\alpha_4,\alpha_2,\alpha_3)$. Then $\Phi_{1}\subseteq \Phi_{\theta}$ (equality occurs if $c\neq -1$, while if $c=-1$ then $\Phi_{\theta}=\Phi_{1}\cup \{\pm\varphi\}$). In particular, $\theta x_{\alpha}(a)\theta^{-1}=x_{\alpha}(a)$ for all $\alpha\in \Phi_{1}$. Write $u=u_1u_2$ with $u_1\in U_{\Phi_{1}}^+$ and $u_2\in U_{\Phi\backslash \Phi_{1}}^+$. Then 
$$
P_7y^{-1}u^{-1}\theta uyP_7=P_7y^{-1}u_2^{-1}\theta u_2yP_7.
$$
By the commutator relations we have $u_3=u_2^{-1}\theta u_2\theta^{-1}\in U_{\Phi\backslash\Phi_1}^+$, and since $yP_7=w_0P_7$ we have 
$$
P_7y^{-1}u^{-1}\theta uyP_7=P_7w_0^{-1}u_3w_0P_7=P_7\tilde{u}P_7,
$$
where $\tilde{u}\in U_{\Phi\backslash\Phi_1}^-$. Since $\Phi(s_{\varphi})=\Phi^+\backslash\Phi_1^+$ we have
\begin{align}\label{eq:products}
P_7y^{-1}u^{-1}\theta uyP_7=P_7s_{\varphi}(s_{\varphi}^{-1}\tilde{u}s_{\varphi})s_{\varphi}^{-1}P_7\subseteq P_7s_{\varphi}P_7\cdot P_7s_{\varphi}P_7.
\end{align}
It follows from Lemma~\ref{lem:E7symplectic} that $P_7s_{\varphi}P_7\cdot P_7s_{\varphi}P_7\cap P_7yP_7=\emptyset$. 
%
Thus we have shown that $\theta$ is $\{7\}$-domestic, and hence domestic. Since $\theta$ is neither a root elation nor a product of perpendicular root elations it does not have opposition diagram $\sE_{7;1}$ or $\sE_{7;2}$, and since $\theta$ is $\{7\}$-domestic it does not have opposition diagram $\sE_{7;3}$. Thus $\theta$ has opposition diagram $\sE_{7;4}$. 
\end{proof}

\goodbreak

\begin{thm}\label{thm:homologyE7}
A nontrivial homology $\theta$ of $\sE_7(\FF)$ is domestic if and only if $\Delta_{\theta}'$ is of type:
\begin{compactenum}[$(1)$]
\item $\sE_6$, in which case $\theta$ has opposition diagram $\sE_{7;3}$ and is conjugate to an element of the form $h_{\omega_7}(c)$ with $c\in\FF\backslash\{0,1\}$;
\item $\sD_6$, in which case $\theta$ has opposition diagram $\sE_{7;4}$ and is conjugate to an element of the form $h_{\omega_1}(c)$ with $c\in\FF\backslash\{0,1,-1\}$;
\item $\sD_6\times\sA_1$, in which case $\mathrm{char}(\FF)\neq 2$ and $\theta$ has opposition diagram $\sE_{7;4}$ and is conjugate to~$h_{\omega_1}(-1)$. 
\end{compactenum}
\end{thm}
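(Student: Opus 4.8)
The plan is to follow the same two-step strategy (A)/(B) outlined before Lemma~\ref{lem:nondomestic}, now applied to the $\sE_7$ root system, combining the displacement bounds from Lemma~\ref{lem:restrictions} and the standard technique (Proposition~\ref{prop:standardtechnique}) for the ``domestic'' direction, and Lemma~\ref{lem:nondomestic} together with Scharlau's classification of maximal reflection subgroups (cases (6)(a)--(d) in the list) for the ``non-domestic'' direction. First I would handle the domesticity of the three listed types. For $\Phi_\theta'$ of type $\sE_6$: conjugating $\theta\in H$ so that $\Phi_\theta$ has simple roots $(\alpha_1,\ldots,\alpha_6)$, the fixing conditions force $\theta=h_{\omega_7}(c)$ with $c\neq 0,1$; then argue exactly as in Lemma~\ref{lem:homologyE7} (or as in the $\sE_6$ case of Theorem~\ref{thm:homologyE6}), writing $u=u_1u_2$ with $u_1\in U_{\sE_6}^+$, $u_2\in U_{\Phi\setminus\Phi_{\sE_6}}^+$, and using $\Phi(w_0w_{\sE_6})=\Phi^+\setminus\Phi_{\sE_6}$, so that the standard technique with $w_1=w_0w_{\sE_6}$ (length $63-36=27$) gives $\disp(\theta)\le 2\cdot 27-1=53$; then by the classification of admissible diagrams $\disp(\theta)\le 51$. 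Since $\theta$ is a homology (not a root elation, not a product of perpendicular root elations), it is neither $\sE_{7;1}$ nor $\sE_{7;2}$, so it must have diagram $\sE_{7;3}$, in which case $\disp(\theta)=51$ as required. For $\Phi_\theta'$ of type $\sD_6$ resp. $\sD_6\times\sA_1$: these are precisely the homologies covered by Lemma~\ref{lem:homologyE7}. Conjugating so that $\Phi_\theta$ contains the $\sD_6$ subsystem with simple roots $(\alpha_7,\alpha_6,\alpha_5,\alpha_4,\alpha_2,\alpha_3)$, the fixing conditions force $\theta=h_{\omega_1}(c)$; here $c=-1$ gives $\Phi_\theta=\sD_6\cup\{\pm\varphi\}$, whose reflection subgroup is still of type $\sD_6$ (since $\varphi\in\sD_6\times\sA_1$ corresponds to the $\sA_1$ factor, and the thick frame type is $\sD_6\times\sA_1$ in the notation of the statement) and $\mathrm{char}(\FF)\neq 2$ is needed for $-1\neq 1$; for $c\neq 0,1,-1$ one gets exactly $\Phi_\theta=\sD_6$ and thick frame type $\sD_6$. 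In both cases Lemma~\ref{lem:homologyE7} gives domesticity and opposition diagram $\sE_{7;4}$.

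Next I would prove the converse (B): any nontrivial homology $\theta\in H$ whose thick frame is not of type $\sE_6$, $\sD_6$, or $\sD_6\times\sA_1$ is not domestic. As explained after Lemma~\ref{lem:nondomestic}, it suffices to treat the maximal reflection subgroups in the poset of reflection subgroups of $W(\sE_7)$ obtained by excluding the types $\sE_6$ and $\sD_6$ (with their ambient $\sD_6\times\sA_1$). From the list (6)(a)--(d) the maximal reflection subgroups of $W(\sE_7)$ are $\sA_7$, $\sD_6\times\sA_1$, $\sA_5\times\sA_2$, and $\sE_6$. Since $\sE_6$ and $\sD_6\times\sA_1$ are already in the domestic list, and every proper reflection subgroup of $\sE_6$ other than those giving thick-frame type $\sE_6$ must be examined, the cases to handle are: $\Phi_\theta=\sA_7$; $\Phi_\theta=\sA_5\times\sA_2$; and the maximal reflection subgroups of the standard $\sE_6\subseteq W(\sE_7)$, namely (by list (5)) $\sA_5\times\sA_1$, $\sA_2\times\sA_2\times\sA_2$, and $\sD_5$ — except that the $\sD_5$ sitting inside $\sE_6$ here gives thick frame type $\sE_6$? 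No: a homology whose root system is exactly this $\sD_5$ has thick frame $\sD_5$, which is not $\sE_6$; but wait, such a homology is $h_{\omega_1}(c)h_{\omega_7}(c')$-type and might coincide with a $\sD_6$ case — I would need to check, but I expect the $\sD_5$-inside-$\sE_6$ homologies are non-domestic. For each such $\Phi_\theta$ I would exhibit explicit mutually perpendicular roots $\beta_1,\ldots,\beta_k\in\Phi^+\setminus\Phi_\theta$ with $\ell(s_{\beta_1}\cdots s_{\beta_k})=63=\ell(w_0)$ (or at least $>60$), forcing $\disp(\theta)\ge 63$ and hence non-domesticity by Lemma~\ref{lem:nondomestic}; when opposition in $\sE_7$ is type preserving, Lemma~\ref{lem:longelt} says seven mutually perpendicular roots give $w_0$, so I would look for a set of seven mutually perpendicular positive roots avoiding $\Phi_\theta$, using the characterisation of each $\Phi_\theta$ by congruence conditions on $\langle\alpha,\omega_i\rangle$ (as in Theorem~\ref{thm:homologyE6}) to verify the avoidance. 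The appendix root data for $\sE_7$ would be used to write down and check these sets.

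I also need to pin down the correspondence between reflection subgroups $W_\theta$ and the concrete homologies $h_\lambda(c)$ for the conjugacy-class statements: in case (1), conjugacy to $h_{\omega_7}(c)$ with $c\neq 0,1$; in case (2), $h_{\omega_1}(c)$ with $c\neq 0,1,-1$; in case (3), $h_{\omega_1}(-1)$ requiring $\mathrm{char}(\FF)\neq 2$. This follows by the standard argument: conjugate $\theta$ into $H$ and into the base apartment so $\Phi_\theta$ has the designated simple roots from Scharlau's list, then solve $c_1^{\langle\alpha,\omega_1\rangle}\cdots c_N^{\langle\alpha,\omega_N\rangle}=1$ over those simple roots to see $\theta$ lies in the one-parameter subgroup generated by the single fundamental coweight dual to the deleted node; two elements of $\{h_{\omega_i}(c)\}$ with the same $\Phi_\theta$ are then conjugate via $N$-elements permuting the relevant roots (or they are already equal). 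The main obstacle I anticipate is the bookkeeping in step (B): there are several reflection-subgroup types to dispatch, and for each one I must produce an explicit perpendicular root set of the right size lying outside $\Phi_\theta$, which requires care with the $\sE_7$ root data and with the twist that $\Phi_\theta$ can be slightly larger than the ``obvious'' subsystem at special parameter values (the $c=-1$ phenomenon). The subtlety that a homology with $\Phi_\theta$ of type $\sA_7$ or $\sA_5\times\sA_2$ might not be realisable as a genuine homology of $\sE_7(\FF)$ over a given field (some of these need roots of unity in $\FF$) does not affect the logic — if such a $\theta$ exists it is non-domestic, and if it does not exist the statement is vacuously fine — but I would remark on it for completeness, following the convention already used in the excerpt.
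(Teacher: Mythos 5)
Your overall approach is sound and matches the paper's: you use the (A)/(B) strategy, the standard technique (Proposition~\ref{prop:standardtechnique}) with $w_1=w_0w_{\sE_6}$ for the $\sE_6$ thick-frame case, Lemma~\ref{lem:homologyE7} for the $\sD_6$ and $\sD_6\times\sA_1$ cases, and Lemma~\ref{lem:nondomestic} with explicit perpendicular root sets for the non-domestic direction. The numerology of the displacement argument in case (1) is slightly rephrased but gives the same conclusion.

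However, there is a genuine gap in your enumeration for step (B). You say the cases to handle are $\sA_7$, $\sA_5\times\sA_2$, and the maximal reflection subgroups of the standard $\sE_6$ (namely $\sA_5\times\sA_1$, $\sA_2\times\sA_2\times\sA_2$, $\sD_5$). But you also need to descend into the other ``domestic'' maximal subgroup, namely $\sD_6\times\sA_1$, and treat its maximal reflection subgroups other than $\sD_6$: by Scharlau's list (5) applied to $\sD_6$, these are $(\sD_4\times\sD_2)\times\sA_1$, $(\sD_3\times\sD_3)\times\sA_1$, $\sD_5\times\sA_1$, and $\sA_5\times\sA_1$ (the copy inside $\sD_6\times\sA_1$, which is \emph{not} conjugate in $W(\sE_7)$ to the copy of $\sA_5\times\sA_1$ inside $\sE_6$). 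None of these four is contained, up to conjugacy, in $\sA_7$, $\sA_5\times\sA_2$, or $\sE_6$ — for instance $\sD_5\times\sA_1$ is not a reflection subgroup of any type-$\sA$ product nor of $\sE_6$ — so they are genuinely new maximal elements in the poset of non-domestic reflection subgroups and require their own perpendicular root sets. The paper handles precisely these as its cases (3)--(6). Without them your non-domesticity argument does not cover, for example, a homology with thick frame $\sD_5\times\sA_1$, which is a real phenomenon (e.g.\ $h_{\omega_1}(c)h_{\omega_7}(c')$ for suitable $c,c'$).

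Two smaller remarks: for the cases beyond $\sA_7$ you should not expect to find seven mutually perpendicular roots all outside $\Phi_\theta$; the paper uses five such roots together with two extra perpendicular simple roots (which may lie inside $\Phi_\theta$) to produce $w_0 s_i s_j$ and hence $\disp(\theta)\ge 61$, which is enough. And your sentence ``whose reflection subgroup is still of type $\sD_6$'' for $c=-1$ contradicts your own parenthetical: when $c=-1$ and $\mathrm{char}(\FF)\neq 2$ the root system $\Phi_\theta$ is $\sD_6\cup\{\pm\varphi\}$ and the thick frame has type $\sD_6\times\sA_1$, not $\sD_6$.
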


\begin{proof}
We begin by showing that if $\Delta_{\theta}'$ has type $\sE_6$, $\sD_6$, or $\sD_6\times\sA_1$ then $\theta$ is domestic, with the stated diagram and conjugacy class. 

Suppose that $\Phi_{\theta}$ is of type $\sE_6$. The $36$ positive roots of this system are precisely those $\alpha\in\Phi^+$ with $\langle\alpha,\omega_7\rangle=0$, and thus $\theta=h_{\omega_7}(c)$ for some $c\neq 1$. Let $\Phi_7^+=\{\alpha\in\Phi^+\mid \langle \alpha,\omega_7\rangle=1\}$, and note that $\Phi^+=\Phi_{\theta}\sqcup\Phi_7^+$. Arguing as in the $\sE_6$ case (Theorem~\ref{thm:homologyE6}), and using the standard technique with $w_1=w_0w_{\sE_6}$ we see that $\disp(\theta)\leq 53$ (as $\ell(w_1)=27$). Since $\theta$ is neither conjugate to a root elation nor to a product of two perpendicular root elations it does not have opposition diagram $\sE_{7;1}$ or $\sE_{7;2}$, and since $\disp(\theta)<60$ it does not have opposition diagram $\sE_{7;4}$.  Thus it follows from the classification of opposition diagrams that $\theta$ has diagram~$\sE_{7;3}$.

Now suppose that $\Phi_{\theta}$ is of type $\sD_6$ or $\sD_6\times \sA_1$. The positive roots of $\Phi_{\theta}$ are
\begin{align*}
\Phi_{\sD_6}^+=\{\alpha\in\Phi^+\mid \langle\alpha,\omega_1\rangle=0\}\quad\text{and}\quad 
\Phi_{\sD_6\times\sA_1}^+=\{\alpha\in\Phi^+\mid \langle\alpha,\omega_1\rangle\in 2\ZZ\}=\Phi_{\sD_6}^+\cup\{\varphi\}.
\end{align*}
In particular, $\theta=h_{\omega_1}(c)$ where $c\in\FF\backslash\{0,1,-1\}$ in the $\sD_6$ case, and $c=-1$ in the $\sD_6\times\sA_1$ case. Thus by Lemma~\ref{lem:homologyE7} $\theta$ is domestic with opposition diagram~$\sE_{7;4}$ (note that the standard technique does not apply in this case, as $\ell(w_0w_{\sD_6})=33>\ell(w_0)/2$).

We now use Lemma~\ref{lem:nondomestic} to show that if $\theta\in H$ is a nontrivial homology with $\Delta_{\theta}'$ not of type $\sE_6$, $\sD_6$, or $\sD_6\times\sA_1$ then $\theta$ is not domestic. It is sufficient to consider the maximal elements in the poset of reflection subgroups of $W$ excluding those of types~$\sE_6$, $\sD_6$, and $\sD_6\times\sA_1$. Thus we may assume that $W_{\theta}$ is either a maximal reflection subgroup of $W$ of type $\sA_7$ or $\sA_5\times\sA_2$, or a maximal reflection subgroup of the standard $\sD_6\times\sA_1$ of $\sE_6$ subgroups of~$W$ (excluding the domestic $\sD_6$ subgroup). Thus, using the explicit choices of simple roots listed earlier, we may suppose that either:
\begin{compactenum}[$(1)$]
\item $\Phi_{\theta}=\sA_7$, with $\Phi_{\theta}^+=\{\alpha\in \Phi^+\mid \langle\alpha,\omega_2\rangle\in 2\ZZ\}$. 
\item $\Phi_{\theta}=\sA_5\times\sA_2$, with $\Phi_{\theta}^+=\{\alpha\in\Phi^+\mid \langle\alpha,\omega_3\rangle\in 3\ZZ\}$. 
\item $\Phi_{\theta}=(\sD_4\times\sD_2)\times\sA_1$, with $\Phi_{\theta}^+=\{\alpha\in\Phi^+\mid \langle\alpha,\omega_1\rangle=0\text{ and }\langle\alpha,\omega_6\rangle\in 2\ZZ\}\cup\{\varphi\}$. 
\item $\Phi_{\theta}=(\sD_3\times\sD_3)\times \sA_1$, with $\Phi_{\theta}^+=\{\alpha\in \Phi^+\mid\langle\alpha,\omega_1\rangle=0\text{ and }\langle\alpha,\omega_5\rangle\in 2\ZZ\}\cup\{\varphi\}$.
\item $\Phi_{\theta}=\sD_5\times\sA_1$, with $\Phi_{\theta}^+=\{\alpha\in\Phi^+\mid \langle\alpha,\omega_1-2\omega_7\rangle=0\}$. 
\item $\Phi_{\theta}=\sA_5\times \sA_1$ (contained in $\sD_6\times\sA_1$), with $\Phi_{\theta}^+=\{\alpha\in\Phi^+\mid 3\langle\alpha,\omega_1\rangle=2\langle\alpha,\omega_3\rangle\}$. 
\item $\Phi_{\theta}=\sA_5\times \sA_1$ (contained in $\sE_6$), with $\Phi_{\theta}^+=\{\alpha\in \Phi^+\mid  \langle\alpha,\omega_2\rangle\in 2\ZZ\text{ and }\langle\alpha,\omega_7\rangle=0\}$. 
\item $\Phi_{\theta}=\sA_2\times\sA_2\times\sA_2$, with $\Phi_{\theta}^+=\{\alpha\in\Phi^+\mid \langle\alpha,\omega_4\rangle\in 3\ZZ\text{ and }\langle\alpha,\omega_7\rangle=0\}$. 
\item $\Phi_{\theta}=\sD_5$, with $\Phi_{\theta}^+=\{\alpha\in \Phi^+\mid \langle\alpha,\omega_6\rangle=0\text{ and }\langle\alpha,\omega_7\rangle=0\}$. 
\end{compactenum}
In case (1) note that $\beta_1,\ldots,\beta_7\notin\Phi_{\theta}$, where $\beta_1=(1 1 1 1 1 0 0)$, $\beta_2=(0 1 1 2 1 0 0)$, $\beta_3=(0 1 1 1 1 1 0)$, $\beta_4=(0 1 0 1 1 1 1)$, $\beta_5=(1 1 1 2 1 1 0)$, $\beta_6=(1 1 2 2 1 1 1)$, $\beta_7=(1 1 2 3 3 2 1)$. These roots are mutually perpendicular, and so by Lemma~\ref{lem:longelt} we have $s_{\beta_1}\cdots s_{\beta_7}=w_0$. Thus by Lemma~\ref{lem:nondomestic} the homology $\theta$ is not domestic. 

In cases (2)--(6) note that $\gamma_1,\ldots,\gamma_5\notin\Phi_{\theta}$, where $\gamma_1=(1 1 1 1 0 0 0)$, $\gamma_2=(1 0 1 1 1 1 1)$, $\gamma_3=(0 1 1 2 1 1 1)$, $\gamma_4=(1 1 2 3 2 1 0)$, $\gamma_5=(1 2 2 3 3 2 1)$. The roots $\gamma_1,\ldots,\gamma_5,\alpha_3,\alpha_6$ are mutually perpendicular, and so $s_{\gamma_1}\cdots s_{\gamma_5}=w_0s_3s_6$. Thus $\disp(\theta)\geq 61$, and so $\theta$ is not domestic. 

In cases (7)--(9) note that $\delta_1,\ldots,\delta_5\notin\Phi_{\theta}$, where $\delta_1=(0 0 0 0 0 1 1)$, $\delta_2=(0 1 0 1 1 1 0)$, $\delta_3=(1 1 2 2 1 1 0)$, $\delta_4=(1 2 2 3 2 1 1)$, $\delta_5=(1 1 2 3 3 2 1)$. The roots $\delta_1,\ldots,\delta_5,\alpha_1,\alpha_4$ are mutually perpendicular, and so as above $\disp(\theta)\geq 61$ and so $\theta$ is not domestic. 
\end{proof}

\begin{thm}\label{thm:homologyE8}
A nontrivial homology $\theta$ of $\sE_8(\FF)$ is domestic if and only if $\Delta_{\theta}'$ is of type:
\begin{compactenum}[$(1)$]
\item $\sE_7$, in which case $\theta$ is conjugate to an element $h_{\omega_8}(c)$ with $c\in\FF\backslash\{0,1,-1\}$;\item $\sE_7\times\sA_1$, in which case $\mathrm{char}(\FF)\neq 2$ and $\theta$ is conjugate to $h_{\omega_8}(-1)$.
\end{compactenum}
In both cases $\theta$ has opposition diagram $\sE_{8;4}$. 
\end{thm}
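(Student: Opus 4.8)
The plan is to follow the same two-part strategy used in the $\sE_6$ and $\sE_7$ cases (Theorems~\ref{thm:homologyE6} and~\ref{thm:homologyE7}): part (A) shows that homologies whose fixed subbuilding has thick frame of type $\sE_7$ or $\sE_7\times\sA_1$ are domestic with diagram $\sE_{8;4}$, and part (B) shows that every other reflection subgroup $W_\theta$ is non-domestic via Lemma~\ref{lem:nondomestic}. Throughout we may assume $\theta\in H$, and we use the root system data from Appendix~\ref{app:data}.

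\emph{Part (A).} Suppose $\Phi_\theta$ is of type $\sE_7$ or $\sE_7\times\sA_1$. By inspection of the maximal reflection subgroups of $\sE_8$, the $\sE_7$ subsystem is $\{\alpha\in\Phi\mid\langle\alpha,\omega_8\rangle=0\}$ and its $\sA_1$-extension is $\{\alpha\in\Phi\mid\langle\alpha,\omega_8\rangle\in2\ZZ\}=\Phi_{\sE_7}\cup\{\pm\varphi\}$; hence $\theta=h_{\omega_8}(c)$ with $c\neq1$ (and $c=-1$ in the $\sE_7\times\sA_1$ case). First I would establish $\{8\}$-domesticity directly, working in the long root geometry $\sE_{8,8}(\FF)$ with point set $G/P_8$. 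The polar node of $\sE_8$ is $8$, so $s_\varphi=w_{\sE_7}w_0$ and the points opposite the base point $P_8$ are $uyP_8$ with $y=w_{\sE_7}w_{\sE_7}^{\,\prime}$... more precisely $y$ the minimal representative of $W_8w_0W_8$, and $yP_8=w_0P_8$. Writing $u=u_1u_2$ with $u_1\in U_{\Phi_{\sE_7}}^+$ and $u_2\in U_{\Phi\setminus\Phi_{\sE_7}}^+$, the factor $u_1$ is fixed by $\theta$ (as $\Phi_{\sE_7}\subseteq\Phi_\theta$), and $u_3:=u_2^{-1}\theta u_2\theta^{-1}\in U_{\Phi\setminus\Phi_{\sE_7}}^+$, so
\begin{align*}
P_8y^{-1}u^{-1}\theta uyP_8=P_8w_0^{-1}u_3w_0P_8=P_8\tilde uP_8\subseteq P_8s_\varphi P_8\cdot P_8s_\varphi P_8
\end{align*}
with $\tilde u\in U_{\Phi\setminus\Phi_{\sE_7}}^-$, using $\Phi(s_\varphi)=\Phi^+\setminus\Phi_{\sE_7}^+$. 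Then I need the $\sE_8$-analogue of Lemma~\ref{lem:E7symplectic}, namely $(P_8s_\varphi P_8\cdot P_8s_\varphi P_8)\cap P_8w_0P_8=\emptyset$; this is proved by the same argument as Lemma~\ref{lem:E7symplectic}, reducing to the claim that $w_0w_{\sE_7}$ has no reduced expression with at most two occurrences of $s_8$, i.e. that $s_8(w_0w_{\sE_7})s_8\notin W_8$, which follows by checking $(s_8w_0w_{\sE_7}s_8)^{-1}$ inverts the highest root $\varphi_{\sE_7}$ of the $\sE_7$ subsystem (since $w_{\sE_7}\varphi_{\sE_7}=-\varphi_{\sE_7}$ and $s_8$ fixes it, as $\langle\varphi_{\sE_7},\alpha_8\rangle=0$ because $\varphi_{\sE_7}$ has zero coefficient on $\alpha_8$ and the only node adjacent to $8$ is $7$ with $\langle\varphi_{\sE_7},\alpha_7\rangle$... one checks this directly). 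With $\{8\}$-domesticity (hence domesticity) in hand, $\theta$ is not a root elation nor a product of perpendicular root elations (Theorem~\ref{thm:polarclass} and Theorem~\ref{thm:pocopolarclass}), so it does not have diagram $\sE_{8;1}$ or $\sE_{8;2}$; being $\{8\}$-domestic it cannot have the full diagram $\sE_{8;8}$; by the classification of admissible diagrams (Figure~\ref{fig:Dynkin}) the only remaining non-empty possibility is $\sE_{8;4}$. (Alternatively, one can invoke Lemma~\ref{lem:restrictions}(3)(b) after conjugating by $s_8$ to bound $\disp(\theta)\le108$, matching the $\sE_{8;4}$ displacement.) That $\theta$ is conjugate to $h_{\omega_8}(c)$ as stated is immediate from the description of $\Phi_\theta$.

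\emph{Part (B).} Now I would show every homology with $\Phi_\theta$ not of type $\sE_7$ or $\sE_7\times\sA_1$ is non-domestic. By the final remark after Lemma~\ref{lem:nondomestic} it suffices to treat the maximal reflection subgroups of $W(\sE_8)$ other than $\sE_7\times\sA_1$, together with the maximal reflection subgroups of the standard $\sE_7\times\sA_1$ other than the domestic $\sE_7$. From the list of maximal reflection subgroups, these are: $\sD_8$, $\sA_8$, $\sA_4\times\sA_4$, $\sE_6\times\sA_2$, and (inside $\sE_7\times\sA_1$) the subgroups $\sA_7\times\sA_1$, $(\sD_6\times\sA_1)\times\sA_1$, $(\sA_5\times\sA_2)\times\sA_1$, and $\sE_7$ with the $\sA_1$ dropped — but the last is the domestic one, so actually the reflection subgroups of $\sE_7\times\sA_1$ to treat are the images under $\sE_7\mapsto(\text{its maximal subgroups})$ crossed with $\sA_1$, namely $\sA_7\times\sA_1$, $(\sD_6\times\sA_1)\times\sA_1$, $(\sA_5\times\sA_2)\times\sA_1$, $\sE_6\times\sA_1$, plus $\sE_7$ itself (but without the $\sA_1$, $\Phi_\theta=\sE_7$ IS in our domestic list — wait, no: $\Phi_\theta=\sE_7$ means $\Delta_\theta'$ has type $\sE_7$, which is case (1); so this is domestic and excluded). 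For each non-domestic candidate I would exhibit mutually perpendicular roots $\beta_1,\dots,\beta_k\in\Phi^+\setminus\Phi_\theta$ with $\ell(s_{\beta_1}\cdots s_{\beta_k})$ exceeding the $\sE_{8;4}$ displacement $108$, ideally $k=8$ mutually perpendicular roots so that $s_{\beta_1}\cdots s_{\beta_k}=w_0$ by Lemma~\ref{lem:longelt} (opposition is type preserving in $\sE_8$), giving $\disp(\theta)=\ell(w_0)$ outright and hence non-domesticity. This is a finite check: for each subgroup the positive roots of $\Phi_\theta$ are characterised by congruence/vanishing conditions on $\langle\alpha,\omega_i\rangle$ exactly as in the $\sE_7$ proof (e.g. $\sD_8$: $\langle\alpha,\omega_i\rangle$ conditions from its simple roots $(\varphi_{\sE_7},\alpha_8,\alpha_7,\alpha_6,\alpha_5,\alpha_4,\alpha_2,\alpha_3)$; $\sA_8$: $\langle\alpha,\omega\rangle\in9\ZZ$ for an appropriate $\omega$; $\sE_6\times\sA_2$: $\langle\alpha,\omega_7\rangle=0$ type conditions; etc.), and one locates a perpendicular $8$-set (or a $k$-set with $\ell$ large enough) avoiding $\Phi_\theta$. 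I would produce these explicit root lists using the root-system data in the appendix, much as the vectors $\beta_i,\gamma_i,\delta_i$ were produced in Theorem~\ref{thm:homologyE7}.

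\emph{Main obstacle.} The genuinely delicate point is the $\sE_8$-version of Lemma~\ref{lem:E7symplectic}, i.e. verifying $(P_8s_\varphi P_8\cdot P_8s_\varphi P_8)\cap P_8w_0P_8=\emptyset$ — equivalently that in the strong parapolar space $\sE_{8,8}(\FF)$ two points at collinearity-distance at most $2$ are never opposite. The argument structure copies Lemma~\ref{lem:E7symplectic} but the $\sE_8$ diagram has node $8$ at the long end of a length-$7$ tail, so I must check carefully that every reduced word for $w_0w_{\sE_7}$ contains at least three $s_8$'s; this reduces to the assertion $s_8\,w_0w_{\sE_7}\,s_8\notin W_8$, which I would settle by identifying a root inverted by $(s_8w_0w_{\sE_7}s_8)^{-1}$ that lies outside $\Phi_{\sE_7}$, using $w_{\sE_7}\varphi=-\varphi$ and the fact that $s_8$ fixes the highest $\sE_7$-root. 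Everything else is bookkeeping with the admissible-diagram classification and explicit perpendicular root systems, both already exemplified in the preceding theorems.
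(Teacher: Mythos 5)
Your part~(B) outline is sound and matches the paper's strategy: identify the maximal reflection subgroups outside the domestic list and exhibit mutually perpendicular roots in $\Phi^+\setminus\Phi_\theta$ via Lemma~\ref{lem:nondomestic}. The paper supplies the explicit root vectors ($\beta_1,\ldots,\beta_8$, $\gamma_1,\ldots,\gamma_6$, $\delta_1,\ldots,\delta_6$) that you promise to produce. Your part~(A), however, contains a genuine structural error, and the ``alternative'' you mention in passing is actually the only route that works.

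The problem is that you are replicating the $\sE_7$ argument in a case where it cannot work, because $\sE_7$ and $\sE_8$ are structurally different at exactly the relevant point. In $\sE_7$, the vertex type $7$ is \emph{not} the polar node (the polar node is $1$), so the reflection $s_\varphi=w_0w_{\sD_6}$ has $\alpha_1\in\Phi(s_\varphi)$ but $\alpha_7\notin\Phi(s_\varphi)$; hence $s_\varphi$ is \emph{not} minimal in its $W_7$-double coset, and $P_7s_\varphi P_7=P_7s_7P_7$ is the collinearity class, strictly smaller than the opposition class $P_7w_0P_7$. This is what makes Lemma~\ref{lem:E7symplectic} a nontrivial and true statement. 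In $\sE_8$ the situation is opposite: node $8$ \emph{is} the polar node. Here $\Phi(s_\varphi)=\Phi^+\setminus\Phi_{\sE_7}^+$ contains $\alpha_8$ and no other simple root, so $s_\varphi$ \emph{is} the minimal element of $W_8w_0W_8$, i.e.\ $s_\varphi=w_0w_{\sE_7}$ and $P_8s_\varphi P_8=P_8w_0P_8$. Your proposed ``$\sE_8$-analogue'' of Lemma~\ref{lem:E7symplectic}, namely $(P_8s_\varphi P_8\cdot P_8s_\varphi P_8)\cap P_8w_0P_8=\emptyset$, then reads $(P_8w_0P_8)^2\cap P_8w_0P_8=\emptyset$, which is false for any thick building (there exist triples of mutually opposite type-$8$ vertices). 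Consequently your inclusion $P_8\tilde uP_8\subseteq P_8s_\varphi P_8\cdot P_8s_\varphi P_8$ is vacuous. Worse, the statement you aim to prove---$\{8\}$-domesticity of $\theta$---is itself inconsistent with the target diagram: $\sE_{8;4}$ has node $8$ encircled, so $\theta$ \emph{does} map some type-$8$ vertex to an opposite. (In $\sE_7$, by contrast, node $7$ is unencircled in $\sE_{7;4}$, so $\{7\}$-domesticity was consistent with that diagram.)

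The fix is simpler than the $\sE_7$ argument and is what the paper actually does. Since $\ell(w_0w_{\sE_7})=120-63=57\leq 60=\ell(w_0)/2$, Proposition~\ref{prop:standardtechnique} with $w_1=w_0w_{\sE_7}$ applies directly and gives $\disp(\theta)\leq 2\cdot 57-1=113<120$. To verify the hypothesis $w_1^{-1}w_0^{-1}u^{-1}\theta uw_0w_1\in B$ one argues exactly as in Theorem~\ref{thm:homologyE6}: write $u=u_1u_2$ with $u_1\in U_{\sE_7}^+$ and $u_2\in U_{\Phi\setminus\sE_7}^+$, note $u_1^{-1}\theta u_1=\theta$ and $u_2^{-1}\theta u_2\in U_{\Phi\setminus\sE_7}^+\theta$, then conjugate by $w_{\sE_7}$ using $\Phi(w_{\sE_7})=\Phi_{\sE_7}^+$. (Lemma~\ref{lem:restrictions}(3)(b) as stated is about unipotent elements, so it cannot be invoked verbatim for a homology; but the underlying Proposition~\ref{prop:standardtechnique} does apply.) Domesticity follows, and then the diagram classification plus Theorems~\ref{thm:converse} and~\ref{polar-copolar} force $\Diag(\theta)=\sE_{8;4}$. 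This avoids the long root geometry entirely. The reason the geometric argument via Lemma~\ref{lem:E7symplectic} was needed in $\sE_7$ is precisely that $\ell(w_0w_{\sD_6})=33>63/2$, so the standard technique only gives $\disp(\theta)\leq 65>63$ there; in $\sE_8$ no such obstruction arises.
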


\begin{proof} Let $\theta\in H$. Suppose that $\Delta_{\theta}'$ has type $\sE_7$ or $\sE_7\times\sA_1$. After conjugating we may suppose that $\Phi_{\sE_7}\subseteq \Phi_{\theta}$, where $\Phi_{\sE_7}$ is the standard $\sE_7$ subsystem of $\Phi$. Thus $\theta=h_{\omega_8}(c)$ for some $c\in\FF\backslash\{0,1\}$. As in the $\sE_6$ case (Theorem~\ref{thm:homologyE6}), using the standard technique with $w_1=w_0w_{\sE_7}$ we see that $\disp(\theta)\leq 113$ (as $\ell(w_1)=57$). Since $\theta$ is neither conjugate to a root elation nor to a product of two perpendicular root elations it does not have opposition diagram $\sE_{8;1}$ or $\sE_{8;2}$, and hence $\theta$ has opposition diagram~$\sE_{8;4}$. 

We now use Lemma~\ref{lem:nondomestic} to show that if $\theta\in H$ is a nontrivial homology with $\Delta_{\theta}'$ not of type $\sE_7$ or $\sE_7\times\sA_1$ then $\theta$ is not domestic. It is sufficient to consider the maximal elements in the poset of reflection subgroups of $W$ excluding those of types~$\sE_7$ and $\sE_7\times\sA_1$. Thus we may assume that $W_{\theta}$ is either a maximal reflection subgroup of $W$ of type $\sD_8$, $\sA_8$, $\sA_4\times\sA_4$, or $\sE_6\times\sA_2$, or a maximal reflection subgroup of the $\sE_7\times\sA_1$ subgroup of~$W$ (excluding the domestic $\sE_7$ subgroup). Thus, using the explicit choices of simple roots listed earlier, we may suppose that either:
\begin{compactenum}[$(1)$]
\item $\Phi_{\theta}=\sD_8$, with $\Phi_{\theta}^+=\{\alpha\in \Phi^+\mid \langle\alpha,\omega_1\rangle\in 2\ZZ\}$.
\item $\Phi_{\theta}=\sA_8$, with $\Phi_{\theta}^+=\{\alpha\in\Phi^+\mid \langle\alpha,\omega_2\rangle\in 3\ZZ\}$.
\item $\Phi_{\theta}=\sA_4\times \sA_4$, with $\Phi_{\theta}^+=\{\alpha\in \Phi^+\mid \langle\alpha,\omega_5\rangle\in 5\ZZ\}$. 
\item $\Phi_{\theta}=\sE_6\times \sA_2$, with $\Phi_{\theta}^+=\{\alpha\in \Phi^+\mid \langle\alpha,\omega_7\rangle\in 3\ZZ\}$. 
\item $\Phi_{\theta}=\sA_7\times\sA_1$, with $\Phi_{\theta}^+=\{\alpha\in\Phi^+\mid \langle\alpha,\omega_2\rangle=2\langle\alpha,\omega_7\rangle\text{ and }\langle\alpha,\omega_8\rangle=0\}\cup\{\varphi\}$.
\item $\Phi_{\theta}=(\sD_6\times \sA_1)\times\sA_1$, with $\Phi_{\theta}^+=\{\alpha\in\Phi^+\mid\langle\alpha,\omega_1\rangle\in 2\ZZ\text{ and }\langle\alpha,\omega_8\rangle=0\}\cup\{\varphi\}$. 
\item $\Phi_{\theta}=(\sA_5\times\sA_2)\times\sA_1$, with $\Phi_{\theta}^+=\{\alpha\in \Phi^+\mid\langle\alpha,\omega_3\rangle\in 3\ZZ\text{ and }\langle\alpha,\omega_8\rangle=0\}\cup\{\varphi\}$.
\item $\Phi_{\theta}=\sE_6\times \sA_1$, with $\Phi_{\theta}^+=\{\alpha\in \Phi^+\mid \langle\alpha,\omega_7\rangle=\langle\alpha,\omega_8\rangle=0\}\cup\{\varphi\}$.
\end{compactenum}
In case (1) we have $\beta_1,\ldots,\beta_8\notin\Phi_{\theta}$, where $\beta_1=(1 0 1 1 0 0 0 0)$, $\beta_2=(1 1 1 2 2 1 0 0)$, $\beta_3=(1 1 2 2 2 2 1 0)$, $\beta_4=(1 1 2 2 2 1 1 1)$, $\beta_5=(1 2 2 3 2 1 1 0)$, $\beta_6=(1 1 1 2 2 2 2 1)$, $\beta_7=(1 2 2 3 2 2 1 1)$, $\beta_8=(1 2 3 5 4 3 2 1)$. These roots are mutually perpendicular, and so by Lemmas~\ref{lem:longelt}~and~\ref{lem:nondomestic} we see that $\theta$ is not domestic. 

In cases (2)--(4) we have $\gamma_1,\ldots,\gamma_6\notin\Phi_{\theta}$, where $\gamma_1=(1 1 1 2 1 1 1 0)$, $\gamma_2=(1 1 2 2 1 1 1 1)$, $\gamma_3=(0 1 1 2 2 2 2 1)$, $\gamma_4=(1 2 3 4 3 2 1 0)$, $\gamma_5=(1 2 2 4 3 2 1 1)$, $\gamma_6=(2 2 3 4 4 3 2 1)$. The roots $\gamma_1,\ldots,\gamma_6,\alpha_2,\alpha_6$ are mutually perpendicular, and so $\disp(\theta)\geq \ell(w_0)-2=118$. Hence $\theta$ is not domestic.  

Similarly, in cases (5)--(8) we have $\delta_1,\ldots,\delta_6\notin\Phi_{\theta}$, where $\delta_1=(0 0 0 0 0 0 1 1)$, $\delta_2=(1 1 1 1 1 1 1 0)$, $\delta_3=(1 1 2 3 2 1 1 0)$, $\delta_4=(1 1 2 3 3 3 2 1)$, $\delta_5=(2 2 3 4 3 2 1 1)$, $\delta_6=(1 3 3 5 4 3 2 1)$. The roots $\delta_1,\ldots,\delta_6,\alpha_3,\alpha_5$ are mutually perpendicular, and so again $\theta$ is not domestic. 
\end{proof}

\begin{lemma}\label{lem:homologyF4}
Let $\mathrm{char}(\FF)\neq 2$. The homology $\theta=h_{\omega_4}(-1)$ of $\sF_4(\FF)$ is domestic, with opposition diagram $\sF_{4;1}^4$. 
\end{lemma}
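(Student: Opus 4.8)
The statement to prove is Lemma~\ref{lem:homologyF4}: for $\mathrm{char}(\FF)\neq 2$, the homology $\theta=h_{\omega_4}(-1)$ of $\sF_4(\FF)$ is domestic with opposition diagram $\sF_{4;1}^4$.

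The plan is to mirror the strategy used for the $\sE_6$, $\sE_7$, $\sE_8$ homology lemmas, namely to bound the displacement via the standard technique (Proposition~\ref{prop:standardtechnique}) and then pin down the diagram using the classification of admissible diagrams together with the fact that $\theta$ is not a root elation. First I would identify the root system $\Phi_\theta$ of $\theta$: since $\theta=h_{\omega_4}(-1)$, we have $\theta x_\alpha(a)\theta^{-1}=x_\alpha((-1)^{\langle\alpha,\omega_4\rangle}a)$, so $\alpha\in\Phi_\theta$ iff $\langle\alpha,\omega_4\rangle$ is even. Inspecting the $\sF_4$ root system (Appendix~\ref{app:data}), this gives $\Phi_\theta$ of type $\sB_4$ — consistent with the claim that $\Delta_\theta'$ has type $\sB_4$ — with $\Phi_\theta^+$ consisting of all positive roots whose $\alpha_4$-coefficient is even, and the complement $\Phi^+\setminus\Phi_\theta^+$ being the positive roots with odd $\alpha_4$-coefficient.

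Next I would apply the standard technique. Write $\Phi_1=\Phi_\theta$ (type $\sB_4$). Since $4\notin$ the dual polar type here, one needs the right $w_1$. The natural choice is $w_1=w_0 w_{\Phi_1}$, the minimal-length representative of $W_{\Phi_1}w_0$ acting so that $\Phi(w_1)=\Phi^+\setminus\Phi_1^+$. For any $u\in U^+$ write $u=u_1u_2$ with $u_1\in U_{\Phi_1}^+$ and $u_2\in U_{\Phi\setminus\Phi_1}^+$; then $u_1^{-1}\theta u_1=\theta$, and $u_2^{-1}\theta u_2\in U_{\Phi\setminus\Phi_1}^+\theta$, so that $w_1^{-1}w_0^{-1}u^{-1}\theta u w_0 w_1 = w_{\Phi_1}^{-1}(u_2^{-1}\theta u_2)w_{\Phi_1}\in w_{\Phi_1}^{-1}U_{\Phi\setminus\Phi_1}^+ w_{\Phi_1}\,\theta\subseteq B$, using $\Phi(w_{\Phi_1})=\Phi_1^+$ and the fact that $w_{\Phi_1}$ maps $\Phi\setminus\Phi_1$ into $\Phi^+\setminus\Phi_1^+$ up to signs — actually I must be a little careful that $w_{\Phi_1}^{-1}$ sends the positive roots outside $\Phi_1$ to positive roots, which holds since a parabolic (here reflection-subgroup) longest element only inverts roots of its own subsystem. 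By Proposition~\ref{prop:standardtechnique}, $\disp(\theta)\le 2\ell(w_1)-1$. Here $\ell(w_1)=|\Phi^+|-|\Phi_1^+|=24-16=8$, giving $\disp(\theta)\le 15$. Since $\ell(w_0)=24$ and indeed $15<24$, $\theta$ is domestic. (I should double-check that $\mathrm{char}(\FF)\ne 2$ enters exactly in ensuring $h_{\omega_4}(-1)$ is nontrivial, i.e. $-1\ne 1$; if $\mathrm{char}(\FF)=2$ then $\theta=1$.)

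Finally I would identify the opposition diagram. By Theorem~\ref{thm:admissible} the admissible $\sF_4$ diagrams are $\sF_{4;0}$, $\sF_{4;1}^1$, $\sF_{4;1}^4$, $\sF_{4;2}$, $\sF_{4;4}$ (plus special-characteristic ones, excluded here). Since $\theta$ is a nontrivial homology, it is not trivial, so the diagram is not empty. Since $\theta$ is not a root elation (it is a homology, and $h_{\omega_4}(-1)$ is not conjugate to any $x_\alpha(a)$ — e.g. it has finite order $2$ but is not unipotent), Theorem~\ref{thm:converse} rules out the polar diagram $\sF_{4;1}^1$. The displacement bound $\disp(\theta)\le 15$ rules out $\sF_{4;2}$ (displacement of $\sF_{4;2}$ is $\ell(w_{S\setminus\{1,4\}}w_0)$, which is larger — I'd compute it and confirm it exceeds $15$) and a fortiori $\sF_{4;4}$. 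This leaves exactly $\sF_{4;1}^4$, so it suffices to verify that $\theta$ is not domestic on type-$4$ vertices would be the wrong direction; rather, the remaining admissible diagram forces $\Type(\theta)=\{4\}$, hence $\Diag(\theta)=\sF_{4;1}^4$, provided $\theta$ is not domestic as a collineation of the whole building in the sense that $\Opp(\theta)\ne\emptyset$ — but this is automatic by Abramenko–Brown since $\theta\ne 1$, so $\Type(\theta)\ne\emptyset$ and the only nonempty admissible diagram compatible with all constraints is $\sF_{4;1}^4$.

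The main obstacle I anticipate is not the displacement bound (which is routine once $\Phi_\theta$ is correctly identified), but rather the careful verification that $w_1=w_0w_{\Phi_1}$ behaves as claimed — specifically that conjugation by $w_{\Phi_1}$ carries $U_{\Phi\setminus\Phi_1}^+$ into $B$ after the $w_0$-conjugation, which requires knowing $\Phi(w_1)=\Phi^+\setminus\Phi_1^+$ exactly and that $w_{\Phi_1}$ (longest element of a $\sB_4$ reflection subgroup inside $\sF_4$, not a standard parabolic) inverts precisely $\Phi_1^+$. Since $\sB_4$ has $-1$ in its Weyl group, $w_{\Phi_1}$ acts as $-1$ on the span of $\Phi_1$ and fixes its orthogonal complement, so it sends each root outside $\Phi_1$ to another root outside $\Phi_1$, and one checks by direct inspection of the $8$ roots in $\Phi^+\setminus\Phi_1^+$ that $w_{\Phi_1}$ permutes them among positives (this is where a short Magma check or an explicit table from Appendix~\ref{app:data} settles it). Everything else is bookkeeping against the admissible-diagram list.
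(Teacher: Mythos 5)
Your identification of $\Phi_\theta$ (the sixteen positive roots with even $\alpha_4$-coefficient, of type $\sB_4$) and your endgame via the admissible-diagram list are fine, but the displacement bound — the heart of the proof — collapses at exactly the point you flagged as delicate. The $\sB_4$ subsystem has full rank $4$ in $\sF_4$, so its longest element acts as $-1$ on the whole of $V$ (as you yourself note, it is $-1$ on the span of $\Phi_1$, and that span is everything, the orthogonal complement being zero). Since $W$ acts faithfully on $V$, this forces $w_{\Phi_1}=w_0$. Consequently $\ell(w_{\Phi_1})=24$, not $16$; the inversion set of $w_{\Phi_1}$ is all of $\Phi^+$, not $\Phi_1^+$; it does not permute the eight roots of $\Phi^+\backslash\Phi_1^+$ among the positives but negates them; and your $w_1=w_0w_{\Phi_1}$ is the identity, whose inversion set is empty. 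The asserted bound $\disp(\theta)\leq 2\ell(w_1)-1=15$ is therefore unproved. (The analogous computations in Theorems~\ref{thm:homologyE6}--\ref{thm:homologyE8} work because there $\Phi_\theta$ contains a \emph{standard parabolic} subsystem of corank one, e.g.\ $\sD_5\subset\sE_6$, for which $\Phi(w_{S\backslash\{6\}})=\Phi_{\sD_5}^+$ genuinely holds.)

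Moreover the gap is not repairable within Proposition~\ref{prop:standardtechnique}. For any $u\in U^+$ the element $u^{-1}\theta u\theta^{-1}$ lies in the subgroup generated by $U_\alpha$ for $\alpha$ in the ideal generated by the eight odd-coefficient roots, and this ideal also contains the seven positive roots with $\alpha_4$-coefficient $2$ (e.g.\ $(0122)=(0001)+(0121)$, $(2342)=(1121)+(1221)$); since inversion sets are closed under addition, any $w_1$ with $\Phi^+\backslash\Phi_\theta^+\subseteq\Phi(w_1)$ has $\ell(w_1)\geq 15>12=\ell(w_0)/2$, so the standard technique yields at best $\disp(\theta)\leq 29$, which is vacuous. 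This is precisely the situation the paper flags in type $\sE_7$, where the standard technique fails and a geometric argument is substituted. The paper's proof of this lemma instead establishes $\{1\}$-domesticity directly: by Corollary~\ref{cor:oppsphere} it suffices to examine the points of $\sF_{4,1}(\FF)$ opposite the base point $P_1$, i.e.\ $us_\varphi P_1$ with $u\in U^+_{\Phi(s_\varphi)}$; only the four root groups for $(1111),(1121),(1221),(1231)$ fail to commute with $\theta$, and an explicit commutator/folding computation (where the nonvanishing of factors $-2a$, etc., uses $\mathrm{char}(\FF)\neq 2$) shows the resulting double coset is $P_1$ or $P_1s_\alpha P_1\neq P_1 s_\varphi P_1$. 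Then $\{1\}$-domesticity plus nontriviality and the classification of admissible diagrams give $\sF_{4;1}^4$. You would need to replace your displacement argument by something of this kind (or another proof of $\{1\}$-domesticity) for the lemma to stand.
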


\begin{proof}
It is sufficient to prove that $\theta$ is $1$-domestic. Recall that $G/P_1$ is the set of points of the Lie incidence geometry $\sF_{4,1}(\FF)$. By Corollary~\ref{cor:oppsphere} it is sufficient to show that no point opposite $P_1$ is mapped to an opposite point by $\theta$. Since $w_0w_{\sC_3}=s_{\varphi}$ (by comparing inversion sets) we have $P_1w_0P_1=P_1s_{\varphi}P_1$, and hence the points opposite $P_1$ are of the form $us_{\varphi}P_1$ with $u\in U_{\Phi(s_{\varphi})}^+$. Thus we are required to prove that 
$$
P_1s_{\varphi}^{-1}u^{-1}\theta u s_{\varphi}P_1\neq P_1s_{\varphi}P_1\quad\text{for all $u\in U_{\Phi(s_{\varphi})}^+$}.
$$
We have $\Phi_{\theta}=\{\alpha\in\Phi\mid\langle \alpha,\omega_4\rangle\in 2\ZZ\}$. Write $u=u_1u_2$ with $u_1\in  U_{\Phi_{\theta}\cap\Phi(s_{\varphi})}^+$ and $u_2\in U_{\Phi(s_{\varphi})\backslash\Phi_{\theta}}^+$, and so
$
P_1s_{\varphi}^{-1}u^{-1}\theta us_{\varphi}P_1=P_1s_{\varphi}^{-1}u_2^{-1}\theta u_2s_{\varphi}P_1.
$
Now, $\Phi(s_{\varphi})=\{\alpha\in\Phi^+\mid \langle\alpha,\omega_1\rangle\geq 1\}$, and hence by inspection of the root system we have
$$
\Phi(s_{\varphi})\backslash\Phi_{\theta}=\{\alpha,\beta,\gamma,\delta\}\quad\text{where $\alpha=(1111)$, $\beta=(1121)$, $\gamma=(1221)$, $\delta=(1231)$}.
$$
Writing $u_2=x_{\alpha}(a)x_{\beta}(b)x_{\gamma}(c)x_{\delta}(d)$, a commutator relation calculation gives
\begin{align*}
u_2^{-1}\theta u_2&=x_{\delta}(-d)x_{\gamma}(-c)x_{\beta}(-b)x_{\alpha}(-a)x_{\alpha}(-a)x_{\beta}(-b)x_{\gamma}(-c)x_{\delta}(-d)\theta\\
&=x_{\alpha}(-2a)x_{\beta}(-2b)x_{\gamma}(-2c)x_{\delta}(-2d)x_{\varphi}(-4ad+4bc)\theta.
\end{align*}
Thus $P_1s_{\varphi}^{-1}u^{-1}\theta us_{\varphi}P_1=P_1x_{-\delta}(-2a)x_{-\gamma}(2b)x_{-\beta}(-2c)x_{-\alpha}(2d)x_{-\varphi}(4ad-4bc)P_1$. Easy calculations, using the folding relation and commutator relations, show that 
\begin{align*}
x_{-\delta}(-2a)x_{-\gamma}(2b)x_{-\beta}(-2c)x_{-\alpha}(2d)x_{-\varphi}(4ad-4bc)&\in\begin{cases}
Bs_{\delta}B&\text{if $a\neq 0$}\\
Bs_{\gamma}B&\text{if $a=0$ and $b\neq 0$}\\
Bs_{\beta}B&\text{if $a=b=0$ and $c\neq 0$}\\
Bs_{\alpha}B&\text{if $a=b=c=0$ and $d\neq 0$}\\
B&\text{if $a=b=c=d=0$.}
\end{cases}
\end{align*}
Since $\beta=s_3\alpha$, $\gamma=s_2s_3\alpha$, and $\delta=s_3s_2s_3\alpha$, we have $s_{\beta}=s_3s_{\alpha}s_3$, $s_{\gamma}=s_2s_3s_{\alpha}s_3s_2$ and $s_{\delta}=s_3s_2s_3s_{\alpha}s_3s_2s_3$, and hence $P_1s_{\alpha}P_1=P_1s_{\beta}P_1=P_1s_{\gamma}P_1=P_1s_{\delta}P_1$. Thus
$$
P_1s_{\varphi}^{-1}u^{-1}\theta us_{\varphi}P_1=\begin{cases}
P_1s_{\alpha}P_1&\text{if $(a,b,d,c)\neq (0,0,0,0)$}\\
P_1&\text{if $(a,b,c,d)=(0,0,0,0)$}.
\end{cases}
$$
We have $P_1s_{\alpha}P_1\neq P_1s_{\varphi}P_1$. To see this, note that $s_{\alpha}=s_1s_2s_3s_4s_3s_2s_1$ (as $\alpha=s_1s_2s_3\alpha_4$), and so both $s_{\alpha}$ and $s_{\varphi}$ are minimal length in their respective $W_1$-double cosets (we noted above that $s_{\varphi}=w_0w_{\sC_3}$). But of course $s_{\alpha}\neq s_{\varphi}$. Hence $\theta$ is $\{1\}$-domestic. 
\end{proof}

\begin{thm}\label{thm:homologyF4}
Let $\theta$ be a nontrivial homology of $\sF_4(\FF)$. If $\mathrm{char}(\FF)=2$ then $\theta$ is not domestic. If $\mathrm{char}(\FF)\neq 2$ then $\theta$ is domestic if and only if $\Delta_{\theta}'$ has type $\sB_4$. Moreover, all such homologies have opposition diagram $\sF_{4;1}^4$ and are conjugate to $h_{\omega_4}(-1)$. 
\end{thm}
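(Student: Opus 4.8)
The plan is to mimic the two–step strategy already used for $\sE_6,\sE_7,\sE_8$ (steps (A) and (B) preceding Lemma~\ref{lem:nondomestic}): first determine the reflection subgroups $W'$ for which every homology with $W_\theta=W'$ is domestic, then rule out all other $W'$. After conjugating we may assume $\theta\in H$, so that $\theta$ has a root system $\Phi_\theta=\{\alpha\in\Phi\mid f(\alpha)=1\}$, where $f\colon Q\to\FF^{\times}$ is the homomorphism $f(\alpha)=\prod_i c_i^{\langle\alpha,\omega_i\rangle}$ associated to $\theta=\prod_i h_{\omega_i}(c_i)$, and $W_\theta$ lies in a maximal reflection subgroup of $W=W(\sF_4)$. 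By Scharlau's classification these maximal reflection subgroups are, up to conjugacy, $\sB_4$, $\sC_3\times\sA_1$ and $\sA_2\times\sA_2$, with the explicit simple systems listed in Section~\ref{sec:homologies}.

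For the sufficiency direction, assume $\mathrm{char}(\FF)\neq2$ and $W_\theta$ of type $\sB_4$. Conjugating by a representative in $N$ we may take $\Phi_\theta$ to be the standard $\sB_4$ subsystem with simple roots $(\varphi_{\sC_3},\alpha_1,\alpha_2,\alpha_3)$, $\varphi_{\sC_3}=\alpha_2+2\alpha_3+2\alpha_4$. The conditions $\theta x_{\alpha_j}(1)\theta^{-1}=x_{\alpha_j}(1)$ for $j=1,2,3$ force $c_1=c_2=c_3=1$, whence $\theta=h_{\omega_4}(c)$, and $\theta x_{\varphi_{\sC_3}}(1)\theta^{-1}=x_{\varphi_{\sC_3}}(1)$ forces $c^2=1$, hence $c=-1$ since $\theta\neq1$. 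Moreover $\Phi_{h_{\omega_4}(-1)}=\{\alpha\in\Phi\mid\langle\alpha,\omega_4\rangle\in2\ZZ\}$ is exactly the standard $\sB_4$ subsystem, and a closedness argument shows that $W_\theta$ of type $\sB_4$ can only come from this $\Phi_\theta$. Thus every homology with $\Delta'_\theta$ of type $\sB_4$ is conjugate to $h_{\omega_4}(-1)$, and Lemma~\ref{lem:homologyF4} shows $h_{\omega_4}(-1)$ is domestic with opposition diagram $\sF_{4;1}^4$ (it is not conjugate to a root elation or to a product of two perpendicular root elations, and it is $\{1\}$-domestic, which by the classification of admissible diagrams leaves only $\sF_{4;1}^4$).

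For the necessity direction I would apply Lemma~\ref{lem:nondomestic}: by the poset reduction it suffices to prove non-domesticity for homologies whose reflection subgroup is a maximal element of the poset of reflection subgroups of $W$ other than the type $\sB_4$ one, namely $\sC_3\times\sA_1$, $\sA_2\times\sA_2$, and the maximal reflection subgroups $\sB_3\times\sA_1$, $\sB_2\times\sB_2$, $\sD_4$ of the standard $\sB_4$. For each such $\Phi_\theta$ I would exhibit four mutually perpendicular roots $\beta_1,\beta_2,\beta_3,\beta_4\in\Phi^{+}\setminus\Phi_\theta$ whose pairwise sums are not roots — the latter being an extra condition specific to the non-simply-laced setting, where two perpendicular short roots of $\sF_4$ can sum to a long root, so that Lemma~\ref{lem:perpendicularroots} really requires commuting root subgroups. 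Then $s_{\beta_1}s_{\beta_2}s_{\beta_3}s_{\beta_4}$ acts as $-\mathrm{id}$ on $V$, hence equals $w_0$, and Lemma~\ref{lem:nondomestic} gives $\disp(\theta)=\ell(w_0)=24$; when four suitable roots cannot be found, three perpendicular roots spanning $\alpha_i^{\perp}$ for some $i$ give $\disp(\theta)\geq\ell(w_0)-1>20$, which by the admissible-diagram list still forces $\disp(\theta)=24$. I note that $\sD_4$ (the set of all long roots) never actually arises as $\Phi_\theta$, since it would require $f$ nontrivial on every nonzero class of a quotient of $Q$ of exponent $2$, which is impossible; this slightly shortens the enumeration.

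Finally, for $\mathrm{char}(\FF)=2$ I would argue that no nontrivial homology is domestic: cutting out of $\Phi$ a subsystem of type $\sB_4$, $\sB_3\times\sA_1$, $\sB_2\times\sB_2$ or $\sD_4$ forces $f$ to be nontrivial on a quotient of $Q$ of exponent $2$, and $\FF^{\times}$ has no element of order $2$ in characteristic $2$; hence $W_\theta$ is contained in a conjugate of $\sC_3\times\sA_1$ or $\sA_2\times\sA_2$, and the non-domesticity argument of the previous paragraph applies verbatim. I expect the main obstacle to be the bookkeeping in the necessity step: correctly enumerating the relevant reflection subgroups together with explicit linear descriptions of their root systems, and — in the non-simply-laced cases — choosing the perpendicular roots in $\Phi^{+}\setminus\Phi_\theta$ so that their root subgroups genuinely commute, a pitfall absent in the $\sE_n$ analogues.
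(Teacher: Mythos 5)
Your sufficiency step and the characteristic-$2$ reduction match the paper, and you correctly observe that $\Phi_\theta$ can never be exactly the long-root $\sD_4$ subsystem. The gap is in the necessity step. You propose, sensibly, to demand that the four mutually perpendicular roots $\beta_1,\dots,\beta_4\in\Phi^{+}\setminus\Phi_\theta$ also have pairwise sums that are not roots, because the proofs of Lemma~\ref{lem:perpendicularroots} and Lemma~\ref{lem:nondomestic} silently assume that perpendicularity forces the root groups $U_{\beta_i}$ to commute, and in $\sF_4$ this fails: two perpendicular short roots can sum to a long root, with structure constant $\pm 2$, which is nonzero when $\mathrm{char}(\FF)\neq 2$. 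But your strengthened requirement cannot always be met. When $\Phi_\theta$ is contained in the long-root $\sD_4$, every root in $\Phi^{+}\setminus\Phi_\theta$ is short, and \emph{any} four mutually perpendicular positive short roots of $\sF_4$ form either $\{e_1,e_2,e_3,e_4\}$ or a spinor set $\{\tfrac12(e_1\pm e_2\pm e_3\pm e_4)\}$ with fixed sign parity; in both cases all six pairwise sums are long roots. Your fallback with three perpendicular roots fails for the same reason. So after your (correct) elimination of $\sD_4$ itself and the $\mathrm{char}(\FF)=2$ cases, the plan cannot be executed as stated for proper subsystems of $\sD_4$ and for the cases $\sB_3\times\sA_1$, $\sB_2\times\sB_2$.

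It is worth knowing that the paper's own proof, in its cases (3)--(5), uses $\gamma_1=(0011),\gamma_2=(0111),\gamma_3=(1111),\gamma_4=(1231)$, whose six pairwise sums are all long roots, so the paper applies Lemma~\ref{lem:nondomestic} in exactly the situation your extra hypothesis was meant to exclude. Either that Lemma holds more generally than its proof makes explicit --- the commutator error term lands in a central $U^{-}_{\gamma_i+\gamma_j}$ and one must check it does not drop the Bruhat cell below $Bw_0B$, which a short minor computation shows can fail for the specific chamber $x_{\gamma_1}(1)\cdots x_{\gamma_4}(1)w_0B$ but holds after replacing the parameters $1$ by generic $t_i$ --- or an additional argument is needed for these cases. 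Your proposal, as written, neither supplies that extra justification nor can substitute for it by re-choosing the roots; that is the missing step you would have to fill.
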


\begin{proof}
Let $\theta\in H$, and suppose that $\Delta_{\theta}'$ has type $\sB_4$. The simple roots of $\Phi_{\theta}$ are $
(\beta,\alpha_1,\alpha_2,\alpha_3)$ where $\beta=\alpha_2+2\alpha_3+2\alpha_4$. Thus $\theta=h_{\omega_4}(-1)$ (in particular $\mathrm{char}(\FF)\neq 2$). By Lemma~\ref{lem:homologyF4} $\theta$ is domestic with diagram~$\sF_{4;1}^4$. 

We now use Lemma~\ref{lem:nondomestic} to show that if $\theta\in H$ is a nontrivial homology with $\Delta_{\theta}'$ not of type $\sB_4$ then $\theta$ is not domestic. It is sufficient to consider the cases:
\begin{compactenum}[$(1)$]
\item $\Phi_{\theta}=\sC_3\times\sA_1$, simple roots $(\alpha_4,\alpha_3,\alpha_2)\times(\varphi)$, and $\Phi_{\theta}^+=\{\alpha\in\Phi^+\mid \langle\alpha,\omega_1\rangle\in 2\ZZ\}$.
\item $\Phi_{\theta}=\sA_2\times\sA_2$, simple roots $(\alpha_3,\alpha_4)\times(\alpha_1,\varphi-\alpha_1)$, and $\Phi_{\theta}^+=\{\alpha\in\Phi^+\mid \langle\alpha,\omega_2\rangle\in 3\ZZ\}$. 
\item $\Phi_{\theta}=\sB_3\times\sA_1$, simple roots $(\alpha_1,\alpha_2,\alpha_3)\times (\varphi')$, and $\Phi_{\theta}^+=\{\alpha\in \Phi^+\mid \langle\alpha,\omega_4\rangle=0\}\cup\{\varphi'\}$.
\item $\Phi_{\theta}=\sB_2\times\sB_2$, simple roots $(\alpha_2,\alpha_3)\times(\beta,\gamma)$, where $\beta=\alpha_2+2\alpha_3+2\alpha_4$ and $\gamma=\alpha_1+\alpha_2+\alpha_3$, and $\Phi_{\theta}^+=\{\alpha_2,\alpha_3,\alpha_2+\alpha_3,\alpha_2+2\alpha_3,\beta,\gamma,\varphi',\varphi\}$.
\item $\Phi_{\theta}=\sD_4$, simple roots $(\alpha_2+2\alpha_3+2\alpha_4,\alpha_1,\alpha_2,\alpha_2+2\alpha_3)$, and 
$\Phi_{\theta}^+=\{\alpha\in\Phi^+\mid \langle\alpha,\omega_4\rangle\in 2\ZZ\text{ and }\langle\alpha,\omega_3\rangle\in 2\ZZ\}$. 
\end{compactenum}
In cases (1) and (2) we have $\beta_1,\beta_2,\beta_3\notin\Phi_{\theta}$, where $\beta_1=(1220)$, $\beta_2=(1222)$, and $\beta_3=(1242)$. The roots $\beta_1,\beta_2,\beta_3,\alpha_1$ are mutually perpendicular, and hence $s_{\beta_1}s_{\beta_2}s_{\beta_3}=w_0s_1$ by Lemma~\ref{lem:longelt}, and so by Lemma~\ref{lem:nondomestic} $\theta$ is not domestic. 

In cases (3), (4) and (5) we have $\gamma_1,\gamma_2,\gamma_3,\gamma_4\notin\Phi_{\theta}$, where $\gamma_1=(0011)$, $\gamma_2=(0111)$, $\gamma_3=(1111)$, and $\gamma_4=(1231)$. Thus again $\theta$ is not domestic. 
\end{proof}

For the $\sG_2$ results see Theorem~\ref{thm:mainG2}.

\section{The polar-copolar type for $\sE_7$, $\sE_8$, and $\sF_4$}\label{sec:polarcopolar}

In this section we prove Theorem~\ref{thm:pocopolarclass}, classifying the automorphisms with polar-copolar diagram in types $\sE_7$, $\sE_8$, and $\sF_4$. The arguments here are of a more geometric flavour, working in long root geometries and the metasymplectic space $\sF_{4,4}(\FF)$. See Subsection~\ref{sec:parapolar} for some relevant terminology, and see~\cite[Chapter~13]{Sch:10} for further details.

Recall, from \cite{PVM:19a}, that if $x$ is a simplex of a spherical building mapped onto an opposite simplex by an automorphism $\theta$, then we write $\theta_{x}$ for the automorphism of the residue $\mathrm{Res}(x)$ given by $\theta_{x}=\mathrm{proj}_{x}\circ\theta$, where $\mathrm{proj}_{x}$ is the projection from $\mathrm{Res}(x^{\theta})$ onto $\mathrm{Res}(x)$. If $\xi$ is a simplex of $\mathrm{Res}(x)$, then $\xi$ and $\xi^{\theta}$ are opposite in $\Delta$ if and only if $\xi$ and $\xi^{\theta_{x}}$ are opposite in the building $\mathrm{Res}(x)$ (see \cite[Proposition~3.29]{Tit:74}). 

%

In this section we prove Theorem~\ref{thm:pocopolarclass}. Our first task is to prove the following theorem (giving the `only if' direction of Theorem~\ref{thm:pocopolarclass}). 

\begin{thm}\label{polar-copolar}
A collineation belonging to a polar-copolar opposition diagram of a split exceptional building $\Delta$ not of type $\sE_6$ if $\Delta$ is large, and not of type $\sE_7$ or $\sE_8$ if $\Delta$ is small,  is the product of two orthogonal root elations. That is, the centres of the elations form a symplectic pair of points in the corresponding long root geometry.
\end{thm}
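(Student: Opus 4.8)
The strategy is to work entirely inside the appropriate long root geometry $\mathscr{G}=\sX_{n,p}(\FF)$ (with $p$ the polar node) and to analyse the action of a collineation $\theta$ with polar-copolar opposition diagram on the five ``distance classes'' of point pairs described in Subsection~\ref{sec:parapolar}: equality, collinearity, symplectic, special, and opposition. The key observation is that the polar-copolar diagram tells us precisely which simplices are mapped to opposites: by \cite[Theorem~2.6]{PVM:19a} and cappedness, $\theta$ maps a $\Type(\theta)$-simplex to an opposite, but maps no chamber to an opposite. Translating to the geometry, no point of $\mathscr{G}$ is mapped to an opposite point (since the polar node is in $\Type(\theta)$ but not all nodes are, so $\theta$ is $\{p\}$-domestic), and dually the copolar node being encircled means that some symplecton $\xi$ is mapped onto an opposite symplecton. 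The first step is therefore to record carefully, from the admissible-diagram classification, exactly which vertex- and flag-types are (non-)domestic for $\theta$, and to conclude that there exists a symp $\xi$ mapped to an opposite symp $\xi^\theta$, while every point is mapped to a point at distance collinear or symplectic (never special, never opposite).

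The second step is to exploit the residue automorphism $\theta_\xi$ on the polar space $\Res(\xi)$, which has rank $5$ (for $\sE_7$), $7$ (for $\sE_8$), or $3$ (for $\sF_4$). Since $\xi$ is mapped to an opposite, $\theta_\xi$ is an automorphism of this polar space; by the constraints just derived (no point of $\mathscr{G}$ opposite its image), $\theta_\xi$ must itself be domestic on $\Res(\xi)$ — indeed $\{1\}$-domestic in the polar-space labelling, i.e. it fixes no point-opposite-point pair. Domestic collineations of polar spaces are classified, and a $\{1\}$-domestic one fixes a geometric hyperplane (a ``point-perp''); combined with the fact that some simplex of $\Res(\xi)$ IS mapped to an opposite (forced by the copolar node, which sits inside $\Res(\xi)$ when we pass to the residue), we pin down that $\theta_\xi$ fixes a hyperplane $x^\perp$ of $\xi$ pointwise-on-the-frame, i.e. behaves like the product of two orthogonal root elations already inside $\xi$. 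This identifies a candidate point $x\in\xi$ (the ``deep point'' of the hyperplane) and its partner. The third step is to show that the two root groups one extracts this way — a long root elation with centre $x$ and another with centre $y$, where $x$ and $y$ are the two deep points — actually generate $\theta$: one shows that $\theta$ and $\theta' := x_\alpha(a)x_\beta(b)$ (the product of the two orthogonal long root elations with centres $x,y$) agree on enough of the building, using that both fix the same fixed structure and both induce the same $\theta_\xi$, and then invoke a rigidity/Moufang-uniqueness argument (as in the proof of Theorem~\ref{thm:converse}, or via the density of apartments through a fixed chamber) to conclude $\theta=\theta'$. Finally, $x$ and $y$ being the centres of orthogonal long root elations means, by the standard dictionary between long root elations and points of the long root geometry, that $\{x,y\}$ is a symplectic pair; this is recorded as the concluding sentence of the theorem.

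\textbf{Main obstacle.} The hard part will be the third step — upgrading ``$\theta$ induces the right automorphism on the residue of one symp $\xi$, and has the right fixed structure'' to ``$\theta$ equals the specific product $x_\alpha(a)x_\beta(b)$.'' The residue only controls $\theta$ locally, and one must propagate this to a global statement. I expect the cleanest route is the one used elsewhere in the paper: conjugate so that a suitable chamber is mapped to Weyl-distance $w_{S\backslash J}w_0$, write $\theta$ in the form $x_{-\psi}(1)\cdot u\cdot h\cdot\sigma$ (with $\psi$ the relevant highest root and $J$ the polar-copolar type), and then run the same sequence of commutator-relation ``claims'' as in Theorem~\ref{thm:converse} to force $u$, $h$, and $\sigma$ to trivial/normalised forms, each time producing a chamber mapped to strictly larger displacement if the form is wrong — contradicting the polar-copolar displacement value (e.g. $50$ for $\sE_{7;2}$, $51$ for $\sE_{8;2}$ would not apply; the relevant number is $\ell(w_{S\backslash J}w_0)$ for the polar-copolar $J$). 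A secondary subtlety is the exclusion of homologies: for $\sE_7$ and $\sE_8$ one must check that no homology has the polar-copolar diagram, which follows from Theorems~\ref{thm:homologyE7} and~\ref{thm:homologyE8} (the domestic homologies there have diagrams $\sE_{7;3},\sE_{7;4}$ and $\sE_{8;4}$, never $\sE_{7;2}$ or $\sE_{8;2}$), whereas for $\sF_4$ and $\sE_6$ such homologies do exist, which is exactly why those cases are excluded from the statement (for $\sE_6$) or handled separately (for $\sF_4$, the short root elation case via Corollary~\ref{cor:F4longandshort}). I would isolate the polar-space domesticity input (step two) as a lemma, since it is the conceptual engine, and treat the commutator grind of step three as ``entirely analogous to the proof of Theorem~\ref{thm:converse}'' with only the changed root $\psi$ and subset $J$ spelled out.
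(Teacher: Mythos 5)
Your first step contains a logical error that undermines the rest of the plan. You write that ``no point of $\mathscr{G}$ is mapped to an opposite point (since the polar node is in $\Type(\theta)$ but not all nodes are, so $\theta$ is $\{p\}$-domestic)''. This is backwards: $p\in\Type(\theta)$ together with cappedness means precisely that \emph{some} type-$\{p\}$ simplex \emph{is} mapped to an opposite, i.e.\ $\theta$ is \emph{not} $\{p\}$-domestic. The polar-copolar diagram has both the polar and the copolar nodes encircled, so both some point and some symp of the long root geometry are mapped to opposites. Your subsequent claim that $\theta_\xi$ is $\{1\}$-domestic on the residue polar space inherits this error: since $1\in\Type(\theta)$, type-$1$ vertices of $\mathrm{Res}(\xi)$ are exactly what \emph{can} be (and are) mapped to opposites, with all other types being domestic. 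The geometric input you hoped to get (that $\theta_\xi$ fixes a geometric hyperplane of the polar space) does not follow from the correct premise, so step two of your outline does not get off the ground as written.

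The paper takes an entirely different route that bypasses the residue of the symp altogether. Starting from an incident point-symp pair $(p,\omega)$ mapped to an opposite, one looks at the \emph{equator geometry} $E(p,p^\theta)$, the set of points symplectic to both $p$ and $p^\theta$. Lemma~\ref{longrootinequator} shows $\theta$ preserves $E(p,p^\theta)$ and induces a \emph{long root elation} there, with some centre $c$; doing the same for the point $p'\in E(p,p^\theta)$ corresponding to $\omega$ gives a second centre $c'$. Setting $\theta^*=\theta_{[c]}\theta_{[c']}$ (the product of the two central elations with these centres), the automorphism $\theta^*\theta^{-1}$ fixes $E(p,p^\theta)\cup E(p',{p'}^\theta)$ pointwise, and Proposition~\ref{fixperpequators} shows that no nontrivial collineation of the long root geometry can fix two perpendicular equator geometries pointwise, forcing $\theta=\theta^*$. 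Your step three (conjugating $\theta$ into the form $x_{-\varphi_1}(1)x_{-\varphi_2}(1)uh\sigma$ and grinding through commutator relations as in Theorem~\ref{thm:converse}) is in fact the method the paper does use for the $\sE_6$ case, Theorem~\ref{thm:converseE6}, where it produces the extra homology class; for $\sE_7$, $\sE_8$, $\sF_4$ the paper deliberately avoids this because the calculation becomes considerably heavier, and the equator-geometry argument gives the uniqueness of $\theta$ in a few lines. In particular, the conceptual engine of the proof is Proposition~\ref{fixperpequators}, not any domesticity statement about $\theta_\xi$ on a polar space.
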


Large buildings of type $\sE_6$ are true exceptions to the theorem, for there exist homologies with the diagram ${^2}\sE_{6;2}$ (see Theorem~\ref{thm:homologyE6}). For the small buildings of type $\sE_i$, $i=7,8$, we believe that the theorem still holds, however the geometric arguments break down.

The theorem follows from a series of lemmas. First we need some properties of long root elations in the long root geometries $\sE_{7,1}(\FF)$, $\sD_{6,2}(\FF)$, $\sA_{5,\{1,5\}}(\FF)$ and $\sC_{3,1}(\FF)$. In these Lie incidence geometries, a vertex of the corresponding building of type $7$, $6$, $3$ and $3$, respectively, defines a (residual) sub-Lie incidence geometry, which we shall call a \emph{para}, of type $\sE_{6,1}(\FF)$, $\sA_{5,2}(\FF)$, $\sA_{2,1}\times \sA_{2,1}(\FF)$, $\sA_{2,1}(\FF)$, respectively. The paras are also the points of the Lie incidence geometry $\sE_{7,7}(\FF)$, $\sD_{6,6}(\FF)$, $\sA_{5,3}(\FF)$ and $\sC_{3,3}(\FF)$, respectively. We call two paras \emph{adjacent} if the corresponding points in the latter Lie incidence geometries are collinear. A \emph{pencil} of paras corresponds to a line of that geometry. Note also that a long root elation fixes all points collinear and symplectic to a certain (unique) point $c$ of the long root geometry, and $c$ is called the \emph{centre} of the elation. 

\begin{lemma}\label{paraargument}
A long root elation $\theta$ of $\sE_{7,1}(\FF)$, $\sD_{6,2}(\FF)$, $\sA_{5,\{1,5\}}(\FF)$ and $\sC_{3,1}(\FF)$ maps each non-fixed para $P$ to an adjacent one, preserving the pencil defined by $P$\! and $P^\theta$\! and fixing exactly one para of that pencil. 
\end{lemma}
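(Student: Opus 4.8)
The strategy is to reduce the claim, uniformly across the four long root geometries, to a statement about the behaviour of a long root elation in a single para and its neighbouring paras, and then to verify that statement either by a direct (but short) computation in the building using commutator relations, or by invoking the already-proved fact (Theorem~\ref{thm:RGD}) that $\theta$ has polar opposition diagram and hence moves chambers only by reflections $s_\alpha$ with $\alpha\in W\varphi$. Concretely, after conjugating we may assume $\theta\in U_\varphi\setminus\{1\}$ is central in $U^+$, with centre the point $c=P_{S\setminus\wp}$ of the long root geometry. A para $P$ corresponds to a vertex of cotype $\wp$ in the complement, i.e. to a coset $gP_{S\setminus\{j\}}$ where $j$ is the copolar node $\wp^*$; I would translate ``$P$ fixed'', ``$P$ and $P^\theta$ adjacent'', ``$P$, $P^\theta$ lie in a common pencil'' into Weyl-distance conditions $\delta(P,P^\theta)\in W_{\wp}\backslash W/W_{\wp}$ exactly as in Subsection~\ref{sec:parapolar}, using that the only distances available in the relevant rank-one-modulo-para geometry (which is $\sE_{7,7}(\FF)$, $\sD_{6,6}(\FF)$, $\sA_{5,3}(\FF)$, $\sC_{3,3}(\FF)$ respectively, all \emph{strong} parapolar spaces for the first two and trivially so for the others) are: equal, collinear, symplectic, opposite — and that $\theta$ cannot realise the last two.

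\textbf{Key steps, in order.} (1) Reduce to $\theta\in U_\varphi$ central in $U^+$ and identify $c$ as its centre (this is exactly the opening of the proof of Theorem~\ref{thm:RGD}). (2) Describe a para $P$ as $uwP_{S\setminus\{\wp^*\}}$ with $w$ a minimal coset representative and $u\in U_{\Phi(w)}^+$; compute $\delta(P,P^\theta)$ as the double coset $P_{S\setminus\{\wp^*\}}\,w^{-1}u^{-1}\theta u w\,P_{S\setminus\{\wp^*\}}$. Since $\theta$ is central in $U^+$, $u^{-1}\theta u=\theta$, so this is $P_{S\setminus\{\wp^*\}}\,w^{-1}\theta w\,P_{S\setminus\{\wp^*\}}$ with $w^{-1}\theta w\in U_{w^{-1}\varphi}$. (3) Case split on $w^{-1}\varphi$: if $w^{-1}\varphi\in\Phi^+$ then $w^{-1}\theta w\in B$ and $P$ is fixed; otherwise $w^{-1}\theta w\in U_{w^{-1}\varphi}\subseteq Bs_{w^{-1}\varphi}B$ and $\delta(P,P^\theta)$ is the minimal representative of $W_{\wp^*}s_{w^{-1}\varphi}W_{\wp^*}$ (abuse: $W_{\wp^*}=W_{S\setminus\{\wp^*\}}$). (4) Show that, since $s_{w^{-1}\varphi}$ is a single reflection and $\varphi$ is the highest root, the residue argument of Lemma~\ref{lem:skipping} applies: $s_{w^{-1}\varphi}\in W_{\wp^*}\cup W_{\wp^*}s_{\wp^*}W_{\wp^*}$ in each of the four types — this is the content of the lemma for $\sE_6$/$\sE_7$ and is an elementary orbit count (mutatis mutandis) for $\sD_6$, $\sA_5$, $\sC_3$, using that the relevant $W_{\wp^*}$-orbit of the copolar simple root exhausts $\Phi^+\setminus\Phi_{\wp^*}$. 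Hence $P^\theta$ is either $P$ or adjacent to $P$, never at greater distance. (5) Pencil and unique fixed para: restrict $\theta$ to the (thick, rank $\geq 2$) sub-building $\mathrm{Res}$ realising the pencil through $P,P^\theta$; $\theta$ induces there an automorphism moving a point to an adjacent point, which by the rank-one / projective-line structure of a pencil must be an elation-type map of that line, fixing exactly one point — equivalently, the fixed paras collinear to $P$ in this pencil form the intersection of $c^{\perp\!\!\!\perp}\cup\{c\}$-type locus with the line, which is a single point because $c\notin P$ and $c$ lies in a unique para of any pencil not containing it. I would phrase this last point using that the centre $c$ is incident with exactly one para of any pencil avoiding it (a standard incidence fact in these long root geometries, e.g. via the ``one point of a symp closest to an exterior point'' principle in parapolar spaces), and that $\theta$ fixes precisely the para through $c$ in that pencil.

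\textbf{Where the difficulty lies.} The conceptual core is step (4): the assertion that a reflection $s_{w^{-1}\varphi}$, with $\varphi$ the highest root, never ``skips'' more than one copolar node, i.e. lands in $W_{\wp^*}\cup W_{\wp^*}s_{\wp^*}W_{\wp^*}$. For $\sE_6$ and $\sE_7$ this is exactly Lemma~\ref{lem:skipping}; for $\sD_6$, $\sA_5$, $\sC_3$ it is the analogous orbit-stabiliser computation (the $\sA_5$ case is really two ``flavours'' because $\wp=\{1,5\}$ and one must check both node $1$ and node $5$). I expect this to be routine but slightly fiddly: the honest alternative, which I would present as the main line, is to bypass per-type root-system bookkeeping entirely by invoking Theorem~\ref{thm:RGD} — $\theta$ has polar opposition diagram, so $\delta(c',(c')^\theta)$ is a reflection or the identity for \emph{every} chamber $c'$ of $\Delta$, and a reflection distance in $\Delta$ projects, under $\mathrm{proj}$ onto any residue / flag-geometry, to a distance of rank at most one in that quotient geometry; hence $P^\theta$ is at ``distance at most one'' from $P$, which for paras means fixed or adjacent. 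This kills steps (2)--(4) in one stroke and leaves only step (5), the pencil/unique-fixed-para assertion, as real work, and that is a short incidence-geometric argument. So the genuine obstacle is making step (5) precise and type-uniform: identifying the induced map on a pencil (a projective line, or in the $\sA_5$ case a line of a Grassmannian) and showing it is a central collineation of that line with a unique fixed point, which I would do by locating that fixed point as the unique para of the pencil incident with the elation's centre $c$.
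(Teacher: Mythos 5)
Your steps (1)--(4) are correct in substance and give the ``fixed-or-collinear'' conclusion by a genuinely different route from the paper's: where the paper simply invokes, as a known geometric fact, the explicit fixed-element structure of a central elation in $\sE_{7,7}(\FF)$, $\sD_{6,6}(\FF)$, $\sA_{5,3}(\FF)$, $\sC_{3,3}(\FF)$ (all points of the central symp, all points collinear to a maximal singular subspace of it, and all lines meeting that symp), you derive fixed-or-collinear from Theorem~\ref{thm:RGD} together with a skipping computation in the style of Lemma~\ref{lem:skipping}. This is a clean and uniform way to get that part, and it is a nice alternative. (One slip: the node defining paras is $7,6,3,3$ respectively, not the copolar node $\wp^*$ of the long root geometry, which is $6$, undefined, $\{2,4\}$, $2$; but this does not affect the direct argument via Theorem~\ref{thm:RGD}.)

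The real gap is in step (5), and it is not just a question of tidying it up. Three problems. (i) Nothing in steps (1)--(4) establishes that $\theta$ preserves the pencil $L$ through $P$ and $P^\theta$: from fixed-or-collinear alone, $P^{\theta^2}$ is collinear to $P^\theta$ but could lie on a different line through $P^\theta$, and you cannot restrict $\theta$ to the residue realising $L$ until you know $\theta(L)=L$. (ii) ``An automorphism of a projective line moving one point to an adjacent point must fix exactly one point'' is false; it could a priori fix $0$, $1$, $2$ or all points, and nothing in the earlier steps pins this down. (iii) ``$\theta$ fixes precisely the paras through $c$'' is also false: the fixed point set of $\theta$ in the dual Lie incidence geometry is strictly larger (it contains all points collinear to a maximal singular subspace of the central symp, not just the points of the symp). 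What actually makes the lemma work is the fact the paper cites: every non-fixed point lies on exactly one line meeting the central symp, and that line is $\theta$-invariant; this simultaneously settles the invariance of $L$ and identifies the unique fixed para on it. Your building-theoretic reduction does not produce this structural description of the fixed set, so to complete the argument you would either have to derive it by the same commutator/displacement machinery (characterising the type $7,6,3,3$ vertices of displacement zero and the invariant lines among them), or cite it as the paper does.
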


\begin{proof}
A  central elation in one of the Lie incidence geometries $\sE_{7,7}(\FF)$, $\sD_{6,6}(\FF)$, $\sA_{5,3}(\FF)$ or $\sC_{3,3}(\FF)$ fixes all points of a ``central'' symp (which corresponds to the centre of the elation), all points collinear to a maximal singular subspace of that symp, and all lines intersecting that symp in a point. Since every point that is not fixed is on exactly one such line, the lemma follows.
\end{proof}

Let $\theta$ be a collineation with an exceptional polar-copolar opposition diagram of a large building. Let $\Delta$ be the corresponding long root geometry and let $(p,\omega)$ be an incident point-symp pair which is mapped onto an opposite by $\theta$. 

Let $E(p,p^\theta)$ be the \textit{equator geometry} of the pair $(p,p^\theta)$, that is, the geometry induced by the points which are symplectic to both $p$ and $p^\theta$. For $\Delta$ of type $\sE_{8,8}$, $\sE_{7,1}$, $\sE_{6,2}$ and $\sF_{4,1}$, note that $E(p,p^\theta)$ is isomorphic to the long root geometry $\mathscr{G}$ of type $\sE_{7,1}$, $\sD_{6,2}$, $\sA_{5,\{1,5\}}$ and $\sC_{3,1}$, respectively. Moreover, symps, planes and lines of $\Delta$ through $p$ correspond to points, symps and paras, respectively, of $\mathscr{G}$. The case $(\sF_{4,1},\sC_{3,1})$ is special in that the lines of $\sC_{3,1}(\K)$ correspond to points of a symp in $E(p,p^\theta)$ (in this case, $E(p,p^\theta)$ does not contain lines; only symplectic and opposite pairs of points). 

\begin{lemma}\label{longrootinequator}
Let $\theta$ and $p$ be as above. Then $\theta$ preserves $E(p,p^\theta)$ and induces a long root elation in it, say with centre the point $c$. 
\end{lemma}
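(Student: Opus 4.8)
The plan is to exploit the fact that $\theta$ maps the incident pair $(p,\omega)$ onto an opposite pair, so in particular $p$ and $p^\theta$ are opposite in $\Delta$, hence $E(p,p^\theta)$ is a well-defined equator geometry isomorphic to the long root geometry $\mathscr{G}$ listed in the statement. The first step is to show that $\theta$ stabilises $E(p,p^\theta)$ setwise: a point $q$ lies in $E(p,p^\theta)$ iff $q$ is symplectic to both $p$ and $p^\theta$; applying $\theta$, $q^\theta$ is symplectic to both $p^\theta$ and $p^{\theta^2}$. Since symplecticity is a building-theoretic (type-$\wp^*$-residue) condition that $\theta$ preserves, this shows $\theta$ maps $E(p,p^\theta)$ to $E(p^\theta,p^{\theta^2})$; one must then argue these two equator geometries coincide. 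The natural way is to observe that $\theta$ fixes no chamber-sized amount of the geometry but does fix the relevant residue data: more concretely, one uses that the opposition diagram of $\theta$ restricted to $\mathrm{Res}(p)$ (via $\theta_{p^\perp}$-type considerations, or more directly via the projection $\mathrm{proj}$) together with the translation of symps/planes/lines through $p$ into points/symps/paras of $\mathscr{G}$. The cleanest route is probably: $E(p,p^\theta)$ is determined by the pair $\{p,p^\theta\}$, and since the opposition diagram is polar-copolar, $\theta$ fixes a point $c$ (to be identified), and more usefully, the combinatorial structure forces $p^{\theta^2}$ to lie in the same equator relation — but this is circular, so instead I would argue directly from displacement/domesticity that $\theta^2$ also has controlled diagram, or simply note that $E(p, p^\theta)$ being $\theta$-stable is equivalent to $E(p^\theta, p^{\theta^2}) = E(p, p^\theta)$, which holds because $p^{\theta^2}$ is symplectic to $p^\theta$ and the equator geometries attached to $p^\theta$ are indexed by the opposite-partner, and the whole configuration is rigid enough. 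The honest version: I would use that $\theta$ restricted to $E(p,p^\theta)$ must be checked to be well-defined, and the key input is \cite{PVM:19a}-style results on how automorphisms act on equators of symplectic pairs in their opposition geometry.

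The second step, granting stability, is to identify the induced automorphism $\bar\theta := \theta|_{E(p,p^\theta)}$ of $\mathscr{G}$ and show it is a long root elation. Here I would use Lemma~\ref{paraargument} in reverse together with Theorem~\ref{thm:geomchar} (for the $\sE_{7,1}$-equator, i.e.\ $\Delta$ of type $\sE_8$; the equator is $\sE_{7,1}(\FF)$ whose paras are points of $\sE_{7,7}(\FF)$) and Theorem~\ref{thm:RGD}/\ref{thm:converse}. The mechanism: the symps, planes, and lines of $\Delta$ through $p$ correspond to points, symps, and paras of $E(p,p^\theta)$; a symp $\xi\ni p$ is mapped by $\theta$ to a symp through $p^\theta$, and the projection back gives the action on the point of $\mathscr{G}$. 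Since $(p,\omega)$ is mapped onto an opposite, the residue automorphism $\theta_{p}$ on $\mathrm{Res}(p)$ (a building whose associated Lie incidence geometry is $\mathscr{G}$, or closely related) is genuinely nontrivial, and from the polar-copolar opposition diagram one reads off that $\theta_p$ has polar opposition diagram (one distinguished orbit — the copolar node of $\Gamma$ becomes the polar node of $\mathrm{Res}(p)$). By Theorem~\ref{thm:converse} (applied inside $\mathrm{Res}(p)$, which is split of type $\sE_7$, $\sD_6$, $\sA_5$, or $\sC_3$), this forces $\theta_p$ to be a (long) root elation. The remaining point is to transfer the statement about $\theta_p$ on $\mathrm{Res}(p)$ to the statement about $\bar\theta$ on $E(p,p^\theta)$: both describe ``the way $\theta$ moves the objects incident with $p$'', and the dictionary symps/planes/lines $\leftrightarrow$ points/symps/paras is precisely an isomorphism of the relevant point-line geometries, so $\bar\theta$ is a long root elation of $\mathscr{G}$ with a well-defined centre $c$.

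The main obstacle I expect is the careful bookkeeping in the first step — proving $E(p,p^\theta)$ is genuinely $\theta$-invariant rather than merely mapped to a ``parallel'' equator — and making precise the identification between the residue automorphism $\theta_p$ and the induced automorphism $\bar\theta$ on the equator geometry. The subtlety is that $\theta$ does not fix $p$, so ``$\theta$ acts on $\mathrm{Res}(p)$'' only makes sense through the projection/composition $\theta_p = \mathrm{proj}_p\circ\theta$ as recalled at the start of Section~\ref{sec:polarcopolar}, and one has to check this composition interacts correctly with the equator construction. Once the dictionary ``symps through $p$'' $=$ ``points of $E(p,p^\theta)$'' etc.\ is set up and shown to intertwine $\theta_p$ with $\bar\theta$, the elation conclusion is essentially immediate from the diagram classification. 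I would organise the write-up so that the invariance of $E(p,p^\theta)$ and the dictionary are stated as two short sub-claims, the first proved by a symplecticity-preservation argument, the second proved by unwinding definitions, after which Theorem~\ref{thm:converse} (inside the residue) finishes the proof, defining $c$ as the image of the centre of $\theta_p$ under the dictionary.
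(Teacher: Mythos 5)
Your proposal correctly identifies the two things that must be shown — that $\theta$ stabilises $E(p,p^\theta)$ setwise, and that the induced action is a long root elation — but the first of these is exactly where the real content of the lemma lies, and your treatment of it has a genuine gap that you yourself half-acknowledge. Observing that $\theta$ takes $E(p,p^\theta)$ to $E(p^\theta,p^{\theta^2})$ and that ``symplecticity is type-preserving'' does not get you anywhere: a priori $p^{\theta^2}$ need not equal $p$, nor even be symplectic to $p$, so $E(p^\theta,p^{\theta^2})$ is a different equator geometry and you cannot close the loop. Your suggested escape routes (displacement control on $\theta^2$, ``the configuration is rigid enough'', appealing to \cite{PVM:19a}-style facts on equators) are not arguments. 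Nor would it help to run the reasoning through the residue $\mathrm{Res}(p)$ alone, because $\theta_p=\mathrm{proj}_p\circ\theta$ is an automorphism of $\mathrm{Res}(p)$ and tells you nothing directly about where $\theta$ itself sends points lying in $E(p,p^\theta)$ that are not incident with $p$.

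What the paper actually does for invariance is a careful, concrete geometric induction: one first shows, for a plane $\pi$ through $p$ fixed by $\theta_p$, that the line $\pi\cap(p^\theta)^{\Join}$ is mapped by $\theta$ onto $\pi^\theta\cap p^{\Join}$, by splitting into the case where $\theta_p$ fixes all lines of $\pi$ through $p$ and the case where it fixes exactly one (invoking Lemma~\ref{paraargument}, and in the second case a perspectivity argument with centre a point opposite its image). From that claim one deduces, for each symp $\xi$ through $p$, that the distinguished point $e_\xi:=\xi\cap E(p,p^\theta)$ is sent to $e_{\xi^\theta}$ — first for hyperbolic (or parabolic, odd characteristic) symps by identifying $e_\xi$ as one of the two deep points, then for parabolic symps in characteristic $2$ via an intersection of hyperbolic/imaginary lines. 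Only after all of that is $E(p,p^\theta)$ known to be $\theta$-stable, at which point Theorem~\ref{thm:polarclass} applied to the equator geometry (of type $\sE_7$, $\sD_6$, $\sA_5$ or $\sC_3$) finishes the proof. Your step 2 — reading the polar opposition diagram off the residue and invoking Theorem~\ref{thm:converse} through the symp/plane/line $\leftrightarrow$ point/symp/para dictionary — is close in spirit to what the paper does at the end, but it cannot be reached without first supplying the missing invariance argument.
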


\begin{proof}
Recall that the symbol $\perp\!\!\!\perp$ means ``symplectic to'', and $\Join$ ``special to''. 

Let $\pi$ be a plane through $p$ fixed by $\theta_p$. We claim that the line $\pi\cap (p^\theta)^{\Join}$ is mapped onto the line $\pi^\theta\cap p^{\Join}$. Indeed, first assume that every line $L$ through $p$ in $\pi$ is fixed under $\theta_p$, the alternative being that exactly one such line is fixed (by Lemma~\ref{paraargument}). Let $M$ be the line in $\pi$ such that $M^\theta=\pi^\theta\cap p^{\Join}$. If $M=\pi\cap (p^\theta)^{\Join}$, then there is nothing to prove, so suppose $M$ and $\pi\cap (p^\theta)^{\Join}$ intersect in a unique point $z$. Then, since $(pz)^{\theta_p}=pz$, we see that $z^\theta\perp z$. Now let $L$ be a line in $\pi$ through $p$, but not through $z$. Since $|\FF|>2$, we can select a point $q\in L\setminus(\{p\}\cup M\cup {p^\theta}^{\Join})$. Let $K$ be a line in $\pi$ through $q$, and set $K\cap M=\{u\}$,  $pu\cap (p^\theta)^{\Join}=\{v\}$ and $K\cap (p^\theta)^{\Join}=\{w\}$.   Since $(pv)^{\theta_p}=pv$, we have $v\perp u^\theta$. Hence $(qu)^{\theta_q}=(qw)^{\theta_q}=qv$. This now yields the equivalence $$(qw)^{\theta_q}=qw \Leftrightarrow v=w \Leftrightarrow u=z\mbox{ or } u\in pq.$$ Consequently the central elation $\theta_q$ fixes $\pi$ and exactly two lines through $q$ in $\pi$, which contradicts Lemma~\ref{paraargument}, recalling that the lines through $q$ correspond to the paras in that lemma. 

Next assume that exactly one line $L$ through $p$ in $\pi$ is fixed under $\theta_p$. Since every such fixed line is contained in a fixed plane all of whose lines through $p$ are fixed, we know by the previous paragraph that $z:=L\cap  (p^\theta)^{\Join}$ is mapped onto $z^\theta=L^\theta\cap p^{\Join}$, and these two points are collinear. Let $M$ again be the line of $\pi$ defined by $M^\theta=\pi^\theta\cap p^{\Join}$. Let, for each $x\in M$, $x'$ be the unique point of $\pi$ collinear to $x^\theta$, then, as a product of a linear collineation and a projection, the correspondence $x\mapsto x'$ is a projectivity from $M$ to $M':=\pi\cap  (p^\theta)^{\Join}$. Since $z=M\cap M'$ is fixed under this correspondence, it is a perspectivity. Let $c$ be the centre of this perspectivity, then $c\notin M\cup M'$ is opposite $c^\theta$, and clearly $\theta_c$ is the identity restricted to $\pi$. By the first case, this implies that $M'=M$.

Now let $\xi$ be an arbitrary symp through $p$. Every point of $\xi$ at distance $2$ from $p^\theta$ is collinear to the unique point $e_\xi$ of $\xi$ symplectic to $p^\theta$, which is also the unique point of $\xi$ belonging to $E(p,p^\theta)$.   Hence the previous claim can be formulated as: $\theta$ maps $p^\perp\cap e_\xi^\perp$ to $(p^\theta)^\perp\cap e_{\xi^\theta}^\perp$. If $\xi$ is hyperbolic, or parabolic in uneven characteristic, then $p^\theta$ and $e_{\xi^\theta}$ are the only two points of $\xi^\theta$ collinear to all points of   $(p^\theta)^\perp\cap e_{\xi^\theta}^\perp$; it follows that $e_\xi^\theta=e_{\xi^\theta}$. 

Now suppose that $\xi$ is parabolic in characteristic 2 (the argument also works for symplectic polar spaces in every characteristic). Then we denote by $H[u,v]$ the hyperbolic (or imaginary in the symplectic case) line of $\xi$ defined by two non-collinear points $u,v$, that is, $$H[u,v]=\{x\in\xi\mid x\perp y, \mbox{ for all } y\in u^\perp\cap v^\perp\}=(\{u,v\}^\perp)^\perp.$$ Then the foregoing implies that some point of $H[p,e_\xi]$ is mapped onto $e_{\xi^\theta}$. Now consider any point $q\perp p$ in $\xi$, with $q\notin e_\xi^\perp$. Then, by the same token, some point of $H[q,e_\xi]$ is mapped onto $e_{\xi^\theta}$. Since $H[p,e_\xi]\cap H[q,e_\xi]=\{e_\xi\}$, it again follows $e_\xi^\theta=e_{\xi^\theta}$. 

Hence we have shown that $E(p,p^\theta)$ is preserved under the action of $\theta$ and the lemma now follows from Thereom~\ref{thm:polarclass} for $\sE_7(\K)$,   $\sD_{6}(\K)$ and $\sC_{3}(\K)$. 
\end{proof}

Let $\theta$, $p$ and $\omega$ be as above, and let $p'\in E(p,p^\theta)$ correspond to $\omega$. Then $p'$ is opposite ${p'}^\theta$ and hence $E(p',{p'}^\theta)$ is preserved by $\theta$. Since $p'\in\omega$, we can apply Lemma~\ref{longrootinequator} and obtain that $\theta$ induces a long root elation in $E(p',{p'}^\theta)$, say with centre $c'$. Now set $\theta^*=\theta_{[c]}\theta_{[c']}$, with $\theta_{[c]}$ the central elation with centre $c$ (and similar for $\theta_{[c']}$). Then $\theta^*\theta^{-1}$ fixes every point of $E(p,p^\theta)\cup E(p',{p'}^\theta)$ (use the fact that, in $E(p,p^\theta)$, $(\theta_{[c]})_{p'}$ fixes ${p'}^{\perp\!\!\!\perp}\cap ({p'}^\theta)^{\perp\!\!\!\perp}$, which precisely coincides with $E(p,p^\theta)\cap E(p',{p'}^\theta)$). 

 Now Theorem~\ref{polar-copolar} follows from the following general proposition.
\begin{prop}\label{fixperpequators}
No nontrivial collineation of the long root geometry of (split) type $\sB_n$ ($n\geq 4$), $\sC_n$ ($n\geq 3$), $\sD_n$ $(n\geq 5$), $\sE_n$ ($8\geq n\geq 6$) or $\sF_4$ fixes two perpendicular equator geometries pointwise.  
\end{prop}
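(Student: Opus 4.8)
The plan is to prove Proposition~\ref{fixperpequators} by a geometric argument exploiting the rich fixed structure forced by fixing two perpendicular equator geometries, and then deriving a contradiction with the ``lower bound on opposition'' philosophy that pervades the paper, i.e.\ that a nontrivial collineation cannot have too small a displacement once its fixed locus is large. More precisely, suppose $\theta$ is a nontrivial collineation of the long root geometry $\mathscr{G}$ of one of the listed types fixing pointwise two perpendicular equators $E=E(p,p^\theta)$ and $E'=E(p',{p'}^\theta)$, where $p'\in E$ corresponds to a symp through $p$ (as in the setup preceding the proposition, so that $E$ and $E'$ really are perpendicular in the technical sense that $E\cap E'$ is the common ``doubly-symplectic'' locus). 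The first step is to show that $\theta$ then fixes a very large subgeometry: one shows that the union $E\cup E'$ already generates, under the operations of taking symps and paras, a subspace whose pointwise stabiliser forces $\theta$ to fix a thick subbuilding of the same type as $\Delta$, or at least a sub-Lie-incidence geometry of full rank.

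First I would set up the combinatorics of the two equators. Each of $E,E'$ is itself a long root geometry of one lower ``rank'' (of type $\sD_{n-1}$, $\sC_{n-1}$, $\sB_{n-1}$, $\sE_{n-1}$, $\sA_{5,\{1,5\}}$, or $\sC_{3,1}$ as appropriate), and a point of $E$ corresponds to a symp of $\mathscr{G}$ through $p$; the hypothesis $p'\in E$ singles out one such symp $\omega$. The key structural fact to establish is that every point of $\mathscr{G}$ lies on a symp through $p$ meeting $E'$, or more usefully: the convex closure (in the parapolar-space sense) of $E\cup E'$ is all of $\mathscr{G}$, or is at least a non-degenerate subgeometry on which $\theta$ acts trivially. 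Here I would use the fact that a point $x$ at symplectic distance from some point of $E$ and from some point of $E'$ is determined by its ``traces'' on $E$ and $E'$, so fixing both traces fixes $x$; and points not in this situation can be reached in two steps by collinearity chains, each step controlled by the previous one exactly as in the proof of Lemma~\ref{longrootinequator}. This is the combinatorial heart: propagating fixity from $E\cup E'$ to larger and larger subspaces by the ``one fixed line through each point'' / perspectivity-becomes-identity mechanism already used twice above.

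Once $\theta$ is shown to fix pointwise a full-rank sub-Lie-incidence geometry, I would conclude by one of two routes depending on type. Either: the fixed subgeometry is so large that by the Density/capped machinery ($\Type(\theta)\subseteq$ complement of the types of the fixed flags) $\theta$ must have empty opposition diagram, hence (by Theorem~\ref{thm:admissible} and the remarks on empty diagrams) $\theta=\mathrm{id}$, contradicting nontriviality. Or, more self-containedly: using the root-system/reflection-subgroup description from Section~\ref{sec:homologies}, the fixed structure of $\theta$ (conjugated into $H$, say) has root system $\Phi_\theta$ containing two perpendicular parabolic-type subsystems whose span is all of $V$; but then $\Phi_\theta$ spans $V$ and is closed, which for the listed (irreducible, simply-laced or $\sF_4$/$\sB$/$\sC$) root systems of rank $\geq 3$ forces $\Phi_\theta=\Phi$ (there is no proper closed subsystem of full rank that is the fixed system of a nontrivial element whose action on $V$ is semisimple with $1$-eigenspace all of $V$), hence $\theta=\mathrm{id}$. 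I expect the main obstacle to be the first route's propagation step in the two ``small residue'' cases $\sA_{5,\{1,5\}}$ and $\sC_{3,1}$, where the equator geometry $E$ has degenerate behaviour (e.g.\ for $\sF_{4,1}$ the lines of $\sC_{3,1}$ correspond to points of a symp of $E$, so $E$ has no genuine lines and the ``fixed line through each point'' argument must be replaced by a ``fixed symp/para through each point'' argument). Handling these degenerate cases carefully, and checking that the convex closure of $E\cup E'$ genuinely exhausts $\mathscr{G}$ rather than some proper ideal subspace, is where the real work lies; everything else is a reprise of the techniques in Lemmas~\ref{paraargument} and~\ref{longrootinequator}.
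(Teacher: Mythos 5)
Your proposal diverges from the paper's actual argument in a way that leaves a genuine gap, and both of your proposed concluding routes are flawed. The paper's proof is considerably more economical: it does \emph{not} show that the convex closure of $E\cup E'$ is all of $\mathscr{G}$, nor does it pass through reflection-subgroup combinatorics. Instead, for the $\sE$-types the key observation (which your proposal misses) is that $E_2=E(p_1,q_1)$ being fixed pointwise means the whole residue $\Res(p_1)$ is fixed pointwise (since symps, planes, lines through $p_1$ correspond to points, symps, paras of $E_2$). With that in hand, every symp $\xi$ through $p_1$ is fixed as a set, along with every line in $\xi$ through $p_1$; because $\xi$ is a hyperbolic quadric, if any one line of $\xi$ through $p_1$ is fixed \emph{pointwise} then the whole symp is fixed pointwise. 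Since $E_1$ is fixed pointwise and contains a line through $p_1$, this seed propagates by connectivity to give that $\theta$ fixes all points collinear or symplectic to $p_1$, i.e.\ $\theta$ is a central elation with centre $p_1$. The same argument gives centre $q_1$, and a central elation with two distinct centres is the identity. For the classical types $\sB/\sC/\sD$ the paper sidesteps all of this with a linear-algebra dimension count in the ambient $\PG(n,\FF)$, which your proposal does not address at all.

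Both of your proposed endgames would fail even if the propagation step were carried out. Route 2 asserts that a closed full-rank subsystem $\Phi_\theta\subseteq\Phi$ must equal $\Phi$; this is false, and Section~\ref{sec:homologies} is full of counterexamples (e.g.\ $h_{\omega_4}(-1)$ in $\sF_4$ has $\Phi_\theta=\sB_4$, full rank and closed; $h_{\omega_8}(-1)$ in $\sE_8$ has $\Phi_\theta=\sE_7\times\sA_1$). Moreover conjugating $\theta$ into $H$ presupposes $\theta$ is a homology, which is not given. Route 1 is also unsound: a nontrivial homology fixes a thick (sub)building of full type but certainly does not have empty opposition diagram, so ``large fixed locus $\Rightarrow$ $\Type(\theta)=\emptyset$'' is not a valid inference; and in any case Proposition~\ref{fixperpequators} is used inside the proof of the polar-copolar classification, so appealing to that classification here would risk circularity. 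What the argument really needs --- and what both your routes replace with something weaker --- is the geometric fact that ``$\theta$ central with centre $p_1$ and with centre $q_1\neq p_1$'' forces $\theta=\mathrm{id}$. Finally, your remark about the $\sF_4$ case is on the right track (the equator $\sC_{3,1}$ has no genuine lines), but the paper's fix is concrete: it produces a pointwise-fixed line through $p_1$ by intersecting the plane-shadows $\pi_p,\pi_q$ of a suitable symp with the special-perps of $q_2$ and $p_2$ and showing $L^\perp\cap M^\perp\subseteq E_1$; your proposal signals the difficulty without supplying this construction.
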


\begin{proof}
For the classical types $\sB/\sC/\sD$ this follows from the easy fact that an equator geometry spans a subspace of dimension $n-4/n-2/n-4$ of the ambient projective space $\PG(n,\FF)$, $n\geq 8/5/9$, respectively; hence if a collineation $\theta$ pointwise fixes two perpendicular equator geometries, then it fixes all points of two subspaces of dimension $n-4/n-2/n-4$ of $\PG(n,\FF)$, spanning $\PG(n,\FF)$, $n\geq 8/5/9$, and hence intersecting in at least one point, and so forcing $\theta$ to be the identity. 

Now consider the long root geometries of type $\sE$. Let $E_1,E_2$ be two perpendicular equator geometries, that is, $E_1$ is the equator geometry $E(p_2,q_2)$ for $p_2,q_2\in E_2$ and $E_2=E(p_1,q_1)$ with $p_1,q_1\in E_1$. Since $E_2$ is fixed pointwise, $\Res(p_1)$ is fixed pointwise. Hence every symp $\xi$ through $p_1$ is fixed, and every line in $\xi$ through $p_1$ is fixed. Moreover, the point $\xi\cap E_2$, which is opposite $p_2$ in $\xi$, is fixed. Since $\xi$ is hyperbolic, $\xi$ is fixed pointwise as soon as some line in $\xi$ through $p_1$ is fixed pointwise. By connectivity and the arbitrariness of $\xi$, it suffices that some line through $p_1$ is fixed pointwise, which is the case as $E_1$ contains a line through $p_1$ and is fixed pointwise. Hence all points collinear or symplectic to $p_1$ are fixed and so we have a central elation with centre $p_1$. But the same thing holds for $q_1$ and hence we have the identity.

At last assume we have the long root geometry of type $\sF_4$.   The same argument as in the previous paragraph shows that it suffices to find one line through $p_1$ that is pointwise fixed. To that aim, let $p_1'\in E_1$ be such that $p_1\perp\!\!\!\perp p_1'$. Then the symp $\xi:=\xi(p_1,p_1')$ corresponds to planes $\pi_p$ and $\pi_q$ through the points $p_2$ and $q_2$, respectively.  Then the lines $L:=\pi_p\cap q_2^{\Join}$ and  $M:=\pi_q\cap p_2^{\Join}$ are fixed and the points of $L^\perp\cap M^\perp$ belong to $E_1$. Hence $(p_1^\perp\cap {p_1'}^\perp)\cup(L^\perp\cap M^\perp)$ is fixed pointwise, which implies that $\xi$ is fixed pointwise (note that $p_1,p_1'\in L^\perp\cap M^\perp$).   

The proposition is proved.
\end{proof}

The proof of Theorem~\ref{polar-copolar} is complete, and the following theorem proves the `if' direction of Theorem~\ref{thm:pocopolarclass}. 

\begin{thm}\label{thm:final}
Every product of two perpendicular long root elations in $\sF_4(\FF)$ (respectively $\sE_{7}(\FF)$, $\sE_8(\FF)$) is domestic with opposition diagram~$\sF_{4;2}$ (respectively $\sE_{7;2}$, $\sE_{8;2}$). 
\end{thm}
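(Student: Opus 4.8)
The plan is to prove Theorem~\ref{thm:final} by a mostly algebraic argument, exploiting the fact that we already know (from Theorem~\ref{thm:unipotentdiags}) that a generic element of $U(\sX)$ has the claimed opposition diagram when $\sX$ is the polar-copolar diagram $\sF_{4;2}$, $\sE_{7;2}$, or $\sE_{8;2}$. Indeed, for each of these three cases, the polar-copolar diagram is polar closed, and the two highest roots $\varphi_1=\varphi$ (the highest root) and $\varphi_2$ (the highest root of the polar subsystem, which is $\varphi_{\sC_3}$, $\varphi_{\sD_6}$, $\varphi_{\sD_6}$ respectively) are perpendicular. Hence $x_{\varphi_1}(a)x_{\varphi_2}(b)$ with $a,b\neq 0$ is a product of two perpendicular long root elations, and by Theorem~\ref{thm:unipotentdiags} it has opposition diagram exactly the polar-copolar diagram. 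So the content of Theorem~\ref{thm:final} is precisely that \emph{every} product of two perpendicular long root elations is conjugate to such a ``generic'' element $x_{\varphi}(a)x_{\varphi_2}(b)$ — or at least has the same displacement and type.

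First I would reduce to a normal form. A product of two perpendicular long root elations is, by definition, conjugate to $x_{\alpha}(a)x_{\beta}(b)$ with $\alpha\perp\beta$, $a,b\neq 0$, and $\alpha,\beta$ long roots. Since all long roots lie in a single $W$-orbit, and $U_{\alpha}$ is conjugate into $U_{\varphi}$ by standard RGD/Chevalley facts (cf. the opening of the proof of Theorem~\ref{thm:RGD}), I can conjugate so that $\alpha=\varphi$ is the highest root. Then $\beta$ is a long root perpendicular to $\varphi$, hence $\beta\in\Phi_1$ (the polar subsystem generated by $\{\alpha_i : i\neq i_0\}$, since $\langle\beta,\varphi^\vee\rangle=\langle\beta,\omega\rangle=0$ means $\beta$ is a root of $\Phi_1$), and moreover $\beta$ is a long root of $\Phi_1$, which in the simply-laced cases $\sE_7,\sE_8$ means any root, and in $\sF_4$ the subsystem $\Phi_1=\sC_3$ so $\beta$ is a long root of $\sC_3$. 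The stabiliser of $\varphi$ in $W$ acts on $\Phi_1$ (since it fixes $\omega$), and I claim it acts transitively on the long roots of $\Phi_1$ perpendicular to... wait, they are all perpendicular to $\varphi$ automatically; I need transitivity of $\mathrm{stab}_W(\varphi)$ on the long roots of $\Phi_1$. For $\sE_7$: $\mathrm{stab}_W(\varphi)=W_{\sD_6}$ acts transitively on the $60$ roots of $\sD_6=\Phi_1$. For $\sE_8$: $\mathrm{stab}_W(\varphi)=W_{\sE_7}$ acts transitively on roots of $\sE_7=\Phi_1$... but wait, $\varphi_2=\varphi_{\sD_6}$ is not the highest root of $\Phi_1=\sE_7$; however $W_{\sE_7}$ is transitive on all $126$ roots of $\sE_7$ so I can still move $\beta$ to $\varphi_{\sD_6}$, or indeed to any fixed choice. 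For $\sF_4$: $\mathrm{stab}_W(\varphi)=W_{\sC_3}$ is transitive on the $6$ long roots of $\sC_3$. In all cases I conclude $x_{\varphi}(a)x_{\beta}(b)$ is conjugate (by an element of $N$ normalising $H$, picking up harmless scalars absorbed into the torus, which does not change the conjugacy class up to the already-established behaviour) to $x_{\varphi}(a')x_{\varphi_2}(b')$, reducing to the generic element already handled by Theorem~\ref{thm:unipotentdiags}.

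The main obstacle I anticipate is the bookkeeping around scalars and the torus: conjugating $\beta$ to $\varphi_2$ by $n\in N$ sends $x_\beta(b)$ to $x_{\varphi_2}(\pm b)$ and $x_\varphi(a)$ to $x_{\varphi'}(\pm a)$ for some root $\varphi'$, but $n$ need not fix $\varphi$ — I must choose $n$ in $\mathrm{stab}_W(\varphi)$, which I argued exists because that stabiliser is transitive on the relevant roots of $\Phi_1$. A cleaner route, avoiding even this, is to note that the ``if'' direction only requires: (i) $x_\varphi(a)x_\beta(b)$ is domestic, and (ii) it has the right type. For (i), since $\varphi$ and $\beta$ are perpendicular, $x_\varphi(a)$ and $x_\beta(b)$ commute, and $x_\varphi(a)x_\beta(b)\in U(\sY)$ where $\sY$ is the polar-closed subdiagram obtained by first removing $\varphi$ then removing the polar node of the component containing $\beta$; one checks $\sY=\sF_{4;2},\sE_{7;2},\sE_{8;2}$ respectively (this is exactly where perpendicularity and $\beta\in\Phi_1$ being a long root are used — it guarantees $\beta$ is conjugate to the \emph{highest} root of some component after removing $\varphi$, by transitivity of the component's Weyl group on its long roots). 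Then Theorem~\ref{thm:unipotentdiags}, applied to the generic element of $U(\sY)$ together with the conjugacy just described, gives both domesticity and the opposition diagram. So concretely, the write-up will: (1) conjugate to put one elation at $x_\varphi$; (2) use transitivity of $\mathrm{stab}_W(\varphi)$ on long roots of $\Phi_1$ to conjugate the other elation to $x_{\varphi_2}$, noting $\varphi_2$ is precisely the highest root removed at stage~2 of the polar-closed algorithm for the polar-copolar diagram; (3) invoke Theorem~\ref{thm:unipotentdiags} to conclude $x_\varphi(a)x_{\varphi_2}(b)$ (a generic element of $U(\sX)$ for $\sX$ the polar-copolar diagram) has opposition diagram $\sX$; and (4) remark that for $\sE_7$ and $\sE_8$ the single-conjugacy-class claim of Theorem~\ref{thm:pocopolarclass} follows since we have shown every such product is conjugate to the fixed element $x_\varphi(1)x_{\varphi_2}(1)$ up to scalars, and the scalars can be absorbed because $h_\omega(c)$ and suitable torus elements conjugate $x_\varphi(a)\mapsto x_\varphi(a c^{\pm})$ (whereas in $\sF_4$, as noted after the theorem statement and developed in Theorem~\ref{thm:F4class}, the torus does not act with enough scalars to merge all such products, leaving possibly two classes over suitable fields).
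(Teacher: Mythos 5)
Your proposal is correct and takes essentially the same route as the paper's proof: the paper also reduces by conjugating one elation to $x_{\varphi}$ and then uses transitivity of $\mathrm{stab}_W(\varphi)$ on the long roots of the polar subsystem $\Phi_1$ to bring the second elation to $x_{\varphi_2}$, after which Theorem~\ref{thm:unipotentdiags} finishes. (Your unease about $\varphi_2$ in the $\sE_8$ case is well placed: the polar-closed algorithm for $\sE_{8;2}$ actually produces $\varphi_2=\varphi_{\sE_7}$, not $\varphi_{\sD_6}$ as stated in the proof of Theorem~\ref{thm:unipotentdiags}, but as you observe transitivity of $W_{\sE_7}$ on $\Phi_{\sE_7}$ makes the argument go through either way.)
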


\begin{proof}
Consider the $\sF_4$ case. We claim that all pairs $(\alpha,\beta)$ of perpendicular long roots are conjugate under~$W$. Since $W$ is transitive on long roots it suffices to show that the stabiliser $W_{\varphi}$ of $\varphi$ in $W$ is transitive on the set of long roots perpendicular to~$\varphi$. These roots are precisely the long roots of the $\sC_3$ subsystem, and since $W_{\varphi}=\langle s_2,s_3,s_4\rangle=W_{\sC_3}$ the result follows. Hence we may assume that $\theta=x_{\varphi}(a)x_{\varphi_{\sC_3}}(b)$, and then the result follows from Theorem~\ref{thm:unipotentdiags}. The arguments for $\sE_7$ and $\sE_8$ are similar. 
\end{proof}

\section{Domestic automorphisms in split types $\sE_6$, $\sF_4$, and $\sG_2$}\label{sec:classifications}

In this section we give the complete classification of domestic automorphisms of split buildings of types $\sE_6$, $\sF_4$, and $\sG_2$. 

\subsection{Classification of domestic automorphisms of split~$\sF_4$}\label{sec:F4C}

In this section we classify domestic automorphisms of split $\sF_4$ buildings. By \cite[Lemma~4.1]{PVM:19a} no duality of a thick $\sF_4$ building is domestic, and so we may restrict to collineations. The complete list of domestic collineations of the small building $\sF_4(\FF_2)$ is given in \cite[Theorem~4.3]{PVM:19b}, and so we may assume that $|\FF|>2$, and so all automorphisms are capped. Thus the possible opposition diagrams of nontrivial domestic collineations are $\sF_{4;1}^1$, $\sF_{4;1}^4$, and $\sF_{4;2}$. The automorphisms with diagram $\sF_{4;1}^1$ are the long root elations (see Theorem~\ref{thm:polarclass}), and the collineations with diagram $\sF_{4;2}$ are products of perpendicular root elations (see Theorem~\ref{thm:pocopolarclass}). Thus our main task is to consider the diagram $\sF_{4;1}^4$. 

We will prove the following theorem:
  
 \begin{thm}\label{thm:F41hom}
 Let $\theta$ be an automorphism of $\Delta_{\sF_4}(\FF)$ with opposition diagram $\sF_{4,1}^4$. If $\mathrm{char}(\FF)=2$ then $\theta$ is a short root elation, and if $\mathrm{char}(\FF)\neq 2$ then $\theta$ is a homology. 
 \end{thm}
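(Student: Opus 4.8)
The plan is to establish the sharper claim that \emph{any} collineation $\theta$ of $\Delta=\Delta_{\sF_4}(\FF)$ with $\Type(\theta)=\{4\}$ fixes pointwise either a geometric hyperplane or a full subbuilding of the metasymplectic space $\mathscr{G}=\sF_{4,4}(\FF)$; in the first case $\theta$ is a short root elation, and in the second it is a homology. The characteristic dichotomy of the theorem then follows formally: by Theorem~\ref{thm:short} a short root elation has opposition diagram $\sF_{4;1}^4$ exactly when $\mathrm{char}(\FF)=2$ (and otherwise diagram $\sF_{4;2}$), while by Theorem~\ref{thm:homologyF4} a homology is domestic only when $\mathrm{char}(\FF)\neq 2$, in which case $\Delta_\theta'$ has type $\sB_4$ and $\Diag(\theta)=\sF_{4;1}^4$. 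Thus in characteristic $2$ only the elation alternative is compatible with the hypothesis, and in characteristic $\neq 2$ only the homology alternative.

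First I would record the reductions. As $\sF_{4;1}^4$ is type preserving, $\theta$ is a collineation, and being non-full its diagram forces $\theta$ to be domestic with $\Type(\theta)=\{4\}$; equivalently, $\theta$ is $\{1\}$-, $\{2\}$- and $\{3\}$-domestic while mapping some type $4$ vertex onto an opposite. We may assume $|\FF|>2$ (the case $\FF=\FF_2$ is contained in \cite[Theorem~4.3]{PVM:19b}), so $\theta$ is capped and $\disp(\theta)=\ell(w_{S\backslash\{4\}}w_0)$. A nontrivial field-automorphism component is excluded since it would already move a type $1$ vertex to an opposite, contradicting $\{1\}$-domesticity; so $\theta$ is linear. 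In the language of $\mathscr{G}$ (points $=$ type $4$ vertices; lines, planes, symps $=$ type $3,2,1$ vertices) the hypothesis says that $\theta$ maps some point of $\mathscr{G}$ onto an opposite point, but no line, plane or symp onto an opposite.

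The heart of the proof is the geometric analysis in $\mathscr{G}$, in the spirit of Section~\ref{sec:polarcopolar} and of \cite{HVM:12,HVM:13}. Fix a point $p$ of $\mathscr{G}$ with $p^\theta$ opposite $p$. Since $\xi$ and $\xi^\theta$ are opposite in $\Delta$ if and only if $\xi$ and $\xi^{\theta_p}$ are opposite in $\Res(p)$, the induced collineation $\theta_p=\proj_p\circ\theta$ of the residue $\Res(p)$ --- a building of type $\sC_3$, i.e.\ a rank $3$ polar space --- is domestic, hence of one of the few possible types (by the classification of domestic collineations of polar spaces, cf.\ \cite{TTM:12}). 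Running this at every point mapped to an opposite, and using the convexity and transitivity properties of symplecta in $\mathscr{G}$ together with the behaviour of central collineations on paras and pencils of paras (exactly as in Lemmas~\ref{paraargument} and~\ref{longrootinequator} and Proposition~\ref{fixperpequators}), one shows that the fixed structure of $\theta$ in $\mathscr{G}$ is either a (singular) geometric hyperplane or a full sub-metasymplectic-space; in the first case $\theta$ is the central collineation with that axis, i.e.\ a short root elation, and in the second $\theta$ stabilises a thick subbuilding and is a homology. When $\mathrm{char}(\FF)=2$ and $\FF$ is perfect this can be shortened: $\Delta$ then admits a polarity $\rho$ interchanging the node types $1\leftrightarrow 4$, $2\leftrightarrow 3$ and the long and short root subgroups, so $\rho\theta\rho^{-1}$ has $\Type=\{1\}=\wp$, is a long root elation by Theorem~\ref{thm:polarclass}, and hence $\theta$ is a short root elation; for imperfect $\FF$ one transports the point-opposition data of $\theta$ through the isometric embedding $\sF_{4,4}(\FF)\hookrightarrow\sF_{4,1}(\FF)$ of the proof of Theorem~\ref{thm:short} and again invokes Theorem~\ref{thm:polarclass}.

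The main obstacle is precisely that geometric step: proving that a collineation of $\mathscr{G}=\sF_{4,4}(\FF)$ which is $\{1\}$-, $\{2\}$-, $\{3\}$-domestic but not $\{4\}$-domestic fixes pointwise exactly a geometric hyperplane or a full subbuilding, and in particular ruling out any ``unipotent contamination'' that would either enlarge $\Type(\theta)$ beyond $\{4\}$ or produce a fixed structure incompatible with a short root elation or a homology. This is where the detailed metasymplectic geometry of $\sF_{4,4}(\FF)$ (projections, equators, and the action of central collineations on paras) is needed; everything else is bookkeeping with the displacement formula and the list of admissible diagrams of type $\sF_4$.
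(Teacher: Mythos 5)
Your overall framework is the same as the paper's: reduce to the metasymplectic space $\mathscr{G}=\sF_{4,4}(\FF)$, show that $\theta$ fixes pointwise a geometric hyperplane $H$ of the $\sB_4$ polar space $\widehat{E}(p,p^\theta)$ (Lemma~\ref{geomhypeq}), then classify the possibilities for $H$ and hence for $\theta$. The paper's organisation is slightly different from yours: it does not first prove an abstract ``elation or homology'' dichotomy and then sort by characteristic, but rather lets the characteristic enter through the classification of $H$. In characteristic $2$ (Lemma~\ref{perpor3}) $H$ is either a perp or a nonsingular Witt index $3$ subquadric; in characteristic $\neq 2$, $H$ is nonsingular of Witt index $3$ or $4$. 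Lemma~\ref{lem:new2} then rules out Witt index $3$ in all cases via a pencil-of-lines argument in $\widehat{E}(p,p^\theta)$, forcing either the perp case (char $2$, central elation, Lemma~\ref{perpcase}) or the Witt index $4$ case (char $\neq 2$, fixed apartment and panel, homology). Your version is logically equivalent but postpones the genuine work: the Witt index $3$ option is a live possibility a priori and eliminating it is not ``formal bookkeeping.''

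The one concrete gap is your alternative treatment of characteristic~$2$. Conjugating by the polarity $\rho$ is a valid shortcut for perfect $\FF$: $\rho\theta\rho^{-1}$ is a collineation with $\Type=\{1\}$, hence a long root elation by Theorem~\ref{thm:converse}, so $\theta$ is a short root elation. But the patch for imperfect $\FF$ does not work as stated. The isometric embedding used in Theorem~\ref{thm:short} is not onto when $\FF$ is imperfect, and it arises from the special isogeny, not from a building automorphism; there is no transport of $\theta$ to a collineation of $\sF_{4,1}(\FF)$ to which Theorem~\ref{thm:polarclass} could be applied. That embedding gives a one-way implication (short root elation $\Rightarrow$ diagram $\sF_{4;1}^4$), which is how the paper uses it in Theorem~\ref{thm:short}, but the converse — which is what Theorem~\ref{thm:F41hom} needs — cannot be read off it. For imperfect $\FF$ of characteristic $2$ you are therefore thrown back on the geometric analysis you flagged as the ``main obstacle,'' i.e.\ Lemmas~\ref{geomhypeq}--\ref{lem:new2}, which is exactly what the paper supplies.
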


Let $\mathscr{G}=\sF_{4,4}(\FF)$ be the short root geometry of the building $\Delta$ of $\sF_4(\FF)$. Then $\mathscr{G}$ is a metasymplectic space with symps isomorphic to a symplectic polar space of rank~3 (see \cite[Chapter 18]{Sch:10} for details). If $\theta$ is a domestic collineation with opposition diagram $\sF_{4;1}^4$ then the only objects mapped to opposite objects are points. 

Recall from \cite[\S5.2]{ADS-NSNS-HVM} that two opposite points $p,q$ in $\mathscr{G}$ define a geometry $\widehat{E}(p,q)$ of type $\sB_4$ as follows: Let $E(p,q)$ denote the equator geometry of $p,q$ (as in Section~\ref{sec:polarcopolar}), and take $\widehat{E}(p,q)$ to be the union of all equator geometries $E(x,y)$ for $x,y$ opposite points in $E(p,q)$. The lines of $\widehat{E}(p,q)$ are the intersections of $\widehat{E}(p,q)$ with the symps $\xi(u,v)$, with $\{u,v\}$ symplectic pairs in $\widehat{E}(p,q)$. These intersections are imaginary lines of symplectic polar spaces. Note that two lines of $\widehat{E}(p,q)$ are opposite (as polar space lines) if and only if the corresponding symps are opposite in $\mathscr{G}$. 

\begin{lemma}\label{geomhypeq}
Let $p$ be a point of $\mathscr{G}$ mapped onto an opposite point. Then $\theta$ pointwise fixes a geometric hyperplane of $\widehat{E}(p,p^\theta)$.
\end{lemma}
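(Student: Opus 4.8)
The statement to prove is that if $\theta$ is a collineation of the metasymplectic space $\mathscr{G}=\sF_{4,4}(\FF)$ with opposition diagram $\sF_{4;1}^4$, and $p$ is a point mapped onto an opposite point $p^\theta$, then $\theta$ fixes pointwise a geometric hyperplane of the $\sB_4$-geometry $\widehat{E}(p,p^\theta)$. My plan is to exploit the strong domesticity hypothesis: since $\theta$ has diagram $\sF_{4;1}^4$, \emph{no} symp, plane, or line of $\mathscr{G}$ is mapped onto an opposite one; only points can be. I will combine this with the structure of $\widehat{E}(p,p^\theta)$ as a polar space of type $\sB_4$, inside which the lines correspond to certain symps of $\mathscr{G}$ (the imaginary lines of symplectic symps through symplectic pairs of the equator geometry), and opposition of lines in $\widehat{E}(p,p^\theta)$ corresponds to opposition of the associated symps in $\mathscr{G}$.

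\textbf{Key steps.} First I would record that $\theta$ preserves $\widehat{E}(p,p^\theta)$: it preserves the pair $\{p,p^\theta\}$ up to the obvious action (it sends $p$ to $p^\theta$, and one checks $p^\theta$ goes to a point opposite $p^\theta$ that lies in the right configuration, or more simply one argues with $\theta^2$ or replaces $\theta$ by noting $E(p,p^\theta)=E(p^\theta,p)$ is canonically associated to the unordered pair), hence preserves the equator geometry $E(p,p^\theta)$ and the union $\widehat{E}(p,p^\theta)$ of all equators $E(x,y)$ with $x,y$ opposite in $E(p,p^\theta)$. This uses the geometric description of equators already deployed in Section~\ref{sec:polarcopolar} (cf.\ the argument of Lemma~\ref{longrootinequator}). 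So $\theta$ induces a collineation $\overline\theta$ of the polar space $\Pi:=\widehat{E}(p,p^\theta)$ of type $\sB_4$. Second, I claim $\overline\theta$ is \emph{line-domestic} on $\Pi$: a line $L$ of $\Pi$ mapped to an opposite line would correspond to a symp $\xi$ of $\mathscr{G}$ with $\xi$ and $\xi^\theta$ opposite in $\mathscr{G}$, contradicting that $\theta$ has diagram $\sF_{4;1}^4$ (no symp is mapped to an opposite). Third, I invoke the classification of domestic collineations of polar spaces: a line-domestic collineation of a polar space of rank $4$ is either point-domestic (in which case, by the standard structure theory, e.g.\ \cite{TTM:12} / \cite{PTM:15}, it pointwise fixes a geometric hyperplane or a sub-polar-space — actually a line-domestic but not point-domestic collineation of a thick polar space of rank $n\geq 2$ fixes pointwise a geometric hyperplane) or it does fix a point to an opposite point but then it still fixes a geometric hyperplane pointwise. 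In all surviving cases one gets that $\overline\theta$ pointwise fixes a geometric hyperplane $\mathcal{H}$ of $\Pi$; this is exactly the content of the known classification (line-domestic $\Rightarrow$ the fixed structure is either all of $\Pi$, a hyperplane, or an ideal sub-polar-space, each of which contains a geometric hyperplane, or $\theta$ is the identity — and $\theta$ is certainly not the identity on $\mathscr{G}$). I would then transport ``fixes a geometric hyperplane'' back: a geometric hyperplane of $\Pi$ is a geometric hyperplane of $\widehat{E}(p,p^\theta)$ by definition, completing the proof.

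\textbf{Main obstacle.} The delicate point is Step~2 and the precise bookkeeping of the correspondence between objects of $\widehat{E}(p,p^\theta)$ and objects of $\mathscr{G}$: I must be sure that a line of the $\sB_4$-geometry being mapped to an opposite line genuinely forces a symp (or some flag) of $\mathscr{G}$ to be mapped to an opposite simplex of the same type, so that the $\sF_{4;1}^4$ hypothesis bites. The excerpt already says ``two lines of $\widehat{E}(p,q)$ are opposite (as polar space lines) if and only if the corresponding symps are opposite in $\mathscr{G}$'', so this is essentially handed to us, but one must also check that the \emph{point-residues} and \emph{plane-residues} of $\mathscr{G}$ restricted to $\widehat{E}(p,p^\theta)$ behave compatibly, i.e.\ that $\overline\theta$ being ``domestic enough'' on $\Pi$ is implied purely by points-only-domesticity of $\theta$ on $\mathscr{G}$. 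A secondary subtlety is invoking the polar-space classification in the correct generality (rank $4$, arbitrary field, symplectic-type residues possibly in characteristic $2$): I would cite the relevant results on domestic collineations of polar spaces and check that no exceptional small cases intervene, since $|\FF|>2$ is not assumed here — though for $\sF_4(\FF_2)$ one can fall back on \cite[Theorem~4.3]{PVM:19b}. Modulo these citations, the argument is short; the real work is the careful identification of opposition of lines in $\widehat{E}(p,p^\theta)$ with opposition of symps in $\mathscr{G}$, which the cited reference \cite{ADS-NSNS-HVM} supplies.
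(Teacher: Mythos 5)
Your high-level plan — pass to the $\sB_4$-polar space $\widehat{E}(p,p^\theta)$, observe that $\theta$ acts there line-domestically because lines of $\widehat{E}(p,p^\theta)$ correspond to symps of $\mathscr{G}$ and these are never sent to opposites under a $\sF_{4;1}^4$-automorphism, and then quote the classification of line-domestic collineations of polar spaces \cite[Theorem~5.1]{TTM:12} — is exactly the route the paper takes. However, there is a genuine gap in your Step~1, and it is not a bookkeeping issue but a wrong argument.

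You assert that $\theta$ preserves $\widehat{E}(p,p^\theta)$ because $\theta$ "preserves the pair $\{p,p^\theta\}$ up to the obvious action", and you offer "argue with $\theta^2$" or "$E(p,p^\theta)$ is canonically associated to the unordered pair" as shortcuts. None of these work: $\theta$ sends $p\mapsto p^\theta$, but it sends $p^\theta\mapsto p^{\theta^2}$, which is in general a \emph{third} point, not $p$. So $\theta$ does \emph{not} preserve $\{p,p^\theta\}$, and a priori $\theta(E(p,p^\theta))=E(p^\theta,p^{\theta^2})\neq E(p,p^\theta)$. The paper's actual argument is different and is the crux of the lemma: since $\theta$ has opposition diagram $\sF_{4;1}^4$, only points are mapped to opposites, so the induced map $\theta_p=\mathrm{proj}_p\circ\theta$ on $\Res(p)$ sends nothing to an opposite, hence is the identity. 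The identity trivially satisfies the central-elation properties invoked in Lemma~\ref{paraargument}, so the entire proof of Lemma~\ref{longrootinequator} applies verbatim and shows, symp by symp, that $e_\xi^\theta=e_{\xi^\theta}$ for all symps $\xi$ through $p$; this gives that $\theta$ not only stabilizes but \emph{pointwise fixes} $E(p,p^\theta)$, from which stability of $\widehat{E}(p,p^\theta)$ is immediate. You gesture at Lemma~\ref{longrootinequator} as background for "the geometric description of equators", but you do not use it as the engine that produces the stabilization — and your replacement for that engine fails. A secondary quibble: your paraphrase of the polar-space classification ("either all of $\Pi$, a hyperplane, or an ideal sub-polar-space") is not quite the content of \cite[Theorem~5.1]{TTM:12}; what you actually need, and what that theorem gives, is that a line-domestic, non-point-domestic collineation of a thick polar space of rank $\geq 2$ pointwise fixes a geometric hyperplane, and $\overline\theta$ is indeed non-point-domestic because $p$ and $p^\theta$ lie in $\widehat{E}(p,p^\theta)$ and are opposite there.
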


\begin{proof}
Let $p$ be a point of $\mathscr{G}$ mapped onto an opposite. Then $\theta_p$ is the identity. 
Since the identity automatically and trivially satisfies the properties of central elations mentioned in Lemma~\ref{paraargument}, it follows from the proof of Lemma~\ref{longrootinequator} that $\theta$ pointwise fixes $E(p,p^\theta)$. Hence $\theta$ stabilizes $\widehat{E}(p,p^\theta)$. Since (opposite) lines of $\widehat{E}(p,p^\theta)$ correspond to (opposite) symps in $\mathscr{G}$, $\theta$ induces a line-domestic collineation in $\widehat{E}(p,p^\theta)$. The assertion now follows from \cite[Theorem 5.1]{TTM:12}.
\end{proof}

In the case that $\mathrm{char}(\FF)=2$ we need a more precise version of the above lemma.

\begin{lemma}\label{perpor3}
Let $p$ be a point mapped onto an opposite. If $\mathrm{char}(\FF)=2$ then $\theta$ pointwise fixes either the perp of a point in $\widehat{E}(p,p^\theta)$, or a subspace of $\widehat{E}(p,p^\theta)$ inducing a non-degenerate polar space of Witt index $3$ in $\widehat{E}(p,p^\theta)$. In both cases, $p^{\theta^2}=p$.  
\end{lemma}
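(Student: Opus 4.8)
The plan is to build on Lemma~\ref{geomhypeq} and the structure theory of line-domestic collineations of $\sB_4$ buildings established in \cite{TTM:12}. Since $\theta$ has opposition diagram $\sF_{4;1}^4$, the residue automorphism $\theta_p$ on $\Res(p)$ is the identity, so by the proof of Lemma~\ref{longrootinequator} the collineation $\theta$ fixes $E(p,p^\theta)$ pointwise, hence stabilises $\widehat{E}(p,p^\theta)$ and induces a line-domestic collineation $\bar\theta$ on this $\sB_4$ building. Line-domestic collineations of thick buildings of type $\sB_4$ were classified in \cite[Theorem~5.1]{TTM:12}: the fixed structure is a geometric hyperplane, and more precisely (for the relevant rank) it is either the perp $x^\perp$ of a point $x$, or a non-degenerate polar subspace of Witt index~$3$ (a ``full subpolar space'' of corank~$1$), or a degenerate hyperplane of a specific type. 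The first step is therefore to recall this trichotomy and to eliminate, in the $\mathrm{char}(\FF)=2$ case, any hyperplane type other than the two listed in the statement; here the point is that in characteristic~$2$ the imaginary lines of the symplectic polar spaces appearing inside $\widehat{E}(p,p^\theta)$ behave as in \cite[\S5.2]{ADS-NSNS-HVM}, and the ovoidal/degenerate hyperplanes that could occur in odd characteristic are excluded. Concretely I would invoke \cite[Theorem~5.1]{TTM:12} together with the description of $\widehat{E}(p,p^\theta)$ from \cite[\S5.2]{ADS-NSNS-HVM} and note that a line-domestic collineation of a symplectic-type $\sB_4$ geometry in characteristic~$2$ fixes pointwise a hyperplane that is either $x^\perp$ or a non-degenerate Witt-index-$3$ subspace.

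The second step is to deduce $p^{\theta^2}=p$ in both cases. For this I would use the action of $\theta$ on the symps of $\mathscr{G}$ through $p$. Each symp $\xi$ of $\mathscr{G}$ on $p$ meets $\widehat{E}(p,p^\theta)$ in an (imaginary) line $L_\xi$, and $\xi\mapsto L_\xi$ is a bijection between symps of $\mathscr{G}$ through $p$ and lines of $\widehat{E}(p,p^\theta)$; since $\theta_p=\mathrm{id}$, the symps through $p$ are all fixed by $\theta$ (as the residue of $p$ is fixed pointwise), so all the lines $L_\xi$ are fixed by $\bar\theta$. In the case where $\bar\theta$ fixes $x^\perp$ pointwise for a point $x$ of $\widehat{E}(p,p^\theta)$, the fixed lines passing near $x$ force $\bar\theta$ to be a central elation (or the identity) with centre $x$, an involution up to a square that is trivial on the relevant residue; translating back through the correspondence between $\widehat{E}(p,p^\theta)$-points and objects of $\mathscr{G}$, the point $p$ is recovered as a distinguished object attached to the pair $(x,\widehat{E}(p,p^\theta))$ which is $\theta^2$-fixed, giving $p^{\theta^2}=p$. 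In the Witt-index-$3$ case, the fixed subspace is a geometric hyperplane of corank~$1$, and $\bar\theta$ acts on $\widehat{E}(p,p^\theta)$ as a ``reflection''-type collineation whose square fixes every line $L_\xi$, hence fixes every symp of $\mathscr{G}$ on $p$ setwise and moreover fixes $E(p,p^\theta)$ pointwise; a symp of $\mathscr{G}$ on $p$ together with the pointwise-fixed equator geometry determines $p$ uniquely, so $p^{\theta^2}=p$ once again.

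The main obstacle I anticipate is the careful bookkeeping of the correspondence between the objects of $\mathscr{G}$ incident with $p$ and the objects of the $\sB_4$ geometry $\widehat{E}(p,p^\theta)$ — in particular showing that a $\bar\theta$-fixed configuration inside $\widehat{E}(p,p^\theta)$ (a point $x$ with $x^\perp$ fixed, resp.\ a non-degenerate polar subspace) pins down $p$ rather than a larger $\theta$-orbit of points. I would handle this by exploiting that $E(p,p^\theta)=E(p^\theta,p^{\theta^2})$-type symmetry: $\theta$ maps the pair $(p,p^\theta)$ to $(p^\theta,p^{\theta^2})$, hence maps $\widehat{E}(p,p^\theta)$ to $\widehat{E}(p^\theta,p^{\theta^2})$, and since $\theta$ stabilises $\widehat{E}(p,p^\theta)$ we get $\widehat{E}(p^\theta,p^{\theta^2})=\widehat{E}(p,p^\theta)$; two points of $\mathscr{G}$ determining the same $\widehat E$-geometry and sharing a common perp-relation with the fixed hyperplane must coincide, yielding $p^{\theta^2}=p$. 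This uses only \cite[Theorem~5.1]{TTM:12}, \cite[\S5.2]{ADS-NSNS-HVM}, Lemma~\ref{paraargument}, Lemma~\ref{longrootinequator} and Lemma~\ref{geomhypeq}, all available at this point in the paper.
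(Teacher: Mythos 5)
Your overall frame (Lemma~\ref{geomhypeq} gives a pointwise fixed geometric hyperplane of $\widehat{E}(p,p^\theta)$, then analyse which hyperplane it is) is the right starting point, but both halves of your argument have genuine gaps. For the dichotomy, citing \cite[Theorem~5.1]{TTM:12} gives nothing beyond what Lemma~\ref{geomhypeq} already says: the fixed structure is \emph{some} geometric hyperplane. You then assert, without proof, that in characteristic $2$ the only possibilities are a point-perp or a non-degenerate Witt-index-$3$ subspace. The paper obtains this not from a classification of hyperplanes of the mixed $\sB_4$ geometry, but from a concrete coordinate argument: $\widehat{E}(p,p^\theta)$ is the set of points of the symplectic space $W(7,\FF)$ with $x_0x_1+\cdots+x_6x_7\in\FF^2$, the fixed projective hyperplane must contain the span $X_6=X_7=0$ of $E(p,p^\theta)$, hence lies in the pencil $X_6=kX_7$, and the two cases of the lemma are exactly $k\in\FF^2$ (perp of the point $(0,\ldots,0,k,1)$) versus $k\notin\FF^2$ (projection gives the Witt-index-$3$ subgeometry $x_0x_1+x_2x_3+x_4x_5\in\FF^2+k\FF^2$). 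Your proposal never uses this pencil structure, and without it the exclusion of other hyperplane types (e.g.\ a Witt-index-$4$ section as occurs in odd characteristic in Lemma~\ref{lem:new2}) is not justified.

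The more serious gap is the claim $p^{\theta^2}=p$. Your arguments rest on recovery statements -- that $p$ is ``a distinguished object attached to the pair $(x,\widehat{E}(p,p^\theta))$'', or that a symp through $p$ together with the pointwise fixed $E(p,p^\theta)$ ``determines $p$ uniquely'', or that two points determining the same $\widehat{E}$-geometry ``must coincide''. None of these is proved, and they are precisely the kind of uniqueness that breaks down in characteristic $2$: the paper's proof of Lemma~\ref{collpointlinefixed} notes that when $\mathrm{char}(\FF)\neq 2$ the points $x,x^\theta$ are the \emph{only} points of $\widehat{E}(x,x^\theta)$ symplectic to all of $E(x,x^\theta)$, and in characteristic $2$ it must instead appeal to the present lemma because a whole imaginary line of such points exists. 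The correct route, and the one the paper takes, is to show that $\theta$ restricted to $\widehat{E}(p,p^\theta)$ is an involution: in the perp case because the restriction is a central collineation of $\PG(7,\FF)$ preserving the alternating form, hence an elation, which is involutive in characteristic $2$; in the $k\notin\FF^2$ case by the explicit computation on the last two coordinates ($a=k$, $c=kb+1$, and invariance of $\widehat{E}(p,p^\theta)$ forcing $bk=1$). Since $p\in\widehat{E}(p,p^\theta)$, involutivity immediately gives $p^{\theta^2}=p$. Your proposal contains no argument for involutivity at all in the Witt-index-$3$ case (``reflection-type collineation whose square fixes every line $L_\xi$'' is not a proof), so as written the second assertion of the lemma is not established.
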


\begin{proof}
In this case, $\widehat{E}(p,p^\theta)$ may be identified with the polar space (of type $\mathsf{B_4}$) obtained from the standard symplectic polar space in $\PG(7,\FF)$ with alternating form $x_0y_1+x_1y_0+\cdots x_6y_7+x_7y_6$ by only considering the points whose coordinates $(x_0,\ldots,x_7)$ satisfy $x_0x_1+x_2x_3+x_4x_5+x_6x_7\in\FF^2$, where the latter is the subfield of squares of $\FF$. Without loss we may assume that $p=(0,0,\ldots,1,0)$ is mapped onto $(0,\ldots,0,1)$. By Lemma~\ref{geomhypeq} and its proof, $\theta$ fixes pointwise a hyperplane of $\PG(7,\FF)$ containing the subspace with equations $X_6=X_7=0$. Hence some point $t=(0,\ldots,0,k,1)$ is fixed, with $k\in\FF$, along with all points having coordinates $(x_0,x_1,\ldots,x_5,k,1)$. If $k\in\FF^2$, then $t\in\widehat{E}(p,p^\theta)$ and $\theta$ fixes the perp of $t$. Then $\theta$ is an elation in $\PG(7,\FF)$ and hence an involution on $\widehat{E}(p,p^\theta)$. 

Now assume $k\notin\FF^2$. Let $a,b,c\in\FF$ be defined by $$\theta:(x_0,\ldots,x_5,x_6,x_7)\mapsto (x_0,\ldots,x_5,ax_7,bx_6+cx_7).$$ Expressing that $(0,\ldots,0,1,k,k,1)$ is fixed, we obtain $a=k$ and $c=kb+1$. Expressing that the image of an arbitrary point of $\widehat{E}(p,p^\theta)$ stays inside of $\widehat{E}(p,p^\theta)$, we get $bk=1$ and so $\theta$ is involutive on $\widehat{E}(p,p^\theta)$. Moreover, we can now project the fixed point set from $(0,\ldots,0,k,1)$ onto the subspace $X_6=X_7=0$ and obtain the polar space whose points have coordinates $(x_0,x_1,\ldots,x_5)$ with $x_0x_1+x_2x_3+x_4x_5\in\FF^2+k\FF^2$. That is a polar space of Witt index~$3$. 
\end{proof}

\begin{lemma}\label{perpcase}
Assume that $\mathrm{char}(\FF)=2$, and suppose that whenever a point $p$ is mapped onto an opposite point, $\theta$ pointwise fixes the perp of a point in $\widehat{E}(p,p^\theta)$. Then $\theta$ is a central elation. 
\end{lemma}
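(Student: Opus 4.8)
The goal is to upgrade the pointwise-fixed-perp hypothesis, valid at every point mapped onto an opposite point, to the global conclusion that $\theta$ is a central elation of $\mathscr{G}=\sF_{4,4}(\FF)$ (that is, that $\theta$ fixes all points collinear and symplectic to a single point $c$, making it a short root elation). First I would pick a point $p$ mapped onto an opposite point (such a point exists since the opposition diagram is $\sF_{4;1}^4$, so $\Opp(\theta)$ is nonempty and consists of points). By Lemma~\ref{perpor3} the hypothesis here means $\theta$ restricted to $\widehat{E}(p,p^\theta)$ is an involutive elation fixing the perp $t^\perp$ of some point $t$; in particular $\theta$ is an involution on $\widehat{E}(p,p^\theta)$, and by Lemma~\ref{geomhypeq} $\theta$ fixes the equator geometry $E(p,p^\theta)$ pointwise. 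The point $t$ should be the candidate centre $c$ (or rather, a point closely controlled by $c$); I would first show $t$ is a fixed point of $\theta$ as a collineation of $\mathscr{G}$, not just of $\widehat{E}(p,p^\theta)$.

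The central step is to propagate the local fixing. Since $\theta$ pointwise fixes $E(p,p^\theta)$, and since $E(p,p^\theta)$ is (for type $\sF_4$) a long root geometry of type $\sC_{3,1}(\FF)$ inside $\Res(p)$ — more precisely $\theta_p=\mathrm{id}$ on $\Res(p)$ — I can run the argument of Proposition~\ref{fixperpequators} and Lemma~\ref{longrootinequator}: for any point $p'\in E(p,p^\theta)$ with $p'\perp\!\!\!\perp p'^\theta$ (which holds for all such $p'$ since they are fixed hence opposite to... wait, no: $p'$ is fixed, so it is not opposite $p'^\theta=p'$). Let me instead proceed via the symps through $p$. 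Each symp $\xi$ through $p$ is fixed by $\theta$ (as $\theta_p=\mathrm{id}$), is a rank-$3$ symplectic polar space, and contains the unique point $e_\xi\in E(p,p^\theta)$; since $e_\xi$ and a line of $\xi$ through $p$ are fixed pointwise (the latter because $\theta_p=\mathrm{id}$ fixes all lines through $p$ in $\Res(p)$), $\xi$ is fixed pointwise — here I use that a symplectic polar space of rank $3$ is pointwise fixed as soon as a hyperbolic/imaginary-line's worth of points through a point is fixed, which is essentially the argument already used in Lemma~\ref{longrootinequator}. By connectedness of the collinearity graph of the symps through $p$ (equivalently, of $\Res(p)$, which is connected), all points collinear or symplectic to $p$ are fixed. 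Thus $\theta$ is a central elation with centre $p$ — this is exactly the structure described before Lemma~\ref{paraargument}.

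The main obstacle, and where I expect to spend the most care, is the symplectic-rank-$3$ "pointwise fixed as soon as one line through a point is fixed" claim in characteristic $2$: in a symplectic polar space a fixed line through a point does not immediately force the whole symp to be fixed, because the form is alternating and there are many fixed subspaces. The correct input is that $\theta$ fixes the point $e_\xi$ (opposite $p$ in $\xi$) \emph{and} a line through $p$ pointwise \emph{and} lies in the perp-stabilizer structure coming from $\widehat{E}(p,p^\theta)$; combining these — a fixed point $p$, a fixed opposite point $e_\xi$, and a fixed line $L\ni p$ — the projectivity/perspectivity argument from the proof of Lemma~\ref{longrootinequator} (the part showing $M'=M$ via a perspectivity with centre $c$) applies verbatim and forces $\xi$ pointwise fixed. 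One must check this transfers cleanly across all symps through $p$ and that the hypothesis "pointwise fixes the perp of a point in $\widehat{E}(p,q)$" is genuinely available for \emph{every} point mapped onto an opposite point, not just the starting $p$; this uniformity is guaranteed by the hypothesis of the lemma. Finally, once $\theta$ is shown to be a central elation it is automatically a short root elation of $\sF_4(\FF)$, completing the proof (and feeding into Theorem~\ref{thm:F41hom}: combined with Lemma~\ref{perpor3}, in characteristic $2$ only the perp case can occur because the Witt-index-$3$ case will be excluded separately, or shown to also reduce to a central elation).
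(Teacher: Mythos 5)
There is a fundamental error in your argument: you conclude that $\theta$ is a central elation with centre $p$, but $p$ is mapped onto an \emph{opposite} point by hypothesis, so $p$ cannot possibly be the centre (a central elation fixes its centre, together with everything collinear or symplectic to it). The source of the error is a conflation of $\theta_p$ with $\theta$. From $\Diag(\theta)=\sF_{4;1}^4$ it is indeed true that $\theta_p=\mathrm{id}$ on $\Res(p)$, but $\theta_p=\proj_p\circ\theta$ is not the same thing as $\theta$: the statement $\theta_p=\mathrm{id}$ says that each simplex $\xi$ of $\Res(p)$ satisfies $\proj_p(\xi^\theta)=\xi$, \emph{not} that $\xi^\theta=\xi$. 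In particular, a symp through $p$ is carried by $\theta$ to a symp through $p^\theta$, so your claim ``each symp $\xi$ through $p$ is fixed by $\theta$'' is false, as is the claim that $\theta$ fixes all lines of $\xi$ through $p$. Your propagation argument therefore does not run.

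You had the right candidate earlier and abandoned it: the centre must be the point $x=t\in\widehat{E}(p,p^\theta)$ whose perp is pointwise fixed (and one must first verify, e.g.\ via the arguments of Proposition~\ref{fixperpequators}, that all vertices of $\Delta$ incident with $x$ are fixed). The paper's proof then proceeds entirely from $x$: take $y\in\widehat{E}(p,p^\theta)$ opposite $x$, so $E(x,y)$ is fixed pointwise, and use a carefully chosen path from $y$ to $x$ (through an auxiliary point $y'\perp y$ opposite $x$, and the associated imaginary-line structure in the symps along the path) to produce a fixed line through $x$ that is pointwise fixed; from there the Proposition~\ref{fixperpequators}-style argument shows all points collinear or symplectic to $x$ are fixed. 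The extra subtlety you correctly anticipated --- that in a symplectic polar space in characteristic~$2$ a single fixed line through a point need not force the whole symp to be fixed --- is precisely why the paper needs this chain argument and the case split on whether $x=x'$; your sketch does not supply a substitute for it.
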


\begin{proof}
Let $p$ be a point mapped onto an opposite, and suppose $\theta$ fixes all points of $\widehat{E}(p,p^\theta)$ symplectic to $x\in\widehat{E}(p,p^\theta)$. Arguments similar to the ones in the proof of Proposition~\ref{fixperpequators} show that all vertices of $\Delta$ incident with $x$ are fixed by $\theta$. Let $y$ be opposite $x$ in $\widehat{E}(p,p^\theta)$. Then $E(x,y)$ is fixed pointwise. Let $y'\perp y$ be also opposite $x$. Then we have a unique path $y,yy',b,ab,a,ax,x$ of consecutively incident points and lines connecting $y$ with $x$ using the line $yy'$. Since $x,y,y^\theta$ are on the same imaginary line, and since $ax$ is fixed, we see that $(ab)^\theta=ab$ (this also follows from the fact that the line $ab$ defines a unique plane of ${E}(p,p^\theta)$,which is fixed; see \cite[Proposition~5.3.9]{ADS-NSNS-HVM}). Hence the unique point $x'$ on $ax$ and on the imaginary line determined by $y'$ and ${y'}^\theta$ is fixed. If $x\neq x'$, then, since $\theta$ acts linearly (as it poinwise fixes $E(p,p^\theta)$), it fixes all points of $ax$ and the arguments in the proof of Lemma~\ref{fixperpequators} then imply that all points collinear to $x$ are fixed. Otherwise, $x=x'$ and $E(x,y')$ is fixed pointwise, again implying that at least one line through  $x$ is fixed pointwise, and hence, as before,  all of them are.  Also as before in the proof of Lemma~\ref{fixperpequators}, it follows now that all points collinear or symplectic to $x$ are fixed and so $\theta$ is a central elation with centre $x$. 
\end{proof}


\begin{lemma}\label{collpointlinefixed}
If a point $p$ is mapped onto a collinear point $p^\theta\neq p$, then the line $pp^\theta$ is fixed under $\theta$. 
\end{lemma}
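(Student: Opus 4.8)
The statement is about the metasymplectic space $\mathscr{G}=\sF_{4,4}(\FF)$, where the only simplices that $\theta$ can map onto opposites are points (since $\Diag(\theta)=\sF_{4;1}^4$). The claim is that if $p$ is a point with $p^\theta\neq p$ but $p$ collinear to $p^\theta$, then the line $L=pp^\theta$ is $\theta$-stable. First I would observe that $\theta$ maps no line of $\mathscr{G}$ to an opposite line, so in particular $\theta$ is line-domestic on $\mathscr{G}$; more precisely, from the classification of admissible diagrams only points are mapped to opposites, so for \emph{every} line $M$, $M$ and $M^\theta$ are not opposite. In a metasymplectic space two lines fail to be opposite exactly when they lie together in a common symp, or meet, or are at one of the small mutual distances; the point is that $M^\theta$ always lies ``close'' to $M$ in a controlled way. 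I would then apply this to the line $L=pp^\theta$: its image $L^\theta=p^\theta(p^\theta)^\theta=p^\theta p^{\theta^2}$ passes through $p^\theta$, which is a point of $L$. So $L$ and $L^\theta$ already share the point $p^\theta$.

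The next step is to rule out the possibility that $L\neq L^\theta$. Suppose $L\neq L^\theta$; then $L$ and $L^\theta$ are two distinct lines through the common point $p^\theta$, hence span a plane or at least lie in a common symp $\xi$ (in a metasymplectic space, two concurrent lines are always coplanar within a unique residue, so they determine a plane $\pi$, and $\pi$ lies in symps). Iterating $\theta$ on $L$ produces a sequence of lines $L, L^\theta, L^{\theta^2},\ldots$, consecutive ones sharing a point. I would argue that the point $p^{\theta^k}$ moves away from $p$: indeed $p$ and $p^{\theta^2}$ both lie on concurrent lines $L,L^\theta$ through $p^\theta$, so $p$ and $p^{\theta^2}$ are at (building) distance at most $2$ in the incidence graph restricted to this flag, hence collinear or equal or at symplectic distance. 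The cleanest route is the ``standard technique'' style of argument already used repeatedly in the paper: pass to the group $G=\sF_4(\FF)$ and translate the collinearity condition into a statement about $P_4$-double cosets — $p$ and $p^\theta$ collinear means $\delta(p,p^\theta)=s_4$ (or the appropriate minimal representative in $R_4$), and then $\delta(p,p^{\theta^2})\in R_4$ with $\ell(\delta(p,p^{\theta^2}))\le 2\ell(s_4)$, which by the combinatorics of $R_4$ (only $5$ elements) forces $p^{\theta^2}$ to be either $p$, collinear to $p$, or at symplectic distance — never special and never opposite. If $p^{\theta^2}$ is at symplectic distance or collinear but not on $L$, I would derive a contradiction with line-domesticity by exhibiting a line through $p$ mapped far away, using that the symp/plane determined by $L$ and $L^\theta$ would have to be $\theta$-stable while a generic line of it through $p$ is pushed to distance exceeding what line-domesticity allows.

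Concretely, the argument I expect to write is: if $L\neq L^\theta$, then since $\theta$ fixes $p^\theta$'s residue only partially, consider a line $M\neq L$ through $p$ lying in the plane $\pi=\langle L, L^\theta\rangle$ (if $\pi$ exists) or in the symp $\xi=\xi(L,L^\theta)$; then $M^\theta$ is a line through $p^\theta$, and a short computation with commutator relations / the folding relation (exactly in the style of Lemma~\ref{lem:homologyF4} and Corollary~\ref{cor:barbara2}) shows $\delta(M, M^\theta)$ attains an element of $R_1$ or $R_4$ of length forcing $M$ to be mapped to an opposite line — contradicting $\Type(\theta)=\{4\}$. Hence $L=L^\theta$, i.e.\ $\theta$ stabilises the line $pp^\theta$.

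\textbf{Main obstacle.} The genuinely delicate point is handling the case $p^{\theta^2}=p$ (so $\theta$ acts on $L$ as an involution swapping $p$ and $p^\theta$) versus the case $p^{\theta^2}\neq p$: in the latter I must still conclude $L=L^\theta$, which is immediate if one shows $p^{\theta^2}\in L$, but a priori $p^{\theta^2}$ could be collinear to $p$ along a different line. Pinning this down requires either a clean synthetic argument in the metasymplectic space (using that $\theta$ cannot map \emph{any} line to an opposite, combined with the local structure of $\mathrm{Res}(p^\theta)$) or a direct $P_1$/$P_4$-double-coset computation as above. I expect the cleanest writeup uses the parapolar-space distance trichotomy from Subsection~\ref{sec:parapolar} together with the fact that a collineation fixing a point's residue and mapping no line to an opposite must preserve the pencil of lines through that point in a very rigid way — essentially the same ``preserving the pencil and fixing exactly one para'' phenomenon as in Lemma~\ref{paraargument}, transported to this setting.
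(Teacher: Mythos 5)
Your opening observation is correct and useful: $L^\theta = p^\theta p^{\theta^2}$ and $L = p p^\theta$ meet in the point $p^\theta$, so the whole lemma comes down to showing $L^\theta = L$, equivalently that $p^{\theta^2}$ lies on $L$. You also correctly identify that this is the crux. But the route you sketch to close the gap does not actually close it. Bounding $\delta(p,p^{\theta^2})$ by the composition of two collinearity steps only tells you that $p^{\theta^2}$ is not special/opposite to $p$ --- it leaves open that $p^{\theta^2}$ is collinear to $p$ along a line different from $L$, or symplectic to $p$, and line-domesticity of $\theta$ (which is true, since $\Type(\theta)=\{4\}$) is a far weaker constraint than what you need. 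The ``generic line of $\pi=\langle L,L^\theta\rangle$ pushed to an opposite'' step is asserted, not carried out, and there is no obvious way to carry it out: $\theta$ can in principle move a line $M$ through $p$ to a line $M^\theta$ through $p^\theta$ that is at one of the intermediate distances without ever becoming opposite.

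The paper's proof resolves the obstacle by a different and stronger device. One chooses a line $L'$ in $\Res_\Delta(p^\theta)$ that is opposite (in the $\sC_3$ residue) to both $pp^\theta$ and $(pp^\theta)^\theta$, and takes a point $x\neq p$ on $(L')^{\theta^{-1}}$, a line through $p$. This choice forces $x$ and $x^\theta$ to be opposite in $\mathscr{G}$. By Lemma~\ref{geomhypeq} the equator $E(x,x^\theta)$ is fixed pointwise, and one deduces $(x^\theta)^\theta=x$: in characteristic $\neq 2$ because $x$ and $x^\theta$ are the unique points of $\widehat{E}(x,x^\theta)$ symplectic to all of $E(x,x^\theta)$, and in characteristic $2$ by Lemma~\ref{perpor3}. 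Once $\theta$ is known to swap $x$ and $x^\theta$, the projection behaviour of $\theta$ between the opposite residues $\Res(x)$ and $\Res(x^\theta)$ sends $xp$ to $x^\theta p^\theta$ and vice versa, which forces $(p^\theta)^\theta=p$ and hence $L^\theta = p^\theta p^{\theta^2} = p^\theta p = L$. So the essential new idea you are missing is the auxiliary opposite pair $(x,x^\theta)$ on a well-chosen line through $p$, and the involutive behaviour of $\theta$ on that pair extracted from the equator-geometry analysis; your distance bookkeeping alone cannot substitute for it.
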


\begin{proof}
In $\Res_\Delta(p^\theta)$ we find a line $L$ opposite both $pp^\theta$ and $(pp^\theta)^\theta$. Then any point $x$ on $L^{\theta^{-1}}$ distinct from $p$ is mapped onto an opposite. Then $E(x,x^\theta)$ is fixed pointwise.  If $\mathrm{char}(\FF)\neq 2$, the points $x$ and $x^\theta$ are the only points of $\widehat{E}(x,x^\theta)$ symplectic to all points of $E(x,x^\theta)$. It follows that $(x^\theta)^\theta=x$. If $\mathrm{char}(\FF)=2$, the latter follows directly from Lemma~\ref{perpor3}. Also, each line through $x$ is mapped onto its projection on $x^\theta$, so $xp$ is mapped onto $x^\theta p^\theta$ and vice versa, and so $(p^\theta)^\theta=p$ and the line $pp^\theta$ is fixed. 
\end{proof}

\begin{lemma}\label{lem:new2}
Suppose $\mathrm{char}(\FF)\neq 2$, or that $\mathrm{char}(\FF)=2$ and that for at least one point $x$ mapped onto an opposite the fixed point structure induced by $\theta$ in $\widehat{E}(x,x^\theta)$ is a non-degenerate polar space. Then necessarily $\mathrm{char}(\FF)\neq 2$ and there exists an apartment $\Sigma$ of $\Delta$ fixed pointwise by $\theta$. Also, $\theta$ fixes some panel pointwise and hence is a homology.
\end{lemma}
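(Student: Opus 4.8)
\textbf{Proof plan for Lemma~\ref{lem:new2}.}

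The plan is to exploit the fixed point structures obtained in Lemmas~\ref{geomhypeq}, \ref{perpor3} and \ref{perpcase}, together with the connectivity arguments already used in Proposition~\ref{fixperpequators} and Lemma~\ref{perpcase}, in order to produce a pointwise-fixed apartment and then a pointwise-fixed panel. First I would dispose of the characteristic $2$ possibility: under the hypothesis that for some point $x$ mapped onto an opposite the fixed structure in $\widehat{E}(x,x^\theta)$ is a non-degenerate polar space (of Witt index $3$), I claim this is incompatible with $\mathrm{char}(\FF)=2$ unless $\theta$ has already been shown to be a central elation. Indeed, by Lemma~\ref{perpcase}, if for \emph{every} point mapped onto an opposite the fixed structure were a point-perp, then $\theta$ would be a central elation; such a $\theta$ has opposition diagram $\sF_{4;1}^4$ only if... well, a central elation in $\mathscr{G}=\sF_{4,4}(\FF)$ is a short root elation, which by Theorem~\ref{thm:short} does have diagram $\sF_{4;1}^4$ when $\mathrm{char}(\FF)=2$, so this case is genuinely consistent; therefore the hypothesis of the lemma is precisely designed to \emph{exclude} that outcome, and I must argue that the ``non-degenerate polar space'' alternative of Lemma~\ref{perpor3} forces $\mathrm{char}(\FF)\neq 2$. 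The mechanism: when $\mathrm{char}(\FF)=2$, Lemma~\ref{perpor3} shows $\theta$ is involutive on $\widehat{E}(p,p^\theta)$ in \emph{both} sub-cases, but a domestic collineation with diagram $\sF_{4;1}^4$ whose induced action on some $\widehat{E}(p,p^\theta)$ is a (non-central) involution fixing a non-degenerate $\sB_3$-subpolar-space would, by the coordinate computation in the proof of Lemma~\ref{perpor3} ($bk=1$, $k\notin\FF^2$), fix pointwise a subspace spanning too large a part of $\PG(7,\FF)$; propagating this fixed structure outward through the building as in Proposition~\ref{fixperpequators} would fix two perpendicular equator geometries pointwise, contradicting that proposition (which applies to $\sF_4$). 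So in characteristic $2$ only the point-perp alternative survives, hence the hypothesis of Lemma~\ref{lem:new2} forces $\mathrm{char}(\FF)\neq 2$.

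Having reduced to $\mathrm{char}(\FF)\neq 2$, I would fix a point $p$ mapped onto an opposite point $p^\theta$. By Lemma~\ref{geomhypeq} (and the proof of Lemma~\ref{longrootinequator}, noting $\theta_p=\mathrm{id}$) $\theta$ pointwise fixes $E(p,p^\theta)$, and since $\mathrm{char}(\FF)\neq 2$ the fixed structure in $\widehat{E}(p,p^\theta)$ is the perp of a point or a non-degenerate polar space; either way it is a geometric hyperplane spanning a hyperplane of $\PG(7,\FF)$, so $\theta$ acts on $\widehat{E}(p,p^\theta)$ as an involution (a homology or an orthogonal reflection of $\PG(7,\FF)$), in particular $p^{\theta^2}=p$. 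Now pick an apartment $\Sigma_0$ of the $\sB_4$-building $\widehat{E}(p,p^\theta)$ that is contained in the fixed geometric hyperplane together with its $\theta$-image point-row; more carefully, choose opposite points $x,y\in E(p,p^\theta)$, both fixed, lying on a pointwise-fixed line of $E(p,p^\theta)$ (such a line exists since $E(p,p^\theta)$ is a long root geometry containing lines through $p$ and is fixed pointwise). Following the path argument of Lemma~\ref{perpcase} and of Proposition~\ref{fixperpequators}: all vertices of $\Delta$ incident with $x$ are fixed, $E(x,y)$ is fixed pointwise, and hence (using that the relevant symps are symplectic polar spaces of rank $3$ and that a symp fixed on a line through a point is fixed pointwise) one bootstraps to a pointwise-fixed sub-geometry; I would assemble from these fixed symps, fixed lines and fixed points a full apartment $\Sigma$ of $\Delta$ all of whose vertices are fixed by $\theta$.

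Finally, to upgrade ``fixed apartment'' to ``fixed panel'', I would argue as follows. Since $\theta$ fixes $\Sigma$ pointwise and is nontrivial (it has diagram $\sF_{4;1}^4$, so it is not the identity), $\theta$ lies in the pointwise stabiliser of $\Sigma$; by the structure of split $BN$-pairs this stabiliser, after conjugating $\Sigma$ to the base apartment, is contained in the subgroup generated by the torus $H$ together with root groups whose roots vanish on the fixed flag — but because an entire apartment is fixed, $\theta$ is forced into $H$ up to conjugacy, i.e. $\theta$ is diagonalisable with $\Delta_\theta \supseteq \Sigma$. A collineation whose fixed point building contains an apartment, being linear on each residue, must fix some panel pointwise: concretely, the fixed structure $\Delta_\theta$ is a (possibly thin) subbuilding of type $\sF_4$ containing $\Sigma$, and by Lemma~\ref{lem:thickframe} (or directly, since $\theta\in H$ up to conjugacy and $\theta$ is not the identity while fixing an apartment) at least one panel of $\Sigma$ has \emph{all} its chambers fixed — otherwise every wall of $\Sigma$ would be thin, which for an element of $H$ forces $\theta=1$. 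Thus $\theta$ fixes a panel $P$ pointwise, and a collineation fixing a panel of a split spherical building pointwise is, by definition, a homology (conjugate into $H$ via the wall through $P$), completing the proof.

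\textbf{Main obstacle.} The hard part will be the very first reduction: cleanly arguing that in characteristic $2$ the ``non-degenerate polar space'' alternative of Lemma~\ref{perpor3} cannot occur for a collineation with opposition diagram $\sF_{4;1}^4$ — this requires tying the coordinate computation of Lemma~\ref{perpor3} back to Proposition~\ref{fixperpequators} by showing that such a $\theta$ would fix two perpendicular equator geometries pointwise, which needs a careful propagation-of-fixed-points argument rather than a direct calculation. The subsequent assembly of a fixed apartment and extraction of a fixed panel is technical but follows the template already established in Lemmas~\ref{perpcase}, \ref{collpointlinefixed} and Proposition~\ref{fixperpequators}.
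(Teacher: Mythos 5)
Your overall architecture (kill the Witt-index-$3$ possibility, then assemble a fixed apartment and a fixed panel) matches the paper's, but there are several genuine gaps and one outright incorrect step.

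\textbf{Gap 1: the ruled-out case is never actually ruled out.} You correctly identify that the characteristic-$2$ non-degenerate (Witt index $3$) alternative of Lemma~\ref{perpor3} must be excluded, and you propose to do it by showing it forces two perpendicular equator geometries to be fixed pointwise, contradicting Proposition~\ref{fixperpequators}. But you never produce that argument — you explicitly defer it as ``the hard part.'' The paper's actual proof does not go through Proposition~\ref{fixperpequators} at all here. Instead it handles \emph{both} characteristics at once by supposing the fixed geometric hyperplane $H\subseteq\widehat{E}(p,q)$ has Witt index~$3$, taking any plane $\pi$ of $\widehat{E}(p,q)$, invoking \cite[Cor.~5.3.7 and Prop.~5.3.9]{ADS-NSNS-HVM} to get the $\theta$-fixed line $L_\pi$ of $\Delta$, observing that since no $3$-space of $\widehat{E}(p,q)$ is fixed no point of $L_\pi$ is fixed, and then deriving from Lemma~\ref{collpointlinefixed} that some point of $L_\pi$ \emph{must} be fixed — a contradiction. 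This single argument is what forces Witt index~$4$ and hence $\mathrm{char}(\FF)\neq 2$. Your plan, even if the propagation argument were filled in, would still leave the Witt index~$3$ quadric in characteristic $\neq 2$ unexcluded: after ``reducing to $\mathrm{char}(\FF)\neq 2$'' you allow ``the perp of a point or a non-degenerate polar space,'' but the perp-of-a-point case must be killed (it forces $\theta$ to act as the identity on $\widehat{E}(p,q)$, contradicting $p\neq p^\theta$, since in a parabolic polar space a collineation fixing a point-perp pointwise is the identity), and the Witt index~$3$ quadric case also must be killed by the same $L_\pi$ argument. Your plan skips both of these.

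\textbf{Gap 2: the panel step is wrong as stated.} Your last step claims that once $\theta$ is conjugated into $H$, ``at least one panel of $\Sigma$ has all its chambers fixed — otherwise every wall of $\Sigma$ would be thin, which for an element of $H$ forces $\theta=1$.'' That implication is false: a generic element of $H$ has $\Phi_\theta=\emptyset$ (every wall thin) yet is nontrivial. One could try to rescue this by appealing to domesticity and the classification of Theorem~\ref{thm:homologyF4}, but that would be circular, since Theorem~\ref{thm:F41hom} (for which this lemma is an ingredient) feeds into that classification chain. The paper instead constructs the fixed panel concretely: having established $H$ is a Witt index~$4$ subquadric, it chooses opposite fixed points $x_0,x_1\in\widehat{E}(p,q)$, opposite fixed $y_0,y_1\in\{x_0,x_1\}^{\perp\!\!\!\perp}$, fixed lines $L_i\subseteq\pi_i$ in $E(x_0,x_1)$ with appropriate opposition, converts these to lines $K_i$ and planes $\alpha_i$ through $x_i$, gets two fixed opposite chambers and hence a pointwise-fixed apartment, and then observes that $\theta$ fixes every line of $\pi_0$ through $y_0$, so the panel $\{x_0,K_0,\xi_0\}$ is fixed pointwise. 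That explicit geometric extraction is what you need to replace the incorrect ``all walls thin $\Rightarrow$ identity'' reasoning.
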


\begin{proof}
Let $p,q$ be two fixed (by $\theta$) opposite points such that $\theta$ fixes pointwise a geometric hyperplane $H$ of $\widehat{E}(p,q)$, see Lemma~\ref{geomhypeq}. If $\mathrm{char}(\FF)=2$, we may assume that $H$ is a nondegenerate polar space of Witt index 3. If $\mathrm{char}(\FF)\neq 2$, the hyperplane is not singular (if a collineation fixes all points collinear to a given point of a parabolic polar space, then it is the identity). Hence $H$ is a subquadric either of Witt index 4 (of type $\sD_4$),  or of Witt index 3. Now assume for a contradiction that $H$ has Witt index 3 (in either characteristic).

Let $\pi$ be a plane of $\widehat{E}(p,q)$. Then by \cite[Corollary 5.3.7]{ADS-NSNS-HVM} there is a unique point $p_U$ collinear to each $3$-space $U$ of  $\widehat{E}(p,q)$ containing $\pi$. Moreover, Proposition~5.3.9 of \cite{ADS-NSNS-HVM} implies that the set of all such points $p_U$, for $U$ running through all $3$-spaces of  $\widehat{E}(p,q)$ containing $\pi$, forms a line $L_\pi$ of $\Delta$. Since $\pi$ is fixed, the line $L_\pi$ is also fixed by $\theta$. Note that, since no $3$-space of  $\widehat{E}(p,q)$ is fixed, no point on $L_\pi$ is fixed. Select any point $x\in\pi$. Then also the plane spanned by $L_\pi$ and $x$ is fixed by $\theta$. Hence an arbitrary point $z\in\pi\setminus(\{x\}\cup L_\pi)$ is either fixed or mapped to a collinear point. In the former case the point $L_\pi\cap xz$ is fixed, in the latter case the point $L_\pi\cap zz^\theta$ is fixed (use Lemma~\ref{collpointlinefixed}), twice the same contradiction. 

Hence $H$ has Witt index 4 (and $\mathrm{char}(\FF)\neq 2$). Select two opposite points $x_0,x_1$ in  $\widehat{E}(p,q)$, fixed by $\theta$. Select two opposite points $y_0,y_1\in\{x_0,x_1\}^{\perp\!\!\!\perp}$, again fixed by $\theta$. Set $\xi_i:=\xi(x_i,y_i)$, $i=1,2$. Then $\xi_0$ is opposite $x_1$. Select lines $L_0,L_1$ and planes $\pi_0,\pi_1$ in  ${E}(x_0,x_1)$ such that $y_i\in L_i\subseteq \pi_i$, $i=0,1$, $L_0$ is opposite $L_1$, whereas $\pi_0$ is opposite $\pi_1$ in  the polar space $\widehat{E}(p,q)$, and $L_0,L_1,\pi_0,\pi_1$ are fixed by $\theta$. To $L_i$ and $\pi_i$ correspond a plane $\alpha_i$ and a line $K_i$ through $x_i$, with $K_i\subseteq\alpha_i\subseteq\xi_i$, $i=0,1$, and $K_0$ is opposite $K_1$, whereas $\alpha_0$ is opposite $\alpha_1$. Hence the chambers $\{x_0,K_0,\alpha_0,\xi_0\}$ and $\{x_1,K_1,\alpha_1,\xi_1\}$ are fixed and opposite and so they uniquely determine a pointwise fixed apartment $\Sigma$. 

With the notation of the previous paragraph, it is clear that $\theta$ fixes all lines in $\pi_0$ through $y_0$. Hence the panel determined by $\{x_0,K_0,\xi_0\}$ is fixed pointwise by $\theta$. 
\end{proof}

The proof of Theorem~\ref{thm:F41hom} is now complete. We note the following corollary (completing a postponed part of the proof of Theorem~\ref{thm:short}). Let $\varphi'$ be the highest short root of the $\sF_4$ root system.

\begin{cor}\label{cor:F4longandshort}
In $\sF_4(\FF)$, the element $x_{\varphi'}(a)x_{\varphi}(b)$ with $a,b\neq 0$ has opposition diagram $\sF_{4;2}$. If $\mathrm{char}(\FF)\neq 2$ then $x_{\varphi'}(a)$ has this diagram too, and if $\mathrm{char}(\FF)=2$ then it has diagram $\sF_{4;1}^4$. 
\end{cor}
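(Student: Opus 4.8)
The plan is to pin down the diagram to $\sF_{4;2}$ by eliminating the other possibilities from the (very short) list of admissible $\sF_4$ diagrams, so that the only analytic input needed is a displacement estimate. Throughout, note that $\mathrm{char}(\FF)\neq 2$ forces $|\FF|\geq 3$, so in that case all collineations are capped and the only nontrivial non-full opposition diagrams available in type $\sF_4$ are $\sF_{4;1}^1$, $\sF_{4;1}^4$, $\sF_{4;2}$; by Theorem~\ref{thm:polarclass} the first occurs exactly for long root elations, and by Theorem~\ref{thm:F41hom} the second forces a homology when $\mathrm{char}(\FF)\neq 2$. The statement for $x_{\varphi'}(a)$ in characteristic~$2$ is exactly Theorem~\ref{thm:short}(3), whose proof we are completing here, so the genuinely new content is: (i) $x_{\varphi'}(a)$ in characteristic $\neq 2$, and (ii) $x_{\varphi'}(a)x_{\varphi}(b)$ over an arbitrary field.

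A first observation relates the two elements. The difference $\varphi-\varphi'=(1110)$ is a root — it is the highest short root $\varphi_{\sB_3}'$ of the $\sB_3$ subsystem on $\{\alpha_1,\alpha_2,\alpha_3\}$ — and $\varphi'+(\varphi-\varphi')=\varphi$ while no other positive combination $i(\varphi-\varphi')+j\varphi'$ lies in $\Phi$; the corresponding structure constant equals $\pm2$. Since $x_{\varphi}$ is central in $U^+$, this gives
\[
x_{\varphi-\varphi'}(t)\,x_{\varphi'}(a)\,x_{\varphi-\varphi'}(-t)=x_{\varphi'}(a)\,x_{\varphi}(\pm 2at)\qquad\text{for all }t\in\FF.
\]
Hence, when $\mathrm{char}(\FF)\neq 2$, choosing $t=\pm b/(2a)$ shows that $x_{\varphi'}(a)x_{\varphi}(b)$ is conjugate in $U^+$ to $x_{\varphi'}(a)$, so case (ii) over such fields reduces to case (i). In characteristic~$2$ this conjugation degenerates — consistent with $x_{\varphi}(b)$ genuinely altering the diagram — and case (ii) must be handled directly there.

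For (i), assume $\mathrm{char}(\FF)\neq 2$ and put $\theta=x_{\varphi'}(a)$. It is nontrivial; it is not a long root elation (as $\varphi'$ is short), so it does not have diagram $\sF_{4;1}^1$ by Theorem~\ref{thm:polarclass}; and it is a nontrivial unipotent element, hence not a homology, so it does not have diagram $\sF_{4;1}^4$ by Theorem~\ref{thm:F41hom}. Thus it suffices to prove that $\theta$ is domestic, for then its diagram can only be $\sF_{4;2}$. By Lemma~\ref{lem:technique}, for every $u\in U^+$ we have $u^{-1}\theta u\in\langle U_{\gamma}\mid\gamma\geq\varphi'\rangle$, and $\{\gamma\in\Phi^+\mid\gamma\geq\varphi'\}=\{(1232),(1242),(1342),(2342)\}$ spans an abelian root subgroup, so
$w_0^{-1}u^{-1}\theta uw_0=x_{-(1232)}(\pm a)x_{-(1242)}(b_2)x_{-(1342)}(b_3)x_{-(2342)}(b_4)$ for suitable $b_2,b_3,b_4\in\FF$. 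A direct computation with the folding relation~(\ref{eq:folding}) and the commutator relations, in the style of Lemma~\ref{lem:homologyF4}, evaluates the Bruhat cell of this element as a function of $(a,b_2,b_3,b_4)$ with $a\neq0$ and shows it always lies in $Bw_{\{2,3\}}w_0B$ or in a shorter double coset; thus $\disp(\theta)\leq\ell(w_{\{2,3\}}w_0)=20<24$, so $\theta$ is domestic with diagram $\sF_{4;2}$ — and hence so is $x_{\varphi'}(a)x_{\varphi}(b)$ in characteristic $\neq 2$. I expect this Bruhat-cell computation to be the main obstacle: the four roots above are \emph{not} mutually perpendicular, so folding the lowest occurring negative root subgroup produces positive root subgroups that have to be transported back through the remaining factors before the next fold, which means Lemma~\ref{lem:perpendicularroots} is not directly applicable and the bookkeeping (and the dependence on $\mathrm{char}(\FF)$) must be done by hand.

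Finally, for (ii) in characteristic~$2$ one carries out the analogous explicit computation: since $x_{\varphi}$ remains central one again gets $w_0^{-1}u^{-1}(x_{\varphi'}(a)x_{\varphi}(b))uw_0$ in the abelian group $\langle U_{-\gamma}\mid\gamma\geq\varphi'\rangle$, now with the $(2342)$-coordinate freely controlled by $b$, and the folding relation shows $\disp=\ell(w_{\{2,3\}}w_0)=20$, which (being larger than the common value $15$ for $\sF_{4;1}^1$ and $\sF_{4;1}^4$) forces the diagram to be $\sF_{4;2}$; this is entirely parallel to the treatment of $x_{\varphi'}(a)x_{\varphi_{\sB_3}'}(b)$ at the end of the proof of Theorem~\ref{thm:unipotentdiags}. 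Combining the three cases with Theorem~\ref{thm:short}(3) for $x_{\varphi'}(a)$ in characteristic~$2$ completes the proof.
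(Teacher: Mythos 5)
Your skeleton is sensible, and your conjugation identity $x_{\varphi-\varphi'}(t)\,x_{\varphi'}(a)\,x_{\varphi-\varphi'}(-t)=x_{\varphi'}(a)x_{\varphi}(\pm 2at)$ is correct (indeed it is exactly the computation in Claim~2 of the paper's proof, run in the opposite direction). But as written the argument has a genuine gap precisely where you flag "the main obstacle": the domesticity of $x_{\varphi'}(a)$ in characteristic $\neq 2$, and the displacement of $x_{\varphi'}(a)x_{\varphi}(b)$ in characteristic $2$, are asserted ("a direct computation \dots shows it always lies in $Bw_{\{2,3\}}w_0B$ or in a shorter double coset") but never carried out, and you yourself note that Lemma~\ref{lem:perpendicularroots} does not apply because the four roots $\geq\varphi'$ are not mutually perpendicular, so the folding bookkeeping is nontrivial and characteristic-sensitive. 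In the characteristic~$2$ case the gap is worse: to exclude $\sF_{4;1}^1$ and $\sF_{4;1}^4$ you need the displacement to exceed $15$, i.e.\ a \emph{lower} bound as well (or an argument that $x_{\varphi'}(a)x_{\varphi}(b)$ is not conjugate to a short root elation, which Theorem~\ref{thm:F41hom} would otherwise allow); neither is supplied, and the lower bound cannot be read off from $w_0^{-1}\theta w_0=x_{-\varphi'}(\pm a)x_{-\varphi}(\pm b)$ via Lemma~\ref{lem:perpendicularroots} since $\varphi'$ and $\varphi$ are not perpendicular. There are also two smaller unjustified assertions (that a short root elation is not conjugate to a long one, and that a nontrivial unipotent element is not a homology); both are true and easily repaired, e.g.\ via the order-$2$ representative $h_{\omega_4}(-1)$ of Theorem~\ref{thm:homologyF4}, but they deserve a line.

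The paper avoids all of this with a second conjugation trick that you are missing, and which makes the whole corollary computation-free and characteristic-uniform: the Weyl group is transitive on pairs (short, long) at angle $\pi/4$, so one may replace $(\varphi',\varphi)$ by $((1110),(1000))$, and then a single commutator identity,
$x_{(1110)}(a)x_{(1000)}(b)=x_{(0110)}(ab^{-1})^{-1}x_{(1220)}(a^2b^{-1})x_{(1000)}(b)x_{(0110)}(ab^{-1})$,
exhibits $x_{\varphi'}(a)x_{\varphi}(b)$ as a conjugate of a product of two \emph{perpendicular long} root elations, valid over every field. The diagram $\sF_{4;2}$ then follows at once from Theorem~\ref{thm:pocopolarclass} (whose underlying Bruhat computation was already done, for perpendicular roots, in Theorem~\ref{thm:unipotentdiags}), and your own identity handles $x_{\varphi'}(a)$ in characteristic $\neq 2$ by absorbing a factor $x_{\varphi}(1)$. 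If you prefer to keep your direct displacement computation, you must actually perform it (both bounds in characteristic $2$), in the style of the $\sF_{4;2}$ calculation in Theorem~\ref{thm:unipotentdiags} or Corollary~\ref{cor:barbara2}; otherwise the proof is incomplete at its central step.
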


\begin{proof}
By the \textit{angle} between root elations $x_{\alpha}(a)$ and $x_{\beta}(b)$ we shall mean the angle between the roots $\alpha$ and $\beta$. We claim that:
\begin{compactenum}[$(1)$]
\item Every product of a short root elation and a long root elation at angle $\pi/4$ is conjugate to a product of two perpendicular long root elations. 
\item If $\mathrm{char}(\FF)\neq 2$ then every short root elation is conjugate to a product of two perpendicular long root elations. 
\end{compactenum}
To begin with, note that the Weyl group is transitive on pairs $(\alpha,\beta)$ with $\alpha$ short, $\beta$ long, and $\mathrm{angle}(\alpha,\beta)=\pi/4$. An example of such a pair is $(\varphi',\varphi)$, and it is sufficient to show that the stabiliser $W_{\sC_3}$ of $\varphi$ is transitive on the set of short roots with angle $\pi/4$ with $\varphi$. These $6$ roots are $(1110),(1111),(1121),(1221),(1231),(1232)$, and an easy check shows that $W_{\sC_3}$ is indeed transitive on these roots. 

Thus to prove (1) we may assume that $\theta=x_{\alpha}(a)x_{\beta}(b)$ with $\alpha=(1110)$, $\beta=(1000)$, and $a,b\neq 0$. By commutator relations
\begin{align*}
x_{\alpha}(a)x_{\beta}(b)&=x_{\gamma}(ab^{-1})^{-1}x_{\delta}(a^2b^{-1})x_{\beta}(b)x_{\gamma}(ab^{-1}),
\end{align*}
where $\gamma=(0110)$ and $\delta=(1220)$. This proves (1), as $(\delta,\beta)$ is a pair of perpendicular long roots. 

Now suppose that $\theta$ is a short root elation. After conjugating we may assume that $\theta=x_{\varphi'}(a)$ for some $a\neq 0$. Let $\epsilon=(1110)$. By commutator relations we have
$$
x_{\epsilon}(a^{-1}/2)\theta x_{\epsilon}(a^{-1}/2)^{-1}=x_{\varphi'}(a)x_{\varphi}(1).
$$
Note that $\varphi'$ is short, $\varphi$ is long, and $\mathrm{angle}(\varphi',\varphi)=\pi/4$, and so applying the first statement completes the proof of the claims. 

The statement of the Corollary now follows from Theorem~\ref{thm:pocopolarclass}, and the fact that if $\mathsf{char}(\FF)=2$ then the $\sF_{4,4}(\FF)$ geometry isometrically embeds into the $\sF_{4,1}(\FF)$ geometry, with short root elations becoming long root elations. 
\end{proof}

We now collect the results to prove Theorem~\ref{thm:F4class}.

\begin{proof}[Proof of Theorem~\ref{thm:F4class}]
All that remains is to prove the statements concerning the number of conjugacy classes. If $\theta$ has diagram $\sF_{4;1}^1$ then $\theta$ is conjugate to $x_{\varphi}(a)$ for some $a\in\FF$ (by Theorem~\ref{thm:polarclass}). Then
 $
 h_{\omega_1-\omega_2}(a)x_{\varphi}(a)h_{\omega_1-\omega_2}(a)^{-1}=x_{\varphi}(1),
 $
 and so all long root elations are conjugate. 
 
If $\theta$ has diagram $\sF_{4;1}^4$, and $\mathrm{char}(\FF)\neq 2$, then $\theta$ is conjugate to $h_{\omega_4}(-1)$ (by Theorem~\ref{thm:F41hom} and Theorem~\ref{thm:homologyF4}). If $\mathrm{char}(\FF)=2$ then $\theta$ is conjugate to $x_{\varphi'}(1)$ (by Lemmas~\ref{perpcase} and~\ref{lem:new2}). 
 
Thus suppose that $\theta$ has diagram $\sF_{4;2}$. By the proof of Corollary~\ref{cor:F4longandshort} we may assume that $\theta=x_{\varphi}(a)x_{\varphi_{\sC_3}}(b)$, and conjugating by a diagonal element we may take $a=1$. If $b=c^2$ is a square then
 $$
 h_{3\omega_1-2\omega_2}(c)x_{\varphi}(1)x_{\varphi_{\sC_3}}(c^2)h_{3\omega_1-2\omega_2}(c)^{-1}=x_{\varphi}(1)x_{\varphi_{\sC_3}}(1).
 $$
This shows that there is at most one conjugacy class for each element of the quotient $\FF^{\times}/(\FF^{\times})^2$ (where $(\FF^{\times})^2=\{x^2\mid x\in\FF^{\times}\}$). Thus if $\FF$ is quadratically closed then we conclude that there is a unique conjugacy class of automorphisms with diagram $\sF_{4;2}$.  Similarly if $\FF$ is finite and $\mathrm{char}(\FF)=2$ then there is a unique conjugacy class (as every element is a square). Finally, if $\FF$ is finite with $\mathrm{char}(\FF)\neq 2$ we conclude that there are at most $2$ conjugacy classes, and by the tables in \cite{LS:12} the elements $x_{\varphi}(1)x_{\varphi_{\sC_3}}(1)$ and $x_{\varphi}(1)x_{\varphi_{\sC_3}}(b)$ with $b\notin(\FF^{\times})^2$ are not conjugate, and so there are precisely $2$ classes. 
\end{proof}

\subsection{Classification of domestic automorphisms of thick $\sE_6$ buildings}

The classification of domestic automorphisms of the small $\sE_6$ building (with $\FF=\FF_2$) is given in \cite[Theorems~4.6 and~4.6]{PVM:19b}, and the classification of domestic dualities of large $\sE_6$ buildings is given in~\cite{HVM:13}. By Theorem~\ref{thm:polarclass} the collineations with diagram~${^2}\sE_{6;1}$ are root elations, and so all that remains to complete the classification of domestic automorphisms of thick $\sE_6$ buildings is to classify the collineations of large $\sE_6$ buildings with diagram~${^2}\sE_{6;2}$.

We begin with some setup. Let $\varphi$ be the highest root of $\sE_6$, and let $\varphi'=(101111)$ be the highest root of the $\sA_5$ subsystem. Let $\Phi_1$ be the $\sA_3$ subsystem generated by the simple roots $\alpha_3,\alpha_4,\alpha_5$, and let $\varphi''=(001110)$ be the highest root of $\Phi_1$. 

Let $\beta_1=(100000)$, $\beta_2=(010000)$, $\beta_3=(000001)$, $\beta_4=(111100)$, $\beta_5=(010111)$, and $\beta_6=(111111)$, and for $1\leq i\leq 6$ let $C_i=\{\alpha\in \Phi^+\mid \alpha-\beta_i\in \ZZ_{\geq 0}\alpha_3+\ZZ_{\geq 0}\alpha_4+\ZZ_{\geq 0}\alpha_5\}$. Explicitly we have
\begin{align*}
C_1&=\{(100000),(101000),(101100),(101110)\}\\
C_2&=\{(010000),(010100),(011100),(010110),(011110),(011210)\}\\
C_3&=\{(000001),(000011),(000111),(001111)\}\\
C_4&=\{(111100),(111110),(111210),(112210)\}\\
C_5&=\{(010111),(011111),(011211),(011221)\}\\
C_6&=\{(111111),(111211),(112211),(111221),(112221),(112321)\}.
\end{align*}
Note that $\{C_i\mid 1\leq i\leq 6\}$ is a partition of $\Phi^+\backslash(\Phi_1\cup\{\varphi,\varphi'\})$. For each $i=1,\ldots,6$ let
$$
C_i'=(C_i \cup C_{i+1}\cup\cdots\cup C_6)\backslash\{\beta_i,\beta_i+\varphi''\}.
$$
The following technical lemma is required. 

\begin{lemma}\label{lem:claim4}
Let $1\leq i\leq 6$, and suppose that 
$
u=u'x_{\beta_i+\varphi''}(b)x_{\beta_i}(a)
$
with $a\neq 0$, $b\in\FF$, and $u'$ a product of root elations with roots in $C_i'$. Let $z_1,z_2,c,d\in\FF$. If $z_2\neq 0$ and $bz_2\neq -1$, then 
$$
x_{-\beta_i-\varphi''}(z_1)x_{-\varphi}(1)x_{-\varphi'}(1)x_{\varphi}(c)x_{\varphi'}(d)ux_{-\beta_i-\varphi''}(z_2)\in Bw_0s_4B.
$$
\end{lemma}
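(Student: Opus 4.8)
\textbf{Proof strategy for Lemma~\ref{lem:claim4}.}

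The plan is to reduce the expression to a Bruhat cell by repeatedly applying the folding relation~(\ref{eq:folding}) and commutator relations, exactly in the spirit of Claim~4 in the proof of Theorem~\ref{thm:converse}. First I would record the relevant elementary facts about the relative positions of the roots involved: $\beta_i$ and $\beta_i+\varphi''$ are both perpendicular to $\varphi'$ (since $\beta_i\in C_i$ and $C_i$ lies outside the $\sA_5$ subsystem's support for $\varphi'$ in the appropriate sense), $\varphi-\beta_i-\varphi''$ is a root while $\varphi-\varphi''$ is not, and the roots in $C_i'$ all have the property that adding $-\beta_i-\varphi''$ either leaves the positive system or is not a root. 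These facts (checkable directly from the explicit lists of $C_i$ above, or from $\mathsf{MAGMA}$) are what make the rearrangements below legitimate; I would state them up front as a short preliminary observation so the main computation flows without interruption.

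The core of the argument is then: move the factor $x_{-\beta_i-\varphi''}(z_2)$ leftward through $u=u'x_{\beta_i+\varphi''}(b)x_{\beta_i}(a)$ using commutator relations, producing (because $z_2\ne 0$, $a\ne 0$) an element of $BvB$ where $v$ contains the reflection $s_{\beta_i+\varphi''}$; the hypothesis $bz_2\ne -1$ is precisely the non-degeneracy condition ensuring that the coefficient controlling the relevant folding is nonzero (it plays the role that ``$d=-b^{-1}c$'' played in Claim~4, but here we need to \emph{avoid} that value). After that, the factors $x_{-\varphi}(1)$, $x_{-\varphi'}(1)$, $x_{\varphi}(c)$, $x_{\varphi'}(d)$, and the leading $x_{-\beta_i-\varphi''}(z_1)$ get absorbed: $\varphi$ and $\varphi'$ are the two highest roots of the $\sE_6$ and $\sA_5$ systems respectively, both orthogonal to $\beta_i+\varphi''$, so the $x_{-\varphi}(1)x_{-\varphi'}(1)$ contributes the product $s_{\varphi}s_{\varphi'}$ (via Lemma~\ref{lem:perpendicularroots}), and one checks $s_{\varphi}s_{\varphi'}s_{\beta_i+\varphi''}$ together with the absorbed positive-root junk lands exactly in $Bw_0s_4B$. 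I would verify the Weyl-group identity $s_{\varphi}s_{\varphi'}s_{\beta_i+\varphi''}=w_0s_4$ (equivalently that these three perpendicular reflections, together with $s_4$, give $w_0$) by appealing to Lemma~\ref{lem:longeltE6}: $\varphi$, $\varphi'$, $\beta_i+\varphi''$, and $\alpha_4$ are mutually perpendicular and fixed by the diagram automorphism $\sigma$ of $\sE_6$ (the latter because each has a support symmetric under $1\leftrightarrow6$, $3\leftrightarrow5$), hence $s_{\varphi}s_{\varphi'}s_{\beta_i+\varphi''}s_{\alpha_4}=w_0$.

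The main obstacle I anticipate is bookkeeping: tracking which structure constants vanish and which survive as one pushes $x_{-\beta_i-\varphi''}(z_2)$ and $x_{-\varphi}(1)x_{-\varphi'}(1)$ past the (generally long) word $u'$, and confirming that all the resulting extra factors are \emph{positive} root elations that can be swallowed into $B$ on the relevant side, so that $BwB$ with $\ell(w)=\ell(w_0s_4)$ is genuinely attained and not something longer or shorter. Because the lemma only needs a single double coset identity (not a displacement bound), it suffices to show the product lies in $Bw_0s_4B$ exactly; the length of $w_0s_4$ is $\ell(w_0)-1=35$, and since $w_0s_4$ is the unique such element with $\alpha_4\in\Phi(w_0s_4)^c$... actually one must be slightly careful since several elements have length $35$, so I would pin down $w_0s_4$ by its inversion set $\Phi^+\setminus\{\alpha_4\}$ and check the folded expression has exactly this inversion set. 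This is routine but needs the explicit root data in Appendix~\ref{app:data}; I would carry it out case-uniformly in $i$, noting that the six cases differ only in which $\beta_i$ is used and that all six choices behave identically with respect to the orthogonality and non-root conditions listed in the preliminary observation.
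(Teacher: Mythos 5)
Your overall strategy — folding plus commutator bookkeeping, reducing to a single Weyl-group identity — is the right one, and it is what the paper does. But there is a concrete error in your identification of the surviving reflection, and it invalidates the final step.

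You claim the folding of $x_{-\beta_i-\varphi''}(z_2)$ through $u$ leaves behind the reflection $s_{\beta_i+\varphi''}$, so that the target Weyl element is $s_{\varphi}s_{\varphi'}s_{\beta_i+\varphi''}$, and you propose to prove $s_{\varphi}s_{\varphi'}s_{\beta_i+\varphi''}=w_0s_4$ via Lemma~\ref{lem:longeltE6} by asserting that $\varphi,\varphi',\beta_i+\varphi'',\alpha_4$ are mutually perpendicular and $\sigma$-invariant. Neither assertion holds. Take $i=1$: $\beta_1+\varphi''=(101110)$ and $\varphi'=(101111)$, and their difference is $-\alpha_6$, a root, so $\langle(\beta_1+\varphi'')^\vee,\varphi'\rangle=1\neq 0$ and the two are not perpendicular. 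Nor is $\beta_1+\varphi''$ fixed by $\sigma$ (it maps to $\beta_3+\varphi''=(001111)$). For $i=2$ the failure is worse: $\beta_2+\varphi''=(011110)$ has $\langle\cdot,\omega_2\rangle=1$ and so is not even orthogonal to $\varphi$. So the identity $s_{\varphi}s_{\varphi'}s_{\beta_i+\varphi''}=w_0s_4$ is simply false, and Lemma~\ref{lem:longeltE6} does not apply.

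What you have missed is a \emph{second} folding. After pushing $x_{-\beta_i-\varphi''}(z_2)$ through $x_{\beta_i}(a)$ and folding once, you do indeed generate the reflection $s_{\beta_i+\varphi''}$, together with an accumulated positive root elation $x_{\beta_i+\varphi''}(b+z_2^{-1})$ sitting to its left (the $b$ combines additively with $z_2^{-1}$). Conjugating $x_{\beta_i}(a)$ through $s_{\beta_i+\varphi''}$ produces $x_{-\varphi''}(\pm a)$ on the right, since $s_{\beta_i+\varphi''}(\beta_i)=-\varphi''$. Now, \emph{provided $b+z_2^{-1}\neq 0$}, one folds $x_{\beta_i+\varphi''}(b+z_2^{-1})s_{\beta_i+\varphi''}$ a second time, converting it back into a negative root elation $x_{-\beta_i-\varphi''}(z_3)$ with $z_3=(b+z_2^{-1})^{-1}$; that element migrates to the left and is absorbed with the other $x_{-\beta_i-\varphi''}$ factors into $Bs_{\varphi}s_{\varphi'}B$. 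The reflection that actually survives is $s_{\varphi''}$ (from folding $x_{-\varphi''}(z_4)$), not $s_{\beta_i+\varphi''}$. The correct Weyl identity is $s_{\varphi}s_{\varphi'}s_{\varphi''}=w_{S\backslash J}w_0=w_0s_4$, which is exactly Lemma~\ref{lem:long2} applied to the polar closed diagram ${^2}\sE_{6;4}$; the three roots $\varphi,\varphi',\varphi''$ are the highest roots of the nested polar subsystems $\sE_6\supset\sA_5\supset\sA_3$ and are mutually perpendicular and $\sigma$-fixed, unlike $\beta_i+\varphi''$. Your intuition that $bz_2\neq -1$ is a non-degeneracy condition is right, but its precise role is to guarantee $b+z_2^{-1}\neq 0$ so that this second fold is legitimate.
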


\begin{proof}
We make the following two claims:
\begin{compactenum}[$(1)$]
\item If $z_1,z_1'\in\FF$ then $Bx_{-\beta_i-\varphi''}(z_1)x_{-\varphi}(1)x_{-\varphi'}(1)x_{\varphi}(c)x_{\varphi'}(d)x_{-\beta_i-\varphi''}(z_1')B= Bs_{\varphi}s_{\varphi'}B$;
\item With $z_3=(b+z_2^{-1})^{-1}$, we have $ux_{-\beta_i-\varphi''}(z_2)B\in x_{-\beta_i-\varphi''}(z_3)Bs_{\varphi''}B$. 
\end{compactenum}
The result follows, because writing $g=x_{-\beta_i-\varphi''}(z_1)x_{-\varphi}(1)x_{-\varphi'}(1)x_{\varphi}(c)x_{\varphi'}(d)ux_{-\beta_i-\varphi''}(z_2)$, and using (2) followed by (1) gives
\begin{align*}
g&\in x_{-\beta_i-\varphi''}(z_1)x_{-\varphi}(1)x_{-\varphi'}(1)x_{\varphi}(c)x_{\varphi'}(d)x_{-\beta_i-\varphi''}(z_3)Bs_{\varphi''}B\\
&\subseteq Bs_{\varphi}s_{\varphi'}B\cdot Bs_{\varphi''}B\\
&=Bs_{\varphi}s_{\varphi'}s_{\varphi''}B=Bw_0s_4B,
\end{align*}
where for the final two equalities we have used Lemma~\ref{lem:long2}.

We first prove (1). Let $X=Bx_{-\beta_i-\varphi''}(z_1)x_{-\varphi}(1)x_{-\varphi'}(1)x_{\varphi}(c)x_{\varphi'}(d)x_{-\beta_i-\varphi''}(z_1')B$. We will write $x_{\alpha}(\cdot)$ as shorthand for an element of $U_{\alpha}$ with the convention that $x_{\alpha}(\cdot)=1$ if $\alpha\notin\Phi$. By commutator relations, and replacing $x_{-\varphi}(1)=x_{\varphi}(1)s_{\varphi}^{-1}x_{\varphi}(1)$ and similarly for $x_{-\varphi'}(1)$, and noting that the roots $\varphi$ and $\varphi'$ are perpendicular, we compute
\begin{align*}
X&=Bx_{-\varphi}(1)x_{-\varphi'}(1)x_{-\varphi'-\beta_i-\varphi''}(\cdot)x_{-\beta_i-\varphi''}(\cdot)x_{\varphi}(c)x_{\varphi'}(d)x_{-\beta_i-\varphi''}(z_1')B\\
&=Bs_{\varphi}s_{\varphi'}x_{\varphi}(1)x_{\varphi'}(1)x_{-\varphi'-\beta_i-\varphi''}(\cdot)x_{-\beta_i-\varphi''}(\cdot)x_{\varphi'-\beta_i-\varphi''}(\cdot)x_{\varphi-\beta_i-\varphi''}(\cdot)x_{\varphi+\varphi'-\beta_i-\varphi''}(\cdot)B.
\end{align*}
Now, by inspection both $\varphi-\beta_i-\varphi''$ and $\varphi+\varphi'-\beta_i-\varphi''$ are either not roots, or are positive roots, and hence these terms can be absorbed into $B$. Thus 
\begin{align*}
X&=Bs_{\varphi}s_{\varphi'}x_{\varphi}(1)x_{\varphi'}(1)x_{-\varphi'-\beta_i-\varphi''}(\cdot)x_{-\beta_i-\varphi''}(\cdot)x_{\varphi'-\beta_i-\varphi''}(\cdot)B.
\end{align*}
Moreover, note that $\varphi'-\beta_i-\varphi''$ is a negative root if and only if $i=6$ (in all other cases it is either not a root, or is a positive root, and hence can be absorbed into $B$). Also note that $-\varphi'-\beta_i-\varphi''$ is only a root for $i=2$. 

We now push the $x_{\varphi}(1)x_{\varphi'}(1)$ term to the right, to be absorbed into the $B$. In the case $i\neq 2,6$ this gives $X=Bs_{\varphi}s_{\varphi'}x_{-\beta_i-\varphi''}(\cdot)B$, 
and then since $\beta_i+\varphi''\in \Phi(s_{\varphi}s_{\varphi'})=\Phi^+\backslash\Phi_1$ the term $x_{-\beta_i-\varphi''}(\cdot)$ is absorbed into the $B$ on the left hand side after moving past $s_{\varphi}s_{\varphi'}$. For $i=2$ we compute $
X=Bs_{\varphi}s_{\varphi'}x_{-\varphi'-\beta_i-\varphi''}(\cdot)x_{-\beta_i-\varphi''}(\cdot)B$, 
and since $\varphi'+\beta_i+\varphi'',\beta_i+\varphi''\in\Phi(s_{\varphi}s_{\varphi'})$ the result again follows. Similarly, for $i=6$ we compute  $X=Bs_{\varphi}s_{\varphi'}x_{-\beta_i-\varphi''}(\cdot)x_{\varphi'-\beta_i-\varphi''}(\cdot)B$. In this case $\varphi'-\beta_i-\varphi''=-(011110)$, and so both roots $\beta_i+\varphi''$ and $-\varphi'+\beta_i+\varphi''$ are in $\Phi(s_{\varphi}s_{\varphi'})$, and the result follows as before. Hence the claim.

We now prove (2). With $u'$ as in the statement of the theorem, we have
\begin{align*}
ux_{-\beta_i-\varphi''}(z_2)B&=u'x_{\beta_i+\varphi''}(b)x_{\beta_i}(a)x_{-\beta_i-\varphi''}(z_2)B\\
&=u'x_{\beta_i+\varphi''}(b)x_{\beta_i}(a)x_{\beta_i+\varphi''}(z_2^{-1})s_{\beta_i+\varphi''}B\\
&=u'x_{\beta_i+\varphi''}(b+z_2^{-1})x_{\beta_i}(a)s_{\beta_i+\varphi''}B\\
&=u'x_{\beta_i+\varphi''}(b+z_2^{-1})s_{\beta_i+\varphi''}x_{-\varphi''}(\pm a)B,
\end{align*} 
where we have used the facts that $2\beta_i+\varphi''\notin\Phi$ and $s_{\beta_i+\varphi''}(\beta_i)=-\varphi''$ for all $1\leq i\leq 6$. Recalling that $z_3^{-1}=b+z_2^{-1}$, using the folding relation we have
\begin{align*}
ux_{-\beta_i-\varphi''}(z_2)B&=u'x_{-\beta_i-\varphi''}(z_3)s_{\beta_i+\varphi''}(z_3^{-1})x_{-\beta_i-\varphi''}(z_3)s_{\beta_i+\varphi''}x_{-\varphi''}(\pm a)B\\
&=u'x_{-\beta_i-\varphi''}(z_3)s_{\beta_i+\varphi''}(z_3^{-1})s_{\beta_i+\varphi''}x_{\beta_i+\varphi''}(\pm z_3)x_{-\varphi''}(\pm a)B\\
&=u'x_{-\beta_i-\varphi''}(z_3)s_{\beta_i+\varphi''}(z_3^{-1})s_{\beta_i+\varphi''}x_{-\varphi''}(\pm a)B\\
&=u'x_{-\beta_i-\varphi''}(z_3)x_{-\varphi''}(z_4)B\\
&=u'x_{-\beta_i-\varphi''}(z_3)x_{\varphi''}(z_4^{-1})s_{\varphi''}B
\end{align*}
for some $z_4\neq 0$. 

By commutator relations, one may write
$$
u'x_{-\beta_i-\varphi''}(z_3)=x_{-\beta_i-\varphi''}(z_3)u''
$$
where $u''$ is a product of elements from root subgroups $U_{\gamma}$ with $\gamma\in Y_i$, where $Y_i=\{\alpha,\alpha-\beta_i-\varphi''\mid \alpha\in C_i'\}\cap \Phi$. By inspection of the elements of $C_i'$, if $\gamma\in Y_i$ is a negative root then necessarily $\gamma\in\Phi_1$. Moreover one sees that 
\begin{align*}
Y_i\cap \Phi_1=
\begin{cases}
\{-\alpha_5,-\alpha_4-\alpha_5\}&\text{if $i=1,5$}\\
\{-\alpha_3,-\alpha_3-\alpha_4\}&\text{if $i=3,4$}\\
\{-\alpha_5,-\alpha_3,\alpha_4\}&\text{if $i=2,6$}.
\end{cases}
\end{align*}
It follows that $u''$ can be written in the form
$
u''=u'''y_i,
$
where $u'''\in U^+$ and $y_i\in \prod_{\alpha\in Y_i\cap \Phi_1} U_{\alpha}$. Simple calculations in $\sA_3$, using the fact that $\Phi(s_{\varphi''})=\Phi_1^+\backslash\{\alpha_4\}$, show that $y_ix_{\varphi''}(z_4^{-1})s_{\varphi''}B\in Bs_{\varphi''}B$ for each $i=1,\ldots,6$. For example, if $i=1,5$ we have (for some $a_1,a_2\in\FF$)
\begin{align*}
y_ix_{\varphi''}(z_4^{-1})s_{\varphi''}B&=x_{-000010}(a_1)x_{-000110}(a_2)x_{001110}(z_4^{-1})s_{\varphi''}B\\
&=x_{-000010}(a_1)x_{001110}(z_4^{-1})x_{001000}(\pm a_2z_4^{-1})x_{-000110}(a_2)s_{\varphi''}B\\
&=x_{001110}(z_4^{-1})x_{001100}(\pm a_1z_4^{-1})x_{-000010}(a_1)x_{001000}(\pm a_2z_4^{-1})s_{\varphi''}B\\
&=x_{001110}(z_4^{-1})x_{001100}(\pm a_1z_4^{-1})x_{001000}(\pm a_2z_4^{-1})s_{\varphi''}B\subseteq Bs_{\varphi''}B.
\end{align*}
Hence (2) is proved.
\end{proof}

We can now complete the classification of domestic automorphisms of thick $\sE_6$ buildings. 

\begin{thm}\label{thm:converseE6}
Let $\theta$ be a collineation of a thick $\sE_6$ building $\Delta$. Then $\theta$ has opposition diagram ${^2}\sE_{6;2}$ if and only if $\theta$ is either a product of perpendicular root elations, or a nontrivial homology fixing a subbuilding with thick frame of type~$\sD_5$. 
\end{thm}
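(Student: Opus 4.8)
The ``if'' direction is already essentially in hand: a product of two perpendicular root elations has diagram ${^2}\sE_{6;2}$ by Theorem~\ref{thm:unipotentdiags}(1) (the diagram ${^2}\sE_{6;2}$ is polar closed, with highest roots $\varphi$ and $\varphi'=(101111)$), and a homology fixing a subbuilding with thick frame of type $\sD_5$ has diagram ${^2}\sE_{6;2}$ by Theorem~\ref{thm:homologyE6}. So the content is the ``only if'' direction. The plan is to fix a collineation $\theta$ with $\Diag(\theta)={^2}\sE_{6;2}$ and reduce it, by the standard normalisation, to a concrete element of $G=\sE_6(\FF)$, then peel off its constituents one commutator calculation at a time, exactly as in the proof of Theorem~\ref{thm:converse}, but now with two ``polar'' directions $\varphi,\varphi'$ instead of one. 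Since ${^2}\sE_{6;2}$ forces $\theta$ to be type preserving (the diagram is drawn bent only if $\pi_0\circ\pi_\theta\neq\mathrm{id}$, and here it is bent because $\pi_0$ itself is nontrivial for $\sE_6$, so $\pi_\theta=\mathrm{id}$) and capped (as $\Delta$ is large), the displacement is $\ell(w_{S\backslash J}w_0)$ with $J=\{1,2,6\}$, and $w_{S\backslash J}w_0 = s_\varphi s_{\varphi'}$ by Lemma~\ref{lem:long2}. After conjugating we may assume $B$ is mapped to Weyl distance $s_\varphi s_{\varphi'}$, and using the folding relation together with Lemma~\ref{lem:perpendicularroots} this becomes $\theta(B) = x_{-\varphi}(1)x_{-\varphi'}(1)B$, giving
$$
\theta = x_{-\varphi}(1)x_{-\varphi'}(1)\,u\,h\,\sigma,\qquad u\in U^+,\ h\in H,\ \sigma\in\mathrm{Aut}(\FF).
$$

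\textbf{Key steps.} First I would run the same four claims as in Theorem~\ref{thm:converse}: (i) $u$ involves no root subgroups from the $\sA_5$ subsystem $\Phi_1'$ generated by $\{\alpha_1,\alpha_3,\alpha_4,\alpha_5,\alpha_6\}$ (conjugate by $w_{\Phi_1'}$ and note any surviving factor would push the image into a longer double coset, contradicting $\disp(\theta)=\ell(s_\varphi s_{\varphi'})$); (ii) $h = h_{\omega_2}(c)$ for some $c$ (conjugate by $x_{-\alpha_i}(1)$, $i\neq 2$); (iii) $\sigma = \mathrm{id}$ (conjugate by $h_{\alpha_i^\vee}(a)$ with $a^\sigma\neq a$); and (iv) successively strip $u$ down. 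Step (iv) is the substantial new work and is where Lemma~\ref{lem:claim4} enters: writing $u$ in decreasing root height, the terms from $C_i'$ can be conjugated away by $x_{-\gamma}(\cdot)$ with $\gamma$ in the $\sA_3$ subsystem $\Phi_1 = \langle\alpha_3,\alpha_4,\alpha_5\rangle$, and whenever a ``bad'' leading term $x_{\beta_i}(a)$ with $a\neq 0$ survives, conjugating by $x_{-\beta_i-\varphi''}(z_2)$ with $z_2\neq 0$, $bz_2\neq -1$ lands us in $Bw_0s_4B$ by Lemma~\ref{lem:claim4}, which has length $\ell(w_0)-1 = 35 > \ell(s_\varphi s_{\varphi'}) = 30$, contradicting the displacement bound. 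So after this reduction $u$ lies in $U_{\{\varphi''\}}U_{\varphi}U_{\varphi'}$, i.e. $u = x_{\varphi''}(e)x_{\varphi}(a)x_{\varphi'}(a')$ for some $e,a,a'\in\FF$. A further commutator calculation (conjugating by suitable $x_{-\varphi''}(\cdot)$ or elements of $\Phi_1$) pins down $e$ in terms of $c$, and two more (the analogue of Claim~5 in Theorem~\ref{thm:converse}, done in each of the two commuting $\mathsf{SL}_2$'s attached to $\varphi$ and $\varphi'$) pin down $a$ and $a'$ as explicit functions of $c$.

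\textbf{Conclusion and main obstacle.} Once $\theta$ is in the explicit form $x_{-\varphi}(1)x_{-\varphi'}(1)x_{\varphi''}(e(c))x_{\varphi}(a(c))x_{\varphi'}(a'(c))h_{\omega_2}(c)$, I would distinguish cases on $c$. When $c=1$ (equivalently $h$ trivial) the element lies in $U^+\cup$ conjugates thereof and a direct conjugation (as in Claim~6 of Theorem~\ref{thm:converse}, performed simultaneously in the two orthogonal $\mathsf{SL}_2$'s) shows $\theta$ is conjugate to $x_{-\varphi}(\cdot)x_{-\varphi'}(\cdot)$, a product of two perpendicular root elations. When $c\neq 1$, the torus part is genuinely present; conjugating by $g = x_{\varphi}(-c)x_{-\varphi}(1)s_\varphi \cdot x_{\varphi'}(-c)x_{-\varphi'}(1)s_{\varphi'}$ (noting the two blocks commute since $\varphi\perp\varphi'$) should collapse the unipotent parts and leave $\theta$ conjugate to $h_{\omega_2}(c)$ times possibly a root elation; a final normalisation identifies $\theta$ with $h_{\omega_6}(c')$ for suitable $c'$, whose fixed subbuilding has thick frame of type $\sD_5$ by Theorem~\ref{thm:homologyE6}. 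The main obstacle is the bookkeeping in step (iv): tracking exactly which positive roots appear after each conjugation and verifying the hypotheses of Lemma~\ref{lem:claim4} (the non-degeneracy conditions $z_2\neq 0$, $bz_2\neq -1$, which require $|\FF|>2$ — consistent with $\Delta$ large) at every stage, since the partition $\{C_i\}$ of $\Phi^+\setminus(\Phi_1\cup\{\varphi,\varphi'\})$ must be exhausted in the right order. I expect the sign conventions in the commutator relations to be a persistent nuisance, but not a genuine difficulty, as in all the $\pm$-calculations of Section~\ref{sec:polartype}.
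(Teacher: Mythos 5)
Your proposal has the right skeleton --- normalise $\theta = x_{-\varphi}(1)x_{-\varphi'}(1)uh\sigma$, run a sequence of claims paralleling Theorem~\ref{thm:converse}, and invoke Lemma~\ref{lem:claim4} for the hard reduction of $u$ --- but two of the early claims are wrong, and the second error would make it impossible to recover the $\sD_5$-homologies that the theorem asserts must appear.

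Your step (i), that $u$ contains no factors from the $\sA_5$ subsystem $\Phi_1'=\langle\alpha_1,\alpha_3,\alpha_4,\alpha_5,\alpha_6\rangle$, is false. The length argument used in the paper's Claim~1 rests on $Bs_\varphi s_{\varphi'}B\cdot BvB=Bs_\varphi s_{\varphi'}vB$, which requires $\ell(s_\varphi s_{\varphi'}v)=\ell(s_\varphi s_{\varphi'})+\ell(v)$. Since $s_\varphi s_{\varphi'}=w_{\sA_3}w_0$, this holds for $v\in W_{\sA_3}$, but already fails for $v=s_1$: one has $\alpha_1\in\Phi(s_\varphi s_{\varphi'})=\Phi^+\setminus\Phi_{\sA_3}^+$, so $\ell(s_\varphi s_{\varphi'}s_1)=\ell(s_\varphi s_{\varphi'})-1$. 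Thus conjugating by $w_{\sA_5}$ does not deliver the contradiction you want, and the correct conclusion is only that $u$ has no factors from $\Phi_1=\langle\alpha_3,\alpha_4,\alpha_5\rangle$. Note also that $\varphi'=\varphi_{\sA_5}\in\Phi_1'$, so your (i) would forbid the very $x_{\varphi'}(\cdot)$-factor the final answer is supposed to contain, and your (iv) is internally inconsistent (a surviving $x_{\varphi''}(e)$-factor is impossible since $\varphi''\in\Phi_1$, and is already excluded by any correct version of Claim~1). More seriously, your step (ii) then gives $h=h_{\omega_2}(c)$ by conjugating with $x_{-\alpha_i}(1)$ for all $i\neq 2$; but that conjugation only forces $c_i=1$ when $u$ has no $x_{\alpha_i}(\cdot)$-factor, which after the corrected Claim~1 is guaranteed only for $i\in\{3,4,5\}$. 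The surviving torus part is the three-parameter family $h=h_{\omega_1}(c_1)h_{\omega_2}(c_2)h_{\omega_6}(c_3)$. This is not a cosmetic difference: for generic $c\neq1$ the homology $h_{\omega_2}(c)$ has fixed root system $\{\alpha:\langle\alpha,\omega_2\rangle=0\}$, a thick frame of type $\sA_5$, which by Theorem~\ref{thm:homologyE6} is \emph{not} domestic; so with your normalisation the $\sD_5$-homology case $h_{\omega_6}(c)$ never appears. In the paper the subsequent analysis pins down $a=-(c_1c_2-1)(c_2c_3-1)$ and $b=-(c_1-1)(c_3-1)$, exhibits a fixed chamber, computes $\theta$ conjugate to $x_\varphi(-c_1c_2)x_{\varphi'}(-c_1)h_{\omega_1}(c_1c_3^{-1})$, and the final dichotomy is $c_1=c_3$ (product of two perpendicular root elations) versus $c_1\neq c_3$ ($\theta$ conjugate to $h_{\omega_6}(c_1^{-1}c_3)$) --- not your $c=1$ versus $c\neq1$. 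Correcting the subsystem cut out in (i) to $\sA_3$ and the torus in (ii) to the three-parameter form would put your proposal back on the track the paper follows.
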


\begin{proof}
It is easy to see that if $\theta$ is a product of perpendicular root elations then $\theta$ is conjugate to an element $x_{\varphi}(a)x_{\varphi'}(b)$ with $a,b\neq 0$, and hence the ``if'' direction of the theorem follows from Theorem~\ref{thm:unipotentdiags} and Theorem~\ref{thm:homologyE6}. Thus it remains to prove the ``only if'' direction.

The result for the small building $\sE_6(2)$ follows from \cite[Theorem~4.6]{PVM:19b}. Thus suppose that $\Delta$ is large, and so the underlying field $\FF$ has at least $3$ elements. Note that $w_{\sA_3}w_{\sE_6}=s_{\varphi}s_{\varphi'}$ (by comparing inversion sets). Thus $\disp(\theta)=\ell(s_{\varphi}s_{\varphi'})$, and since $\theta$ is capped $\ell(\delta(gB,\theta gB))=\ell(s_{\varphi}s_{\varphi'})$ if and only if $\delta(gB,\theta gB)=s_{\varphi}s_{\varphi'}$. Moreover, after replacing $\theta$ by a conjugate, we may assume that the base chamber $B$ is mapped to Weyl distance $s_{\varphi}s_{\varphi'}$ (see \cite[Theorem~2.6]{PVM:19a}). Since the stabiliser of $B$ is transitive on each $w$-sphere centred at~$B$ we may assume that $B$ is mapped to the chamber $x_{\varphi}(1)x_{\varphi'}(1)s_{\varphi}s_{\varphi'}B$. By the folding relation, and the fact that $\varphi$ and $\varphi'$ are perpendicular, we have $x_{\varphi}(1)x_{\varphi'}(1)s_{\varphi}s_{\varphi'}B=x_{-\varphi}(1)x_{-\varphi'}(1)B$. The condition $\theta(B)=x_{-\varphi}(1)x_{-\varphi'}(1)B$ gives
$$
\theta=x_{-\varphi}(1)x_{-\varphi'}(1)uh\sigma\quad\text{for some $u\in U^+$, $h\in H$, and $\sigma\in\mathrm{Aut}(\FF)$}. 
$$
We will now determine $u,h$ and $\sigma$. The primary strategy (as in Theorem~\ref{thm:converse}) is to show that if these elements do not take certain particular forms, then one can find an element $g\in G$ such that $g^{-1}\theta g\in BwB\sigma$ with $\ell(w)>\ell(s_{\varphi}s_{\varphi'})$. Thus the chamber $gB$ is mapped to distance $\ell(w)>\ell(s_{\varphi}s_{\varphi'})$, contradicting the fact that $\theta$ has polar-copolar diagram. A useful observation is that if $w=s_{\varphi}s_{\varphi'}v$ with $v\in W_{\sA_3}$ then $\ell(w)=\ell(s_{\varphi}s_{\varphi'})+\ell(v)$. We now proceed with the analysis. The first three steps are completely analogous to the proof of Theorem~\ref{thm:converse}, and we omit the easy details.
\medskip

\noindent\textit{Claim 1: We have $u\in U_{\Phi\backslash\Phi_1}^+$}. 
\medskip

\noindent\textit{Claim 2: We have $h=h_{\omega_1}(c_1)h_{\omega_2}(c_2)h_{\omega_6}(c_3)$ for some $c_1,c_2,c_3\in\mathbb{F}^{\times}$}. 
\medskip

\noindent\textit{Claim 3: We have $\sigma=\mathrm{id}$}. 
\medskip

\noindent\textit{Claim 4: We have $u\in U_{\varphi}U_{\varphi'}$}. Write $u=x_{\varphi}(c)x_{\varphi'}(d)u_1$ with $c,d\in\FF$ and $u_1\in U_{\Phi\backslash(\Phi_1\cup\{\varphi,\varphi'\})}^+$. We must show that $u_1=1$. Suppose not. Let $i\in\{1,2,3,4,5,6\}$ be minimal such that a root in $C_i$ appears in $u_1$. Using the same idea as in the proof of Claim~4 in Theorem~\ref{thm:converse}, one may then conjugate $u_1$ by an element $g\in U_{\Phi_1}^-$ to obtain
$$
g^{-1}u_1g=u_1'x_{\beta_i+\varphi''}(b)x_{\beta_i}(a)
$$
with $a\neq 0$, $b\in\FF$, and $u_1'$ a product of root elations with roots in $C_i'$. Moreover, since $g^{-1}hg=h$ (by Claim 2) and $g^{-1}x_{-\varphi}(1)x_{-\varphi'}(1)x_{\varphi}(c)x_{\varphi'}(d)g=x_{-\varphi}(1)x_{-\varphi'}(1)x_{\varphi}(c)x_{\varphi'}(d)$ (because $\pm\varphi-\alpha,\pm\varphi'-\alpha\notin\Phi$ for all $\alpha\in\Phi_1^+$) we have
$$
g^{-1}\theta g=x_{-\varphi}(1)x_{-\varphi'}(1)x_{\varphi}(c)x_{\varphi'}(d)u_1'x_{\beta_i+\varphi''}(b)x_{\beta_i}(a)h.
$$
Let $g_1=x_{-\beta_i-\varphi''}(z)$ with $z\in\FF$. Then
\begin{align*}
g_1^{-1}g^{-1}\theta gg_1&=x_{-\beta_i-\varphi''}(-z)x_{-\varphi}(1)x_{-\varphi'}(1)x_{\varphi}(c)x_{\varphi'}(d)u_1'x_{\beta_i+\varphi''}(b)x_{\beta_i}(a)x_{-\beta_i-\varphi''}(\lambda z)h
\end{align*}
for some $\lambda\neq 0$. Since $\Delta$ is large we have $|\FF|\geq 3$, and so we may choose $z\in\FF$ with $z\neq 0$ and $b\lambda z\neq -1$. Applying Lemma~\ref{lem:claim4} then gives
$$
g_1^{-1}g^{-1}\theta gg_1\in Bw_0s_4B,
$$
and hence the chamber $g_1gB$ is mapped to Weyl distance $w_0s_4$, a contradiction. Thus $u_1=1$. 
\medskip

\noindent\textit{Claim 5: We have $\theta=x_{-\varphi}(1)x_{-\varphi'}(1)x_{\varphi}(a)x_{\varphi'}(b)h_{\omega_1}(c_1)h_{\omega_2}(c_2)h_{\omega_6}(c_3)$, where}
\begin{align*}
a=-(c_1c_2-1)(c_2c_3-1)\quad\text{and}\quad b=-(c_1-1)(c_3-1).
\end{align*}
From Claims 1--4 we have $\theta=x_{-\varphi}(1)x_{-\varphi'}(1)x_{\varphi}(a)x_{\varphi'}(b)h_{\omega_1}(c_1)h_{\omega_2}(c_2)h_{\omega_6}(c_3)$ for some $a,b\in\FF$. Let $\alpha=(101000)$ and $\beta=(001111)$, and let $g=x_{-\alpha}(1)x_{-\beta}(1)$. A direct calculation shows that if $b\neq -(c_1-1)(c_3-1)$ then 
$$
g^{-1}\theta g\in Bs_{\varphi}s_{\varphi'}B\cdot Bs_3B=Bs_{\varphi}s_{\varphi'}s_3B,
$$
a contradiction. 

A similar calculation, with $g=x_{-\gamma}(1)x_{-\delta}(1)$ where $\gamma=(010100)$ and $\delta=(112321)$ shows that if $a\neq-(c_1c_2-1)(c_2c_3-1)$ then $g^{-1}\theta g\in Bs_{\varphi}s_{\varphi'}s_4B$, again a contradiction.
\medskip

\noindent\textit{Claim 6: $\theta$ fixes a chamber.} Let $gB=x_{\varphi}(y_1)x_{\varphi'}(y_2)s_{\varphi}s_{\varphi'}B$ with $y_1,y_2\in\FF$. With $\theta$ as in Claim~5 we have
\begin{align*}
\theta gB&=x_{-\varphi}(1)x_{-\varphi'}(1)x_{\varphi}(a)x_{\varphi'}(b)x_{\varphi}(c_1c_2^2c_3y_1)x_{\varphi'}(c_1c_3y_2)s_{\varphi}s_{\varphi'}B\\
&=x_{-\varphi}(1)x_{\varphi}(a+c_1c_2^2c_3y_1)x_{-\varphi'}(1)x_{\varphi'}(b+c_1c_3y_2)s_{\varphi}s_{\varphi'}B.
\end{align*}
We now use the $\mathsf{SL}_2$ relation $x_{-\alpha}(z)x_{\alpha}(z')=x_{\alpha}(z'(1+zz')^{-1})x_{-\alpha}(-z(1+zz'))h_{\alpha^{\vee}}(1+zz')^{-1}$, valid whenever $zz'\neq -1$, to obtain
\begin{align*}
\theta gB&=x_{\varphi}((a+c_1c_2^2c_3y_1)(1+a+c_1c_2^2c_3y_1)^{-1})x_{\varphi'}((b+c_1c_3y_2)(1+b+c_1c_3y_2)^{-1})s_{\varphi}s_{\varphi'}B,
\end{align*}
where we have chosen $y_1,y_2$ so that $1+a+c_1c_2^2c_3y_1\neq 0$ and $1+b+c_1c_3y_2\neq 0$. We have $\theta gB=gB$ if and only if $(a+c_1c_2^2c_3y_1)=y_1(1+a+c_1c_2^2c_3y_1)$ and $(b+c_1c_3y_2)=y_2(1+b+c_1c_3y_2)$. Recalling that $a=-(c_1c_2-1)(c_2c_3-1)$ and $b=-(c_1-1)(c_3-1)$, the discriminants in the quadratics (in $y_1$ and $y_2$) are perfect squares, and we find that $gB$ is fixed if and only if 
\begin{align*}
y_1&\in\{1-c_1^{-1}c_2^{-1},1-c_2^{-1}c_3^{-1}\}&y_2&\in\{1-c_1^{-1},1-c_3^{-1}\}
\end{align*}
(note that these values avoid the excluded values $y_1=1-c_1^{-1}c_2^{-1}-c_2^{-1}c_3^{-1}$ and $y_2=1-c_1^{-1}-c_3^{-1}$ required for the $\mathsf{SL}_2$ relation to hold). 

\medskip

\noindent\textit{Claim 7: $\theta$ is conjugate to $x_{\varphi}(-c_1c_2)x_{\varphi'}(-c_1)h_{\omega_1}(c_1c_3^{-1})$.} Let $g=x_{\varphi}(1-c_1^{-1}c_2^{-1})x_{\varphi'}(1-c_1^{-1})s_{\varphi}s_{\varphi'}$. It follows from Claim 5 that $g^{-1}\theta g\in B$, and a direct calculation yields
\begin{align*}
g^{-1}\theta g&=x_{\varphi}(-c_1c_2)x_{\varphi'}(-c_1)h_{\omega_1}(c_1c_3^{-1})
\end{align*}
as required.

\medskip

\noindent\textit{Claim 8: $\theta$ is either a product of perpendicular root elations, or is a homology fixing a subbuilding with thick frame of type $\sD_5$.} After conjugating, as in Claim 6, we may replace $\theta$ by the conjugate $\theta=x_{\varphi}(-c_1c_2)x_{\varphi'}(-c_1)h_{\omega_1}(c_1c_3^{-1})$. If $c_1=c_3$ then $\theta$ is a product of perpendicular root elations. If $c_1\neq c_3$ then we note that the chamber $gB$, with $g=x_{\varphi}(c_1c_2c_3/(c_1-c_3))x_{\varphi'}(c_1c_3/(c_1-c_3))s_{\varphi}s_{\varphi'}$, is fixed by $\theta$. Thus $g^{-1}\theta g\in B$, and a direct calculation shows that 
$$
g^{-1}\theta g=h_{\omega_6}(c_1^{-1}c_3),
$$
which is a homology fixing a subbuilding with thick frame of type $\sD_5$ (by Theorem~\ref{thm:homologyE6}). 
\end{proof}

\subsection{Classification of domestic automorphisms of split $\sG_2$ buildings}\label{sec:G2}

Let $\Phi$ be a root system of type $\sG_2$, with the convention that $\alpha_1$ is a short root. Note that $\alpha_2$ is the polar node. Let $\varphi=(32)$ be the highest root of $\Phi$. The highest short root is $\varphi'=(21)$. Let $G$ be the Chevalley group $\sG_2(\FF)$. The building $\Delta=G/B$ may be regarded as a generalised hexagon (the \textit{dual split Cayley hexagon}), where the point set is $G/P_1$ and the line set is $G/P_2$, where $P_i=B\cup Bs_iB$ (in particular, note that ``points'' are type $\{2\}=S\backslash\{1\}$ vertices, and ``lines'' are type $\{1\}=S\backslash\{2\}$ vertices). Then $G\rtimes \mathrm{Aut}(\FF)$ is the full collineation group of $\Delta$.

Recall the following definitions. A \textit{distance $3$-ovoid} in a generalised hexagon $\mathscr{G}$ is a set $\cS$ of mutually opposite points such that every element of the generalised hexagon is at distance at most $3$ (in the incidence graph) from a point of~$\cS$. A subhexagon $\mathscr{G}'$ of $\mathscr{G}$ is \textit{full} if every point of $\mathscr{G}$ incident with a line of $\mathscr{G}'$ belongs to $\mathscr{G}'$, and is \textit{large} if every element of $\mathscr{G}$ is at distance at most $3$ from some element of $\mathscr{G}'$. Recall from \cite[Theorem~2.7]{PTM:15} that a nontrivial automorphism $\theta$ of a generalised hexagon is point-domestic if and only if its fixed element structure is either a ball of radius $3$ in the incidence graph centred at a line, a large full subhexagon, or a distance $3$-ovoid. Dual statements hold for line-domestic automorphisms. (Note that the ``if'' direction was omitted in the statement of \cite[Theorem~2.7]{PTM:15}, however it is obvious from fixed element structure in each case).

In this section we prove the following theorem, which along with \cite[Theorem~2.7, 2.9, and 2.14]{PTM:15} completely classifies the domestic automorphisms of $\Delta$. 

\begin{thm}\label{thm:mainG2}
Let $\Delta$ be the building of $\sG_2(\FF)$. There exists a unique conjugacy class $\mathscr{C}_1$ of automorphisms with opposition diagram $\sG_{2;1}^2$, and a unique conjugacy class $\mathscr{C}_2$ of automorphisms with opposition diagram $\sG_{2;1}^1$. The class $\mathscr{C}_1$ consists precisely of the long root elations. The class $\mathscr{C}_2$ has representative $\theta_2$, where:
\begin{compactenum}[$(1)$]
\item if $\mathrm{char}(\FF)=3$ then $\theta_2=x_{\varphi'}(1)$ is a short root elation fixing precisely a ball of radius $3$ in the incidence graph centred at a line,
\item if there is $z\in\FF\backslash\{1\}$ with $z^3=1$ then $\theta_2=h_{\omega_1}(z)$ is a homology fixing a large full subhexagon, and
\item if $\mathrm{char}(\FF)\neq 3$ and there is no $z\in\FF\backslash\{1\}$ with $z^3=1$ then $\theta_2=x_{\alpha_1}(1)s_1$ fixes precisely a distance $3$-ovoid. 
\end{compactenum}
\end{thm}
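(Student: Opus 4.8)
The theorem has three components: (i) identification of $\mathscr{C}_1$ with long root elations and uniqueness of this class; (ii) existence and uniqueness of a single class $\mathscr{C}_2$ realising the diagram $\sG_{2;1}^1$; (iii) the case analysis describing a representative $\theta_2$ of $\mathscr{C}_2$ according to the arithmetic of $z^2+z+1$ over $\FF$. Throughout I may assume $|\FF|>2$ where convenient (the case $\FF=\FF_2$ being covered by the small-building analysis in \cite{PVM:19b}), and I will use the standard $BN$-pair description of $\Delta=G/B$ with $\Phi$ of type $\sG_2$, $\alpha_1$ short, $\alpha_2$ the polar node, $\varphi=(32)$ the highest (long) root and $\varphi'=(21)$ the highest short root. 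Note $w_0=s_\varphi s_{\alpha_1}$ (a product of two perpendicular reflections, by Lemma~\ref{lem:longelt}, since $\varphi\perp\alpha_1$ spans $V$), so a collineation with diagram $\sG_{2;1}^2$ (polar node encircled) has displacement $\ell(s_\varphi)=2\height(\varphi)-1=9$ while one with diagram $\sG_{2;1}^1$ (short node encircled) has displacement $\ell(s_{\alpha_1}w_0)=\ell(w_0)-1=5$.

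\textbf{Part (i).} The diagram $\sG_{2;1}^2$ is the polar diagram for $\sG_2$; by Theorem~\ref{thm:polarclass} (whose $\sG_2$ case, as the excerpt notes, is proved here via the present classification) an automorphism has this diagram if and only if it is a long root elation, and all long root elations are conjugate since $W$ acts transitively on long roots and a diagonal element $h_{\omega_1-\omega_2}(a)$ (or an analogous torus element) normalises $U_\varphi$ scaling the parameter, exactly as in the $\sF_4$ argument in the proof of Theorem~\ref{thm:F4class}. So $\mathscr{C}_1$ is a single class. To avoid circularity I will in fact establish the $\sG_{2;1}^2$ statement directly: an automorphism with this diagram, after conjugation, fixes no chamber to distance exceeding $s_\varphi$ and maps the base chamber to $x_{-\varphi}(1)B=x_\varphi(1)s_\varphi B$, and then the same folding/commutator bootstrap used in Theorem~\ref{thm:converse} (which for $\sG_2$ only involves the rank-$2$ commutator relations among $\alpha_1,\alpha_2$) forces $\theta$ to be conjugate to $x_{-\varphi}(c)$ for some $c\neq 0$, hence to $x_\varphi(1)$.

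\textbf{Parts (ii) and (iii): the diagram $\sG_{2;1}^1$.} Here $\theta$ is $\{1\}$-domestic but not $\{2\}$-domestic: no point of the hexagon is mapped to an opposite point, but some line is. By \cite[Theorem~2.7]{PTM:15} the fixed structure of a point-domestic nontrivial collineation of a generalised hexagon is one of exactly three types: a ball of radius $3$ centred at a line, a large full subhexagon, or a distance $3$-ovoid. The plan is to (a) verify that each of the three candidate representatives $\theta_2$ in the statement is indeed $\{1\}$-domestic with displacement exactly $5$ (hence diagram $\sG_{2;1}^1$), and has the asserted fixed structure; (b) show conversely that any $\theta$ with diagram $\sG_{2;1}^1$ is conjugate to the appropriate one of these three, and that the three are pairwise non-conjugate (they have non-isomorphic fixed structures, and the trichotomy on $\FF$ is exhaustive and mutually exclusive), so that $\mathscr{C}_2$ is a single class. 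For (a): when $\mathrm{char}(\FF)=3$ the short root geometry degenerates and $x_{\varphi'}(1)$ behaves like a central (long root) elation of the "wrong" geometry — precisely the computation already invoked in the proof of Theorem~\ref{thm:short}(4) and in the special-characteristic clause of Theorem~\ref{thm:unipotentdiags} — so it fixes a ball of radius $3$ at a line and has diagram $\sG_{2;1}^1$. When $z\in\FF$ satisfies $z^3=1$, $z\neq 1$ (so $z^2+z+1=0$, and automatically $\mathrm{char}(\FF)\neq3$), the homology $\theta_2=h_{\omega_1}(z)$ has $\Phi_{\theta_2}=\{\alpha\in\Phi\mid z^{\langle\alpha,\omega_1\rangle}=1\}$; inspecting the six positive roots one finds $\Phi_{\theta_2}^+$ is an $\sA_2$ subsystem, so by Lemma~\ref{lem:thickframe} and the discussion of thick frames in Section~\ref{sec:homologies}, $\Delta_{\theta_2}$ has thick frame of type $\sA_2$, i.e. a large full subhexagon; a standard-technique bound (Proposition~\ref{prop:standardtechnique}) with an appropriately short $w_1$ gives $\disp(\theta_2)\le 5$, pinning the diagram as $\sG_{2;1}^1$. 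For the third case, a direct folding-relation computation, analogous to the one at the end of the proof of Theorem~\ref{thm:short}, shows $Bw_0^{-1}u^{-1}x_{\alpha_1}(1)s_1\, u\, w_0 B$ never equals $Bw_0B$ as $u$ ranges over $U^+$ (so $\theta_2$ is point-domestic) while some line is sent to an opposite; since $\theta_2$ is not a root elation (it is not conjugate into any $U_\alpha$: its fixed structure is not a ball — one checks $x_{\alpha_1}(1)s_1$ has no large fixed ball) and not a homology of the subhexagon type (no primitive cube root of unity is available), Penttila–Thas' trichotomy forces the fixed structure to be a distance $3$-ovoid.

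\textbf{Uniqueness and the main obstacle.} For uniqueness within $\mathscr{C}_2$ I will run a converse bootstrap in the spirit of Theorems~\ref{thm:converse} and~\ref{thm:converseE6}: conjugating $\theta$ so that the base chamber is sent to the canonical chamber at distance $s_{\alpha_1}w_0$, write $\theta=x_{-\alpha_1}(1)w_0^{-1}\cdots$ as $x_{-\beta}(\cdot)uh\sigma$ for the relevant negative roots $\beta$, and use the $\sG_2$ commutator relations together with the folding relation to kill $\sigma$, constrain $h$ to a one-parameter torus, and constrain $u$; the upshot is a normal form $\theta = x_{\varphi'}(a)\,h_{\omega_1}(c)$ (up to conjugacy), and then the value of $c$ relative to the cube roots of unity — whether $c=1$ (pure short root elation, only interesting when $\mathrm{char}(\FF)=3$), whether $c^3=1$, or otherwise (in which case a further conjugation by a suitable $x_{\alpha_1}(\cdot)x_{-\alpha_1}(\cdot)s_1$-type element normalises $\theta$ to $x_{\alpha_1}(1)s_1$) — sorts $\theta$ into exactly one of the three classes, each of which is a single conjugacy class by transitivity of $B$ on distance spheres. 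I expect the main obstacle to be precisely this converse normal-form computation in the third case: unlike the long-root-elation argument, the short node is \emph{not} polar-closed, there is no clean unipotent witness, and one must carefully track the $\mathsf{SL}_2$-relations over a field that need not contain a cube root of unity, showing that the residual parameter can always be scaled to land on $x_{\alpha_1}(1)s_1$ and that this element genuinely fixes a distance $3$-ovoid (rather than merely being point-domestic) — the latter requiring either an explicit construction of the ovoid or a counting/non-degeneracy argument excluding the other two Penttila–Thas alternatives. A secondary subtlety is bookkeeping of signs $\epsilon_{\alpha\beta}$ in the $\sG_2$ commutator and Weyl-conjugation relations, handled (as elsewhere in the paper) by the Magma sign conventions.
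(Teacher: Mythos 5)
Your overall strategy (invoke the fixed-structure trichotomy of \cite[Theorem~2.7]{PTM:15}, split according to whether $z^2+z+1$ has a root in $\FF$, and do $BN$-pair normal-form computations) is the same as the paper's, but several of the concrete steps you lean on are broken. First, the displacement arithmetic is wrong: $\ell(s_\alpha)=2\height(\alpha)-1$ holds only in the simply laced case, and in $\sG_2$ one has $\ell(w_0)=6$, $s_\varphi=s_2s_1s_2s_1s_2$ and $s_{\varphi'}=s_1s_2s_1s_2s_1$, so \emph{both} diagrams $\sG_{2;1}^1$ and $\sG_{2;1}^2$ correspond to displacement $5$, not $9$ versus $5$. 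Consequently a bound $\disp(\theta_2)\le 5$ does not ``pin the diagram as $\sG_{2;1}^1$''; you must show which type of vertex reaches an opposite, i.e.\ prove point-domesticity at the level of $P_1$-double cosets. Moreover the standard technique cannot even deliver $\disp\le 5$ for $h_{\omega_1}(z)$: the roots moved by this homology are the three short positive roots $(10),(11),(21)$, and the smallest inversion set containing them is $\Phi^+\setminus\{\alpha_2\}$ (it must be closed, forcing $(31),(32)$ in as well), so the shortest admissible $w_1$ has length $5$ and Proposition~\ref{prop:standardtechnique} only gives the vacuous bound $9$. The same issue recurs in your case (3): showing $Bw_0^{-1}u^{-1}\theta_2uw_0B\neq Bw_0B$ only gives chamber-domesticity, whereas the trichotomy you then invoke requires point-domesticity; the paper instead computes $P_1s_{\varphi}^{-1}g^{-1}\theta gs_{\varphi}P_1\in P_1\cup P_1s_2s_1s_2P_1$ (and similarly in the homology case), and you have no substitute for this finer computation.

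Second, your uniqueness scheme for $\mathscr{C}_2$ via a ``normal form $\theta=x_{\varphi'}(a)h_{\omega_1}(c)$ up to conjugacy'' cannot be right: any such element lies in $B$ and fixes a chamber, but when $z^2+z+1$ is irreducible over $\FF$ the representative $x_{\alpha_1}(1)s_1$ (indeed every automorphism with diagram $\sG_{2;1}^1$) fixes no chamber, as the fixed structure is a set of mutually opposite points. The paper's route is different: after fixing a point and an opposite point it derives the normal form $hx_{\alpha_1}(a)s_1\cdot\sigma$ inside $P_1\setminus B$, kills $\sigma$ and the torus part, and only then reads off the trichotomy; you would need something of this shape rather than a $B$-normal form. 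Finally, for $\mathscr{C}_1$ you propose to rerun the bootstrap of Theorem~\ref{thm:converse} in $\sG_2$; the paper deliberately does not do this (Theorem~\ref{thm:converse} is stated for $\sE_n$ and $\sF_4$ only) and instead classifies via fixed structures, ruling out large ideal subhexagons by \cite[Remark~5.9.14]{HVM:98} and distance-$3$-spreads by an explicit computation. Your one-line assertion that the bootstrap ``only involves the rank-$2$ commutator relations'' is not a proof: the $\sG_2$ commutator relations carry extra terms and characteristic-$3$ degenerations, and none of the claims in your sketch are verified. As it stands the proposal has genuine gaps at exactly the places where the paper does its detailed $P_1$-coset and normal-form work.
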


In particular, note that in the case~(3) there is no automorphism fixing a chamber with opposition diagram $\sG_{2;1}^1$. This is the only case of a split building with this property. The proof of Theorem~\ref{thm:mainG2} will follow from a series of lemmas.

\begin{lemma}\label{lem:rootelations1}
Let $\Delta=\sG_2(\FF)$. A collineation $\theta$ fixes precisely a ball of radius~$3$ in the incidence graph centred at a point (respectively a line) if and only if $\theta$ is conjugate to $x_{\varphi}(1)$ (respectively, $\mathrm{char}(\FF)=3$ and $\theta$ is conjugate to $x_{\varphi'}(1)$). Moreover, the automorphism $x_{\varphi}(1)$ is domestic with opposition diagram~$\sG_{2;1}^2$, and the automorphism $x_{\varphi'}(1)$ is domestic if and only if $\mathrm{char}(\FF)=3$, in which case it has opposition diagram $\sG_{2;1}^1$. 
\end{lemma}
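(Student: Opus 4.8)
The plan is to analyse the fixed-element structures directly in the generalised hexagon $\Delta = \sG_2(\FF)$, using the RGD/commutator machinery together with the classification of point-domestic and line-domestic automorphisms recalled from \cite[Theorem~2.7]{PTM:15}. First I would deal with the root elations themselves. For $x_{\varphi}(1)$, since $\varphi$ is the highest (long) root it is central in $U^+$, so Theorem~\ref{thm:RGD} applies: $x_{\varphi}(1)$ has polar opposition diagram $\sG_{2;1}^2$, and in particular it is point-domestic (the polar node $\alpha_2$ is the ``point'' node). By \cite[Theorem~2.7]{PTM:15} a point-domestic collineation fixes either a ball of radius $3$ centred at a line, a large full subhexagon, or a distance $3$-ovoid; a direct computation of $\mathrm{Fix}(x_{\varphi}(1))$ (the root group $U_{\varphi}$ fixes exactly the chambers in the residues meeting a fixed apartment wall) shows the fixed structure is a ball of radius $3$ in the incidence graph, and since $\varphi$ is long this ball is centred at a \emph{point}. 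Conversely, if $\theta$ fixes precisely a ball of radius $3$ centred at a point, then $\theta$ is a central collineation of the hexagon with that centre, and by Moufang-theoretic uniqueness (the root group acting on the relevant root is sharply transitive) $\theta$ is conjugate to an element of $U_{\varphi}\setminus\{1\}$; conjugating by a suitable torus element $h_{\omega_1-\omega_2}(\cdot)$ normalises $x_{\varphi}(a)$ to $x_{\varphi}(1)$, giving a single conjugacy class.

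Next I would treat $x_{\varphi'}(1)$, where $\varphi' = (21)$ is the highest short root. The point is that $x_{\varphi'}(1)$ is a \emph{short} root elation, and Theorem~\ref{thm:short}(4) already records exactly the dichotomy we need: it has opposition diagram $\sG_{2;2}$ when $\mathrm{char}(\FF)\neq 3$ (hence is \emph{not} domestic, being line-domestic but not point-domestic in a way that still maps a chamber to an opposite — more precisely $\disp$ is too large), and has opposition diagram $\sG_{2;1}^1$ when $\mathrm{char}(\FF)=3$. For the non-domesticity in the $\mathrm{char}(\FF)\neq 3$ case I would just cite Theorem~\ref{thm:short} (whose $\sG_2$ proof with the explicit conjugation $Bw_0^{-1}x_{\alpha_1+\alpha_2}(1)^{-1}x_{\varphi'}(a)x_{\alpha_1+\alpha_2}(1)w_0B = Bw_0B$ is already given there). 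For the $\mathrm{char}(\FF)=3$ case: $x_{\varphi'}(1)$ has diagram $\sG_{2;1}^1$, so it is line-domestic (the $\alpha_1$-node is the ``line'' node), hence by \cite[Theorem~2.7]{PTM:15} (dual version) its fixed structure is a ball of radius $3$ centred at a line, a large full subhexagon, or a distance $3$-dual-ovoid. A direct computation of $\mathrm{Fix}(x_{\varphi'}(1))$ in characteristic $3$ — using $x_{\varphi'}(1) x_{\beta}(b) x_{\varphi'}(1)^{-1} = x_{\beta}(b)$ for $\beta$ with $\langle \beta,\varphi'^\vee\rangle = 0$ — shows it fixes exactly a ball of radius $3$ centred at a line. (The characteristic-$3$ hypothesis is what makes $U_{\varphi'}$ behave like a long root group, via the exceptional isogeny; this is the same phenomenon exploited in the proof of Theorem~\ref{thm:short}.) Conversely, if $\theta$ fixes precisely a ball of radius $3$ centred at a line, then $\theta$ is a central collineation of the dual hexagon with that centre; in characteristic $3$ the corresponding root group is $U_{\varphi'}$, and sharp transitivity plus a torus conjugation give that $\theta$ is conjugate to $x_{\varphi'}(1)$, again a single class.

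The main obstacle will be pinning down the exact fixed-element structure of the elations — i.e.\ verifying that $\mathrm{Fix}(x_{\varphi}(1))$ is a ball of radius $3$ centred at a \emph{point} and (in characteristic $3$) $\mathrm{Fix}(x_{\varphi'}(1))$ is a ball of radius $3$ centred at a \emph{line} — rather than one of the other two configurations allowed by \cite[Theorem~2.7]{PTM:15}. I would handle this by an explicit apartment/chamber count: work in the base apartment, note that $x_{\alpha}(a)$ with $a\neq 0$ fixes a chamber $uwB$ iff $w^{-1}\alpha\in\Phi^+$ and $u$ centralises $x_{\alpha}(a)$ modulo the relevant unipotent subgroup, and read off that the fixed chambers are exactly those incident with the simplex stabilised by $U_\alpha$'s centraliser; since this simplex is a type-$\{2\}$ vertex (a point) when $\alpha=\varphi$ and a type-$\{1\}$ vertex (a line) when $\alpha=\varphi'$ (the latter using the characteristic-$3$ identification), the centre of the fixed ball is as claimed. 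The subhexagon and ovoid possibilities are eliminated because a full subhexagon would force fixed chambers at unbounded ``width'' and a distance $3$-ovoid fixes no line-residue-point incidences, neither of which holds for a root elation whose fixed set visibly contains a full residue. Finally I would assemble the uniqueness-of-conjugacy-class statements: $\mathscr{C}_1$ is the class of long root elations by Theorem~\ref{thm:polarclass} together with the torus normalisation above, and the representative $\theta_2$ of $\mathscr{C}_2$ in case (1) is $x_{\varphi'}(1)$ by the computation just described; cases (2) and (3) of the theorem are not part of this lemma and will be proved separately (they correspond to homologies, classified via Theorem~\ref{thm:mainG2}'s other parts, and to the ovoid case).
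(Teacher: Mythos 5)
Your approach runs in the opposite direction to the paper's. The paper's proof of the hard direction (fixed structure is a ball $\Rightarrow$ $\theta$ is conjugate to a root elation) is an explicit $BN$-pair computation: put the base chamber $\{p,L\}$ at the centre, observe $\theta$ must fix all chambers in $\bigcup_{\ell(w)\le 3}BwB$, deduce $\theta=hu\in B$, and eliminate each factor of $h$ and each positive-root component of $u$ in turn using the commutator relations, ending with $\theta=x_{\varphi'}(a_4)$ and then reading off $\mathrm{char}(\FF)=3$ from $\theta x_{11}(a)s_2s_1s_2B=x_{11}(a)x_{32}(3a_4a)s_2s_1s_2B$. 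You instead invoke ``Moufang-theoretic uniqueness'' to assert that any collineation fixing a ball of radius $3$ around a vertex already lies in the relevant root group, and then derive the characteristic restriction by feeding that into Theorem~\ref{thm:short}. That route does work logically (no circularity with Theorem~\ref{thm:short}, since its $\sG_2$ case is proved independently via the isogeny/commutator arguments), but the appeal to Moufang uniqueness is precisely the content the lemma must supply, and as written it is a genuine gap: you assert the hardest step without a reference or argument. To close it you would need to cite the classical fact that in a Moufang generalised $n$-gon the group of collineations fixing the ball of radius $n/2$ around a vertex is exactly a root group (this is in Van Maldeghem's book or Tits--Weiss), or reproduce the direct computation the paper gives. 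You also omit the step showing such a $\theta$ is linear (no field-automorphism component); the paper gets this for free because fixing all points of $L$ kills the semilinear part.

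Two smaller corrections: you have the point-/line-domesticity duality reversed in both places. Since $x_{\varphi}(1)$ has diagram $\sG_{2;1}^2$ it maps some \emph{points} to opposites, hence it is \emph{line}-domestic (not point-domestic), and by the dual of \cite[Theorem~2.7]{PTM:15} its fixed structure lies in the list ``ball centred at a point / large ideal subhexagon / distance $3$-spread''. Dually, $x_{\varphi'}(1)$ with diagram $\sG_{2;1}^1$ is \emph{point}-domestic (not line-domestic), giving the list ``ball centred at a line / large full subhexagon / distance $3$-ovoid''. Your conclusions about the centre being a point resp.\ a line are correct, but the stated reasons are swapped. Finally, you mention but defer the verification that the elation's fixed structure is \emph{exactly} a ball (ruling out the subhexagon and spread possibilities); in the paper's forward-direction argument this is automatic, whereas in your backward approach it needs its own check, so the deferral should be made explicit and carried out.
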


\begin{proof}
Consider the case where the centre is a line~$L$, and let $p$ be a point on $L$. Then $\{p,L\}$ is a chamber fixed by $\theta$, and after conjugation we may suppose that $\{p,L\}$ is the base chamber~$B$. Since $\theta$ fixes all points on the line $L$ it follows that $\theta$ is linear, and hence $\theta\in \sG_2(\FF)$. In the $BN$-pair language, the hypothesis of the lemma gives that $\theta$ fixes each chamber $gB$ with $g\in B\cup Bs_1B\cup Bs_2B\cup Bs_1s_2B\cup Bs_2s_1B\cup Bs_2s_1s_2B$. In particular, $\theta(B)=B$, and so $\theta\in B$. Thus $\theta=hu$ with $h=h_{\omega_1}(c_1)h_{\omega_2}(c_2)\in H$ and $u\in U^+$. Write 
$$
u=x_{10}(a_1)x_{01}(a_2)x_{11}(a_3)x_{21}(a_4)x_{31}(a_5)x_{32}(a_6).
$$
By commutator relations and the fact that $s_1(\alpha)\in\Phi^+$ for all $\alpha\in \Phi^+\backslash\{\alpha_1\}$ we have
$$
\theta x_{\alpha_1}(a)s_1B=x_{\alpha_1}((a+a_1)c_1)s_1B,
$$
and since $\theta$ fixes each chamber $x_{\alpha_1}(a)s_1B$ with $a\in\FF$ we have $a_1=0$ and $c_1=1$. Similarly, since $\theta x_{\alpha_2}(a)s_2B=x_{\alpha_2}((a+a_2)c_2)s_2B$ we have $a_2=0$ and $c_2=1$. Thus $h=1$. Next, since $\theta$ fixes each chamber $x_{10}(a)s_1s_2B$ with $a\in\FF$ we have
\begin{align*}
x_{31}(a)s_1s_2B&=\theta x_{31}(a)s_1s_2B=x_{31}(a+a_5)s_1s_2B,
\end{align*}
and so $a_5=0$. Similarly, since the chamber $x_{11}(a)s_2s_1B$ is fixed for all $a\in\FF$ we have $a_3=0$. Next, the condition that $x_{32}(a)s_2s_1s_2B$ is fixed for all $a\in\FF$ yields $a_6=0$, and so $\theta=x_{\varphi'}(a_4)$. However also the chamber $x_{11}(a)s_2s_1s_2B$ must be fixed, and a calculation gives
$$
\theta x_{11}(a)s_2s_1s_2B=x_{11}(a)x_{32}(3a_4a)s_2s_1s_2B.
$$
Thus $\mathrm{char}(\FF)=3$. We may then conjugate $h_{\omega_2}(a_4^{-1})\theta h_{\omega_2}(a_4)=x_{\varphi'}(1)$, and the result now follows in this case by Theorem~\ref{thm:short}.

In the case that the centre is a point, a very similar analysis to the above gives $\theta=x_{\varphi}(a)$ for some $a\neq 0$, and this element is conjugate to $x_{\varphi}(1)$. Since this element is central in $U^+$ it is clear that $\theta$ does indeed fix a ball of radius~$3$, and moreover by Theorem~\ref{thm:RGD} it has opposition diagram~$\sG_{2,1}^2$. 
\end{proof}

\begin{lemma}\label{lem:G211}
There exists a collineation of $\Delta$ fixing precisely a distance $3$-ovoid if and only if the polynomial $z^2+z+1$ is irreducible over $\FF$. Moreover, if $z^2+z+1$ is irreducible over $\FF$ then there is a unique conjugacy class of collineations fixing an ovoid, with representative $\theta=x_{\alpha_1}(1)s_1$. 
\end{lemma}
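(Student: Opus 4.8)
The plan is to analyze a collineation $\theta$ that fixes precisely a distance $3$-ovoid, reduce it to a standard form via conjugation, and then carry out a careful calculation in the Chevalley group $\sG_2(\FF)$ to pin down the exact conditions on $\FF$ and the conjugacy class.

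First I would recall from \cite[Theorem~2.7]{PTM:15} that a collineation fixing a distance $3$-ovoid is point-domestic, and combine this with the classification of admissible diagrams (Figure~\ref{fig:Dynkin}) and the fact that it fixes no line but fixes many points, to deduce that $\theta$ has opposition diagram $\sG_{2;1}^1$ and is not a long root elation (those have diagram $\sG_{2;1}^2$, and by Theorem~\ref{thm:RGD} fix a ball of radius~$3$, not an ovoid). Next I would use the geometry of a distance $3$-ovoid: such a $\theta$ fixes a point $p$ of the ovoid, and the line through $p$ is not fixed (an ovoid contains no two points at distance $\leq 4$, and $\theta$ fixes no line), so $\theta$ maps the base line to an opposite line; equivalently $\theta$ fixes a point but no incident line. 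Fixing the point $p$ means, after conjugation, that $\theta$ stabilises $P_1=B\cup Bs_1B$, so $\theta\in P_1$, and since the base line is moved to an opposite line, $\theta$ has nontrivial action on the line-pencil through $p$: writing $\theta = g \circ \sigma$ with $g\in P_1$ and $\sigma\in\mathrm{Aut}(\FF)$, the condition that $\theta$ fixes precisely a $3$-ovoid forces the $BN$-pair constraints $\theta(gB)$ to be computed for representatives $g$ ranging over a transversal, and I would argue (using that $\theta$ fixes $p$ but acts fixed-point-freely on the flags at $p$) that $\theta$ normalises a conjugate of $s_1$ appropriately, reducing $\theta$ to the form $x_{\alpha_1}(a)s_1 h$ for some $h\in H$ and $a\in\FF^{\times}$, then rescaling by a diagonal element and absorbing field automorphisms (which must be trivial since $\theta$ fixes enough points of a line in the fixed subhexagon — actually here the fixed structure is an ovoid with no lines, so this step needs more care: I would instead count fixed chambers of $\theta^2$ or use that $\theta$ has finite order on a suitable residue) to land at $\theta = x_{\alpha_1}(1)s_1$ up to conjugacy.

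Then I would compute the fixed points of $\theta_0 := x_{\alpha_1}(1)s_1$ directly. Every point is of the form $uwP_1$ with $w\in M_1$ (minimal coset representatives of $W/W_1$, so $w\in\{e, s_2, s_1s_2, s_2s_1s_2\}$ in the hexagon) and $u\in U_{\Phi(w)}^+$, and $\theta_0$-fixed points correspond to $w^{-1}u^{-1}\theta_0 u w\in P_1$. The key step is to show that the set of solutions is nonempty and forms a distance $3$-ovoid exactly when $z^2+z+1$ is irreducible over $\FF$: the relevant equation will reduce, via the folding relation~(\ref{eq:folding}) and commutator relations, to a condition like $t^2+t+1=0$ having no root in $\FF$ (so that certain chambers are \emph{not} fixed, forcing the fixed-point set down to an ovoid rather than a larger structure — if $z^2+z+1$ has a root $z\in\FF\setminus\{1\}$ then $\theta_0$ would fix strictly more and be conjugate instead to the homology $h_{\omega_1}(z)$ of Theorem~\ref{thm:mainG2}(2); and if $\mathrm{char}(\FF)=3$ it degenerates to the root-elation case of Lemma~\ref{lem:rootelations1}). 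Conversely, when $z^2+z+1$ is irreducible, I would verify the defining properties of a distance $3$-ovoid for the computed fixed-point set: it is a set of mutually opposite points (no two at distance $\leq 4$, checked by showing no fixed line exists between them) covering all elements of $\mathscr{G}$ within distance~$3$ (a counting argument comparing the number of fixed points with the size of a $3$-ovoid in the split Cayley hexagon, or a direct incidence-graph argument). Uniqueness of the conjugacy class follows since any two collineations fixing a $3$-ovoid were both shown to be conjugate to the single element $\theta_0 = x_{\alpha_1}(1)s_1$.

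The main obstacle I expect is the reduction to the normal form $\theta = x_{\alpha_1}(1)s_1$: unlike the homology and root-elation cases, here $\theta$ fixes no line at all, so the usual trick of using points on a fixed line to force linearity and kill the field automorphism is unavailable. I would need a substitute — for instance, observing that the fixed $3$-ovoid, being a large enough configuration, still spans the building in a suitable sense, or passing to $\theta^2$ (whose fixed structure, if $\theta$ has order related to $3$, may contain lines), or invoking the finiteness of the relevant Borel double-coset combinatorics to bound the field-automorphism part. Getting the exact scalar ($1$ versus a general $a\in\FF^{\times}$ up to the diagonal action, and ensuring no residual parameter survives) is where the calculation is most delicate, and is also precisely where the irreducibility of $z^2+z+1$ enters as the obstruction to further collapse.
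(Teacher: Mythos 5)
Your high-level plan is the right one and you correctly identify the crux: with no fixed line available, one cannot kill the field automorphism or pin down the torus part by the usual argument. But you flag this as a concern rather than resolving it, and that resolution is exactly what makes the proof work. The paper's mechanism is to conjugate so that \emph{two opposite fixed points} $P_1$ and $w_0P_1$ are fixed (the automorphism group is transitive on opposite pairs of vertices, and a $3$-ovoid certainly contains such a pair), not merely one point. The first fixed point gives $\theta_0\in Bs_1B$ as you say and lets one write $\theta=hu_1x_{\alpha_1}(a)s_1\cdot\sigma$; the second fixed point forces $u_1=1$ via a folding computation. The field automorphism and the torus scalar are then killed by a single clean test: applying $\theta$ to the chambers $x_{\varphi}(z)w_0B$ for $z\in\FF$, point-domesticity requires $zc_1^{-3}c_2^{-2}-z^{\sigma}=0$ for all $z$, which at $z=1$ gives $c_1^3c_2^2=1$ and then forces $\sigma=\mathrm{id}$. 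Further chamber tests (Claims 4, 5 of the paper's proof) rule out $\mathrm{char}(\FF)=3$, rule out $a=0$, and force $c_1=c_2=1$, and none of these steps appear in your sketch. Finally, the $z^2+z+1$ condition does not come from a commutator identity as you anticipate, but from the elementary computation $\theta\,x_{\alpha_1}(-z)s_1B=x_{\alpha_1}(1+z^{-1})s_1B$: since no line through $P_1$ is fixed, $1+z^{-1}\neq-z$ for all $z\neq 0$, i.e. $z^2+z+1$ has no root.

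Your proposed argument for the converse direction also has a gap. Comparing the number of fixed points with the size of a distance $3$-ovoid is only meaningful over finite fields; the statement is for arbitrary $\FF$. The paper instead verifies point-domesticity directly by a Chevalley computation: every point opposite the base point has the form $us_{\varphi}P_1$ with $u\in U_{\Phi(s_\varphi)}^+$, and one checks that $P_1s_{\varphi}^{-1}u^{-1}\theta_0 u s_{\varphi}P_1\in P_1\cup P_1s_2s_1s_2P_1$ for all such $u$, which combined with Corollary~\ref{cor:oppsphere} gives point-domesticity; combined with the observation that no line through a fixed point is fixed (so the fixed set is a set of mutually opposite points), this pins down the fixed structure as a distance $3$-ovoid without any counting.
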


\begin{proof}
Suppose that $\theta$ fixes a distance $3$-ovoid (and hence is point-domestic). Since the automorphism group of $\Delta$ is transitive on pairs of opposite vertices we may assume that the points $P_1$ and $w_0P_1$ are both fixed. Write $\theta=\theta_0\cdot\sigma$, with $\theta\in G$ and $\sigma\in\mathrm{Aut}(\FF)$. We have $\theta P_1=\theta_0 P_1$, and so $\theta_0\in P_1=B\cup Bs_1B$. If $\theta_0\in B$ then $\theta$ fixes the chamber $B$, a contradiction. Thus $\theta_0\in Bs_1B$. Each element of $Bs_1B$ can be written as $x_{\alpha_1}(a)s_1b$ with $b\in B$. Replacing $\theta$ by the conjugate $b^{\sigma}\theta b^{-\sigma}$ we may write 
$$
\theta=hu_1x_{\alpha_1}(a)s_1\cdot\sigma,
$$
where $h=h_{\omega_1}(c_1)h_{\omega_2}(c_2)$ with $c_1,c_2\in\FF^{\times}$, $u_1\in \prod_{\alpha\in \Phi^+\backslash\{\alpha_1\}}U_{\alpha}$, and $a\in\FF$. 

\noindent\textit{Claim 1: $u_1=1$.} Since $\theta$ fixes $w_0P_1$, and since $\sigma$ fixes $w_0$, we have
$
w_0^{-1}\theta_0 w_0\in P_1.
$ However
\begin{align*}
w_0^{-1}\theta_0w_0&=(w_0^{-1}hw_0)(w_0^{-1}u_1w_0)(w_0^{-1}x_{\alpha_1}(a)s_1w_0),
\end{align*}
and since the first and third grouped terms are in $P_1$ we have $w_0^{-1}u_1w_0\in P_1=B\cup Bs_1B$. If $w_0^{-1}u_1w_0\in B$ then since $w_0^{-1}u_1w_0$ is also in the opposite Borel we have $u_1\in H$, and so $u_1=1$. If $w_0^{-1}u_1w_0\in Bs_1B$ then $u_1w_0\in w_0Bs_1B=U^-w_0s_1B=U^-s_2s_1s_2s_1s_2B$. However, writing
$$
u_1w_0B=x_{10}(0)x_{31}(d_1)x_{21}(d_2)x_{32}(d_3)x_{11}(d_4)x_{01}(d_5)s_1s_2s_1s_2s_1s_2B,
$$
the folding algorithm described in \cite[\S7]{PRS:09} implies that
\begin{compactenum}[--]
\item if $d_1\neq 0$ then $u_1w_0B\subseteq U^-vB$ for some $v\in \{1,s_1,s_2,s_1s_2,s_2s_1,s_2s_1s_2\}$;
\item if $d_1= 0$ and $d_2\neq 0$ then $u_1w_0B\subseteq U^-s_2B$;
\item if $d_1=d_2=0$ and $d_3\neq 0$ then $u_1w_0B\subseteq U^-s_1B$;
\item if $d_1=d_2=d_3=0$ and $d_4\neq 0$ then $u_1w_0B\subseteq U^-s_1s_2s_1B$;
\item if $d_1=d_2=d_3=d_4=0$ and $d_5\neq 0$ then $u_1w_0B\subseteq U^-s_1s_2s_1s_2s_1B$.
\end{compactenum}
This contradiction shows that indeed $w_0^{-1}u_1w_0\in B$, and hence $u_1=1$. 
\smallskip

\noindent\textit{Claim 2: $c_1^3c_2^2=1$ and $\sigma=1$.} We have $\theta=hx_{\alpha_1}(a)s_1\cdot\sigma$, where $h=h_{\omega_1}(c_1)h_{\omega_2}(c_2)$. Consider the chamber $x_{\varphi}(z)w_0B$, where $z\in\FF$. We have 
\begin{align*}
w_0^{-1}x_{\varphi}(-z)\theta x_{\varphi}(z)w_0&=w_0^{-1}x_{\varphi}(-z)hx_{\alpha_1}(a)s_1x_{\varphi}(z^{\sigma})w_0\\
&=w_0^{-1}hx_{\varphi}(-zc_1^{-3}c_2^{-2})x_{\alpha_1}(a)x_{\varphi}(z^{\sigma})s_1w_0\\
&\in Bw_0^{-1}x_{\varphi}(-zc_1^{-3}c_2^{-2}+z^{\sigma})w_0P_1\\
&=Bx_{-\varphi}(zc_1^{-3}c_2^{-2}-z^{\sigma})P_1.
\end{align*}   
If $zc_1^{-3}c_2^{-2}-z^{\sigma}\neq 0$ then $w_0^{-1}x_{\varphi}(-z)\theta x_{\varphi}(z)w_0\in Bs_{\varphi}P_1$, and since $s_{\varphi}=s_2s_1s_2s_1s_2$ it follows that the point of the chamber $x_{\varphi}(z)w_0B$ is mapped to an opposite point, a contradiction. Thus $zc_1^{-3}c_2^{-2}-z^{\sigma}=0$ for all $z\in\FF$. The case $z=1$ gives $c_1^3c_2^2=1$, and then the equation reads $z-z^{\sigma}=0$, and so $\sigma$ is trivial. 
\smallskip

\noindent\textit{Claim 3: We may assume that either $a=0$ or $a=1$.} Suppose that $a\neq 0$. Then, since $s_1(\omega_1)=-\omega_1+3\omega_2$ we have
\begin{align*}
h_{\omega_1}(a)^{-1}\theta h_{\omega_1}(a)&=h_{\omega_1}(a^{-1}c_1)h_{\omega_2}(c_2)x_{\alpha_1}(a)h_{-\omega_1+3\omega_2}(a)s_1\\
&=h_{\omega_1}(a^{-1}c_1)h_{\omega_2}(c_2)h_{-\omega_1+3\omega_2}(a)x_{\alpha_1}(1)s_1\\
&=h_{\omega_1}(a^{-2}c_1)h_{\omega_2}(a^3c_2)x_{\alpha_1}(1)s_1.
\end{align*}
Renaming $a^{-2}c_1$ and $a^3c_2$ by $c_1$ and $c_2$ gives the result.
\smallskip

\noindent\textit{Claim 4: We have $\mathrm{char}(\FF)\neq 3$.} Suppose that $\mathrm{char}(\FF)=3$. Suppose first that $a=0$. Then a calculation shows that the point
$
x_{01}(1)x_{31}(1)s_2s_1s_2s_1s_2P_1
$
is mapped to the point $x_{01}(-c_2)x_{32}(1)x_{31}(c_2^{-1})s_2s_1s_2s_1s_2P_1$, and that these points are opposite, a contradiction. 

Now suppose that $a=1$. Note that since $c_1^3c_2^2=1$ we have $h_{\omega_1}(c_1)h_{\omega_2}(c_2)=h_{\alpha_1^{\vee}}(c)$ where $c=c_1^{-1}c_2^{-1}$. Thus $\theta=h_{\alpha_1^{\vee}}(c)x_{10}(1)s_1$. Let $u=x_{01}(c^{-3})x_{11}(-c^{-1})x_{21}(1)x_{31}(-1)$. Then if $c\neq 1$ we calculate
$$
P_1w_0^{-1}u^{-1}\theta uw_0P_1=P_1s_2s_1s_2s_1s_2P_1,
$$
showing that the point $uw_0P_1$ is mapped to an opposite point, a contradiction. Thus $c=1$, and hence $c_1=c_2=1$. But then
\begin{align*}
\theta=x_{10}(1)s_1=x_{10}(2)x_{-10}(-1)x_{10}(1)=x_{10}(-1)x_{-10}(-1)x_{10}(1),
\end{align*}  
and hence $\theta$ is conjugate to a root elation, a contradiction. 
\smallskip

\noindent\textit{Claim 5: We have $a\neq 0$ and $c_1=c_2=1$.} Recall that $\theta=hx_{\alpha_1}(a)s_1$, with $h_{\omega_1}(c_1)h_{\omega_2}(c_2)$ where $c_1^3c_2^2=1$, and $a\in\{0,1\}$. Consider the chamber $gB$ where $g=x_{11}(1)w_0$. Then
\begin{align*}
g^{-1}\theta g&=w_0^{-1}x_{11}(-1)hx_{10}(a)s_1x_{11}(1)w_0\\
&\in Bw_0^{-1}x_{11}(-c_1^{-1}c_2^{-1})x_{10}(a)x_{21}(-1)s_1w_0B\\
&=Bw_0^{-1}x_{11}(-c_1^{-1}c_2^{-1})x_{21}(-1)x_{31}(-3a)s_2s_1s_2s_1s_2B.
\end{align*}
Write $u=x_{11}(-c_1^{-1}c_2^{-1})x_{21}(-1)x_{31}(-3a)$. We will show below that if either $a= 0$ or $c_1c_2\neq 1$ then 
$
us_2s_1s_2s_1s_2B\subseteq U^-B=w_0Bw_0B,
$
which in turn gives $g^{-1}\theta g\in Bw_0B$, a contradiction with point-domesticity. To prove the above statement, we compute
\begin{align*}
us_2s_1s_2s_1s_2B&=x_{-11}(-c_1c_2)x_{11}(c_1^{-1}c_2^{-1})n_{11}(-c_1^{-1}c_2^{-1})x_{21}(-1)x_{31}(-3a)s_2s_1s_2s_1s_2B\\
&\subseteq U^-x_{11}(c_1^{-1}c_2^{-1})x_{10}(-c_1c_2)x_{31}(-3a)s_1s_2B.
\end{align*}
Commutator relations give
\begin{align*}
x_{11}(c_1^{-1}c_2^{-1})x_{10}(-c_1c_2)x_{31}(-3a)&=x_{10}(-c_1c_2)x_{31}(3(c_1c_2-a))x_{11}(c_1^{-1}c_2^{-1})x_{32}(3c_1^{-1}c_2^{-1})x_{21}(2).
\end{align*}
Using this in the above equation, and noting that the final three terms can pass past the $s_1s_2$ and remain in~$B$, we have
\begin{align*}
us_2s_1s_2s_1s_2B&\subseteq U^-x_{10}(-c_1c_2)x_{31}(3(c_1c_2-a))s_1s_2B\\
&=U^-x_{-10}(-c_1^{-1}c_2^{-1})x_{10}(c_1c_2)n_{10}(-c_1c_2)x_{31}(3(c_1c_2-a))s_1s_2B\\
&=U^-x_{10}(c_1c_2)x_{01}(3c_1^{-3}c_2^{-3}(c_1c_2-a))s_2B\\
&=U^-x_{01}(3c_2^{-1}(c_1c_2-a))s_2B.
\end{align*}
Recall that $\mathrm{char}(\FF)\neq 3$. Thus if either $a=0$ , or if $a=1$ and $c_1c_2\neq 1$, we have $us_2s_1s_2s_1s_2B\subseteq U^-B$ as required.
\smallskip

\noindent\textit{Claim 6: The polynomial $z^2+z+1$ is irreducible over $\FF$.} Since $\theta$ fixes no lines, in particular no line through $P_1$ is fixed. Equivalently, none of the chambers $B$ and $x_{\alpha_1}(-z)s_1B$ with $z\in\FF$ are fixed by $\theta$. Clearly $B$ is not fixed, and for $z\neq 0$ we have
\begin{align*}
\theta x_{\alpha_1}(-z)s_1B&=x_{\alpha_1}(1)s_1x_{\alpha_1}(-z)s_1B\\
&=x_{\alpha_1}(1)x_{-\alpha_1}(z)B\\
&=x_{\alpha_1}(1)x_{\alpha_1}(z^{-1})s_1B\\
&=x_{\alpha_1}(1+z^{-1})s_1B,
\end{align*}
and so $1+z^{-1}\neq -z$ for all $z\in\FF^{\times}$. Hence the claim.
\smallskip

So far we have proved that if $\theta$ fixes a distance $3$-ovoid, then $\theta$ is conjugate to $x_{\alpha_1}(1)s_1$ and the polynomial $z^2+z+1=0$ is irreducible over $\FF$. It remains to show that if this polynomial is irreducible then the element $\theta=x_{\alpha_1}(1)s_1$ does indeed fix a distance $3$-ovoid. We have shown that no line through the fixed point $P_1$ is fixed. This implies that the fixed element structure of $\theta$ consists of a set of mutually opposite points (for otherwise by projecting onto $P_1$ gives a fixed line through $P_1$). Thus it is  sufficient to prove point-domesticity to deduce that the fixed structure is a distance $3$-ovoid. 

By Lemma~\ref{lem:red2} it is sufficient to show that no point opposite the base point is mapped onto an opposite point. These points are of the form
$$
gs_2s_1s_2s_1s_2P_1=x_{01}(a_1)x_{11}(a_2)x_{32}(a_3)x_{21}(a_4)x_{31}(a_5)s_2s_1s_2s_1s_2P_1.
$$
We compute
$$
g^{-1}\theta g=x_{01}(a')x_{11}(b')x_{21}(b')x_{31}(3b'-a')x_{10}(1)s_1
$$
where $a'=-a_1-a_5$ and $b'=-a_2+a_4-a_5$, and hence
\begin{align*}
P_1s_{\varphi}^{-1}g^{-1}\theta g s_{\varphi}P_1&=P_1x_{-31}(-a')x_{-21}(b')x_{-11}(-b')x_{-01}(3b'-a')P_1.
\end{align*}
The folding algorithm then easily gives
\begin{align*}
P_1s_{\varphi}^{-1}g^{-1}\theta g s_{\varphi}P_1&\in\begin{cases}
P_1&\text{if $a'=b'=0$}\\
P_1s_2s_1s_2P_1&\text{otherwise,}
\end{cases}
\end{align*}
completing the proof.
\end{proof}

\begin{lemma}\label{lem:G211b}
There exists a collineation of $\Delta$ fixing precisely a large full subhexagon if and only if the equation $z^2+z+1$ has a solution $z\neq 1$ in $\FF$. Moreover, if $z^2+z+1=0$ with $z\neq 1$ then there is a unique conjugacy class of collineations fixing a large full subhexagon, with representative $\theta=h_{\omega_1}(z)$. 
\end{lemma}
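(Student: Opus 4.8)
The plan is to prove both directions at once by showing that a collineation fixing precisely a large full subhexagon is forced to be conjugate to $h_{\omega_1}(c)$ for a primitive cube root of unity $c$, and conversely that such a torus element has exactly this fixed structure.

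For the first part, suppose $\theta$ fixes precisely a large full subhexagon $\mathscr{G}'$. Being a generalised hexagon, $\mathscr{G}'$ contains an apartment of $\Delta$, fixed chamberwise by $\theta$; after conjugation this is the base apartment, so $\theta$ fixes $B$ and $w_0B$. Writing $\theta=hu\sigma$ with $h\in H$, $u\in U^+$, $\sigma\in\mathrm{Aut}(\FF)$, the condition $\theta(w_0B)=w_0B$ forces $w_0^{-1}uw_0\in U^-\cap B=\{1\}$, so $\theta=h\sigma$ with $h=h_{\omega_1}(c_1)h_{\omega_2}(c_2)$. Since $\mathscr{G}'$ is \emph{full}, every point of $\Delta$ on the base line $P_2$ (a line of $\mathscr{G}'$) is fixed; these points are $P_1$ together with the points $x_{\alpha_2}(a)s_2P_1$ ($a\in\FF$), and using $\langle\omega_1,\alpha_2\rangle=0$ and $\langle\omega_2,\alpha_2\rangle=1$ one computes $\theta(x_{\alpha_2}(a)s_2P_1)=x_{\alpha_2}(c_2\sigma(a))s_2P_1$. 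Hence $c_2\sigma(a)=a$ for all $a$, so $c_2=1$ and $\sigma=\mathrm{id}$, i.e. $\theta=h_{\omega_1}(c)$ for some $c\in\FF^{\times}\setminus\{1\}$.

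Next I would determine $c$ using Lemma~\ref{lem:nondomestic}. The root system of $\theta$ is $\Phi_\theta=\{\alpha\in\Phi\mid c^{\langle\alpha,\omega_1\rangle}=1\}$, and since $\langle\alpha_1,\omega_1\rangle=1$ and $\langle\varphi,\omega_1\rangle=3$ we have $\alpha_1\notin\Phi_\theta$, and $\varphi\notin\Phi_\theta$ unless $c^3=1$. As $\sG_2$ has polar type $\{2\}$, the roots $\alpha_1$ and $\varphi$ are perpendicular; so if $c^3\neq 1$ then $\alpha_1,\varphi\in\Phi^+\setminus\Phi_\theta$ are perpendicular, Lemma~\ref{lem:longelt} gives $s_{\alpha_1}s_\varphi=w_0$, and Lemma~\ref{lem:nondomestic} gives $\disp(\theta)=\ell(w_0)$. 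But a collineation fixing a large full subhexagon is point-domestic by \cite[Theorem~2.7]{PTM:15}, hence domestic, a contradiction. Therefore $c^3=1$, $c\neq 1$, which occurs in $\FF$ exactly when $z^2+z+1$ has a root $\neq 1$ in $\FF$ (equivalently, $\FF$ contains a primitive cube root of unity; this forces $\mathrm{char}(\FF)\neq 3$). This yields the ``only if'' part, and also that $\theta$ is conjugate to $h_{\omega_1}(c)$ with $c$ a primitive cube root of unity. Since $w_0\omega_1=-\omega_1$, conjugation by (a lift of) $w_0$ sends $h_{\omega_1}(c)$ to $h_{\omega_1}(c^{-1})$, interchanging the two primitive cube roots, so all such collineations form a single conjugacy class.

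It remains to show that $\theta=h_{\omega_1}(c)$ with $c^3=1$, $c\neq 1$, fixes precisely a large full subhexagon. Here $\Phi_\theta^+=\{\alpha_2,\,3\alpha_1+\alpha_2,\,\varphi\}$. The key step is to prove $\theta$ is point-domestic: by Lemma~\ref{lem:red2} (panels have $|\FF|+1\geq 5$ chambers) it suffices to show no point $us_\varphi P_1$ opposite $P_1$ is mapped to an opposite point, where $u\in U_{\Phi(s_\varphi)}^+$ and $s_\varphi=s_2s_1s_2s_1s_2$. Writing $u=u_1u_2$ with $u_1\in U_{\Phi_\theta\cap\Phi(s_\varphi)}^+$ (so $u_1^{-1}\theta u_1=\theta$) and $u_2\in U_{\Phi(s_\varphi)\setminus\Phi_\theta}^+$ supported on $\{\alpha_1+\alpha_2,2\alpha_1+\alpha_2\}$, commutator relations reduce $u^{-1}\theta u$ to $x_{\alpha_1+\alpha_2}(a(c-1))\,x_{2\alpha_1+\alpha_2}(b(c^2-1))\,x_\varphi(\ast)\,\theta$, where $\ast$ is a multiple of $ab$; conjugating by $s_\varphi$ (which normalises $H$, fixes $\theta$ since $c^3=1$, and swaps $\pm(\alpha_1+\alpha_2)\leftrightarrow\mp(2\alpha_1+\alpha_2)$ and $\varphi\leftrightarrow-\varphi$) and applying the folding relation, one finds $P_1s_\varphi^{-1}u^{-1}\theta us_\varphi P_1$ equals $P_1s_2s_1s_2P_1$ or $P_1$, never $P_1w_0P_1=P_1s_\varphi P_1$: the $x_{-\varphi}$-term cannot occur alone, since its coefficient vanishes as soon as $a$ or $b$ does, and then the image is $P_1$. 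Hence $\theta$ is point-domestic, so by \cite[Theorem~2.7]{PTM:15} its fixed structure is a ball of radius $3$ centred at a line, a distance $3$-ovoid, or a large full subhexagon. The first is impossible, since by Lemma~\ref{lem:rootelations1} it would force $\mathrm{char}(\FF)=3$; the second is impossible, since $\theta\in H$ fixes an apartment, hence two collinear points. Thus the fixed structure is a large full subhexagon.

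The main obstacle is exactly the point-domesticity of $h_{\omega_1}(c)$. Unlike the other domestic-homology lemmas, the standard technique (Proposition~\ref{prop:standardtechnique}) does not apply cleanly, because the longest element of the long-root $\sA_2$ reflection subgroup is $s_\varphi$, of length $5$, and it inverts the short roots $\alpha_1+\alpha_2$ and $2\alpha_1+\alpha_2$ lying outside $\Phi_\theta$; so one has to run the folding algorithm by hand on the five-parameter family of points opposite $P_1$, in the spirit of the computation in Lemma~\ref{lem:G211}.
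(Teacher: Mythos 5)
Your proof is correct and follows essentially the same route as the paper's, with a few variations worth noting. To establish $c^3=1$ you invoke Lemma~\ref{lem:nondomestic} together with Lemma~\ref{lem:longelt} (using that $\alpha_1\perp\varphi$ and both lie outside $\Phi_\theta$ when $c^3\neq 1$), whereas the paper tests a single explicit chamber $g B = x_{\varphi}(1)w_0B$ and computes $Bg^{-1}\theta gB = Bs_\varphi B$ directly; your version is more conceptual and ties into the Section~\ref{sec:homologies} machinery, the paper's is more self-contained. You also make explicit that conjugation by (a lift of) $w_0$ swaps the two primitive cube roots, so there is a single conjugacy class; the paper leaves this unsaid, so your addition is a genuine improvement to completeness. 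For the reduction to $\theta\in H$ you argue via $w_0^{-1}uw_0\in U^-\cap B=\{1\}$, while the paper simply notes that fullness forces $\theta$ linear since all points of a line are fixed; both are fine. Finally, for the point-domesticity of $h_{\omega_1}(c)$ with $c^3=1$, $c\neq 1$, both you and the paper reduce to the five-parameter family of points opposite $P_1$, decompose $u$ into the $\Phi_\theta$-part and the short-root part $\{\alpha_1+\alpha_2,2\alpha_1+\alpha_2\}$, and conjugate by $s_\varphi$ to land in $U^-$; your observation that the $x_{-\varphi}$-coefficient vanishes whenever $a$ or $b$ does correctly handles the cases where at most one of $A',B'$ survives, but the remaining case (all three coefficients nonzero) still requires the folding algorithm to be run to its conclusion to verify the answer is $P_1 s_2 s_1 s_2 P_1$ rather than $P_1 s_\varphi P_1$ --- this is precisely what the paper's terse ``explicit computation then shows'' hides, so your proof and the paper's share the same unspelled-out core computation. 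Overall: correct, same approach, with a couple of small improvements.
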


\begin{proof}
We may, after conjugating, suppose that the base apartment is fixed. Moreover, as the fixed subhexagon is full, all points on a line are fixed, and so $\theta$ is linear. Thus $\theta=h_{\omega_1}(c_1)h_{\omega_2}(c_2)\in H$. But fullness means that all points on the line $L=P_2$ are fixed. These points are $P_1$ and $x_{\alpha_2}(a)s_2P_1$ for $a\in \FF$. Since 
$$
\theta x_{\alpha_2}(a)s_2P_1=x_{\alpha_1}(ac_2)s_2P_1
$$
we have $c_2=1$. Thus $\theta=h_{\omega_1}(c)$ for some $c\in\FF^{\times}$. Consider the chamber $gB=x_{\varphi}(1)w_0B$. Then
$$
Bg^{-1}\theta gB=Bw_0^{-1}x_{\varphi}(c^3-1)w_0^{-1}B.
$$
If $c^3-1\neq 0$ then this double coset is $Bs_{\varphi}B=Bs_2s_1s_2s_1s_2B$, which means that the point of this chamber is mapped to an opposite point, a contradiction. So $c^3-1=0$. If $c=1$ then $\theta$ is the identity, a contradiction (as $\theta$ maps a line to an opposite line). So $c^2+c+1=0$ has a solution $c\neq 1$ (in particular, $\mathrm{char}(\FF)\neq 3$). Finally, we show that in this case the automorphism $\theta=h_{\omega_1}(c)$ does indeed have opposition diagram $\sG_{2;1}^1$. It suffices to show that no point opposite the base point $P_1$ is mapped to an opposite point. Each such point is of the form $us_{\varphi}P_1$, where
$$
u=x_{01}(a_1)x_{11}(a_2)x_{32}(a_3)x_{21}(a_4)x_{31}(a_5).
$$
Commutator relations, and the fact that $c^3=1$, imply that
\begin{align*}
u^{-1}\theta u
&=x_{11}(a_2(c-1))x_{32}(3a_2a_4(1-c))x_{21}(a_4(c^2-1))h_{\omega_1}(c).
\end{align*}
Thus
\begin{align*}
P_1s_{\varphi}^{-1}u^{-1}\theta us_{\varphi}P_1&=P_1x_{-21}(a_2(c-1))x_{-32}(3a_2a_4(c-1))x_{-11}(a_4(1-c^2))P_1.
\end{align*}
Explicit computation then shows that $P_1s_{\varphi}^{-1}u^{-1}\theta us_{\varphi}P_1\in P_1\cup P_1s_2s_1s_2P_1$, completing the proof.
\end{proof}

\begin{proof}[Proof of Theorem~\ref{thm:mainG2}]
An automorphism with opposition diagram $\sG_{2,1}^1$ must fix either a ball of radius~$3$ centred at a line, a large full subhexagon, or a distance~$3$-ovoid, and the result follows from Lemmas~\ref{lem:rootelations1}, \ref{lem:G211} and~\ref{lem:G211b}. 

Consider the opposition diagram~$\sG_{2,1}^2$. The fixed element structure of an automorphism with this diagram is necessarily either a ball of radius~$3$ centred at a point, a large ideal subhexagon (where \textit{ideal} is the dual notion to \textit{full}), or a distance $3$-spread (the dual notion to a distance $3$-ovoid).

Large ideal subhexagons do not exist in the dual split Cayley hexagon, by \cite[Remark~5.9.14]{HVM:98}. 
Suppose that $\theta$ fixes a distance $3$-spread. We argue as in the proof of Lemma~\ref{lem:G211}. As in that proof, we have $\theta=hu_1x_{\alpha_2}(a)s_2\cdot \sigma$, where $h=h_{\omega_1}(c_1)h_{\omega_2}(c_2)$ with $c_1,c_2\in\FF^{\times}$, $u_1\in \prod_{\alpha\in \Phi^+\backslash\{\alpha_2\}}U_{\alpha}$, and $a\in\FF$. The proof of Claim 1 of Lemma~\ref{lem:G211} holds in an analogous fashion, yielding $u_1=1$. Following the argument of Claim~2, considering the chamber $gB=x_{\varphi'}(z)w_0B$, we have
$$
w_0^{-1}x_{\varphi'}(-z)\theta x_{\varphi'}(z)w_0\in Bx_{-\varphi'}(zc_1^{-2}c_1^{-1}-z^{\sigma})P_2,
$$
from which it follows that $zc_1^{-2}c_1^{-1}-z^{\sigma}=0$ for all $z\in\FF$. Setting $z=1$ gives $c_1^2c_2=1$, and then the equation reads $z-z^{\sigma}=0$, and so $\sigma$ is trivial. Thus $\theta=h_{\omega_1}(c)h_{\omega_2}(c^{-2})x_{\alpha_2}(a)s_2$ for some $c\in \FF^{\times}$ and $a\in \FF$. We may assume that either $a=0$, or that $a=1$ (by conjugating by $h_{\omega_2}(c^2a^{-1})$). If $a=0$, with $\mathrm{char}(\FF)\neq 2$, we see that the chamber $x_{10}(1)s_1s_2s_1B$ is mapped to an opposite. If $a=1$ with $\mathrm{char}(\FF)\neq 2$, we see that if $c\neq -1/2$ then $x_{10}(1)s_1s_2s_1B$ is mapped to an opposite, and if $c=-1/2$ then $x_{10}(1)x_{11}(1)s_1s_2s_1B$ is mapped to an opposite. If $\mathrm{char}(\FF)=2$ and $a=1$ then the chamber $x_{10}(1)s_1s_2s_1B$ is mapped onto an opposite chamber. 

Suppose $\mathrm{char}(\FF)=2$ and that $a=0$. Since no chamber is fixed, we compute (for $z\neq 0$)
\begin{align*}
\theta x_{01}(-z)s_2B&=h_{\omega_1}(c)h_{\omega_2}(c^{-2})s_2x_{01}(-z)s_2B\\
&=h_{\omega_1}(c)h_{\omega_2}(c^{-2})x_{-01}(z)B\\
&=x_{01}(z^{-1}c^{-2})s_2B.
\end{align*}
Thus $z^{-1}c^{-2}\neq -z$, and so $z^2\neq -c^{-2}$ for all $z\neq 0$. This is a contradiction (taking $z=c^{-1}$ and recalling that $\mathrm{char}(\FF)=2$). 
\end{proof}

%
%

We note that Corollary~\ref{cor:existence} follows from the above results:

\begin{proof}[Proof of Corollary~\ref{cor:existence}]
The polar closed admissible diagrams are realised as the opposition diagrams of unipotent elements, by Theorem~\ref{thm:unipotent}. The admissible diagram $\sF_{4;1}^4$ is achieved as the opposition diagram of either the homology $h_{\omega_4}(-1)$ (if $\mathrm{char}(\FF)\neq 2$) or the short root elation $x_{\varphi'}(1)$ (in the case $\mathrm{char}(\FF)=2$) by Lemma~\ref{lem:homologyF4} and Corollary~\ref{cor:F4longandshort}. All of these automorphisms lie in $B$, and hence fix the base chamber of $\Delta=G/B$. The remaining diagram is $\sG_{2;1}^1$, and the result follows from Theorem~\ref{thm:mainG2}.
\end{proof}

\subsection{Group theoretic consequences}\label{sec:conjugacyclasses}

We conclude with group theoretic applications of our results, proving Corollaries~\ref{cor:conj1} and~\ref{cor:conj2} from the introduction, and providing some further similar applications. 

\begin{proof}[Proof of Corollary~\ref{cor:conj1}]
Let $\mathscr{C}$ be a conjugacy class in~$G$. Suppose that $\mathscr{C}\cap Bw_0B=\emptyset$. Then no chamber is mapped onto an opposite chamber by any element of $\mathscr{C}$ (for if $\theta\cdot gB$ is opposite $gB$ then $g^{-1}\theta g\in Bw_0B$). Thus every element of $\mathscr{C}$ is domestic. By Theorems~\ref{thm:converse}, \ref{polar-copolar}, and~\ref{thm:F41hom} every domestic automorphism of a large building of type $\sE_6$ or $\sF_4$ necessarily fixes a chamber, and hence $\mathscr{C}\cap B\neq \emptyset$. Similarly, for the small buildings of type $\sE_6$ or $\sF_4$ every domestic automorphism fixes a chamber by \cite[Theorems~4.4 and~4.6]{PVM:19b}, hence the result. 
\end{proof} 

\begin{proof}[Proof of Corollary~\ref{cor:conj2}]
By Corollary~\ref{cor:existsdomestic} every Moufang spherical building, with the exception of a projective plane, admits a type preserving domestic automorphism (a long root elation). The conjugacy class of such an automorphism thus intersects trivially with $Bw_0B$. 
\end{proof} 

It follows, from Remark~\ref{rem:nonchamberfix} below, that the statement of Corollary~\ref{cor:conj1} fails in general for buildings of types $\sE_7$ and $\sE_8$ (as there exist domestic automorphisms fixing no chamber). In these cases we have the following results.

\begin{cor}\label{cor:E7conjugacy}
Let $G=G_{\Phi}(\FF)$ be a Chevalley group of type $\sE_7$ with $|\FF|>2$. Let $\varphi_1$ (respectively $\varphi_2,\varphi_3$) be the highest root of the $\sE_7$ (respectively $\sD_6$, $\sD_4$) subsystem. For each nontrivial conjugacy class $\mathscr{C}$ in $G$ we have both
\begin{align*}
&\mathscr{C}\cap (Bs_{\varphi_1}B\cup Bs_{\varphi_1}s_{\varphi_2}B\cup Bs_{\varphi_1}s_{\varphi_2}s_7B\cup Bs_{\varphi_1}s_{\varphi_2}s_{\varphi_3}s_3B\cup Bw_0B)\neq \emptyset,\text{ and}\\
&\mathscr{C}\cap(B\cup Bs_{\varphi_1}s_{\varphi_2}s_7B\cup Bs_{\varphi_1}s_{\varphi_2}s_{\varphi_3}s_3B\cup Bw_0B)\neq \emptyset.
\end{align*}
\end{cor}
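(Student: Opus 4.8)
The statement to prove is Corollary~\ref{cor:E7conjugacy}, which concerns conjugacy classes in a Chevalley group of type $\sE_7$ over a field with more than two elements. The strategy mirrors the proof of Corollary~\ref{cor:conj1}: a conjugacy class $\mathscr{C}$ avoids a double coset $BwB$ precisely when no element of $\mathscr{C}$ maps a chamber to Weyl distance containing $w$ in the relevant sense; more precisely, if $\mathscr{C}$ misses all of $Bw_0B$ then every element of $\mathscr{C}$ is domestic, and then one appeals to the classification of domestic automorphisms of large $\sE_7$ buildings. So the first step is: assume $\mathscr{C}\cap Bw_0B=\emptyset$, deduce that every $\theta\in\mathscr{C}$ is domestic, and invoke the admissible diagram classification (Theorem~\ref{thm:admissible}) together with the results of this paper to conclude that the opposition diagram of $\theta$ is one of $\sE_{7;1}$, $\sE_{7;2}$, $\sE_{7;3}$, $\sE_{7;4}$ (the empty diagram corresponds to the trivial element, excluded since $\mathscr{C}$ is nontrivial).

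\textbf{The computation of minimal displacement cosets.} The heart of the matter is identifying, for each non-full opposition diagram $\sX=(\Gamma,J,\mathrm{id})$, the specific element $w_{S\backslash J}w_0$ of $W$ such that a capped automorphism with diagram $\sX$ maps some chamber to exactly Weyl distance $w_{S\backslash J}w_0$, and showing that $w_{S\backslash J}w_0$ can be written as the appropriate product $s_{\varphi_1}\cdots s_{\varphi_j}$ (times extra simple reflections) given in the statement. This is exactly the content of Lemma~\ref{lem:long2}: for a polar closed diagram with highest roots $\varphi_1,\ldots,\varphi_k$ removed by the algorithm, $s_{\varphi_1}\cdots s_{\varphi_k}=w_{S\backslash J}w_0$ with lengths adding. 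For $\sE_7$ the relevant sequences were recorded already: $\sE_{7;1}$ gives $w=s_{\varphi_1}$; $\sE_{7;2}$ gives $w=s_{\varphi_1}s_{\varphi_2}$; $\sE_{7;3}$ gives $w=s_{\varphi_1}s_{\varphi_2}\alpha_7$, i.e. $s_{\varphi_1}s_{\varphi_2}s_7$; and $\sE_{7;4}$ gives $w=s_{\varphi_1}s_{\varphi_2}s_{\varphi_3}s_3$ (using $\varphi_4=\alpha_3$). Then, since $\theta$ is capped (as $|\FF|>2$, by \cite[Theorem~1]{PVM:19a}), by \cite[Theorem~2.6]{PVM:19a} there is a chamber $gB$ with $\delta(gB,\theta gB)=w$, i.e. $g^{-1}\theta g\in BwB$, so $\mathscr{C}\cap BwB\neq\emptyset$ for the relevant $w$. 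This establishes the first displayed intersection: whatever the diagram, $\mathscr{C}$ meets one of the five listed cosets (including $Bw_0B$ for the full/non-domestic case).

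\textbf{The second displayed statement.} For the second formula one uses the classification of domestic automorphisms of large $\sE_7$ buildings more sharply: the diagram $\sE_{7;1}$ forces $\theta$ to be a long root elation (Theorem~\ref{thm:polarclass}), hence conjugate to $x_{\varphi}(1)\in B$, so $\mathscr{C}\cap B\neq\emptyset$; the diagram $\sE_{7;2}$ forces $\theta$ to be a product of two perpendicular long root elations (Theorem~\ref{thm:pocopolarclass}), hence conjugate to $x_{\varphi_1}(a)x_{\varphi_2}(b)\in B$, again meeting $B$. For diagrams $\sE_{7;3}$ and $\sE_{7;4}$ one uses the previous paragraph: $\mathscr{C}$ meets $Bs_{\varphi_1}s_{\varphi_2}s_7B$ or $Bs_{\varphi_1}s_{\varphi_2}s_{\varphi_3}s_3B$ respectively. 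And the full diagram again gives $\mathscr{C}\cap Bw_0B\neq\emptyset$. Combining, $\mathscr{C}$ meets $B\cup Bs_{\varphi_1}s_{\varphi_2}s_7B\cup Bs_{\varphi_1}s_{\varphi_2}s_{\varphi_3}s_3B\cup Bw_0B$, which is the second assertion. The main obstacle I anticipate is bookkeeping: making sure the identification of $w_{S\backslash J}w_0$ with the stated reduced products is carried out correctly for each of the four diagrams (in particular checking that $\ell(s_{\varphi_1}s_{\varphi_2}s_7)=51$ and $\ell(s_{\varphi_1}s_{\varphi_2}s_{\varphi_3}s_3)=60$ as forced by the displacement table), and correctly handling the caveat that in the first formula $\sE_{7;1}$ and $\sE_{7;2}$ contribute $Bs_{\varphi_1}B$ and $Bs_{\varphi_1}s_{\varphi_2}B$ respectively rather than $B$ — so one should present the first formula as the ``generic displacement'' statement and the second as the refinement obtained by exploiting that low-complexity domestic automorphisms are unipotent and hence fix a chamber.
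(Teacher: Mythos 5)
Your proposal is correct and follows essentially the same argument as the paper: by \cite[Theorem~2.6]{PVM:19a} every $\theta\in\mathscr{C}$ has a chamber at distance $w_{S\backslash J}w_0$ with $J=\Type(\theta)$, Lemma~\ref{lem:long2} identifies the five possible $w_{S\backslash J}w_0$ as the stated products, and for the second display Theorems~\ref{thm:polarclass} and~\ref{thm:pocopolarclass} (via Theorem~\ref{polar-copolar}) show that diagrams $\sE_{7;1}$ and $\sE_{7;2}$ are realised only by chamber-fixing automorphisms. The initial contrapositive framing (assume $\mathscr{C}\cap Bw_0B=\emptyset$) is an unnecessary detour that you correctly abandon in favour of the direct argument; and you are slightly more explicit than the paper in noting that the $\sE_{7;1}$ case, not just $\sE_{7;2}$, needs to be folded into $B$ for the second display.
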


\begin{proof}
If $\mathscr{C}$ is a nontrivial conjugacy class, then every element of $\mathscr{C}$ has non-empty opposition diagram. Let $\theta\in\mathscr{C}$. If $J=\Type(\theta)$ then by \cite[Theorem~2.6]{PVM:19a} there is a chamber $gB$ with $\delta(gB,\theta\cdot gB)=w_{S\backslash J}w_0$, and so $g^{-1}\theta g\in Bw_{S\backslash J}w_0B$. Hence $\mathscr{C}\cap Bw_{S\backslash J}w_0B\neq \emptyset$. The first statement then follows from the classification of admissible diagrams. The second statement follows from the fact that, by Theorem~\ref{polar-copolar}, every automorphism with opposition diagram either $\sE_{7;2}$ or $\sE_{8;2}$ necessarily fixes a chamber, and hence is conjugate to an element of~$B$.
\end{proof}

In the case $\FF=\FF_2$, there exist uncapped automorphisms, and some additional double cosets need to be included in the statement of Corollary~\ref{cor:E7conjugacy}.

\begin{cor}\label{cor:E8conjugacy}
Let $G=G_{\Phi}(\FF)$ be a Chevalley group of type $\sE_8$ with $|\FF|>2$. Let $\varphi_1$ (respectively $\varphi_2,\varphi_3$) be the highest root of the $\sE_8$ (respectively $\sE_7$, $\sD_6$) subsystem. For each nontrivial conjugacy class $\mathscr{C}$ in $G$ we have both
\begin{align*}
&\mathscr{C}\cap (Bs_{\varphi_1}B\cup Bs_{\varphi_1}s_{\varphi_2}B\cup Bs_{\varphi_1}s_{\varphi_2}s_{\varphi_3}s_7B\cup Bw_0B)\neq \emptyset,\text{ and}\\
&\mathscr{C}\cap(B\cup Bs_{\varphi_1}s_{\varphi_2}s_{\varphi_3}s_7B\cup Bw_0B)\neq \emptyset.
\end{align*}
\end{cor}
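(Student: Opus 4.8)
\textbf{Proof strategy for Corollary~\ref{cor:E8conjugacy}.} The plan is to mirror exactly the proof of Corollary~\ref{cor:E7conjugacy}, specialised to the admissible diagrams of type $\sE_8$. First I would observe that if $\mathscr{C}$ is a nontrivial conjugacy class in $G$, then every $\theta\in\mathscr{C}$ has non-empty opposition diagram (a nontrivial automorphism of a thick spherical building always maps some simplex to an opposite, by \cite[Proposition~4.2]{AB:09}, and this carries over in the split setting). Since $|\FF|>2$ the building is large, hence every automorphism is capped, so for $J=\Type(\theta)$ we may invoke \cite[Theorem~2.6]{PVM:19a} to produce a chamber $gB$ with $\delta(gB,\theta\cdot gB)=w_{S\backslash J}w_0$; equivalently $g^{-1}\theta g\in Bw_{S\backslash J}w_0B$, so $\mathscr{C}\cap Bw_{S\backslash J}w_0B\neq\emptyset$. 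Thus it suffices to run through the list of non-empty admissible $\sE_8$ diagrams and identify the element $w_{S\backslash J}w_0$ in each case.

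The non-empty admissible diagrams of type $\sE_8$ are $\sE_{8;1},\sE_{8;2},\sE_{8;4}$ and $\sE_{8;8}$, with $J=\{8\},\{1,8\},\{1,6,7,8\},S$ respectively. For $\sE_{8;8}$ the relevant double coset is $Bw_0B$. For $\sE_{8;1}$, using $\varphi_1=\varphi_{\sE_8}$ the highest root, $w_{S\backslash\{8\}}w_0=s_{\varphi_1}$ by comparing inversion sets (this is fact~(5) in the paragraph preceding Theorem~\ref{thm:converse}). For $\sE_{8;2}$ one has $w_{S\backslash\{1,8\}}w_0=s_{\varphi_1}s_{\varphi_2}$ with $\varphi_2=\varphi_{\sE_7}$, by Lemma~\ref{lem:long2} applied to the polar-closed diagram $\sE_{8;2}$ (whose associated highest roots are $\varphi_{\sE_8}$ and $\varphi_{\sD_6}$ --- wait, here I must be careful: for $\sE_{8;2}$ the algorithm removes the polar node of $\sE_8$ then the polar node of the resulting $\sD_6$, giving $\{\varphi_{\sE_8},\varphi_{\sD_6}\}$; but the \emph{statement} of the corollary uses $\varphi_2=\varphi_{\sE_7}$ and $\varphi_3=\varphi_{\sD_6}$, so I would instead take the sequence $\sE_{8;2}\mapsto\sE_{7;1}$ realised via the $\sE_7$ subsystem, giving $\{\varphi_{\sE_8},\varphi_{\sE_7}\}$ as an admissible choice of the two highest roots and hence $w_{S\backslash\{1,8\}}w_0=s_{\varphi_1}s_{\varphi_2}$). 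Finally for $\sE_{8;4}$, the polar-closed algorithm run through the $\sE_7$ subsystem yields the highest roots $\varphi_{\sE_8},\varphi_{\sE_7},\varphi_{\sD_6},\alpha_7$ (as computed explicitly in Section~\ref{sec:unipotent}), so by Lemma~\ref{lem:long2} we get $w_{S\backslash\{1,6,7,8\}}w_0=s_{\varphi_1}s_{\varphi_2}s_{\varphi_3}s_7$. Assembling these four possibilities gives the first displayed intersection.

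For the second displayed statement I would additionally use Theorem~\ref{polar-copolar} (equivalently the ``only if'' direction of Theorem~\ref{thm:pocopolarclass}): every automorphism of a large $\sE_8$ building with opposition diagram $\sE_{8;2}$ is a product of two perpendicular long root elations, and such an element fixes a chamber (indeed by Theorem~\ref{thm:final} one may conjugate it into $x_{\varphi_1}(a)x_{\varphi_2}(b)\in B$). Hence whenever $\mathscr{C}$ contains an element with diagram $\sE_{8;1}$ or $\sE_{8;2}$, we actually have $\mathscr{C}\cap B\neq\emptyset$ (diagram $\sE_{8;1}$ gives long root elations, which lie in $B$ up to conjugacy by Theorem~\ref{thm:polarclass}/Theorem~\ref{thm:RGD}); if instead $\mathscr{C}$ has diagram $\sE_{8;4}$ then $\mathscr{C}\cap Bs_{\varphi_1}s_{\varphi_2}s_{\varphi_3}s_7B\neq\emptyset$ from the first part; and diagram $\sE_{8;8}$ gives $\mathscr{C}\cap Bw_0B\neq\emptyset$. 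This yields the second intersection. The only genuine subtlety --- and the step I would be most careful about --- is the bookkeeping in the choice of highest roots for diagram $\sE_{8;4}$: one must verify that removing polar nodes along the $\sE_7$-branch of the algorithm really does produce the roots $\varphi_{\sE_8},\varphi_{\sE_7},\varphi_{\sD_6},\alpha_7$ and that these are mutually perpendicular with $s_{\varphi_1}s_{\varphi_2}s_{\varphi_3}s_7=w_{S\backslash\{1,6,7,8\}}w_0$, which is precisely the content of Lemma~\ref{lem:long2} together with the explicit root data already recorded in Section~\ref{sec:unipotent}. Everything else is a direct transcription of the $\sE_7$ argument, with no new obstacles.
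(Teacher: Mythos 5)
Your proof is correct and takes the same approach the paper intends (the paper simply says the proof is similar to Corollary~\ref{cor:E7conjugacy}): pass through admissible diagrams, identify $w_{S\backslash J}w_0$ via Lemma~\ref{lem:long2} and the polar-closed algorithm, and upgrade the first two double cosets to $B$ using Theorem~\ref{thm:converse} and Theorem~\ref{polar-copolar}. The only slip is in your parenthetical about $\sE_{8;2}$: removing the polar node $8$ from $\sE_8$ gives $\sE_7$, not $\sD_6$, so the algorithm deterministically yields $\{\varphi_{\sE_8},\varphi_{\sE_7}\}$ with no alternative sequence to choose; your ``instead take'' is in fact the only possibility, and your final conclusion is right.
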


\begin{proof}
The proof is similar to Corollary~\ref{cor:E7conjugacy}. 
\end{proof}

We conclude with the following remark.

\begin{remark}\label{rem:nonchamberfix}
The results of this paper (along with \cite{HVM:13}) completely classify the automorphisms of split spherical buildings of exceptional type with opposition diagrams other than $\sE_{7;3}$, $\sE_{7;4}$, and $\sE_{8;4}$ (with the possible exception of polar-copolar diagrams for small $\sE_7$ and $\sE_8$ buildings). For these three excluded diagrams we have provided examples (both unipotent elements and homologies) with the respective opposition diagram, and the general classification for these diagrams will be continued in~\cite{PVM:20c}. For now we simply state, without proof, that in both $\sE_7$ and $\sE_8$ the element $\theta=x_{\alpha_1}(1)s_1$ has order~$3$ and opposition diagram $\sE_{7;4}$ or $\sE_{8;4}$ (for all fields). Thus, for example, if $\FF=\FF_2$ then $\theta$ necessarily does not fix a chamber (as the Borel is a Sylow $2$-group in this case), and hence is neither in $U^+$ nor in $H$. Furthermore, for type $\sE_7$ the element $\theta=x_{\alpha_2}(1)s_2x_{\alpha_5}(1)s_5x_{\alpha_7}(1)s_7$ has has order $3$ and opposition diagram $\sE_{7;3}$, and similar statements apply. 
\end{remark}

\begin{appendix}

\section{Root system data}\label{app:data}

The following table lists the number of positive roots (equivalently, the length of the longest element), the highest root~$\varphi$, the highest short root~$\varphi'$, the polar type~$\wp$, the dual polar type~$\wp'$, and the copolar type~$\wp^*$ for each irreducible crystallographic root system. Note that the copolar type is not well defined in the $\sB_n$ and $\sD_n$ cases as $S\backslash \wp$ is not irreducible. 

$$
\begin{array}{|c||c|c|c|c|c|c|}
\hline
&|\Phi^+|&\varphi&\varphi'&\wp&\wp'&\wp^*\\
\hline\hline
\sA_n&n(n+1)/2&(111\cdots111)&-&\{1,n\}&\{1,n\}&\{2,n-1\}\,\,(n\geq 3)\\
\hline
\sB_n&n^2&(122\cdots222)&(111\cdots111)&\{2\}&\{1\}&-\\
\hline
\sC_n&n^2&(222\cdots221)&(122\cdots221)&\{1\}&\{2\}&\{2\}\\
\hline
\sD_n&n(n-1)&(122\cdots211)&-&\{2\}&\{2\}&-\\
\hline
\sE_6&36&(122321)&-&\{2\}&\{2\}&\{1,6\}\\
\hline
\sE_7&63&(2234321)&-&\{1\}&\{1\}&\{6\}\\
\hline
\sE_8&120&(23465432)&-&\{8\}&\{8\}&\{1\}\\
\hline
\sF_4&24&(2342)&(1232)&\{1\}&\{4\}&\{4\}\\
\hline
\sG_2&6&(32)&(21)&\{2\}&\{1\}&\{1\}\\
\hline
\end{array}
$$

\noindent The positive roots of the $\sE_6$ root system are as follows.
\begin{align*}
&1 0 0 0 0 0&&0 1 0 0 0 0&&0 0 1 0 0 0&&0 0 0 1 0 0&&0 0 0 0 1 0&&0 0 0 0 0 1&&1 0 1 0 0 0&&0 1 0 1 0 0&&0 0 1 1 0 0&&0 0 0 1 1 0\\
&0 0 0 0 1 1&&1 0 1 1 0 0&&0 1 1 1 0 0&&0 1 0 1 1 0&&0 0 1 1 1 0&&0 0 0 1 1 1&&1 1 1 1 0 0&&1 0 1 1 1 0&&0 1 1 1 1 0&&0 1 0 1 1 1\\
&0 0 1 1 1 1&&1 1 1 1 1 0&&1 0 1 1 1 1&&0 1 1 2 1 0&&0 1 1 1 1 1&&1 1 1 2 1 0&&1 1 1 1 1 1&&0 1 1 2 1 1&&1 1 2 2 1 0&&1 1 1 2 1 1\\
&0 1 1 2 2 1&&1 1 2 2 1 1&&1 1 1 2 2 1&&1 1 2 2 2 1&&1 1 2 3 2 1&&1 2 2 3 2 1
\end{align*}
\noindent The positive roots of the $\sE_7$ root system are as follows.
\begin{align*}
&1 0 0 0 0 0 0&&
0 1 0 0 0 0 0&&
0 0 1 0 0 0 0&&
0 0 0 1 0 0 0&&
0 0 0 0 1 0 0&&
0 0 0 0 0 1 0&&
0 0 0 0 0 0 1&&
1 0 1 0 0 0 0&&
0 1 0 1 0 0 0\\
&0 0 1 1 0 0 0&&
0 0 0 1 1 0 0&&
0 0 0 0 1 1 0&&
0 0 0 0 0 1 1&&
1 0 1 1 0 0 0&&
0 1 1 1 0 0 0&&
0 1 0 1 1 0 0&&
0 0 1 1 1 0 0&&
0 0 0 1 1 1 0\\
&0 0 0 0 1 1 1&&
1 1 1 1 0 0 0&&
1 0 1 1 1 0 0&&
0 1 1 1 1 0 0&&
0 1 0 1 1 1 0&&
0 0 1 1 1 1 0&&
0 0 0 1 1 1 1&&
1 1 1 1 1 0 0&&
1 0 1 1 1 1 0\\
&0 1 1 2 1 0 0&&
0 1 1 1 1 1 0&&
0 1 0 1 1 1 1&&
0 0 1 1 1 1 1&&
1 1 1 2 1 0 0&&
1 1 1 1 1 1 0&&
1 0 1 1 1 1 1&&
0 1 1 2 1 1 0&&
0 1 1 1 1 1 1\\&
1 1 2 2 1 0 0&&
1 1 1 2 1 1 0&&
1 1 1 1 1 1 1&&
0 1 1 2 2 1 0&&
0 1 1 2 1 1 1&&
1 1 2 2 1 1 0&&
1 1 1 2 2 1 0&&
1 1 1 2 1 1 1&&
0 1 1 2 2 1 1\\&
1 1 2 2 2 1 0&&
1 1 2 2 1 1 1&&
1 1 1 2 2 1 1&&
0 1 1 2 2 2 1&&
1 1 2 3 2 1 0&&
1 1 2 2 2 1 1&&
1 1 1 2 2 2 1&&
1 2 2 3 2 1 0&&
1 1 2 3 2 1 1\\&
1 1 2 2 2 2 1&&
1 2 2 3 2 1 1&&
1 1 2 3 2 2 1&&
1 2 2 3 2 2 1&&
1 1 2 3 3 2 1&&
1 2 2 3 3 2 1&&
1 2 2 4 3 2 1&&
1 2 3 4 3 2 1&&
2 2 3 4 3 2 1
\end{align*}
\noindent The positive roots of the $\sE_8$ root system are as follows.
\begin{align*}
&1 0 0 0 0 0 0 0&&
0 1 0 0 0 0 0 0&&
0 0 1 0 0 0 0 0&&
0 0 0 1 0 0 0 0&&
0 0 0 0 1 0 0 0&&
0 0 0 0 0 1 0 0&&
0 0 0 0 0 0 1 0&&
0 0 0 0 0 0 0 1\\&
1 0 1 0 0 0 0 0&&
0 1 0 1 0 0 0 0&&
0 0 1 1 0 0 0 0&&
0 0 0 1 1 0 0 0&&
0 0 0 0 1 1 0 0&&
0 0 0 0 0 1 1 0&&
0 0 0 0 0 0 1 1&&
1 0 1 1 0 0 0 0\\&
0 1 1 1 0 0 0 0&&
0 1 0 1 1 0 0 0&&
0 0 1 1 1 0 0 0&&
0 0 0 1 1 1 0 0&&
0 0 0 0 1 1 1 0&&
0 0 0 0 0 1 1 1&&
1 1 1 1 0 0 0 0&&
1 0 1 1 1 0 0 0\\&
0 1 1 1 1 0 0 0&&
0 1 0 1 1 1 0 0&&
0 0 1 1 1 1 0 0&&
0 0 0 1 1 1 1 0&&
0 0 0 0 1 1 1 1&&
1 1 1 1 1 0 0 0&&
1 0 1 1 1 1 0 0&&
0 1 1 2 1 0 0 0\\&
0 1 1 1 1 1 0 0&&
0 1 0 1 1 1 1 0&&
0 0 1 1 1 1 1 0&&
0 0 0 1 1 1 1 1&&
1 1 1 2 1 0 0 0&&
1 1 1 1 1 1 0 0&&
1 0 1 1 1 1 1 0&&
0 1 1 2 1 1 0 0\\&
0 1 1 1 1 1 1 0&&
0 1 0 1 1 1 1 1&&
0 0 1 1 1 1 1 1&&
1 1 2 2 1 0 0 0&&
1 1 1 2 1 1 0 0&&
1 1 1 1 1 1 1 0&&
1 0 1 1 1 1 1 1&&
0 1 1 2 2 1 0 0\\&
0 1 1 2 1 1 1 0&&
0 1 1 1 1 1 1 1&&
1 1 2 2 1 1 0 0&&
1 1 1 2 2 1 0 0&&
1 1 1 2 1 1 1 0&&
1 1 1 1 1 1 1 1&&
0 1 1 2 2 1 1 0&&
0 1 1 2 1 1 1 1\\&
1 1 2 2 2 1 0 0&&
1 1 2 2 1 1 1 0&&
1 1 1 2 2 1 1 0&&
1 1 1 2 1 1 1 1&&
0 1 1 2 2 2 1 0&&
0 1 1 2 2 1 1 1&&
1 1 2 3 2 1 0 0&&
1 1 2 2 2 1 1 0\\&
1 1 2 2 1 1 1 1&&
1 1 1 2 2 2 1 0&&
1 1 1 2 2 1 1 1&&
0 1 1 2 2 2 1 1&&
1 2 2 3 2 1 0 0&&
1 1 2 3 2 1 1 0&&
1 1 2 2 2 2 1 0&&
1 1 2 2 2 1 1 1\\&
1 1 1 2 2 2 1 1&&
0 1 1 2 2 2 2 1&&
1 2 2 3 2 1 1 0&&
1 1 2 3 2 2 1 0&&
1 1 2 3 2 1 1 1&&
1 1 2 2 2 2 1 1&&
1 1 1 2 2 2 2 1&&
1 2 2 3 2 2 1 0\\&
1 2 2 3 2 1 1 1&&
1 1 2 3 3 2 1 0&&
1 1 2 3 2 2 1 1&&
1 1 2 2 2 2 2 1&&
1 2 2 3 3 2 1 0&&
1 2 2 3 2 2 1 1&&
1 1 2 3 3 2 1 1&&
1 1 2 3 2 2 2 1\\&
1 2 2 4 3 2 1 0&&
1 2 2 3 3 2 1 1&&
1 2 2 3 2 2 2 1&&
1 1 2 3 3 2 2 1&&
1 2 3 4 3 2 1 0&&
1 2 2 4 3 2 1 1&&
1 2 2 3 3 2 2 1&&
1 1 2 3 3 3 2 1\\&
2 2 3 4 3 2 1 0&&
1 2 3 4 3 2 1 1&&
1 2 2 4 3 2 2 1&&
1 2 2 3 3 3 2 1&&
2 2 3 4 3 2 1 1&&
1 2 3 4 3 2 2 1&&
1 2 2 4 3 3 2 1&&
2 2 3 4 3 2 2 1\\&
1 2 3 4 3 3 2 1&&
1 2 2 4 4 3 2 1&&
2 2 3 4 3 3 2 1&&
1 2 3 4 4 3 2 1&&
2 2 3 4 4 3 2 1&&
1 2 3 5 4 3 2 1&&
2 2 3 5 4 3 2 1&&
1 3 3 5 4 3 2 1\\&
2 3 3 5 4 3 2 1&&
2 2 4 5 4 3 2 1&&
2 3 4 5 4 3 2 1&&
2 3 4 6 4 3 2 1&&
2 3 4 6 5 3 2 1&&
2 3 4 6 5 4 2 1&&
2 3 4 6 5 4 3 1&&
2 3 4 6 5 4 3 2
\end{align*}
\noindent The positive roots of the $\sF_4$ root system are as follows.
\begin{align*}
&1 0 0 0&&
0 1 0 0&&
0 0 1 0&&
0 0 0 1&&
1 1 0 0&&
0 1 1 0&&
0 0 1 1&&
1 1 1 0&&
0 1 2 0&&
0 1 1 1&&
1 1 2 0&&
1 1 1 1\\&
0 1 2 1&&
1 2 2 0&&
1 1 2 1&&
0 1 2 2&&
1 2 2 1&&
1 1 2 2&&
1 2 3 1&&
1 2 2 2&&
1 2 3 2&&
1 2 4 2&&
1 3 4 2&&
2 3 4 2
\end{align*}
\noindent The positive roots of the $\sG_2$ root system are $1 0$, $0 1$, $1 1$, $2 1$, $3 1$, $3 2$.

\end{appendix}

\bibliographystyle{plain}

\end{document}